\chardef\bslash=`\\ 
\newtheorem{thm}{Theorem}[section]
\newtheorem{cor}[thm]{Corollary}
\newtheorem{lem}[thm]{Lemma}
\newtheorem{rem}[thm]{Remark}
\theoremstyle{definition}
\newtheorem{defn}{Definition}[section]
\numberwithin{equation}{section}
\newcommand{\bxi}{\boldsymbol{\xi}}
\newcommand{\bz}{\boldsymbol{z}}
\newcommand{\bw}{\boldsymbol{w}}
\newcommand{{\bH}}{{\bf H}}
\newcommand{\bV}{{\bf V}}
\newcommand{\cA}{\mathcal A}
\newcommand{\cC}{\mathcal C}
\newcommand{\cB}{\mathcal B}
\newcommand{\cT}{\mathcal T}
\newcommand{\cL}{\mathcal L}
\newcommand{\cP}{\mathcal P}
\newcommand{\cof}{{\rm cof}}
\newcommand{\map}{\longrightarrow}
\newcommand{\half}{\frac{1}{2}}
\newcommand{\fl}{\;\; \forall\,}
\newcommand{\hto}{H^2_0(\Omega)}
\newcommand{\integ}{\int_\Omega}
\newcommand{\dx}{\;\textit{dx}}
\newcommand{\trinl}{\ensuremath{\left| \! \left| \! \left|}}
\newcommand{\trinr}{\ensuremath{\right| \! \right| \! \right|}}
\newcommand{\vket}{von K\'{a}rm\'{a}n equations }
\newcommand{\BFS}{Bogner-Fox-Schmit }
\newcommand{\Poincare}{Poincar\'{e} }
\definecolor{violet}{rgb}{0.580,0.,0.827}
\begin{document}
\title[Error estimates for optimal control of von K\'{a}rm\'{a}n equations]{Error estimates for the numerical approximation of a  distributed optimal control problem governed by the  von K\'{a}rm\'{a}n equations}
\date{\today}
\maketitle
\begin{center}
\author{        
Gouranga Mallik
\footnote{Department of Mathematics, Indian Institute of Technology Bombay, Powai, Mumbai 400076, India. Email. gouranga@math.iitb.ac.in}
$\boldsymbol{\cdot}$ Neela Nataraj
\footnote{Department of Mathematics, Indian Institute of Technology Bombay, Powai, Mumbai 400076, India. Email. neela@math.iitb.ac.in}
$\boldsymbol{\cdot}$ Jean-Pierre Raymond
 \footnote{
  Institut Math\'{e}matiques de Toulouse, UMR CNRS 5219, Universit\'{e} Paul Sabatier Toulouse III, 31062 Toulouse Cedex 9, France. Email. raymond@math.univ-toulouse.fr}
}
\end{center}
\begin{abstract} 
 In this paper, we discuss the numerical approximation of a distributed optimal control problem governed by the  von K\'{a}rm\'{a}n equations, defined in polygonal domains with point-wise control constraints. Conforming finite elements are employed to discretize the state and adjoint variables. The control is discretized using piece-wise constant approximations.  {\it  A~priori} error estimates are derived for the state, adjoint and control variables under minimal regularity assumptions on the exact solution. Numerical results that justify the theoretical results are presented.
\end{abstract}

\bigskip
\noindent{\bf Keywords:} {von K\'{a}rm\'{a}n equations, distributed control,  plate bending, non-linear, conforming  finite element methods, error estimates}

\bigskip

{\bf AMS subject classifications.} \subjclass{65N30, 65N15, 49M05, 49M25}

\section{Introduction}
Consider the  distributed control problem governed by the von K\'{a}rm\'{a}n equations defined by: 
\begin{subequations}
\begin{align}
&  \min_{u \in U_{ad}} J(\Psi, u) \,\,\, \textrm{ subject to } \label{cost}\\
&  \Delta^2  \psi_1 =[\psi_1 ,\psi_2]+f +{\mathcal C}u \,\, \quad \mbox{ in } \Omega, \label{state1} \\
&  \Delta^2  \psi_2 =- \half[\psi_1 ,\psi_1] \,\, \quad \mbox{ in } \Omega,  \label{state2} \\
& \psi_1=0,\,\frac{\partial \psi_1}{\partial \nu}=0\text{ and } \psi_2=0,\,\frac{\partial  \psi_2}{\partial \nu} =0
\text{  on }\partial\Omega, \label{state3}
\end{align}
\end{subequations}
where $\Psi=(\psi_1,\psi_2)$ and the components $\psi_1$ and $\psi_2$ denote the displacement and Airy-stress respectively,  $\Delta^2\varphi:=\varphi_{xxxx}+2\varphi_{xxyy}+\varphi_{yyyy},$ the von K\'{a}rm\'{a}n bracket $ [\eta,\chi]:=\eta_{xx}\chi_{yy}+\eta_{yy}\chi_{xx}-2\eta_{xy}\chi_{xy}$ and
 $\nu$ is the unit outward normal to the boundary $\partial\Omega$ of the polygonal domain $\Omega \subset {\mathbb R}^2$. The load function $f \in H^{-1}(\Omega)$,  
  ${\mathcal C} \in {\mathcal L}(L^2(\omega), L^2(\Omega))$ is a localization operator defined by ${\mathcal C}u(x) = u(x) \chi_{\omega}(x)$, where $\chi_{\omega}$ is the characteristic function of $\omega \subset L^2(\Omega)$,   the cost functional $J(\Psi, u)$ is defined by
\begin{equation}
J(\Psi, u) :=\frac{1}{2}\|\Psi- \Psi_d \|_{L^2(\Omega)}^2  + \frac{\alpha}{2} \int_{\omega} |{u}|^2 \: dx,
\end{equation}
with a fixed regularization parameter $\alpha>0$, $\Psi_d=(\psi_{1d}, \psi_{2d})$ is the given observation for $\Psi$ and $U_{ad} \subset L^2(\omega)$ is a non-empty, convex and bounded admissible space of controls defined by
\begin{equation} \label{controlspace}
U_{ad} = \{   u \in L^2(\omega) : u_a \le  u(x) \le u_b  \; \; \mbox{ for almost every }  x \mbox { in }  \omega \},
\end{equation}
$u_a, \: u_b \in {\mathbb R}, \; u_a \le u_b $ are given.

\medskip 
Let us first discuss the results available for the state equations, for a given $u \in L^2(\omega)$. For results regarding the existence of solutions, regularity and bifurcation phenomena of the \vket we refer to \cite{CiarletPlates, Knightly, Fife, Berger,BergerFife, BlumRannacher} and the references therein. It is well known \cite{BlumRannacher} that in a polygonal domain $\Omega$, for given $f\in H^{-1}(\Omega)$, the solution of the biharmonic problem belongs to $\hto\cap H^{2+\gamma}(\Omega)$, where $\gamma\in (\half,1]$, referred to as the index of elliptic regularity, is determined by the interior angles of $\Omega$. Note that when $\Omega$ is convex, $\gamma=1$ and the solution belongs to $\hto\cap H^3(\Omega)$. 
It is also stated in \cite{BlumRannacher} that similar regularity results hold true for von K\'{a}rm\'{a}n equations, that is, the solutions $\psi_1, \psi_2$ belong to $\hto\cap H^{2+\gamma}(\Omega)$.  However, we give the details of the arguments of this proof in this paper.

\medskip
\noindent 
Due to the importance of the problem in application areas, several numerical approaches have  been attempted in the past for the state equation.   The major challenges of the problem from the numerical analysis point of view  are the non-linearity and the higher order nature of the equations. In \cite{Brezzi, Miyoshi, Quarteroni},  the authors consider the state equation with {\it homogeneous} boundary conditions and study the approximation and error bounds for {\it nonsingular} solutions. The convergence analysis has been studied using conforming finite element methods in \cite{Brezzi, ng1}, nonconforming finite element methods in \cite{ng2}, $C^0$ interior penalty method \cite{brennernew}, the Hellan-Hermann-Miyoshi mixed finite element method in \cite{Miyoshi, Reinhart} and a stress-hybrid method in \cite{Quarteroni}, respectively.   

\medskip
For the numerical approximation of  the distributed control problem defined in \eqref{cost}-\eqref{state3}, not many results are available in literature. The paper \cite{hou} discusses conforming finite element approximation of the problem defined in convex domains {\it without} control constraints and when the control is defined over the whole domain $\Omega$, whereas \cite{GunzburgerHou1996}  discusses an abstract framework for a class of nonlinear optimization problems using a Lagrange multiplier approach. For results on optimal control problems of the steady-state Navier-Stokes equations, with and without control constraints,  many references are available, see for example, \cite{cmj}, \cite{GHS91},  \cite{GHS91A} to mention a few. Employing a post processing of the discretized control $u$, \cite{MeyerRosh04,GudiNatarajPorwal14} establish a super convergence result for the control variable for the second-order and fourth-order linear elliptic problems.


\medskip \noindent  In this paper, we discuss the existence of solutions of conforming finite element approximations of  {\it local nonsingular} solutions of the control problem and establish {\it a priori} error estimates. The contributions of this paper are
\begin{itemize}
\item[(i)] error estimates for the state and adjoint variables in the energy norm  and that for the control variable  in the $L^2$ norm,  under realistic regularity assumptions for the exact solution of the problem defined in polygonal domains,
\item[(ii)] a  guaranteed quadratic convergence result in convex domains for a post processed control \cite{MeyerRosh04} constructed by projecting the discrete adjoint state into the admissible set of controls,
\item[(iii)]  error estimates for state and adjoint variables in lower order  $H^1$ and $L^2$ norms,
\item[(iv)] numerical results that illustrate all the theoretical estimates.
\end{itemize}

\medskip \noindent
Throughout the paper, standard notations on Lebesgue and
Sobolev spaces and their norms are employed.
The standard semi-norm and norm on $H^{s}(\Omega)$ (resp. $W^{s,p} (\Omega)$) for $s>0$ are denoted by $|\cdot|_{s}$ and $\|\cdot\|_{s}$ (resp. $|\cdot|_{s,p}$ and $\|\cdot\|_{s,p}$ ). The standard $L^2$ inner product is denoted by $(\cdot, \cdot)$.  We use the notation ${\bH}^s(\Omega)$ (resp. ${\bf L}^p(\Omega)$) to denote the product space  
 $H^{s}(\Omega) \times H^s(\Omega)$ (resp. $L^p(\Omega) \times L^p(\Omega)$). The notation $a\lesssim b$ means there exists a generic mesh independent constant $C$ such that $a\leq Cb$. The positive constants $C$ appearing in the inequalities denote generic constants which do not depend on the mesh-size. 

\medskip \noindent The rest of the paper is organized as follows. The weak formulation and some auxiliary results needed for the analysis are described in Section 2. The state and adjoint variables are discretized and some intermediate discrete problems along with error estimates are established in Section 3. In Section 4, the discretization of the control variable is described and the existence and convergence results for the fully discrete problem are established. This is followed by derivation of the error estimates for the state, adjoint and control variables in Section 5.  The post processing of control for improved error estimates and lower order estimates for state and adjoint variables are also derived. Section 6 deals with the results of the numerical experiments.  The discrete optimization problem is solved using the primal-dual active set strategy, see for example \cite{fredi2010}. The state and adjoint variables are discretized using \BFS finite elements and the control variable is discretized using piecewise constant functions. The post-processed control is also computed.

The analysis that we do in Sections 2 and 3 and several results stated there are very close to what is done in \cite{cmj} for the Navier-Stokes system. However, many of the proofs in our paper are based on results specific to the von K\'{a}rm\'{a}n equations. This is why we  have to adapt several results from \cite{cmj} to our setting. 

\section{Weak formulation and Auxiliary results}
\label{sec:weakformulation}
In this section, we state the weak formulation corresponding to \eqref{cost}-\eqref{state3} in the first subsection and present some auxiliary results in the second subsection. This will be followed by the derivation of first order and second order optimality conditions for the control problem in the third subsection.

\subsection{Weak formulation}
The weak formulation corresponding to \eqref{cost}-\eqref{state3} reads:
\begin{subequations}\label{wform}
  \begin{align} 
  &  \min_{(\Psi, {u}) \in  \bV \times  U_{ad}} J(\Psi, u) \,\,\, \textrm{ subject to } \label{cost1}\\
   &  A(\Psi,\Phi)+B(\Psi,\Psi,\Phi)=(F + {\mathbf C}{\bf u}, \Phi), \label{wv}
  \end{align}
\end{subequations}
for all $\Phi \in \bV$, where  $ \bV:=V \times V$ with $V :=H^2_0(\Omega)$.  For all $ {\boldsymbol \xi}=({\boldsymbol \xi}_{1},{\boldsymbol \xi}_{2}),{\boldsymbol \lambda}=({\boldsymbol \lambda}_{1},{\boldsymbol \lambda}_{2})$, $ \Phi=(\varphi_{1},\varphi_{2})\in \bV$,
   \begin{align*}
    &A({\boldsymbol \lambda},\Phi):=a({\boldsymbol \lambda}_1,\varphi_1)+a({\boldsymbol \lambda}_2,\varphi_2), \\
    &B({\boldsymbol \xi},{\boldsymbol \lambda},\Phi):=b({\boldsymbol \xi}_{1},{\boldsymbol \lambda}_{2},\varphi_{1})+b({\boldsymbol \xi}_{2},{\boldsymbol \lambda}_{1},\varphi_{1})-b({\boldsymbol \xi}_{1},{\boldsymbol \lambda}_{1},\varphi_{2}), \; \;  \\
    & F= \left( \begin{array}{c}
f  \\
0  \end{array} \right) , \; {\mathbf C}{\bf u} =
\left( \begin{array}{c}
{\mathcal C}u \\
0  \end{array} \right), \;
 {\bf u} =\left( \begin{array}{c}
u \\
0  \end{array} \right)
   \mbox{ and }
    (F + {\mathbf C}{\bf u} ,\Phi):=(f+{\mathcal C}u,\varphi_1), 
   \end{align*} 
 \noindent and for all $\eta,\chi,\varphi\in V$, 
  \begin{align*}
   &a(\eta,\chi):=\integ D^2 \eta:D^2\chi\dx, \; \;  b(\eta,\chi,\varphi):=\half\integ \cof(D^2\eta)D\chi\cdot D\varphi\dx. \; \;       \; \;   
  \end{align*}
  
\begin{rem}
Note that 
\begin{equation}
\int_{\Omega} [\eta, \chi] \varphi \dx = - \int_{\Omega}  \cof(D^2\eta)D\chi\cdot D\varphi\dx, \; \; \; \forall \eta, \chi, \varphi \in H^2_0(\Omega).
\end{equation}
The form $b(\cdot,\cdot,\cdot)$ is derived using the divergence-free rows property~\cite{Evans}. Since the Hessian matrix $D^2\eta$ is symmetric, $\cof(D^2\eta)$ is symmetric. Consequently, $b(\cdot,\cdot,\cdot)$ is symmetric with respect to the  second and third variables, that is,   $b(\eta,{\boldsymbol \xi},\varphi)=b(\eta,\varphi,{\boldsymbol \xi})$. Moreover,  since $[\cdot,\cdot]$ is symmetric, $b(\cdot,\cdot,\cdot)$ is symmetric with respect to all the variables.   Also, $B(\cdot,\cdot,\cdot)$ is symmetric in the first and second variables due to the symmetry of $b(\cdot,\cdot,\cdot)$.    
\end{rem}

\medskip
\noindent The Sobolev space $\bV$ is equipped with the norm defined by
\begin{align*}
 \trinl\Phi\trinr_2&:=(|\varphi_1|_{2,\Omega}^2+|\varphi_2|_{2,\Omega}^2)^{\frac{1}{2}} \; \;  \; \fl\Phi=(\varphi_1,\varphi_2)\in \bV,
\end{align*}
 where $\displaystyle |\varphi|^{2}_{2, \Omega}=\int_\Omega D^2\varphi:D^2\varphi\dx$, for all $\varphi\in V$.  
 
 In the sequel, the product norm defined on ${\bH}^s(\Omega)$ and ${\bf L}^2(\Omega)$ are denoted by $\trinl\cdot\trinr_s$ and $\trinl\cdot\trinr$, respectively.  

\medskip
The following properties of boundedness and coercivity of $A(\cdot, \cdot)$ and boundedness of $B(\cdot, \cdot, \cdot)$  hold true: 
$\forall {\boldsymbol \xi},{\boldsymbol \lambda},\Phi\in \bV$,
\begin{eqnarray}
 && |{A}({\boldsymbol \xi}, \Phi)| \leq \trinl{\boldsymbol \xi}\trinr_2 \: \trinl\Phi\trinr_2,\label{boundA}\\
& & |{A}({\boldsymbol \xi},{\boldsymbol \xi})| \geq \trinl   {\boldsymbol \xi} \trinr_2^2, \label{coercivity}\\
& {\rm and } & |B({\boldsymbol \xi}, {\boldsymbol {\boldsymbol \lambda}}, \Phi)|  \leq  C_b \trinl{\boldsymbol \xi}\trinr_2 \: \trinl {\boldsymbol {\boldsymbol \lambda}} \trinr_2 \: \trinl\Phi\trinr_2. \label{boundB1}
\end{eqnarray}

With the definition of $B(\cdot,\cdot,\cdot)$, the symmetry of $b(\cdot,\cdot,\cdot)$ and the Sobolev imbedding, it yields \cite{ng1}
\begin{align}\label{boundB2}
|B(\Xi,\Theta,\Phi)|\lesssim
\begin{cases}
	\trinl \Xi\trinr_{2+\gamma} \trinl \Theta\trinr_ {2}\trinl \Phi\trinr_{1} \fl \Xi\in {\bH}^{2+\gamma}(\Omega)\text{ and } \Theta,\Phi\in \bV,\\
	\trinl \Xi\trinr_{2+\gamma} \trinl \Theta\trinr_ {1}\trinl \Phi\trinr_{2}\fl \Xi\in {\bH}^{2+\gamma}(\Omega)\text{ and } \Theta,\Phi\in \bV,\\
	\trinl \Xi\trinr_{1} \trinl \Theta\trinr_ {2+\gamma}\trinl \Phi\trinr_{2}\fl \Xi\in \bV, \Theta\in {\bH}^{2+\gamma}(\Omega) \text{ and }\Phi\in \bV,
\end{cases}
\end{align}
where $\gamma \in(\half,1]$ denotes the elliptic regularity index. The above estimates are also valid if $\gamma$ is replaced by any 
$\gamma_0  \in (1/2,\gamma)$, that is 
\begin{equation}
  |B(\Xi,\Theta,\Phi)| \leq C_\gamma\trinl \Xi\trinr_{2+\gamma_0} \trinl \Theta\trinr_ {2}\trinl \Phi\trinr_{1}, \label{boundB3}
\end{equation}
for all $\gamma_0 \in (1/2,\gamma)$.

\noindent We now prove another boundedness result which will be also needed later.

\begin{lem}
For  $\Xi, \Theta,\Phi\in \bV$, there holds 
\begin{equation}
  |B(\Xi,\Theta,\Phi)| \leq C_{\epsilon}\trinl \Xi\trinr_{2} \trinl \Theta\trinr_ {2}\trinl \Phi\trinr_{1+\epsilon}, \quad 0 < \epsilon < 1/2. \label{boundB_new}
\end{equation}
\end{lem}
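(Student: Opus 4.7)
The plan is to reduce the estimate on $B$ to a corresponding estimate on the trilinear form $b$, by applying the triangle inequality to the three terms in the definition of $B(\Xi,\Theta,\Phi)$. So I need to show that
\[
|b(\eta,\chi,\varphi)| \lesssim C_{\epsilon}\, |\eta|_{2}\, |\chi|_{2}\, \|\varphi\|_{1+\epsilon}, \qquad \eta,\chi,\varphi \in V.
\]
Since $b(\eta,\chi,\varphi) = \tfrac12 \int_\Omega \cof(D^2 \eta)\, D\chi \cdot D\varphi \, dx$, the natural tool is a three-factor H\"older inequality with exponents $2$, $p$ and $q$ satisfying $\tfrac12 + \tfrac1p + \tfrac1q = 1$, applied as
\[
|b(\eta,\chi,\varphi)| \lesssim \|\cof(D^2 \eta)\|_{L^2(\Omega)}\, \|D\chi\|_{L^p(\Omega)}\, \|D\varphi\|_{L^q(\Omega)} \lesssim |\eta|_{2}\, \|D\chi\|_{L^p(\Omega)}\, \|D\varphi\|_{L^q(\Omega)}.
\]

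The exponents should be tuned to the available regularity of $\varphi$. I would choose $q$ so that $D\varphi \in H^{\epsilon}(\Omega) \hookrightarrow L^q(\Omega)$ by the 2D Sobolev embedding; this forces $q = 2/(1-\epsilon)$, and then the H\"older constraint gives $p = 2/\epsilon$, which is finite because $\epsilon > 0$. The factor $D\chi$ lies in $H^1(\Omega)$, which, again by the 2D Sobolev embedding, embeds continuously into $L^p(\Omega)$ for every finite $p$, in particular for $p = 2/\epsilon$; the embedding constant is the source of the $\epsilon$-dependent constant $C_\epsilon$. Assembling these embeddings gives
\[
\|D\chi\|_{L^{2/\epsilon}(\Omega)} \lesssim C_{\epsilon}\, |\chi|_{2}, \qquad \|D\varphi\|_{L^{2/(1-\epsilon)}(\Omega)} \lesssim \|\varphi\|_{1+\epsilon},
\]
which combined with the H\"older estimate yields the desired bound on $b$, and hence on $B$ after summing the three constituent terms.

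There is no real obstacle: the argument is a routine combination of H\"older's inequality with 2D Sobolev embeddings. The only point requiring mild care is to verify that both $p = 2/\epsilon$ and $q = 2/(1-\epsilon)$ stay strictly in $(2,\infty)$ for $\epsilon \in (0,1/2)$, so that the embeddings $H^1 \hookrightarrow L^p$ and $H^\epsilon \hookrightarrow L^q$ are genuinely continuous with finite constants, and to track how the constant blows up as $\epsilon \downarrow 0$ (which is why $C_\epsilon$ is allowed to depend on $\epsilon$).
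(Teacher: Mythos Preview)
Your proposal is correct and is essentially identical to the paper's proof: the paper also reduces to estimating $b$, applies the three-factor H\"older inequality with exponents $2$, $2/\epsilon$, $2/(1-\epsilon)$, and then invokes the two-dimensional embeddings $H^1(\Omega)\hookrightarrow L^{2/\epsilon}(\Omega)$ and $H^{\epsilon}(\Omega)\hookrightarrow L^{2/(1-\epsilon)}(\Omega)$ for $0<\epsilon<1/2$.
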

\begin{proof} It is enough to prove that 
$$\int_\Omega \cof(D^2\xi)D\theta\cdot D\varphi\dx \le C_{\epsilon} \trinl \xi \trinr_2 
\trinl \theta \trinr_2 \trinl \varphi \trinr_{1+\epsilon} \fl \xi,\theta,\varphi\in V.  $$
For $0<\epsilon <1/2$, we have 
\begin{eqnarray*}
\int_\Omega \cof(D^2\xi)D\theta\cdot D\varphi\dx &\le & 
\trinl\cof(D^2\xi)\trinr \trinl D\theta \trinr_{ L^{2/\epsilon}(\Omega)} 
\trinl D\varphi \trinr_{L^{2/1-\epsilon}(\Omega)}  \\
& \le & C_{\epsilon} \trinl\xi \trinr_2 \trinl \theta\trinr_2 \trinl\varphi\trinr_{1+\epsilon}.
\end{eqnarray*}
The last inequality follows from the imbeddings 
$$H^1(\Omega) \hookrightarrow { L}^{2/\epsilon}(\Omega), \; \; H^{\epsilon}(\Omega) \hookrightarrow { L}^{2/(1-\epsilon)}(\Omega)  \mbox{ for } 0 < \epsilon<1/2. $$
The proof is complete.
\end{proof}

  \subsection{Some auxiliary results}
\noindent Define the operator ${\mathcal A} \in {\mathcal L} (\bV, \bV')$ by 
$$
\Big\langle {\mathcal A} \Psi ,\Phi   \Big\rangle_{\bV',\bV } = A(\Psi, \Phi)\qquad \forall \Psi, \Phi \in \bV,
$$
and the nonlinear operator ${\mathcal B}$ from $\bV$ to
$\bV'$  by
$$
\Big\langle {\mathcal B} (\Psi), \Phi \Big\rangle_{\bV',\bV }=B(\Psi, \Psi, \Phi) \qquad \forall \Psi, \Phi \in \bV.
$$
For simplicity of notation, the duality pairing $\Big\langle \cdot,
\cdot \Big\rangle_{\bV',\bV }$ is denoted by $\Big\langle \cdot,\cdot \Big\rangle$.

In the sequel, the weak formulation \eqref{wv} will also be written as 
\begin{equation}
\Psi \in \bV, \; \; {\mathcal A} \Psi + {\mathcal B}(\Psi) = {\bf F} +{\mathbf C}{\bf u}  \mbox{ in } \bV'.  \label{of}
\end{equation}
Note that the nonlinear operator ${\mathcal B}(\Psi)$ can also be expressed in the form
 $${\mathcal B}(\Psi):=\left( \begin{array}{c}
-[\psi_1,\psi_2]  \\
 {\small \frac{1}{2}}[\psi_1,\psi_1]  \end{array} \right).$$
It is easy to verify that, for all $\Psi \in \bV$, the operator ${\mathcal B}'(\Psi) \in {\mathcal L}(\bV, \bV')$ and its adjoint operator ${\mathcal B}'(\Psi)^* \in {\mathcal L}(\bV, \bV')$ satisfy 
\begin{align}
 \Big \langle {\mathcal B}'(\Psi){ {\boldsymbol \xi}}, \Phi \Big \rangle &=
B(\Psi, { {\boldsymbol \xi}}, \Phi) + B( { {\boldsymbol \xi}}, \Psi, \Phi) \; \; \fl  {{ {\boldsymbol \xi}}}, \Phi \in \bV,\\
\Big \langle {\mathcal B}'(\Psi)^*{ {\boldsymbol \xi}}, \Phi \Big \rangle&=
B(\Psi, \Phi, {\boldsymbol \xi}) + B(\Phi, \Psi,  {{\boldsymbol \xi}}) \; \; \fl  { { {\boldsymbol \xi}}} , \Phi \in \bV. \label{opad}
\end{align}
Moreover, ${\mathcal B}'' \in {\mathcal L}(\bV \times \bV, \bV')$ satisfies
\begin{align}
\Big \langle {\mathcal B}''(\Psi,{ {\boldsymbol \xi}}), \Phi \Big \rangle &=
B(\Psi, { {\boldsymbol \xi}}, \Phi) + B( { {\boldsymbol \xi}}, \Psi, \Phi) \; \; \fl  \Psi,  {{  {\boldsymbol \xi}}}, \Phi \in \bV.
\end{align}
\begin{thm}[\it Existence \cite{Knightly,CiarletPlates}] For given $u \in L^2(\omega)$, the problem \eqref{wv} possesses at least one solution. 
\end{thm}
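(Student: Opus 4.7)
The plan is to establish existence by combining a uniform a priori bound for any weak solution with a standard Galerkin construction and compactness argument; this is the classical strategy used in \cite{Knightly,CiarletPlates} for the von K\'arm\'an equations without control, which adapts here since the term $\mathbf{C}\mathbf{u}$ only contributes an additional right-hand side in $L^2(\Omega) \subset H^{-1}(\Omega)$.

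First I would derive the a priori estimate. The obvious test $\Phi = \Psi$ is unhelpful because $B(\Psi,\Psi,\Psi)$ does not vanish. Instead I would test \eqref{wv} separately with $\Phi=(0,\psi_2)$ and $\Phi=(\psi_1,0)$. Using the full symmetry of $b(\cdot,\cdot,\cdot)$ noted in the remark, the first choice yields $a(\psi_2,\psi_2)=b(\psi_1,\psi_1,\psi_2)$, while the second gives $a(\psi_1,\psi_1)+2b(\psi_1,\psi_1,\psi_2)=(f+\mathcal{C}u,\psi_1)$. Eliminating the trilinear term produces the identity
\begin{equation*}
|\psi_1|_2^2 + 2|\psi_2|_2^2 \;=\; (f+\mathcal{C}u,\psi_1),
\end{equation*}
from which the Poincar\'e inequality and Cauchy--Schwarz give
$\trinl\Psi\trinr_2 \lesssim \|f\|_{H^{-1}(\Omega)} + \|u\|_{L^2(\omega)}$.
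This cancellation identity is the crucial structural fact making existence go through without smallness assumptions.

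Next I would introduce a Galerkin approximation. Take an increasing sequence of finite-dimensional subspaces $\mathbf{V}_n \subset \mathbf{V}$ whose union is dense, and seek $\Psi_n \in \mathbf{V}_n$ satisfying
\begin{equation*}
A(\Psi_n,\Phi) + B(\Psi_n,\Psi_n,\Phi) = (F+\mathbf{C}\mathbf{u},\Phi) \qquad \forall \Phi \in \mathbf{V}_n.
\end{equation*}
The argument above applies verbatim in the finite-dimensional subspace, giving $\trinl\Psi_n\trinr_2 \lesssim \|f\|_{H^{-1}(\Omega)} + \|u\|_{L^2(\omega)}$ independently of $n$. A standard consequence of Brouwer's fixed point theorem (applied to the finite-dimensional map $\Psi_n \mapsto \Psi_n + A^{-1}(B(\Psi_n,\Psi_n,\cdot) - (F+\mathbf{C}\mathbf{u},\cdot))$, or equivalently the topological-degree lemma of Lions/Temam) then yields existence of $\Psi_n$.

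Finally I would pass to the limit. By the uniform bound, a subsequence satisfies $\Psi_{n_k} \rightharpoonup \Psi$ weakly in $\mathbf{V}$, and by Rellich--Kondrachov the convergence is strong in $\bH^{1+\epsilon}(\Omega)$ for any $\epsilon\in(0,1/2)$. The linear term $A(\Psi_n,\Phi)$ passes to the limit trivially for each fixed $\Phi$ in some $\mathbf{V}_m$. For the trilinear term I would write
\begin{equation*}
B(\Psi_n,\Psi_n,\Phi) - B(\Psi,\Psi,\Phi) = B(\Psi_n-\Psi,\Psi_n,\Phi) + B(\Psi,\Psi_n-\Psi,\Phi),
\end{equation*}
and bound each piece using \eqref{boundB_new}: the factor $\trinl\Psi_n-\Psi\trinr_{1+\epsilon}$ (appearing after permuting arguments via the symmetries in the remark) tends to zero, while the other two factors stay uniformly bounded in $\trinl\cdot\trinr_2$. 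This passage-to-the-limit in the cubic nonlinearity is the only subtle step, and it is exactly where the compactness of the embedding $H^2 \hookrightarrow H^{1+\epsilon}$ combined with estimate \eqref{boundB_new} does the work. A density argument over $m$ then shows that $\Psi$ satisfies \eqref{wv} for every $\Phi \in \mathbf{V}$, completing the proof.
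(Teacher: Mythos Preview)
The paper supplies no proof of this statement: it simply cites \cite{Knightly,CiarletPlates} and moves on. Your sketch correctly reconstructs the classical Galerkin--compactness argument used in those references, and the key structural identity $|\psi_1|_2^2+2|\psi_2|_2^2=(f+\mathcal{C}u,\psi_1)$ that you derive is exactly the one the paper itself uses later in the proof of Lemma~\ref{ap}.

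One small technical point worth tightening at the Brouwer step: the direct ``acute angle'' test $\Phi=\Psi_n$ does \emph{not} kill the trilinear term, since $B(\Psi,\Psi,\Psi)=b(\psi_1,\psi_1,\psi_2)$ has no sign. The fix is already implicit in your a~priori computation: test with the weighted vector $(\psi_{1,n},2\psi_{2,n})$, for which $B(\Psi_n,\Psi_n,(\psi_{1,n},2\psi_{2,n}))=0$ by the same cancellation, and apply the Lions--Temam lemma in the equivalent inner product $a(\varphi_1,\eta_1)+2a(\varphi_2,\eta_2)$ on $\mathbf V_n$; alternatively, replace Brouwer by Leray--Schauder, noting that the identity gives a uniform bound on solutions of $\Psi=tS(\Psi)$ for $t\in[0,1]$. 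Also make explicit that $\mathbf V_n$ is chosen as a product $V_n\times V_n$, so that the componentwise tests $(\psi_{1,n},0)$ and $(0,\psi_{2,n})$ are admissible. With these clarifications your argument is complete.
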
  
\smallskip
\noindent A linearization of \eqref{wv} around $\Psi$ in the direction ${\boldsymbol \xi}$ is given by
\begin{equation*}
\boldsymbol{L} {\boldsymbol \xi}:= {\mathcal A}{\boldsymbol \xi}+{\mathcal B}'(\Psi) {\boldsymbol \xi}.
\end{equation*}

\begin{defn}[\it Nonsingular solution] \label{isolated} For a given $u \in L^2(\omega)$,  a solution $\Psi$ of \eqref{wv} is said to be regular if the linearized form is nonsingular.
That is, if $\big \langle \boldsymbol{L}{\boldsymbol \xi},\Phi \big \rangle={ 0}$ for all $\Phi\in \bV$, then ${\boldsymbol \xi}=\boldsymbol{0}$. 
In that case, 
we will also say  that the pair $(\Psi,u)$ is a nonsingular solution of \eqref{state1}-\eqref{state2}. 
\end{defn}
\begin{rem}
{}{The dependence of $\Psi$ with respect to $u$ is made explicit with the notation $\Psi_{u}$ whenever it is necessary to do so.}{}
\end{rem}
{}{
\begin{lem}[$Properties$ $ of$  ${\mathcal A}^{-1}$] \label{poa}
The following properties hold true:
\begin{eqnarray*}
& (i) & {\mathcal A}^{-1} \in  {\mathcal L}(\bV', \bV). \\
& (ii) & {\mathcal A}^{-1} \in {\mathcal L}({\bH}^{-1}(\Omega), {\bH}^{2+ \gamma}(\Omega)), \gamma\in (1/2,1] \text{ is the elliptic regularity index}. \\
& (iii) & {\mathcal A}^{-1} \in  {\mathcal L}({\bH}^{-1-\epsilon}(\Omega), {\bH}^{2+ \gamma(1-\epsilon)}(\Omega)) \mbox{ for all } 0 < \epsilon < 1/2.
\end{eqnarray*}
\end{lem}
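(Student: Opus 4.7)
The plan is to dispatch the three assertions in order, with the first two being routine given the structure already set up, and the third following by interpolation.

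For part (i), I would invoke the Lax--Milgram theorem directly. The bilinear form $A(\cdot,\cdot)$ on $\bV\times \bV$ is bounded by \eqref{boundA} and coercive by \eqref{coercivity}, so the associated operator $\mathcal{A}:\bV\to\bV'$ (which is just the direct sum of two biharmonic operators with clamped boundary conditions) is an isomorphism, and $\mathcal{A}^{-1}\in\mathcal{L}(\bV',\bV)$.

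For part (ii), I would reduce to the scalar biharmonic regularity result on polygonal domains as in Blum--Rannacher \cite{BlumRannacher}. Given $\mathbf{g}=(g_1,g_2)\in \bH^{-1}(\Omega)$, the equation $\mathcal{A}\Psi=\mathbf{g}$ decouples into two independent problems $\Delta^2\psi_i=g_i$ in $\Omega$ with $\psi_i=\partial_\nu\psi_i=0$ on $\partial\Omega$. The Blum--Rannacher regularity result then gives $\psi_i\in H^2_0(\Omega)\cap H^{2+\gamma}(\Omega)$ together with $\|\psi_i\|_{2+\gamma}\lesssim \|g_i\|_{-1}$, where $\gamma\in(1/2,1]$ depends only on the interior angles of $\Omega$. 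Summing over the two components produces the claimed continuity of $\mathcal{A}^{-1}$ from $\bH^{-1}(\Omega)$ into $\bH^{2+\gamma}(\Omega)$.

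For part (iii), the idea is to interpolate between the two endpoints just established. Identifying $\bV'=\bH^{-2}(\Omega)$ (the dual of $\bH^2_0(\Omega)$), parts (i) and (ii) say
\[
\mathcal{A}^{-1}\in\mathcal{L}(\bH^{-2}(\Omega),\bH^2(\Omega))\cap \mathcal{L}(\bH^{-1}(\Omega),\bH^{2+\gamma}(\Omega)).
\]
Applying complex (or real) interpolation with parameter $\epsilon\in(0,1/2)$, the source space is
\[
[\bH^{-1}(\Omega),\bH^{-2}(\Omega)]_\epsilon = \bH^{-1-\epsilon}(\Omega),
\]
and the target space is
\[
[\bH^{2+\gamma}(\Omega),\bH^{2}(\Omega)]_\epsilon = \bH^{2+\gamma(1-\epsilon)}(\Omega).
\]
Interpolation of linear operators then yields the desired continuity of $\mathcal{A}^{-1}$ from $\bH^{-1-\epsilon}(\Omega)$ into $\bH^{2+\gamma(1-\epsilon)}(\Omega)$.

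The main technical point, and the place that requires the most care, is the identification of the interpolation spaces in (iii), in particular on the target side, where one must be cautious about the clamped boundary conditions contained in $\bV=\bH^2_0(\Omega)$. The restriction $\epsilon<1/2$ is precisely what ensures that $2+\gamma(1-\epsilon)>2+\gamma/2>5/2$, so that the boundary traces entering the definition of $H^{2+\gamma(1-\epsilon)}(\Omega)\cap H^2_0(\Omega)$ behave consistently under interpolation, and the standard Lions--Magenes identifications can be applied without loss.
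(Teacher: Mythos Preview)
Your argument is correct and follows precisely the route taken in the paper: Lax--Milgram for (i), the Blum--Rannacher biharmonic regularity for (ii), and interpolation between these two endpoints for (iii). Your additional remarks on the identification of the interpolation spaces and the role of the restriction $\epsilon<1/2$ go beyond what the paper spells out, but they only reinforce the same argument.
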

\begin{proof} The statement $(i)$ follows from the Lax-Milgram Lemma. The statement  $(ii)$ follows from the regularity result for biharmonic problem (see \cite{BlumRannacher}). Now  (iii) follows from (i) and (ii) by interpolation.
\end{proof}}{}
\noindent In the next lemma, we  obtain  $a~priori$ bounds for the solution $\Psi$ of \eqref{wv}.

\begin{lem}[$An~a~priori~estimate$]\label{ap}
 For $f\in H^{-1}(\Omega)$ and $u \in L^2(\omega)$, the solution $\Psi$ of \eqref{wv} belongs to $ {\bH}^{2 + \gamma} (\Omega)$,  $\gamma \in (1/2,1]$ being the elliptic regularity index, and  satisfies the $a~priori$ bounds
\begin{subequations}
\begin{align}
 \trinl\Psi\trinr_2 &\leq C(\|f\|_{-1} + \|u\|_{L^2(\omega)}), \label{app1} \\ 
 \trinl\Psi\trinr_{2+\gamma}  & \leq C\left( \|f\|^3_{-1}+ \|u\|^3_{L^2(\omega)} +\|f\|^2_{-1}+ \|u\|^2_{L^2(\omega)}  +
 \|f\|_{-1} + \|u\|_{L^2(\omega)} \right). \label{app2}
\end{align}

\end{subequations}
\end{lem}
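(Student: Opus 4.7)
The plan is to prove the two bounds separately, with the nonlinear estimate relying on a bootstrap in two stages.

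For the energy bound (\ref{app1}), I would exploit the symmetry of the trilinear form by testing (\ref{wv}) with the weighted choice $\Phi=(\psi_1,2\psi_2)\in\bV$ rather than with $\Psi$ itself. The bilinear term is $A(\Psi,\Phi)=|\psi_1|_2^2+2|\psi_2|_2^2$. For the trilinear term, the definition of $B$ gives
\[
B(\Psi,\Psi,\Phi)=b(\psi_1,\psi_2,\psi_1)+b(\psi_2,\psi_1,\psi_1)-2b(\psi_1,\psi_1,\psi_2),
\]
and the full symmetry of $b(\cdot,\cdot,\cdot)$ recorded in the Remark forces the three terms to coincide up to the scalar factor, so $B(\Psi,\Psi,\Phi)=0$. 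The right-hand side reduces to $(f+\mathcal{C}u,\psi_1)$, which I would dominate by $C(\|f\|_{-1}+\|u\|_{L^2(\omega)})\|\psi_1\|_{H^1_0}$ and then by $|\psi_1|_2$ using Poincaré's inequality in $H^2_0(\Omega)$. Since $2|\psi_2|_2^2\ge 0$, this first gives the bound on $|\psi_1|_2$; re-inserting into $|\psi_1|_2^2+2|\psi_2|_2^2\le C(\|f\|_{-1}+\|u\|_{L^2(\omega)})|\psi_1|_2$ gives the bound on $|\psi_2|_2$, and (\ref{app1}) follows.

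For the higher regularity bound (\ref{app2}), I would recast the weak formulation as the operator equation ${\mathcal A}\Psi = F+{\mathbf C}{\bf u}-{\mathcal B}(\Psi)$ and bootstrap. Ideally one would apply Lemma \ref{poa}(ii) directly, but the $\bH^{-1}$ bound on ${\mathcal B}(\Psi)$ furnished by (\ref{boundB3}) requires $\Psi\in\bH^{2+\gamma_0}$, which is not yet available. Hence I would do a first pass using Lemma \ref{poa}(iii) together with the new estimate (\ref{boundB_new}): duality in (\ref{boundB_new}) yields $\trinl{\mathcal B}(\Psi)\trinr_{-1-\epsilon}\lesssim \trinl\Psi\trinr_2^2$ for any $0<\epsilon<1/2$, and therefore
\[
\trinl\Psi\trinr_{2+\gamma(1-\epsilon)}\lesssim \|f\|_{-1}+\|u\|_{L^2(\omega)}+\trinl\Psi\trinr_2^2.
\]
Setting $\gamma_0:=\gamma(1-\epsilon)\in(1/2,\gamma)$, I now have $\Psi\in\bH^{2+\gamma_0}$, so in a second pass the refined estimate (\ref{boundB3}) applies and gives $\trinl{\mathcal B}(\Psi)\trinr_{-1}\lesssim \trinl\Psi\trinr_{2+\gamma_0}\trinl\Psi\trinr_2$. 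Lemma \ref{poa}(ii) then produces
\[
\trinl\Psi\trinr_{2+\gamma}\lesssim \|f\|_{-1}+\|u\|_{L^2(\omega)}+\trinl\Psi\trinr_{2+\gamma_0}\trinl\Psi\trinr_2.
\]
Substituting (\ref{app1}) and the intermediate bound on $\trinl\Psi\trinr_{2+\gamma_0}$, the right-hand side becomes a polynomial of degree three in $\|f\|_{-1}+\|u\|_{L^2(\omega)}$, which is exactly the content of (\ref{app2}).

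The main obstacle is the bootstrap: one cannot reach the full exponent $2+\gamma$ in a single application of ${\mathcal A}^{-1}$ because the natural control of ${\mathcal B}(\Psi)$ starting only from $\Psi\in\bV$ is the weaker $\bH^{-1-\epsilon}$ bound of (\ref{boundB_new}), not an $\bH^{-1}$ bound. This is precisely why Lemma \ref{poa}(iii) and the newly established estimate (\ref{boundB_new}) are needed before (\ref{boundB3}) and Lemma \ref{poa}(ii) can be invoked to close the argument. Once this two-step mechanism is in place the cubic growth in (\ref{app2}) is simply the arithmetic of composing the two substitutions.
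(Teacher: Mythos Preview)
Your proposal is correct and follows essentially the same approach as the paper: testing with the weighted pair $(\psi_1,2\psi_2)$ is exactly the paper's choice $\varphi_1=\psi_1$, $\varphi_2=\psi_2$ in the scalar equations (with the second multiplied by~$2$), and the cancellation $B(\Psi,\Psi,\Phi)=0$ is precisely the identity $\int_\Omega[\psi_1,\psi_2]\psi_1\,dx=\int_\Omega[\psi_1,\psi_1]\psi_2\,dx$ used there. For \eqref{app2} the paper runs the identical two-step bootstrap---first \eqref{boundB_new} with Lemma~\ref{poa}(iii) to reach $\bH^{2+\gamma(1-\epsilon)}$, then \eqref{boundB3} with Lemma~\ref{poa}(ii) to reach $\bH^{2+\gamma}$---so your argument matches the paper line by line (just be sure to take $\epsilon$ small enough that $\gamma_0=\gamma(1-\epsilon)>1/2$, which is implicit in the paper as well).
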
 
\begin{proof} 


\noindent From the scalar form of \eqref{wv}, we obtain, 
\begin{align}
a(\psi_1,\varphi_1)&=\int_{\Omega}[\psi_1, \psi_2]\varphi_1\dx+(f+u,\varphi_1)   \;\; \; \forall \varphi_1 \in V, \label{eq1}\\
a(\psi_2,\varphi_2)&=-\half\int_{\Omega}[\psi_1,\psi_1]\varphi_2\dx \; \;\; \forall \varphi_2 \in V. \label{eq2}
\end{align}
Choose $\varphi_1=\psi_1$ in \eqref{eq1} and $\varphi_2=\psi_2$ in \eqref{eq2}, use the result $\displaystyle \int_{\Omega}[\psi_1,\psi_2]\psi_1\dx = \int_{\Omega}[\psi_1,\psi_1] \psi_2\dx $ and the definition of $a(\cdot,\cdot)$  to obtain

$$ |\psi|_2^2+2|\psi|_2^2\leq \ |f\|_{-1}\|\psi_1\|_1 + \|u \|_{L^2(\omega)} \|\psi_1\|_0. $$

%
\noindent An application of \Poincare inequality leads to  \eqref{app1}. 

\medskip 

It is already proved in \cite{BlumRannacher} that \eqref{wv} admits a solution in ${\bH}^2(\Omega)$. From \eqref{boundB_new}, it follows that 
$$|B(\Psi, \Psi,\Phi) | \le C_{\epsilon} \trinl\Psi\trinr_2^2\trinl \Phi\trinr_{1+\epsilon} \mbox{ for } 0 < \epsilon < 1/2.$$
Thus ${\mathcal B}(\Psi)$ belongs to ${\bH}^{-1-\epsilon}(\Omega)$ and 
$\trinl{\displaystyle {\mathcal B}(\Psi)}\trinr_{-1-\epsilon} \le C_{\epsilon} \trinl \Psi\trinr_2^2$.
From Lemma \ref{poa} (iii), it follows that 
$$\trinl \Psi\trinr_{2 + \gamma(1-\epsilon)} \le C_{\epsilon} \left( \trinl \Psi\trinr^2_2 + \|u\| + \|f \|_{-1}\right). $$
Next using \eqref{boundB3}, we obtain 
$$\|{\mathcal B}(\Psi) \|_{-1} \le C \trinl \Psi \trinr_{2 + \gamma(1-\epsilon)} 
 \trinl \Psi \trinr_{2}.$$
 Combining this estimate with Lemma \ref{poa}(ii), we finally obtain the required result \eqref{app2}.
 \end{proof}
Note that $\Psi \in {\bH}^{2+\gamma}(\Omega)$ is already observed in 
\cite{BlumRannacher}, but the arguments are not completely given there and hence we have given a complete proof for clarity.

\noindent The implicit function theorem yields the following result, see \cite{cmj}.

\begin{thm}\label{th2.5}
Let $(\bar{\Psi}, \bar{u}) \in \bV \times L^2(\omega)$
be a nonsingular solution of
\eqref{wv}. 
Then there exist an open ball
 ${\mathcal O}(\bar { u})$ 
 of $\bar{ u }$ in $L^2(\omega)$, an open ball
 ${\mathcal O}({\bar \Psi})$ of
$\bar \Psi$ in $\bV$, and a mapping $G$ from
${\mathcal O}(\bar u)$ to ${\mathcal O}(\bar\Psi)$ of class $C^\infty$,
such that, for all $u\in {\mathcal O}(\bar u)$, $\Psi_{u}=G({u})$  is the unique solution in ${\mathcal
O}(\bar\Psi)$ to  $\eqref{of}$.{}{Thus, $G'(u)= ({\mathcal A} + {\mathcal B}'(\Psi_{u}))^{-1}$ is uniformly bounded from a smaller ball into a smaller ball.}{}$($These smaller balls are still denoted by $ {\mathcal O}(\bar u)$ and ${\mathcal O}(\bar\Psi)$ for notational simplicity.$)$ 
Moreover,  if
$ G'({u}){v} =: \mathbf{z}_{{v}} \in
\bV$ and 
$ G''({u}) {v}^2 =: \mathbf{w} \in
\bV$, then ${\bz}_{v}$ and $\bw$ satisfy the equations
\begin{eqnarray}
{\mathcal A}\mathbf{z}_{{v}} +
{\mathcal B}'(\Psi_{u})\mathbf{z}_{{v}} =
{\mathbf C}\mathbf{v}\quad \mathrm{in\ } \bV,
\label{E2.7}\\
{\mathcal A}{\mathbf{w}} + {\mathcal B}'(\Psi_{u}){\mathbf{w}} +{\mathcal B}''({\mathbf{z}}_v,{\mathbf{z}}_v)
= 0\quad \mathrm{in\ } \bV,  \label{E2.8}
\end{eqnarray}
and $({\mathcal A} + {\mathcal B}'(\Psi_{u}))$ is an isomorphism from $\bV$
into ${\bV}'$ for all $u\in {\mathcal O}(\bar u)$.
\label{T2.2}

Also, the following holds true:
\begin{eqnarray*}
&&\|{\mathcal A} + {\mathcal B}'(\Psi_{u})\|_{{\mathcal L}(\bV, \bV')} \le C, \quad 
\|({\mathcal A} + {\mathcal B}'(\Psi_{u}))^{-1}\|_{{\mathcal L}(\bV', \bV)} \le C \; \; \forall u \in {\mathcal O}(\bar u), \\
& &  \|\mathbf{z}_{{v}}\|_{2}\le \| G'({u}) \|_{{\mathcal L}(L^2(\omega), H^2(\Omega))}\|v\|_{L^2(\omega)}. 
\end{eqnarray*}
\end{thm}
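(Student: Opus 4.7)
The natural strategy is to apply the Banach space implicit function theorem to the mapping
\[
F : \bV \times L^2(\omega) \longrightarrow \bV', \qquad F(\Psi,u):=\mathcal{A}\Psi+\mathcal{B}(\Psi)-\mathbf{F}-\mathbf{C}\mathbf{u},
\]
whose zero set is the solution set of \eqref{of}. Since $\mathcal{A}$ and $\mathbf{C}$ are bounded linear and $\mathcal{B}$ is a continuous quadratic polynomial in $\Psi$ by \eqref{boundB1}, $F$ is of class $C^\infty$ on $\bV\times L^2(\omega)$; its only non-vanishing derivatives are $D_\Psi F$, $D_u F$ and $D_{\Psi\Psi}F=\mathcal{B}''$, all of which are explicit. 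Moreover $F(\bar\Psi,\bar u)=0$ by hypothesis.

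The crux is to verify that the partial derivative $D_\Psi F(\bar\Psi,\bar u) = \mathcal{A}+\mathcal{B}'(\bar\Psi) =: \boldsymbol{L}$ is a topological isomorphism from $\bV$ onto $\bV'$. Injectivity is precisely the nonsingularity assumption in Definition~\ref{isolated}. For surjectivity I plan to argue that $\boldsymbol{L}$ is Fredholm of index zero: indeed, estimate \eqref{boundB_new} shows that $\mathcal{B}'(\bar\Psi)\in\mathcal{L}(\bV,\bH^{-1-\epsilon}(\Omega))$ for any $\epsilon\in(0,1/2)$, and the dual Rellich embedding $\bH^{-1-\epsilon}(\Omega)\hookrightarrow\bV'$ is compact; hence $\mathcal{B}'(\bar\Psi):\bV\to\bV'$ is compact. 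Since $\mathcal{A}:\bV\to\bV'$ is an isomorphism (Lemma~\ref{poa}(i)), one can write $\boldsymbol{L}=\mathcal{A}(I+\mathcal{A}^{-1}\mathcal{B}'(\bar\Psi))$ with $\mathcal{A}^{-1}\mathcal{B}'(\bar\Psi)$ compact on $\bV$, so $\boldsymbol{L}$ is Fredholm of index zero; injectivity then forces surjectivity.

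With these hypotheses, the implicit function theorem produces open balls $\mathcal{O}(\bar u)\subset L^2(\omega)$ and $\mathcal{O}(\bar\Psi)\subset\bV$ and a $C^\infty$ mapping $G:\mathcal{O}(\bar u)\to\mathcal{O}(\bar\Psi)$ such that, for $u\in\mathcal{O}(\bar u)$, $\Psi_u=G(u)$ is the unique solution of \eqref{of} in $\mathcal{O}(\bar\Psi)$. Differentiating the identity $F(G(u),u)=0$ once in the direction $v\in L^2(\omega)$ yields
\[
(\mathcal{A}+\mathcal{B}'(\Psi_u))G'(u)v=\mathbf{C}\mathbf{v},
\]
which is exactly \eqref{E2.7} with $\mathbf{z}_v=G'(u)v$ and gives the representation $G'(u)=(\mathcal{A}+\mathcal{B}'(\Psi_u))^{-1}\mathbf{C}$. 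Differentiating again in the direction $v$ and using that $\mathcal{B}''$ is independent of $\Psi$ produces \eqref{E2.8} for $\mathbf{w}=G''(u)v^2$.

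Finally, for the uniform estimates, continuity of $G$ allows us to shrink $\mathcal{O}(\bar u)$ so that $\trinl\Psi_u\trinr_2$ is bounded uniformly; combined with \eqref{boundB1} this gives $\|\mathcal{A}+\mathcal{B}'(\Psi_u)\|_{\mathcal{L}(\bV,\bV')}\le C$. Since inversion is continuous on the open set of invertible operators in $\mathcal{L}(\bV,\bV')$ and $\boldsymbol{L}$ is invertible, a further shrinkage of $\mathcal{O}(\bar u)$ makes $u\mapsto (\mathcal{A}+\mathcal{B}'(\Psi_u))^{-1}$ uniformly bounded. The bound on $\mathbf{z}_v$ is then immediate from the chain rule. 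The single nontrivial step I expect to be the main obstacle is the Fredholm/compactness argument above; everything else is a direct application of the implicit function theorem together with the a priori bounds already established in Lemmas~\ref{poa} and~\ref{ap}.
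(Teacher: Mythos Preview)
Your proposal is correct and follows precisely the route the paper itself indicates: the paper does not give a detailed proof of this theorem but simply states ``The implicit function theorem yields the following result, see \cite{cmj}'' and records the consequences. Your Fredholm/compactness argument for the invertibility of $\mathcal{A}+\mathcal{B}'(\bar\Psi)$, the differentiation of $F(G(u),u)=0$ to obtain \eqref{E2.7}--\eqref{E2.8}, and the shrinking-of-balls argument for the uniform bounds are exactly the standard verifications one would carry out, and they rely only on estimates (\eqref{boundB1}, \eqref{boundB_new}) and Lemma~\ref{poa} already proved in the paper.
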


\begin{lem}[{\it A priori bounds for the linearized problem}]\label{linapriori}
 The solution ${\bf z}_{ v }$ of the linearized problem \eqref{E2.7} belongs to ${\bH}^{2 + \gamma}(\Omega) $,  $\gamma \in (1/2,1] \; \mbox{being the elliptic regularity index}$, and  satisfies the $a~priori$ bound
 $$\trinl  z_v \trinr_{2 + \gamma} \le C\|v\|_{L^2(\omega)}.$$
\end{lem}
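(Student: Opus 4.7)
The plan is to bootstrap from the $\bH^2$ bound on $\bz_v$ provided by Theorem~2.8 to the full $\bH^{2+\gamma}$ bound by reading the linearized equation as an inhomogeneous biharmonic system and applying the elliptic regularity statement in Lemma~2.4(ii).

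First, I would rewrite \eqref{E2.7} as
\begin{equation*}
\mathcal{A}\bz_v \;=\; \mathbf{C}\mathbf{v} \;-\; \mathcal{B}'(\Psi_u)\bz_v \quad \text{in } \bV',
\end{equation*}
and identify the two pieces of the right-hand side as elements of $\bH^{-1}(\Omega)$. The source $\mathbf{C}\mathbf{v}$ lies in $\bL^2(\Omega)$ with $\trinl\mathbf{C}\mathbf{v}\trinr_{-1}\lesssim \|v\|_{L^2(\omega)}$. For the nonlinear correction, using the definition of $\mathcal{B}'(\Psi_u)$ and the symmetry of $B(\cdot,\cdot,\cdot)$ in its first two arguments,
\begin{equation*}
\bigl|\langle \mathcal{B}'(\Psi_u)\bz_v,\Phi\rangle\bigr|
\;=\; \bigl|B(\Psi_u,\bz_v,\Phi)+B(\bz_v,\Psi_u,\Phi)\bigr|
\;=\; 2\,\bigl|B(\Psi_u,\bz_v,\Phi)\bigr|.
\end{equation*}
The first branch of \eqref{boundB2} with $\Xi=\Psi_u\in\bH^{2+\gamma}(\Omega)$ and $\Theta=\bz_v\in\bV$ then yields
\begin{equation*}
\bigl|\langle \mathcal{B}'(\Psi_u)\bz_v,\Phi\rangle\bigr|
\;\lesssim\;\trinl\Psi_u\trinr_{2+\gamma}\,\trinl\bz_v\trinr_2\,\trinl\Phi\trinr_1 \quad\forall\,\Phi\in\bV,
\end{equation*}
so $\mathcal{B}'(\Psi_u)\bz_v\in\bH^{-1}(\Omega)$ with norm controlled by $\trinl\Psi_u\trinr_{2+\gamma}\trinl\bz_v\trinr_2$.

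Next I would invoke Lemma~2.4(ii) (elliptic regularity for the biharmonic operator $\mathcal{A}$) to conclude that $\bz_v\in\bH^{2+\gamma}(\Omega)$ with
\begin{equation*}
\trinl\bz_v\trinr_{2+\gamma}
\;\le\; C\bigl(\|v\|_{L^2(\omega)}+\trinl\Psi_u\trinr_{2+\gamma}\,\trinl\bz_v\trinr_2\bigr).
\end{equation*}
To close the estimate I would use that $\trinl\Psi_u\trinr_{2+\gamma}$ is bounded on $\mathcal{O}(\bar u)$ by Lemma~2.7, and that $\trinl\bz_v\trinr_2\le C\|v\|_{L^2(\omega)}$ by the uniform bound on $G'(u)=(\mathcal{A}+\mathcal{B}'(\Psi_u))^{-1}\mathbf{C}$ stated in Theorem~2.8. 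Combining these two facts yields $\trinl\bz_v\trinr_{2+\gamma}\le C\|v\|_{L^2(\omega)}$, as required.

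There is no genuine obstacle: once one notices that the first slot of $B$ carries the highest regularity (as reflected in \eqref{boundB2}) and invokes the symmetry of $B$ in its first two arguments, both nonlinear contributions in $\mathcal{B}'(\Psi_u)\bz_v$ reduce to the same estimate, and the remaining work is a routine application of elliptic regularity. The only point requiring care is checking that $\Psi_u\in\bH^{2+\gamma}(\Omega)$ holds uniformly on the neighborhood $\mathcal{O}(\bar u)$ so that the constant in the final bound depends only on $\|f\|_{-1}$ and on the size of $\mathcal{O}(\bar u)$.
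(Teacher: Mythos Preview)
Your proposal is correct and follows essentially the same route as the paper: rewrite \eqref{E2.7} as $\mathcal{A}\bz_v=\mathbf{C}\mathbf{v}-\mathcal{B}'(\Psi_u)\bz_v$, bound the right-hand side in $\bH^{-1}(\Omega)$ via \eqref{boundB2} together with the $\bH^{2+\gamma}$ bound on $\Psi_u$ from Lemma~\ref{ap} and the $\bV$ bound on $\bz_v$ from Theorem~\ref{th2.5}, and then apply Lemma~\ref{poa}(ii). Your explicit use of the symmetry of $B(\cdot,\cdot,\cdot)$ to collapse the two terms in $\mathcal{B}'(\Psi_u)\bz_v$ is a nice clarification that the paper leaves implicit.
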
{
\begin{proof}
From Theorem \ref{th2.5}, we know that there exists $C>0$ such that $\|\mathbf{z}_{{v}}\|_{2}\le C$ for $u \in {\mathcal O}(\bar u)$. Now rewriting \eqref{E2.7} in the form
\begin{equation}\label{lax-milgram}
{\mathcal A}\mathbf{z}_{{v}} =
{\mathbf C}\mathbf{v} -{\mathcal B}'(\Psi_{u})\mathbf{z}_{{v}}, 
\end{equation}
and  using Theorem \ref{th2.5} and \eqref{app2}, we obtain, for $u\in \mathcal{O}(\bar{u})$
\begin{equation*}
\| {\mathcal B}'(\Psi_{u})\mathbf{z}_{{v}}\|_{-1} \;\le C \trinl \Psi_{u} \trinr_{2+\gamma} \trinl z_v \trinr_2 \le C \|v\|_{L^2(\omega)}.
\end{equation*}
Since $A(\cdot,\cdot)$ is bounded and coercive, a use of Lemma~\ref{poa}(ii) and the above result  in \eqref{lax-milgram} leads
to the required regularity result \cite{BlumRannacher}.
\end{proof}
}



\noindent The next lemma is an easy consequence of the {\it a priori} bounds in Lemma \ref{ap}.
\begin{lem}\label{L2.5} 
Let $({\bar \Psi}, {\bar u})$ be a nonsingular solution of \eqref{wv}, as defined in Theorem \ref{th2.5}.  Let $({u_k})_k$
be a sequence in ${\mathcal O}(\bar u)$ weakly converging to
$\bar u$ in $L^2(\omega)$. Let $\Psi_{u_k}$ be the
solution to equation \eqref{wv} in ${\mathcal O}(\bar\Psi)$
corresponding to $u_k$.  Then,  $(\Psi_{u_k})_k$ converges to $\bar \Psi$ in
$\bV$.
\end{lem}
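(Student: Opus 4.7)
The plan is to exploit the a priori bounds from Lemma~\ref{ap} to get compactness of the sequence $(\Psi_{u_k})_k$ in $\bV$, pass to the limit in the nonlinear weak formulation, and then invoke the uniqueness part of Theorem~\ref{th2.5} to identify the limit with $\bar\Psi$. A subsequence-subsequence argument then upgrades convergence of a subsequence to convergence of the whole sequence.

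First, since $(u_k)_k$ converges weakly in $L^2(\omega)$, it is bounded. Applying Lemma~\ref{ap} to each $u_k$ yields a uniform bound
\[
\trinl \Psi_{u_k}\trinr_{2+\gamma}\le C,
\]
so $(\Psi_{u_k})_k$ is bounded in $\bH^{2+\gamma}(\Omega)$. By the compact embedding $\bH^{2+\gamma}(\Omega)\hookrightarrow\hookrightarrow \bV$ (valid since $\gamma>1/2>0$), there exists a subsequence, still denoted $(\Psi_{u_k})_k$, and some $\Psi^{*}\in \bV$ with $\Psi_{u_k}\to \Psi^{*}$ strongly in $\bV$ (and $u_k \rightharpoonup \bar u$ in $L^2(\omega)$ by assumption).

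Next, I would pass to the limit in the weak formulation
\[
A(\Psi_{u_k},\Phi)+B(\Psi_{u_k},\Psi_{u_k},\Phi)=(F+{\mathbf C}{\mathbf u}_k,\Phi)\quad \forall\,\Phi\in \bV.
\]
The linear term $A(\Psi_{u_k},\Phi)$ converges to $A(\Psi^{*},\Phi)$ by strong convergence in $\bV$ and continuity of $A$ (see \eqref{boundA}). The nonlinear term is controlled using the trilinear bound \eqref{boundB1}:
\[
|B(\Psi_{u_k},\Psi_{u_k},\Phi)-B(\Psi^{*},\Psi^{*},\Phi)|\le C_b\big(\trinl\Psi_{u_k}\trinr_2+\trinl\Psi^{*}\trinr_2\big)\trinl\Psi_{u_k}-\Psi^{*}\trinr_2\trinl\Phi\trinr_2\to 0,
\]
so $B(\Psi_{u_k},\Psi_{u_k},\Phi)\to B(\Psi^{*},\Psi^{*},\Phi)$. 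For the right-hand side, since $\Phi$ is fixed and ${\mathcal C}:L^2(\omega)\to L^2(\Omega)$ is linear and continuous, weak convergence $u_k\rightharpoonup\bar u$ in $L^2(\omega)$ gives $({\mathbf C}{\mathbf u}_k,\Phi)\to ({\mathbf C}\bar{\mathbf u},\Phi)$. Thus $\Psi^{*}$ satisfies \eqref{wv} with control $\bar u$.

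Finally, since $\Psi_{u_k}\in {\mathcal O}(\bar\Psi)$ for every $k$, the strong limit $\Psi^{*}$ lies in the closure of ${\mathcal O}(\bar\Psi)$. Choosing in Theorem~\ref{th2.5} an open ball slightly larger than the one in which $(\Psi_{u_k})_k$ lives (which is possible by shrinking both balls at the outset, as noted in the theorem), $\bar\Psi$ is the unique solution of \eqref{wv} with control $\bar u$ in this larger ball. Hence $\Psi^{*}=\bar\Psi$. The limit being independent of the extracted subsequence, the standard subsequence argument yields that the whole sequence $(\Psi_{u_k})_k$ converges to $\bar\Psi$ strongly in $\bV$. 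The main delicate point is ensuring that $\Psi^{*}$ still lies in the uniqueness region of Theorem~\ref{th2.5} despite only closed-ball convergence; this is handled by the freedom of choosing the radii in the implicit function theorem construction.
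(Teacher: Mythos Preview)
Your proof is correct and follows precisely the approach the paper indicates: the paper does not give a detailed proof of this lemma, only noting that it is ``an easy consequence of the \textit{a priori} bounds in Lemma~\ref{ap}''; you have written out exactly this argument (uniform $\bH^{2+\gamma}$ bounds, compactness into $\bV$, passage to the limit in \eqref{wv}, uniqueness from Theorem~\ref{th2.5}, and the subsequence-subsequence conclusion). The same reasoning appears later, spelled out, in the proof of Lemma~\ref{aux_new}.
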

\subsection{Optimality Conditions}
In this subsection, we discuss the first order and second order optimality conditions for the optimal control problem. 

\begin{defn}\cite{cmj} $($\it{Local solution of the optimal control problem}$)$: The pair $(\bar \Psi, \bar u) \in  
\bV \times U_{ad}$ is a local solution of \eqref{wform} if and only if $(\bar \Psi, \bar u)$ satisfies \eqref{wv} and there exist neighbourhoods  ${\mathcal O}(\bar\Psi)$ of $\bar \Psi$ in $\bV$ and ${\mathcal O}(\bar u)$  of ${\bar u}$ in $L^2(\omega)$ such that 
$J({\bar \Psi},{\bar u} ) \le J(\Psi, u)$ for all pairs $(\Psi, u) \in {\mathcal O}(\bar\Psi)  \times (U_{ad} \cap {\mathcal O}(\bar u))$ satisfying \eqref{wv}.

\end{defn}

The existence of a solution of \eqref{wform} can be obtained using standard arguments of considering a minimizing sequence, which is bounded in $\bV \times L^2(\omega)$, and passing to the limit \cite{Lions1, hou, fredi2010}.

\medskip

For the purpose of numerical approximations, we consider only local solutions $(\bar \Psi, \bar u)$ of \eqref{wform} such that the pair is a nonsingular solution of \eqref{of}. For a {\it local nonsingular solution}  chosen in this fashion, we can apply Theorem \ref{th2.5} and modify the control problem \eqref{wform} to 
\begin{align} \label{rcf}
\inf_{u \in U_{ad} \cap {\mathcal O}(\bar u) } j(u),
\end{align}
 where $j:  U_{ad} \cap {\mathcal O}(\bar u) \rightarrow {\mathbb R}$ is the reduced cost functional defined by $j(u):= J(G(u), u)$ and $G(u) = \Psi_{u} =(\psi_{1u}, \psi_{2u})\in \bV$ is the unique solution to \eqref{wv} as defined in Theorem \ref{th2.5}. Then, ${\bar u}$ is a local solution of \eqref{rcf}.
 
 \medskip
Since $G$ is of class $C^{\infty}$ in   $ {\mathcal O}(\bar u)$, $j$ is of class $C^{\infty}$ and for every $u \in  {\mathcal O}(\bar u)$ and $v \in L^2(\omega)$, it is easy to compute
\begin{subequations} \label{derivatives}
\begin{align}
&j'(u)v=
\int_{\omega} \left( {\mathcal C}^*{\theta_{1u}} + \alpha { u} \right)  { v} \: dx, \label{fd} \\
&j''(u)v^2= \int_{\Omega} \left( |{\bf z}_v|^2+  { [[}{\bf z}_v, {\bf z}_v]]  \right) \Theta_{u} \: \dx + \alpha \int_{\omega} |v|^2 \: \dx, \label{sd}
\end{align} 
\end{subequations}
where ${\bf z}_v=(z_{1v}, z_{2v})$ is the solution of \eqref{E2.7}, 
 $${ [[}{\bf z}_v, {\bf z}_v]] := \big( [z_{1v},z_{2v}] + [z_{2v},z_{1v}], -[z_{1v},z_{1v}] \big), $$ $ [\cdot, \cdot]$ being the von  K\'{a}rm\'{a}n bracket,  $\Theta_{u} =(\theta_{1u}, \theta_{2u}) \in \bV$ is the solution of the adjoint system and 
 $${ [[}{\bf z}_v, {\bf z}_v]] \Theta_{u} :=\left( [z_{1v},z_{2v}]  + [z_{2v},z_{1v}]\right) \theta_{1u} -[z_{1v},z_{1v}] \theta_{2u}.$$ The adjoint system is given by
\begin{subequations} \label{adj}
\begin{align}
  &\Delta^2  \theta_1 -[\psi_{2u} ,\theta_{1u}] +[\psi_{1u} ,\theta_{2u}]= \psi_{1u}-\psi_{1d} \,\, \quad \mbox{ in } \Omega, \label{adj1_cts} \\
 &\Delta^2  \theta_2 -  [\psi_{1u} ,\theta_{1u}]  = \psi_{2u}-\psi_{2d}  \,\, \quad \mbox{ in } \Omega,  \label{adj2_cts} \\
 &\theta_1=0,\,\frac{\partial \theta_1}{\partial \nu}=0 \text{ and } \theta_2=0,\,\frac{\partial  \theta_2}{\partial \nu} =0
\text{  on }\partial\Omega. \label{adj3_cts}
\end{align}
\end{subequations}
As for the case of the state equations, the adjoint equations in \eqref{adj} can also be written
 equivalently in an operator form as 
 \begin{equation}\label{adjof}
 \Theta_{u} \in \bV \qquad {\mathcal A}^* \Theta_{u}+ {\mathcal B}'(\Psi_{u})^* \Theta_{u}= \Psi_{u}- \Psi_d \quad 
 \rm{in} \quad \bV', 
  \end{equation}
 with the operator $({\mathcal A}^* + {\mathcal B}'(\Psi_{u})^*)$ being an isomorphism from $\bV$ into ${\bV}'$ (see Theorem \ref{th2.5}).
 The first order optimality condition $j'({\bar u})(u- {\bar u}) \ge 0$ for all $ u \in U_{ad}$ translates to 
\begin{equation*}
\int_{\omega} \left({\mathbf C}^* {\bar \Theta} + \alpha {\bf \bar u} \right) \cdot ({\bf u} - {\bf \bar u}) \: \dx \ge 0 \qquad \forall   {\bf u} =(u,0), \;  {u \in U_{ad}},
\end{equation*}
where ${ \bf \bar u}= ({\bar u},0)$ and
$\bar \Theta=(\bar {\theta}_1,\bar {\theta}_2)$ being the adjoint state corresponding to  a local nonsingular solution $({\bar \Psi}, {\bar u}) \in \bV \times U_{ad}$ of \eqref{wform}, or equivalently in a scalar form as 
\begin{equation*}
\int_{\omega} \left({\mathcal C}^* {\bar {\theta}_1} + \alpha {\bar u} \right) ({ u} - { \bar u}) \: \dx \ge 0 \qquad \forall    {u \in U_{ad}}.
\end{equation*}

\medskip
\noindent The {\it optimality system} for the optimal control problem \eqref{wform} can be stated as follows:
\begin{subequations} \label{opt_con}
\begin{align}
&  A({\bar \Psi},\Phi)+B({\bar \Psi}, {\bar \Psi},\Phi)=(F + {\bf C}{\bar{\bf u}}, \Phi) \fl \Phi \in \bV \; \;  {(State \; equations)}\label{state_eq}
\\  
& A(\Phi, {\bar\Theta})+B({\bar \Psi}, \Phi, {\bar \Theta}) + B(\Phi, {\bar \Psi}, {\bar \Theta})=(\bar \Psi- \Psi_d, \Phi) \fl \Phi \in \bV \; \; {(Adjoint \; equations)}\label{adj_eq}
 \\
&  \left({\mathbf C}^* {\bar \Theta} + \alpha {\bf \bar u} ,  {\bf u} - {\bf \bar u} \right)_{{\bf L}^2(\omega)} \ge 0  \fl  {\bf u} =(u,0), \; {u \in U_{ad}}, \; \;  {(First \: order \: optimality \: condition)}. \label{opt3}
\end{align}

\end{subequations}
\noindent The optimal control ${\bf \bar u}$ in \eqref{opt3} has the representation for a.e. $x \in \Omega$ :
\begin{equation}\label{rep}
{ \bf \bar u}(x)= \pi_{[u_a, u_b]} \left( - \frac{1}{\alpha} ({\mathbf C}^* {\bar \Theta} ) \right),
\end{equation}
where the projection operator $\pi_{[a,b]}$ is defined by $\pi_{[a,b]}(g):= \min\{b, \max \{a,g\} \}.$
 
  \begin{rem}
   The optimality conditions in \eqref{opt_con}  can also be derived with the help of a Lagrangian for the constrained optimization problem  \eqref{wform} defined by 
   $$ L(\Psi, u, \Theta)= J(\Psi,u) -\left( A(\Psi, \Theta) + B(\Psi,\Psi, \Theta)-({\bf F} +{\mathbf C} {\bf u}, \Theta)\right) \quad \forall (\Psi,u, \Theta) \times \bV \times U_{ad} \times \bV. $$
   \end{rem} 
For the error analysis for this nonlinear control problem, second order sufficient optimality conditions are required. We now proceed to discuss the second order optimality conditions. 

\medskip
\noindent Define the {\it tangent cone} at ${\bar u} $ to $U_{ad}$ as 
\begin{equation*}
{\mathscr C}_{U_{ad}}({\bar u}) := \left\{ u \in L^2(\omega): u \; {\rm satisfies } \; \eqref{cases} \right\},
\end{equation*}
with 
\begin{align} \label{cases}
 \begin{cases}
u(x) \in {\mathbb R} &\text{if ${\bar u} (x)  \in (u_a,u_b)$},\\
u(x) \ge 0 &\text{if  ${\bar u} (x) =u_a$}, \\
u(x) \le 0 &\text{if  ${\bar u} (x) =u_b$}.
\end{cases} 
\end{align}

\noindent The function ${\mathcal C}^* {\bar \theta_1} + \alpha {\bar u}$ or $~{\mathbf C}^* {\bar \Theta} + \alpha {\bf \bar  u}~$ in the vector form, is used frequently in the analysis. Introduce the notation 
$$ {\bar d}(x)={\mathcal C}^* {\bar \theta_1} + \alpha {\bar u}, \; \; x \in \omega. $$
Associated with ${\bar d}$, we introduce another cone ${\mathscr C}_{\bar u} \subset {\mathscr C}
_{U_{ad}}({\bar u})$ defined by 
 
\begin{equation*}
{\mathscr C}_{\bar u} := \left\{ u \in L^2(\omega): u \; {\rm satisfies \;}  \eqref{cases1}\right\},
\end{equation*}
with
\begin{align} \label{cases1}
  \begin{cases}
u(x) =0 \mbox{ if } {\bar d}(x) \ne 0,\\
u(x) \ge 0  \mbox{ if } {\bar d}(x) = 0 \mbox{ and } {\bar u}(x)=u_a,\\
u(x) \le 0  \mbox{ if } {\bar d}(x) = 0 \mbox{ and } {\bar u}(x)=u_b.
\end{cases} 
\end{align}
By the definition of ${\bar d}$, we have
$$j'(u)v= \int_{\omega} {\bar d}(x)  v(x) \: dx \quad \forall v \in L^2(\omega). $$

\noindent Moreover, if we choose $v \in {\mathscr C}_{\bar u}$, the optimality condition \eqref{opt3} yields
${\bar d}(x) v(x) =0$ for almost all $x \in \omega$.

\medskip
\noindent The following theorem is on second order necessary optimality conditions. The proof  is on similar lines of the proof of Theorem 3.6  in \cite{cmj} and hence skipped.
\begin{thm}\label{ns}
Let $({\bar \Psi}, {\bar u})$ be a nonsingular local solution  of \eqref{wform}. 
\begin{equation}\label{second_opt_cond}
j''({\bar u})v^2 \ge 0 \qquad  \forall v \in {\mathscr C}_{\bar u}.
\end{equation}
\end{thm}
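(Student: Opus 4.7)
The plan is to mimic the classical argument used to derive second-order necessary conditions for optimization with pointwise control constraints, exploiting the fact that Theorem~\ref{th2.5} makes the reduced functional $j$ of class $C^\infty$ on a neighborhood $\mathcal{O}(\bar u)$, so that a second-order Taylor expansion of $j$ around $\bar u$ is available.

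First, I would observe that for any $v\in\mathscr{C}_{\bar u}$, the pointwise cone definition \eqref{cases1} forces $\bar d(x)\,v(x)=0$ a.e.\ on $\omega$. Combined with the expression \eqref{fd} for $j'(\bar u)$, this yields $j'(\bar u)v=\int_\omega \bar d(x)\, v(x)\,dx=0$, which annihilates the linear term in the Taylor expansion. Next, I would construct approximating sequences $\{v_k\}\subset\mathscr{C}_{\bar u}$ and $\{t_k\}\subset(0,\infty)$ with $t_k\to 0^+$, $v_k\to v$ in $L^2(\omega)$, and $\bar u+t_k v_k\in U_{ad}\cap\mathcal{O}(\bar u)$ for all $k$ large. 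A natural choice is to truncate $v$ in magnitude ($|v|\le k$) and to zero it out on the thin strips $\{u_a<\bar u<u_a+1/k\}$ and $\{u_b-1/k<\bar u<u_b\}$ wherever the sign constraint of the cone could be violated, then take $t_k\sim 1/k^2$. The cone conditions \eqref{cases1} are preserved by this truncation, so $v_k\in\mathscr{C}_{\bar u}$, and $v_k\to v$ in $L^2(\omega)$ by dominated convergence.

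With these sequences in hand, I would invoke local optimality to write $j(\bar u+t_k v_k)\ge j(\bar u)$ and combine this with the second-order Taylor expansion
\[
j(\bar u+t_k v_k)-j(\bar u)=t_k\,j'(\bar u)v_k+\tfrac{1}{2}t_k^2\, j''(\bar u+s_k t_k v_k)\,v_k^2,
\]
for some $s_k\in(0,1)$. Since $v_k\in\mathscr{C}_{\bar u}$, the first-order term vanishes by the first step, and dividing by $t_k^2/2$ gives $j''(\bar u+s_k t_k v_k)\,v_k^2\ge 0$. Passing to the limit $k\to\infty$ requires continuity of the map $u\mapsto j''(u)$ near $\bar u$: this follows from Lemma~\ref{L2.5} applied to the state and an analogous continuity result for the adjoint $\Theta_u$ solving \eqref{adjof}, together with the explicit formula \eqref{sd} and the continuous dependence of $\mathbf{z}_{v_k}$ on $v_k$ via Theorem~\ref{th2.5} and Lemma~\ref{linapriori}. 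Combined with $v_k\to v$ in $L^2(\omega)$, this yields $j''(\bar u+s_k t_k v_k)\,v_k^2\to j''(\bar u)\,v^2$, hence \eqref{second_opt_cond}.

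The hard part will be the construction of the admissible approximating sequence. Since $v\in L^2(\omega)$ need not be bounded and $\bar u$ may approach $u_a$ or $u_b$ on sets of positive measure where $\bar d=0$, one cannot use a single small step size uniformly; the truncation must simultaneously respect the sign constraints encoded in $\mathscr{C}_{\bar u}$, deliver pointwise feasibility of $\bar u+t_k v_k$, and preserve the $L^2$-convergence $v_k\to v$. Careful book-keeping on the strongly active set $\{\bar d\ne 0\}$ versus the weakly active set $\{\bar d=0\}\cap\{\bar u\in\{u_a,u_b\}\}$ will be necessary, but the underlying strategy is standard and essentially transcribes the Navier-Stokes argument of \cite{cmj} to the present von K\'arm\'an setting.
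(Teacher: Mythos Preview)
Your proposal is correct and follows precisely the approach the paper intends: the paper itself does not give a proof but states that it ``is on similar lines of the proof of Theorem~3.6 in \cite{cmj} and hence skipped,'' and what you have outlined is exactly that standard argument---truncate $v\in\mathscr{C}_{\bar u}$ to obtain feasible perturbations $\bar u+t_kv_k\in U_{ad}$, use $j'(\bar u)v_k=0$ (from $\bar d\,v_k=0$ a.e.), expand to second order, and pass to the limit via the continuity of $j''$ furnished by Theorem~\ref{th2.5} and the explicit formula \eqref{sd}. Your identification of the feasibility construction as the only delicate point is accurate, and the truncation you describe is the right one.
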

\begin{cor} Let $({\bar \Psi}, {\bar u})$ be a nonsingular local solution  of \eqref{wform} and ${\bar \Theta} = \Theta({\bar u})$ be the associated adjoint state. Then, 
Theorem \ref{ns} and \eqref{sd} give
$$ \int_{\Omega} \left( |{\bf \bar z}_v|^2+  { [[}{\bf \bar z}_v, {\bf \bar z}_v]]   {\bar \Theta}\right)  \dx + \alpha \int_{\omega} |v|^2  \dx > 0$$
for all $v \in {\mathscr C}_{\bar u}$, where ${\bf \bar z}_v= {\bz}_{v}({\bar u})$ is the solution to \eqref{E2.7} for $u= {\bar u}$ and $v \in {\mathscr C}_{\bar u}$.
\end{cor}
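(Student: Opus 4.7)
The plan is straightforward: this corollary is essentially a substitution, combining the abstract second-order necessary condition from Theorem \ref{ns} with the explicit formula \eqref{sd} for $j''(u)v^2$, specialized at $u = \bar u$. First I would invoke Theorem \ref{ns} to obtain $j''(\bar u)v^2 \ge 0$ for every $v \in \mathscr{C}_{\bar u}$. Then I would expand $j''(\bar u)v^2$ using \eqref{sd}, noting that at $u = \bar u$ the state becomes $\Psi_{\bar u} = \bar \Psi$, the associated adjoint $\Theta_{\bar u}$ coincides with $\bar\Theta$ by definition of the adjoint state (i.e.\ the unique solution of \eqref{adjof} with $u = \bar u$), and the linearized state ${\bf z}_v$ solving \eqref{E2.7} with $\Psi_u$ replaced by $\bar\Psi$ is precisely ${\bf \bar z}_v$ as introduced in the statement.

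Substituting these identifications into \eqref{sd} gives
\[
j''(\bar u)v^2 = \int_{\Omega} \left( |{\bf \bar z}_v|^2 +  [[{\bf \bar z}_v, {\bf \bar z}_v]] \,{\bar \Theta}\right) \dx + \alpha \int_{\omega} |v|^2 \dx,
\]
and combining this with Theorem \ref{ns} yields the stated inequality on the cone $\mathscr{C}_{\bar u}$. The only real point requiring care is the strict sign: Theorem \ref{ns} furnishes only a non-strict inequality, whereas the corollary asserts strictness. This is the standard second-order \emph{sufficient} condition imposed on the nonsingular local solution for the subsequent error analysis; I would state it as the additional hypothesis that $j''(\bar u)v^2 > 0$ for all nonzero $v \in \mathscr{C}_{\bar u}$, which is standard in the optimal control literature (and implicitly understood in the excerpt, since the $v = 0$ case trivially violates a strict inequality). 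No further computation is needed beyond this identification, so the only mild obstacle is clarifying that the strictness is a hypothesis rather than a derived conclusion.
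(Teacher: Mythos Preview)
Your approach matches the paper's: the corollary is stated without a separate proof and is obtained exactly as you describe, by specializing formula \eqref{sd} at $u=\bar u$ and invoking Theorem~\ref{ns}. You are also right to flag the strict inequality: Theorem~\ref{ns} yields only $j''(\bar u)v^2 \ge 0$, so the ``$>0$'' in the corollary does not follow from the cited ingredients alone. This is a slip in the paper rather than a gap in your reasoning --- note that the very next result, Theorem~\ref{ssoc}, takes the strict inequality as an explicit \emph{assumption}, confirming that it is a sufficient-condition hypothesis and not a derived consequence of Theorem~\ref{ns}.
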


\begin{thm}[\it Second Order Sufficient Condition]\label{ssoc} Let $({\bar \Psi}, {\bar u})$ be a nonsingular local solution of \eqref{wform} and let ${\bar \Theta}=\Theta({\bar u})$ be the associated adjoint state. Assume that {}{$$ \int_{\Omega} \left( |{\bf \bar z}_v|^2+  { [[}{\bf \bar z}_v, {\bf \bar z}_v]]  {\bar \Theta} \right)  \dx + \alpha \int_{\omega} |v|^2 \dx  > 0$$}{} for all $v \in {\mathscr C}_{\bar u}$. Then, there exist $\epsilon >0$ and $\mu >0$ such that, for all $u \in U_{ad}$ satisfying, together with $\Psi_{u}$,  
$$ 
\|u - {\bar u}\|^2_{L^2(\omega)}  +\trinl \Psi_{u} - {\bar \Psi} \trinr^2\le \epsilon^2,
$$
we have 
$$
J({\bar \Psi}, {\bar u}) + 
\frac{\mu}{2} \left( \|u - {\bar u}\|^2_{L^2(\omega)} +
 \trinl \Psi_{u} - {\bar \Psi} \trinr^2 \right) 
\le J( \Psi_{u}, { u}) . 
$$ 
\end{thm}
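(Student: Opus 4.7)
The statement is a standard second-order sufficient condition; the natural strategy is a proof by contradiction, normalizing a hypothetical ``bad'' sequence and extracting a limiting direction in the critical cone ${\mathscr C}_{\bar u}$. Assume the conclusion fails. Then for each $k \in \mathbb{N}$, there exists $u_k \in U_{ad}$ with
$$\|u_k - {\bar u}\|^2_{L^2(\omega)} + \trinl \Psi_{u_k} - {\bar \Psi}\trinr^2 \le 1/k^2, \quad j(u_k) - j({\bar u}) < \tfrac{1}{2k}\bigl(\|u_k - {\bar u}\|^2_{L^2(\omega)} + \trinl \Psi_{u_k} - {\bar \Psi}\trinr^2\bigr).$$
Theorem \ref{th2.5} gives the Lipschitz bound $\trinl \Psi_u - {\bar \Psi}\trinr_2 \le C\|u - {\bar u}\|_{L^2(\omega)}$ on a neighbourhood of $\bar u$, so the smallness above reduces to $\|u_k - \bar u\|_{L^2(\omega)} \to 0$. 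Set $t_k := \|u_k - {\bar u}\|_{L^2(\omega)}$ and $v_k := (u_k - \bar u)/t_k$; then $\|v_k\|_{L^2(\omega)}=1$ and, up to a subsequence, $v_k \rightharpoonup v$ weakly in $L^2(\omega)$.

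I would next show $v \in {\mathscr C}_{\bar u}$. The sign conditions defining ${\mathscr C}_{U_{ad}}({\bar u})$ are preserved by weak $L^2$-limits, so $v \in {\mathscr C}_{U_{ad}}({\bar u})$. Since $j$ is $C^\infty$ on ${\mathcal O}({\bar u})$, Taylor's formula gives $j(u_k) = j({\bar u}) + j'({\bar u})(u_k - \bar u) + \tfrac{1}{2} j''(u^*_k)(u_k-\bar u)^2$ for some $u^*_k = {\bar u} + \tau_k(u_k-{\bar u})$, $\tau_k \in (0,1)$. Dividing by $t_k^2$, using $j'({\bar u})(u_k - \bar u) \ge 0$ (first-order optimality), and the bound on $j(u_k) - j(\bar u)$, one obtains both $\int_\omega {\bar d}\, v_k\,dx = O(t_k)$ and $\tfrac12 j''(u^*_k)v_k^2 \le O(1/k)$. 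The first gives $\int_\omega {\bar d}\, v\,dx = 0$ in the limit; combined with the pointwise inequality ${\bar d}(x) v_k(x) \ge 0$ a.e. (which follows from the explicit form of ${\bar d}$ under the sign constraints and ${\bar d} = 0$ on $\{u_a < {\bar u} < u_b\}$), the same holds for $v$, forcing $v(x) = 0$ whenever ${\bar d}(x) \ne 0$; hence $v \in {\mathscr C}_{\bar u}$.

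The crucial analytic step is to evaluate $\lim j''(u^*_k) v_k^2$ and arrive at a strictly positive number, contradicting $\limsup \tfrac12 j''(u^*_k) v_k^2 \le 0$. Writing $w_k := {\bf z}^{u^*_k}_{v_k}$ for the linearized state at $u^*_k$ in direction $v_k$ and $\Theta^*_k$ for the adjoint at $u^*_k$, the continuity result of Lemma \ref{L2.5} applied to $u^*_k$, together with continuous dependence of the adjoint (via \eqref{adjof} and Theorem \ref{th2.5}), yields $\Psi_{u^*_k} \to {\bar \Psi}$ and $\Theta^*_k \to {\bar \Theta}$ strongly in $\bV$. Since $v_k$ is bounded in $L^2(\omega)$ and $L^2(\omega) \hookrightarrow \bV'$ compactly, weak convergence $v_k \rightharpoonup v$ upgrades to strong convergence ${\bf C}{\bf v}_k \to {\bf C}{\bf v}$ in $\bV'$; combining with uniform invertibility of ${\mathcal A} + {\mathcal B}'(\Psi_{u^*_k})$ from Theorem \ref{th2.5}, one concludes $w_k \to {\bar {\bf z}}_v$ strongly in $\bV$. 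Strong $H^2$-convergence of $w_k$ and $\Theta^*_k$ allows passage to the limit in both $\int_\Omega |w_k|^2\,dx$ and $\int_\Omega [[w_k,w_k]]\,\Theta^*_k\,dx$; keeping $\alpha\|v_k\|^2_{L^2(\omega)} = \alpha$ intact gives
$$\lim_{k \to \infty} j''(u^*_k) v_k^2 = \int_\Omega \bigl(|{\bar {\bf z}}_v|^2 + [[{\bar {\bf z}}_v, {\bar {\bf z}}_v]]\,{\bar \Theta}\bigr)\,dx + \alpha = j''({\bar u}) v^2 + \alpha\bigl(1 - \|v\|^2_{L^2(\omega)}\bigr).$$

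Finally I would conclude by a dichotomy. If $v \ne 0$, then $v \in {\mathscr C}_{\bar u}$ and the hypothesis of the theorem gives $j''({\bar u})v^2 > 0$; together with $\|v\|_{L^2(\omega)} \le 1$ (weak lower semicontinuity) this forces the limit above to be strictly positive. If $v = 0$, the limit equals $\alpha > 0$. Either way we contradict $\limsup j''(u^*_k) v_k^2 \le 0$. The main obstacle is the compactness analysis in the previous paragraph: showing that the trilinear term $\int_\Omega [[w_k, w_k]]\,\Theta^*_k\,dx$, which formally involves products of second derivatives in $L^2$, actually converges, relies on the strong $H^2$-convergence of $w_k$ obtained via the uniform invertibility in Theorem \ref{th2.5} together with the compactness of $L^2(\omega) \hookrightarrow \bV'$.
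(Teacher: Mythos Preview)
Your proof is correct and follows the same overall contradiction-plus-normalization strategy as the paper: extract a weakly convergent unit sequence of directions, show the limit lies in ${\mathscr C}_{\bar u}$, and reach a contradiction with the hypothesis. The technical implementations differ in two respects worth noting. First, you normalize by $t_k=\|u_k-\bar u\|_{L^2(\omega)}$ and work entirely through the reduced functional $j$ and its second Taylor polynomial, using the linearized state $w_k={\bf z}^{u_k^*}_{v_k}$; the paper instead normalizes by the combined quantity $\rho_k=(\|u_k-\bar u\|^2+\trinl\Psi_{u_k}-\bar\Psi\trinr^2)^{1/2}$, expands $J$ directly, and works with the normalized \emph{actual} state increment ${\bf z}_k=(\Psi_{u_k}-\bar\Psi)/\rho_k$, which it shows satisfies a perturbed linearized equation. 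Second, for the compactness step you use $L^2(\omega)\hookrightarrow\bV'$ compact together with the uniform invertibility of ${\mathcal A}+{\mathcal B}'(\Psi_{u_k^*})$ from Theorem~\ref{th2.5} to obtain $w_k\to\bar{\bf z}_v$ strongly in $\bV$, whereas the paper uses the ${\bH}^{2+\gamma}$ regularity of ${\bf z}_k$ and the compact embedding ${\bH}^{2+\gamma}\hookrightarrow\bV$. Your route is somewhat more streamlined and avoids invoking the extra regularity lemma; the paper's route keeps the state equation in the foreground. Both close with the same dichotomy: either the limit direction is nonzero (and the hypothesis gives strict positivity) or it is zero (and the $\alpha$-term survives), contradicting $\limsup j''(u_k^*)v_k^2\le 0$.
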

\begin{proof} We argue by contradiction. If possible,  let $\{(\Psi_{u_k},u_k) \}$ be a sequence
satisfying  \eqref{wv} with $u_k \in U_{ad}$, such that 
\begin{align}
  \|u_k - {\bar u}\|^2_{L^2(\omega)}  +\trinl\Psi_{u_k} - {\bar \Psi} \trinr^2 \le \frac{1}{k^2} \label{assum}
  \end{align}
  and 
\begin{equation}\label{star}
J({\bar \Psi}, {\bar u}) + 
\frac{1}{k} \left( \|u_k - {\bar u}\|^2_{L^2(\omega)} +
 \trinl \Psi_{u_k} - {\bar \Psi} \trinr^2  \right) 
>  J( \Psi_{u_k}, { u_k}).
\end{equation}
Set 
$$\rho_k =\sqrt{{\|u_k - {\bar u}\|^2_{L^2(\omega)}  +\trinl \Psi_{u_k} - {\bar \Psi} \trinr^2 }} , \quad  v_k = \frac{u_k-{\bar u}}{\rho_k},\; {\bf z}_k = \frac{\Psi_{u_k}-{\bar \Psi}}{\rho_k}.$$
Note that $(\Psi_{u_k})_k$ is bounded in $ {\bH}^{2+\gamma}(\Omega)$, see \eqref{app2}. Clearly, 
$\|v_k\|^2_{L^2(\omega)} + \trinl {\bf z}_k \trinr^2=1$ and the pair 
$({\bf z}_k, v_k)$ satisfies the equation
\begin{equation}\label{lim}
 {\mathcal A} {\bf z}_k - \frac{1}{2} {\mathcal B}'({\bar \Psi}) {\bf z}_k + 
\frac{1}{2} {\mathcal B}'(\Psi_{u_k}) {\bf z}_k ={\mathcal C} v_k \; \mbox{ in  }  \bV'.
\end{equation}
Following the proof of Lemma \ref{linapriori}, we can verify that 
$\trinl {\bf z}_k \trinr_{2+\gamma} \le C \|v_k\|_{L^2(\omega)}, $ with a constant $C$ independent of $k$.  By passing to the limit (up to a subsequence) in \eqref{lim}, we can prove that
\begin{align*}
& {\bf z}_k \rightharpoonup {\bf z} \mbox{ in } {\bH}^{2+\gamma}(\Omega), \;
 {\bf z}_k \rightarrow {\bf z} \mbox{ in } \bV ,
v_k \rightharpoonup v \mbox{ in } L^2(\omega),
\end{align*}
and $ {\bf z}=  {\bf \bar z}_v$, that is, $ {\bf z}$ is the solution of \eqref{E2.7} associated with $v$ for $u=\bar{u}$.

\bigskip
\noindent Now we verify that $v \in {\mathscr C}_{\bar u}$. With \eqref{star}, we have 
\begin{eqnarray*}
\frac{\rho_k}{k} & >& \frac{J({\bar \Psi + \rho_k  {\bf z}_k}, {\bar u} + \rho_k v_k) - J ({\bar \Psi}, {\bar u})}{\rho_k} \\
& = & \frac{1}{2} \int_{\Omega}\left( 2 ({\bar \Psi} - \Psi_d) + \rho_k {\bf z}_k\right)
{\bf z}_k \: dx + \frac{\alpha}{2} \int_{\omega} \left( 2 {\bar u} + \rho_k v_k\right)  v_k \: dx.
\end{eqnarray*}
By passing to the limit as $k \rightarrow \infty$ and using \eqref{assum}, we obtain
$$ \int_{\Omega}  ({\bar \Psi} - \Psi_d) {\bf z}_v \: dx + {\alpha} \int_{\omega}   {\bar u} v dx \le 0,$$
which yields $\displaystyle \int_{\omega} {\bar d}(x) v(x) \: dx \le 0.$ The last condition implies that $v \in {\mathscr C}_{\bar u}$.

\medskip
\noindent Making a second order Taylor expansion of $J$ at $({\bar \Psi}, {\bar u})$, we have 
\begin{align*}
J(\Psi_{u_k}, u_k) = & J({\bar \Psi}, {\bar u}) + \partial_{\Psi}  J({\bar \Psi}, {\bar u}) \: \rho_k {\bf z}_k +
\partial_{u}  J({\bar \Psi}, {\bar u}) \: \rho_k {v}_k \\
&  +
\frac{1}{2} \int_{\Omega} |\Psi_{u_k} - {\bar \Psi}|^2\: dx + \frac{\alpha}{2} \int_{\omega} |u_k-{\bar u}|^2\: dx.
\end{align*}
Thus with \eqref{star}, we can write 
\begin{align} \label{2star}
\partial_{\Psi}  J({\bar \Psi}, {\bar u}) {\bf z}_k +
\partial_{u}  J({\bar \Psi}, {\bar u})  {v}_k + \frac{1}{2} \int_{\Omega} |{\bf z}_k|^2 \: dx + 
\frac{\alpha}{2} \int_{\omega} |v_k|^2 \: dx < \frac{1}{k}.
\end{align}
Also,
\begin{align*}
\partial_{\Psi}  J({\bar \Psi}, {\bar u}) {\bf z}_k +
\partial_{u}  J({\bar \Psi}, {\bar u})  {v}_k  = \int_{\Omega} ({\bar \Psi}- \Psi_d) {\bf z}_k \: dx + 
\alpha \int_{\omega} {\bar u} v_k \: dx,
\end{align*}
and using the adjoint state ${\bar \Theta}$, we obtain
\begin{align*}
\int_{\Omega}({\bar \Psi}- \Psi_d) {\bf z}_k \: dx = \int_{\Omega} {\bar \Theta}\left({\mathcal C}v_k -
{\mathcal B}'({\bf z}_k)(\Psi_k-{\bar \Psi}) \right) \: dx.
\end{align*}
Thus,
\begin{align*}
\frac{1}{\rho_k}\left( \partial_{\Psi}  J({\bar \Psi}, {\bar u}) {\bf z}_k +
\partial_{u}  J({\bar \Psi}, {\bar u})  {v}_k\right) = \frac{1}{\rho_k}
\int_{\Omega} {\bar \Theta}\left( {\bar d}(x) v_k \: dx - \int_{\Omega}
{\mathcal B}'({\bf z}_k)({\bf z}_k) \right) {\bar \Theta}\:  dx.
\end{align*}
Since ${\bar d}(x) v(x) \ge 0 $, with \eqref{2star}, we have 
$$ - \int_{\Omega}
2{\mathcal B}'({\bf z}_k)({\bf z}_k)  {\bar \Theta}\:  dx + \int_{\Omega}|{\bf z}_k|^2 \: dx + 
\alpha \int_{\omega} |v_k|^2 \: dx < \frac{2}{k}.$$
By passing to the inferior limit, we have 
$$\int_{\Omega}  \left( |{\bf z}|^2 + [[{\bf z}, {\bf z}]] \: {\bar \Theta} \right) \: dx + \alpha \int_{\omega} |v|^2 \: dx \le 0.$$

Since $v \in {\mathscr C}_{\bar u}$ and due to our assumption about the {\bf sufficient} second order optimality condition, we have $({\bf z}, v)=(0,0)$.
Hence,
\begin{align*}
& \lim_{k \rightarrow \infty} \left( -2 \int_{\Omega} {\mathcal B}({\bf z}_k) {\bar \Theta}   + \int_{\Omega} |{\bf z}_k|^2 \: dx\right) 
=  -2 \int_{\Omega} {\mathcal B}({\bf z}) {\bar \Theta}  \: dx + \int_{\Omega} |{\bf z}|^2 \: dx,
\end{align*}
  and thus $\displaystyle \lim_{k \rightarrow \infty} \int_{\omega} |v_k|^2 \: dx =0$. Thus we have a contradiction with $\|v_k\|^2_{L^2(\omega)} + \|{\bf z}_k\|^2_{L^2(\Omega)}=1,$ and the proof 
  is complete.
\end{proof}

\noindent Note that the second order optimality condition 
$$\int_{\Omega}  \left( |{\bf \bar z}_{ v}|^2 + [[{\bf \bar z}_v, {\bf \bar z}_v]] \: {\bar \Theta} \right) \: dx + \alpha \int_{\omega} |v|^2 \: dx >0, \; \;$$ for all $v \in {\mathcal C}_{\bar u}$ is equivalent to $j''({\bar u})v^2 >0 \quad \forall v \in {\mathscr C}_{\bar u}$.

\bigskip
\noindent As in \cite{cmj}, we reinforce the above condition by  assuming that 
\begin{align}\label{ssoc2}
j''({\bar u})v^2 > \delta \left( \|v\|^2_{L^2(\omega) } + \|{}{{\bf \bar z}_v}{} \|^2_{L^2(\Omega)} \right),
\quad \forall v \in {\mathscr C}_{\bar u}^{\tau}, 
\end{align}
where
\begin{equation*}
{\mathscr C}_{\bar u}^{\tau} := \left\{ v \in L^2(\omega): \eqref{cases3} \mbox{ is satisfied} \right\},
\end{equation*}
with
\begin{align} \label{cases3}
 \begin{cases}
v(x) =0 \mbox{ if } | {d}(x) | > \tau,\\
v(x) \ge 0  \mbox{ if }    | {d}(x) | \le \tau \mbox{ and } {\bar u}(x)=u_a,\\
v(x) \le 0  \mbox{ if }  | {d}(x) | \le \tau \mbox{ and } {\bar u}(x)=u_b.
\end{cases} 
\end{align}
and ${\bf \bar z}_v$ is the solution of \eqref{E2.7} {}{with $u={\bar u}$}{}.

\begin{thm}(Theorem 3.10, \cite{cmj}) The condition \eqref{second_opt_cond} is equivalent to \eqref{ssoc2}.
\end{thm}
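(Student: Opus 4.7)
The plan is to prove a two-sided implication. In light of the note immediately preceding the theorem (which identifies the pointwise inequality $>0$ on $\mathscr{C}_{\bar u}$ with $j''(\bar u)v^2>0$ on $\mathscr{C}_{\bar u}$), I interpret \eqref{second_opt_cond} as $j''(\bar u)v^2>0$ for all $v\in\mathscr{C}_{\bar u}\setminus\{0\}$; otherwise the implication $\eqref{second_opt_cond}\Rightarrow\eqref{ssoc2}$ obviously fails since the latter is a coercivity estimate while the former is only non-negativity.

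The easy direction, $\eqref{ssoc2}\Rightarrow\eqref{second_opt_cond}$, I would handle by first verifying the set inclusion $\mathscr{C}_{\bar u}\subset\mathscr{C}_{\bar u}^{\tau}$ for every $\tau>0$. If $v\in\mathscr{C}_{\bar u}$ and $|\bar d(x)|>\tau$, then $\bar d(x)\neq 0$, so $v(x)=0$; the sign constraints transfer because $\{\bar d=0\}\subset\{|\bar d|\leq \tau\}$. Then for any $v\in\mathscr{C}_{\bar u}\setminus\{0\}$ we have $v\in\mathscr{C}_{\bar u}^{\tau}$, whence \eqref{ssoc2} gives $j''(\bar u)v^2>\delta\|v\|^2_{L^2(\omega)}>0$.

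For the converse I would argue by contradiction. Assuming \eqref{ssoc2} fails, for each $k$ there exists $v_k\in\mathscr{C}_{\bar u}^{1/k}$, $v_k\neq 0$, with $\|v_k\|^2_{L^2(\omega)}+\|{\bf \bar z}_{v_k}\|^2_{L^2(\Omega)}=1$ and $j''(\bar u)v_k^2\leq 1/k$. After extracting a subsequence, $v_k\rightharpoonup v$ weakly in $L^2(\omega)$; by Lemma~\ref{linapriori} and the compact embedding ${\bH}^{2+\gamma}(\Omega)\hookrightarrow {\bH}^2(\Omega)$, the linearity of the map $v\mapsto {\bf \bar z}_v$ gives ${\bf \bar z}_{v_k}\to {\bf \bar z}_v$ strongly in $\bV$ (hence in ${\bf L}^2(\Omega)$). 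The key intermediate step is to show $v\in\mathscr{C}_{\bar u}$. For each fixed $m$, on $E_m=\{|\bar d|>1/m\}$ every $v_k$ with $k\geq m$ vanishes, so weak $L^2$-convergence forces $v=0$ a.e.\ on $E_m$; taking $m\to\infty$, $v=0$ a.e.\ on $\{\bar d\neq 0\}$. On $\{\bar d=0\}\cap\{\bar u=u_a\}$ (resp.\ $\{\bar u=u_b\}$) every $v_k$ is nonnegative (resp.\ nonpositive) since $\{\bar d=0\}\subset\{|\bar d|\leq 1/k\}$, and these are weakly closed convex cones in $L^2$, so the sign is preserved in the limit.

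Finally I pass to the limit in $j''(\bar u)v_k^2$. The strong convergence ${\bf \bar z}_{v_k}\to{\bf \bar z}_v$ in $\bV$ together with \eqref{boundB1} (and boundedness of $\bar\Theta$) yields $\int_\Omega [[{\bf \bar z}_{v_k},{\bf \bar z}_{v_k}]]\bar\Theta\,dx\to \int_\Omega [[{\bf \bar z}_{v},{\bf \bar z}_{v}]]\bar\Theta\,dx$ and $\int_\Omega|{\bf \bar z}_{v_k}|^2\,dx\to \int_\Omega|{\bf \bar z}_{v}|^2\,dx$, while weak lower semicontinuity gives $\liminf_k\alpha\int_\omega|v_k|^2\geq \alpha\int_\omega|v|^2$. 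Therefore $j''(\bar u)v^2\leq\liminf_k j''(\bar u)v_k^2\leq 0$. If $v\neq 0$, this contradicts strict positivity on $\mathscr{C}_{\bar u}$. If $v=0$, then ${\bf \bar z}_v=0$, so the normalization and strong convergence of ${\bf \bar z}_{v_k}$ in $L^2$ force $\|v_k\|^2_{L^2(\omega)}\to 1$; but then $j''(\bar u)v_k^2\to \alpha>0$, again contradicting $j''(\bar u)v_k^2\leq 1/k$. The main obstacle is the cone-passage in step three: the pointwise conditions defining $\mathscr{C}_{\bar u}^{\tau_k}$ depend on $k$, so I must exhaust $\{\bar d\neq 0\}$ by the level sets $E_m$ and exploit the weak-closedness of the sign cones to obtain $v\in\mathscr{C}_{\bar u}$; everything else reduces to the compactness of $v\mapsto {\bf \bar z}_v$ and the $\bar u$-independent boundedness estimates for $B$.
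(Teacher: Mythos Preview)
Your argument is correct and is essentially the standard contradiction argument due to Casas--Mateos--Raymond in \cite{cmj}. Note, however, that the paper does not supply its own proof of this statement: the theorem is simply quoted from \cite{cmj} without argument, so there is nothing to compare your approach against. Your reading of \eqref{second_opt_cond} as the strict inequality $j''(\bar u)v^2>0$ on $\mathscr{C}_{\bar u}\setminus\{0\}$ is the intended one (the label in the paper points to the $\geq 0$ necessary condition, which as you observe cannot be equivalent to the coercivity \eqref{ssoc2}); with that interpretation your two implications, the cone inclusion $\mathscr{C}_{\bar u}\subset\mathscr{C}_{\bar u}^\tau$, the exhaustion of $\{\bar d\neq 0\}$ by the sets $\{|\bar d|>1/m\}$, the compactness of $v\mapsto {\bf \bar z}_v$ from Lemma~\ref{linapriori}, and the dichotomy $v\neq 0$ versus $v=0$ all go through exactly as you wrote.
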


\section{Discretization of State \& Adjoint Variables}

In this section, first of all, we describe the discretization of the state variable using {conforming} finite elements. This is followed by definition of an auxiliary discrete problem  corresponding to the state equation for a given control $u \in U_{ad}$. We establish  the existence of  a unique solution and error estimates  for this problem under suitable assumptions. Similar results for an auxiliary problem corresponding to the adjoint variable is proved next.

\subsection{Conforming finite elements}
Let $\mathcal T_h$ be a regular, conforming and quasi-uniform triangulation of $\overline{\Omega}$ into closed triangles, rectangles or quadrilaterals. Set $h_T=diam(T)$, $T \in \mathcal T_h $ and define the discretization parameter $h:=\max_{T\in\mathcal{T}_h}h_T$. We now provide examples of  two conforming finite elements defined on a triangle and a rectangle,  namely the Argyris elements and \BFS (see Figure ~\ref{fig:elements}).

\begin{figure}[h!]
  \begin{center}
   \subfloat[]{\includegraphics[width = 2.5in,height=2in]{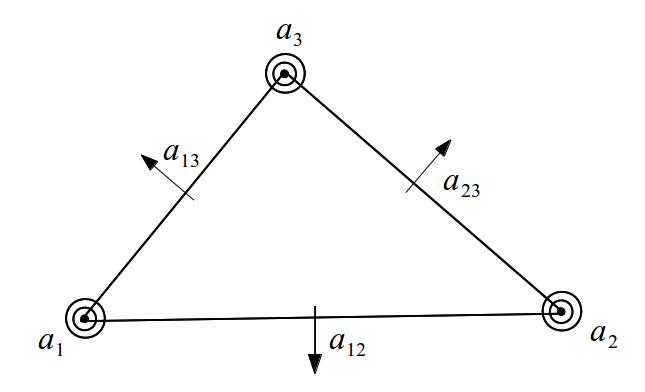}}
    \subfloat[]{\includegraphics[width = 2.4in,height=2in]{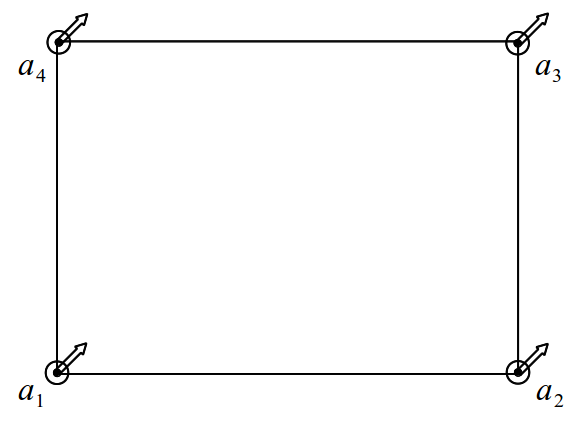}}
    \caption{ (A) Argyris element and (B) \BFS element}\label{fig:elements}
  \end{center}
\end{figure}
\begin{defn}[Argyris element \cite{Brenner,Ciarlet}]
The Argyris element is a triplet $(T,P_5(T),\Sigma_T)$ where $T$ is a triangle,  $P_5(T)$ denotes polynomials of degree $\leq 5$ in both the variables and the 21 degrees of freedom in $ \Sigma_T$ are determined by the values of the unknown functions, its  first order and second order derivatives at the three vertices and the normal derivatives at the midpoints of the three edges of $T$ (see Figure \ref{fig:elements}(A)).
\end{defn}
\begin{defn}[\BFS element \cite{Ciarlet}] Let $T\in \mathcal{T}_h$ be a rectangle with vertices $a_i=(x_i,y_i),\: i=1,2,3,4$. 
 The \BFS element is a triplet $(T, Q_3(T),\Sigma_T)$, where
 $Q_3(T)$ denotes polynomials of degree $\leq 3$ in both the variables and the degrees of freedom $\Sigma_T$ is defined by  $\Sigma_T=\{p(a_i),\frac{\partial p}{\partial x}(a_i),\frac{\partial p}{\partial y}(a_i),\frac{\partial^2 p}{\partial x\partial y}(a_i),\; 1\leq i\leq 4\}$ (see Figure \ref{fig:elements}(B)).
\end{defn}

The conforming  $C^1$ finite element spaces associated with Argyris and \BFS elements are contained in
$ C^1(\overline\Omega)\cap H^2(\Omega)$. Define
\begin{align*}
  V_h&=\left\{v\in C^1(\overline\Omega):v|_{T}\in P_T,\fl T\in\mathcal{T}_h \text{ with }v|_{\partial\Omega}=0, \frac{\partial v}{\partial \nu}\big{|}_{\partial\Omega}=0\right\}\subset  H^2_0(\Omega),
\end{align*}
where
\begin{align*} 
    P_T &= \left\{ 
      \begin{array}{l l}
        P_5(T) & \text{for Argyris element}, \\
        Q_3(T)& \text{for \BFS element}.
      \end{array} \right.
\end{align*}
The discrete state and adjoint variables are sought in the finite dimensional space defined by $\bV_h:={V_h} \times V_h$.

\medskip

\begin{lem}[\it Interpolant \cite{Ciarlet}]\label{interpolant}
Let $\Pi_h:V\map V_h$ be the Argyris or \BFS nodal interpolation operator. 
Then for $\varphi\in H^{2+\gamma}(\Omega),$ with $\gamma\in(\half,1]$ denoting the index of elliptic regularity, it holds:
\begin{align} \label{inte}
&\|\varphi-\Pi_h\varphi\|_{m} \leq C h^{\min\{k+1,\;2+\gamma\}-m}\|\varphi\|_{2+\gamma} \qquad\text{ for } m=0,1,2;
\end{align}
where $k=5$ (resp. 3) for the Argyris element  (resp. \BFS element).
\end{lem}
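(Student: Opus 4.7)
The plan is to follow the classical Ciarlet approach: reduce the global estimate to element-wise estimates via affine scaling to a reference configuration, invoke the Bramble-Hilbert lemma there, and then sum the contributions. Since $\mathcal T_h$ is shape regular and quasi-uniform, for each $T\in\mathcal T_h$ there is an affine map $F_T:\hat T\to T$, $\hat x\mapsto B_T\hat x+b_T$, with $\|B_T\|\lesssim h_T$, $\|B_T^{-1}\|\lesssim h_T^{-1}$, and $|\det B_T|\sim h_T^2$, where $\hat T$ is the reference triangle (Argyris) or reference rectangle (BFS). The pullback $\hat\varphi:=\varphi\circ F_T$ satisfies the affine-equivalence identity $\widehat{\Pi_h\varphi}=\hat\Pi\hat\varphi$ on $\hat T$, where $\hat\Pi$ is the fixed reference interpolation operator that reproduces polynomials in $P_k(\hat T)$ (with $k=5$ for Argyris, $k=3$ for BFS).

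The next step is to establish the continuity $\hat\Pi:H^{2+\gamma}(\hat T)\to H^2(\hat T)$. Since $\gamma>\tfrac12$, the two-dimensional Sobolev embedding yields $H^{2+\gamma}(\hat T)\hookrightarrow C^{1,\gamma-1/2}(\overline{\hat T})$, so the point-value and first-derivative functionals defining the interpolant are genuine continuous functionals on $H^{2+\gamma}(\hat T)$. The second-order nodal functionals (Hessian components at vertices for Argyris, mixed second partials at vertices for BFS) are handled by the standard Ciarlet argument exploiting the trace theorem together with the norm-equivalence on the finite-dimensional shape-function space. With $\hat\Pi$ continuous and reproducing $P_k(\hat T)$, the Bramble-Hilbert lemma gives
\begin{equation*}
\|\hat\varphi-\hat\Pi\hat\varphi\|_{H^m(\hat T)}\le(1+\|\hat\Pi\|)\,\inf_{\hat p\in P_k(\hat T)}\|\hat\varphi-\hat p\|_{H^{s}(\hat T)}\lesssim|\hat\varphi|_{H^{s}(\hat T)},\qquad s:=\min\{k+1,\,2+\gamma\}.
\end{equation*}

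The standard scaling identities $|\hat\psi|_{H^\ell(\hat T)}\sim h_T^{\ell-1}|\psi|_{H^\ell(T)}$, which hold for integer $\ell$ and, by interpolation, for fractional orders, then convert the reference bound into the local estimate
\begin{equation*}
\|\varphi-\Pi_h\varphi\|_{H^m(T)}\lesssim h_T^{\,s-m}\,|\varphi|_{H^{s}(T)}.
\end{equation*}
Squaring, summing over $T\in\mathcal T_h$, and using $h_T\le h$ together with additivity of the Sobolev seminorm (handling the fractional-order cross terms in the usual way) yields the claimed global bound. Since in both cases $k\ge 3$ while $\gamma\le 1$ forces $2+\gamma\le 3\le k+1$, the minimum is always attained by $2+\gamma$, so the final rate is $h^{2+\gamma-m}$.

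The main obstacle is the delicate justification of the continuity of the nodal Hessian-type evaluations on $H^{2+\gamma}$ when $\gamma$ is only slightly above $\tfrac12$: the embedding $H^{2+\gamma}\hookrightarrow C^{1,\gamma-1/2}$ does not extend to $C^2$ unless $\gamma>1$, so a strict pointwise interpretation of the second-order degrees of freedom is not \emph{a priori} available. This is circumvented by the classical trace-based interpretation (or, equivalently, a density/regularization argument) given in Ciarlet's monograph, to which the statement defers; once this is accepted, all remaining steps are routine scalings and the Bramble-Hilbert lemma.
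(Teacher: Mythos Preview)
The paper offers no proof of this lemma; it is recorded as a known result with a citation to Ciarlet. Your outline follows the standard scaling--plus--Bramble--Hilbert route, which is the right template, but two technical points need tightening.

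First, the Argyris element is \emph{not} affine-equivalent in the usual sense: the edge-midpoint normal-derivative functionals do not transform under a general affine map, so the identity $\widehat{\Pi_h\varphi}=\hat\Pi\hat\varphi$ fails as stated. The correct setting is Ciarlet's ``almost-affine'' families, which still yields the estimate but requires the modified bookkeeping in Ciarlet's Chapter~6 rather than the plain pull-back you wrote. (For BFS on axis-parallel rectangles the affine maps are diagonal and your identity is fine.)

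Second, you correctly isolate the genuine obstacle: the second-order point evaluations (Hessian entries at vertices for Argyris, the mixed partial $\partial_{xy}^2$ for BFS) are \emph{not} continuous linear functionals on $H^{2+\gamma}(\hat T)$ when $\gamma\le 1$, because in two dimensions $H^\gamma\not\hookrightarrow C^0$. Your appeal to a ``trace-based interpretation'' does not rescue this---no trace theorem produces point values of a merely $H^\gamma$ function. The density argument you mention parenthetically \emph{is} the right fix: prove the estimate first for $\varphi\in C^\infty(\overline\Omega)$, note that it bounds $\|\Pi_h\varphi\|_m\lesssim\|\varphi\|_{2+\gamma}$ uniformly, and extend $\Pi_h$ to $H^{2+\gamma}$ by continuity (the range $V_h$ is finite-dimensional, hence closed). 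The extended operator satisfies \eqref{inte} by construction. Alternatively one may replace the second-order point functionals by local averages in the spirit of Scott--Zhang, which preserves polynomial reproduction and hence the Bramble--Hilbert step. Either way, the nodal operator $\Pi_h$ in the statement must be tacitly understood as this extension.
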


\subsection{Auxiliary problems for the state equations}
Define  an auxiliary continuous  problem associated with the state equation as follows:

\bigskip

\noindent Seek $\Psi_{u}\in \bV$ such that 
  \begin{align} 
   &  A(\Psi_{u},\Phi)+B(\Psi_{u},\Psi_{u},\Phi)=(F + {\mathbf C}{\bf u}, \Phi) \; \fl \Phi \in \bV, \label{wv1}
  \end{align}
where  ${\bf u}=(u,0)$, $u \in L^2(\omega)$ is given. 

\medskip

\noindent A discrete conforming finite element approximation for this problem can be defined as:

Seek $\Psi_{u,h}\in \bV_h$ such that 
  \begin{align} 
   &  A(\Psi_{u,h},\Phi_h)+B(\Psi_{u,h},\Psi_{u,h},\Phi_h)=(F + {\mathbf C}{\bf u}, \Phi_h) \; \fl \Phi_h \in \bV_h. \label{wform2}
  \end{align}

\noindent For a given $u \in L^2(\omega)$, \eqref{wform2} is not well-posed in general. 
The main results  of this subsection are stated now.
\begin{thm} \label{aux_dis}
Let $({\bar \Psi}, {\bar u}) \in \bV \times L^2(\omega) $ be a nonsingular solution of \eqref{of}. 
Then, there exist $\rho_1, \rho_2>0$ and $h_1 >0$ such that, for all $0 < h< h_1$ and ${u} \in B_{\rho_2}({\bar u})$, \eqref{wform2} admits a unique solution in 
$B_{\rho_1}(\bar \Psi)$.
\end{thm}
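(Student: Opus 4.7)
The plan is to apply Banach's fixed point theorem to a linearization-based map on $\bV_h$. Since $(\bar\Psi,\bar u)$ is nonsingular, the operator $\mathcal{L}_{\bar\Psi} := \mathcal{A} + \mathcal{B}'(\bar\Psi)$ is an isomorphism from $\bV$ onto $\bV'$ by Theorem \ref{th2.5}. The first key step is to establish a uniform discrete inf--sup condition: there exist $\beta>0$ and $h_1>0$ such that for all $0<h<h_1$ and all $\Phi_h \in \bV_h$,
\[
\sup_{0 \ne \chi_h \in \bV_h} \frac{\langle \mathcal{L}_{\bar\Psi}\Phi_h,\chi_h\rangle}{\trinl \chi_h\trinr_2} \ge \beta \trinl \Phi_h\trinr_2.
\]
This follows in the Brezzi--Rappaz--Raviart style: the coercive part $A(\cdot,\cdot)$ already provides discrete stability for $\mathcal{A}$ by \eqref{coercivity}, while the perturbation $\mathcal{B}'(\bar\Psi)$ is compact from $\bV$ into $\bV'$, since $\bar\Psi \in \bH^{2+\gamma}(\Omega)$ (Lemma \ref{ap}) together with \eqref{boundB3} and the symmetry of $B$ in its first two slots yields $|\langle \mathcal{B}'(\bar\Psi)\Phi,\chi\rangle| \le C\trinl \Phi\trinr_2 \trinl \chi\trinr_1$. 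A standard compactness--contradiction argument then transfers the continuous injectivity of $\mathcal{L}_{\bar\Psi}$ into uniform discrete injectivity.

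Next, define $T_h : \bV_h \to \bV_h$ by $T_h(\Psi_h) = \Phi_h$, where $\Phi_h \in \bV_h$ is the unique solution of
\[
A(\Phi_h,\chi_h) + \langle \mathcal{B}'(\bar\Psi)\Phi_h,\chi_h\rangle = (F+{\mathbf C}{\bf u},\chi_h) - B(\Psi_h,\Psi_h,\chi_h) + \langle \mathcal{B}'(\bar\Psi)\Psi_h,\chi_h\rangle \quad \forall\, \chi_h \in \bV_h.
\]
The uniform inf--sup guarantees $T_h$ is well-defined, and a fixed point of $T_h$ is precisely a solution of \eqref{wform2}. Let $\bar\Psi_h \in \bV_h$ denote the Galerkin approximation of $\bar\Psi$ relative to $\mathcal{L}_{\bar\Psi}$. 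A C\'ea-type argument using Lemma \ref{interpolant} together with $\bar\Psi \in \bH^{2+\gamma}(\Omega)$ gives $\trinl \bar\Psi - \bar\Psi_h\trinr_2 \le C h^{\gamma}$. Subtracting from the definition of $T_h\Psi_h$ the equation characterizing $\bar\Psi_h$ and exploiting the identity
\[
B(\Psi_h,\Psi_h,\chi_h) - B(\bar\Psi,\bar\Psi,\chi_h) - \langle \mathcal{B}'(\bar\Psi)(\Psi_h - \bar\Psi),\chi_h\rangle = B(\Psi_h-\bar\Psi,\Psi_h-\bar\Psi,\chi_h),
\]
a direct consequence of the bilinearity and symmetry of $B$ in its first two slots, one obtains via \eqref{boundB1}
\[
\trinl T_h\Psi_h - \bar\Psi_h\trinr_2 \le C\bigl(\|u-\bar u\|_{L^2(\omega)} + h^{\gamma} + \trinl \Psi_h - \bar\Psi\trinr_2^{\,2}\bigr).
\]
Choosing $\rho_1$ small, then $\rho_2$ and $h_1$ sufficiently small relative to $\rho_1$, ensures $T_h$ maps the closed ball $\overline{B}_{\rho_1}(\bar\Psi)\cap \bV_h$ into itself.

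For contraction, an analogous subtraction for $\Psi_{h,1},\Psi_{h,2} \in \overline{B}_{\rho_1}(\bar\Psi)\cap\bV_h$, after expanding $B(\Psi_{h,1},\Psi_{h,1},\chi_h) - B(\Psi_{h,2},\Psi_{h,2},\chi_h) - \langle \mathcal{B}'(\bar\Psi)(\Psi_{h,1}-\Psi_{h,2}),\chi_h\rangle$ as $B(\Psi_{h,1}-\Psi_{h,2},\Psi_{h,1}-\bar\Psi,\chi_h) + B(\Psi_{h,2}-\bar\Psi,\Psi_{h,1}-\Psi_{h,2},\chi_h)$, gives
\[
\trinl T_h\Psi_{h,1} - T_h\Psi_{h,2}\trinr_2 \le C\bigl(\trinl \Psi_{h,1}-\bar\Psi\trinr_2 + \trinl \Psi_{h,2}-\bar\Psi\trinr_2\bigr)\trinl \Psi_{h,1}-\Psi_{h,2}\trinr_2 \le \tfrac12 \trinl \Psi_{h,1}-\Psi_{h,2}\trinr_2
\]
for $\rho_1$ chosen small. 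Banach's fixed point theorem then produces the unique solution of \eqref{wform2} in $\overline{B}_{\rho_1}(\bar\Psi)\cap\bV_h$. The main obstacle is the uniform discrete inf--sup condition for $\mathcal{L}_{\bar\Psi}$; once that is in hand, the remainder of the argument reduces to routine quadratic estimates exploiting the polynomial structure of $\mathcal{B}$.
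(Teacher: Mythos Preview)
Your argument is correct and complete in outline; the discrete inf--sup condition for $\mathcal{L}_{\bar\Psi}$ via a compactness--contradiction argument is indeed standard, and the quadratic residual identities you exploit are exactly right. However, the paper organizes the proof differently. Rather than working directly on $\bV_h$ with a discrete inf--sup, the paper introduces the biharmonic solution operators $T\in\mathcal{L}(\bV',\bV)$ and $T_h\in\mathcal{L}(\bV',\bV_h)$, and the nonlinear maps $\mathscr{N}(\Psi,u)=\Psi+T[\mathcal{B}(\Psi)-(F+\mathbf{C}\mathbf{u})]$, $\mathscr{N}_h(\Psi,u)=\Psi+T_h[\mathcal{B}(\Psi)-(F+\mathbf{C}\mathbf{u})]$, so that zeros of $\mathscr{N}_h$ are precisely the discrete solutions. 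The invertibility you need is obtained not via inf--sup but by showing $\|T[\mathcal{B}'(\bar\Psi)]-T_h[\mathcal{B}'(\Psi)]\|_{\mathcal{L}(\bV)}<\epsilon$ for $h$ and $\trinl\Psi-\bar\Psi\trinr_2$ small (their Lemma~\ref{aux}), then invoking a Neumann-series perturbation lemma (their Lemma~\ref{banach}) to conclude that $\partial_\Psi\mathscr{N}_h(\Psi,u)=I+T_h[\mathcal{B}'(\Psi)]$ is an automorphism of $\bV$ with uniformly bounded inverse. The contraction map is then $\mathscr{G}(\Psi,u)=\Psi-[\partial_\Psi\mathscr{N}_h(\bar\Psi,\bar u)]^{-1}\mathscr{N}_h(\Psi,u)$ on a ball in $\bV$ (not $\bV_h$); the fixed point automatically lands in $\bV_h$ because $\mathscr{N}_h(\Psi,u)=0$ forces $\Psi=-T_h[\cdots]\in\bV_h$.

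The two routes are equivalent in substance: your discrete inf--sup is the variational counterpart of the paper's invertibility of $I+T_h[\mathcal{B}'(\bar\Psi)]$. Your approach is more self-contained and stays inside $\bV_h$; the paper's solution-operator formulation mirrors the Casas--Mateos--Raymond framework for Navier--Stokes (their reference \cite{cmj}) and makes the later results on $G_h$ and its derivative (needed for Lemma~\ref{fourpointfive}) drop out more directly from the implicit function theorem applied to $\mathscr{N}_h$ on $\bV$.
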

\begin{rem}
For $\rho>0$, ${u} \in B_{\rho}({\bar u})$ means that $\|u - {\bar u}\|_{L^2(\omega)} \le \rho.$ Similarly, ${\Psi} \in B_{\rho}({\bar \Psi}) \Longrightarrow \trinl \Psi - {\bar \Psi} \trinr_{2} \le \rho.$
\end{rem}

\begin{thm} \label{ee1}
 Let $({\bar \Psi}, {\bar u}) \in \bV \times L^2(\omega)$ be a nonsingular solution of \eqref{of}.  Let $h_1$ and $\rho_2$ be defined as in  
Theorem \ref{aux_dis}. 
Then, for ${u} \in B_{\rho_2}({\bar u})$ and $0 < h< h_1$, the solutions $\Psi_{u}$ and $\Psi_{u,h}$ of \eqref{wv1} and \eqref{wform2} satisfy the error estimates:
\begin{equation}\label{ee1_est}
(a) \; \trinl\Psi_{u}-\Psi_{u,h}\trinr_{2} \le C h^\gamma \qquad (b) \;  \; \trinl\Psi_{u}-\Psi_{u,h}\trinr_{1} \le C h^{2\gamma},
\end{equation} 
where $\gamma \in (1/2,1]$ denotes the index of elliptic regularity.
\end{thm}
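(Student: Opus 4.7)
I would follow the standard strategy for nonlinear conforming FEM: linearize the equation around $\Psi_u$, establish a discrete inf--sup condition for the linearized operator, and use an Aubin--Nitsche duality argument to obtain the improved lower-order estimate in (b).

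\emph{Part (a).} Set $e := \Psi_{u} - \Psi_{u,h}$ and let $\mathcal{L} := \mathcal{A} + \mathcal{B}'(\Psi_{u})$. By Theorem~\ref{th2.5}, $\mathcal{L}$ is an isomorphism $\bV \to \bV'$ for every $u \in B_{\rho_2}(\bar u)$, with uniformly bounded inverse. The $\bH^{2+\gamma}$ regularity of solutions to the adjoint linearized problem with $\bH^{-1}$ data (obtained by the argument of Lemma~\ref{linapriori} transposed to $\mathcal{L}^*$, combined with Lemma~\ref{poa}), together with Lemma~\ref{interpolant} and a Schatz-type perturbation argument, would furnish a uniform discrete inf--sup condition for $\mathcal{L}$ on $\bV_h$ for all sufficiently small $h$. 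Subtracting \eqref{wform2} from \eqref{wv1} tested on $\bV_h$ and exploiting the algebraic identity
$$
\mathcal{B}(\Psi_{u}) - \mathcal{B}(\Psi_{u,h}) = \mathcal{B}'(\Psi_{u}) e - \mathcal{B}(e)
$$
(which follows from the bilinearity and symmetry of $B(\cdot,\cdot,\cdot)$ in its first two arguments) reduces Galerkin orthogonality to $\langle \mathcal{L} e, \Phi_h\rangle = \langle \mathcal{B}(e), \Phi_h\rangle$ for all $\Phi_h \in \bV_h$. Applying the discrete inf--sup to $\Phi_h := \Pi_h \Psi_{u} - \Psi_{u,h}$ and using boundedness of $\mathcal{L}$, \eqref{boundB1}, and Lemma~\ref{interpolant} gives
$$
\trinl \Pi_h \Psi_{u} - \Psi_{u,h}\trinr_2 \le C\trinl \Psi_{u} - \Pi_h \Psi_{u}\trinr_2 + C\trinl e \trinr_2^2 \le C h^{\gamma} + C\trinl e\trinr_2^2.
$$
The triangle inequality then produces a quadratic inequality in $\trinl e\trinr_2$; since Theorem~\ref{aux_dis} ensures $\trinl e\trinr_2$ can be made uniformly small, this closes to $\trinl e\trinr_2 \le C h^{\gamma}$.

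\emph{Part (b).} For the $\bH^1$ estimate I would use Aubin--Nitsche duality. Given $g \in \bH^{-1}(\Omega)$, let $\Theta \in \bV$ solve $\mathcal{L}^* \Theta = g$; then $\trinl \Theta\trinr_{2+\gamma} \le C\|g\|_{-1}$ by the regularity of the linearized adjoint. Using the Galerkin relation on $\bV_h$,
$$
\langle g, e\rangle = \langle \mathcal{L} e, \Theta\rangle = \langle \mathcal{L} e, \Theta - \Pi_h \Theta\rangle + \langle \mathcal{B}(e), \Pi_h \Theta\rangle.
$$
The first summand expands to $A(e, \Theta - \Pi_h \Theta) + B(\Psi_{u}, e, \Theta - \Pi_h \Theta) + B(e, \Psi_{u}, \Theta - \Pi_h \Theta)$; applying \eqref{boundB2} (which exploits $\Psi_{u} \in \bH^{2+\gamma}$), the interpolation bound $\trinl \Theta - \Pi_h \Theta\trinr_m \le C h^{2+\gamma - m}\trinl \Theta\trinr_{2+\gamma}$ from Lemma~\ref{interpolant} for $m = 1, 2$, and the estimate (a), each piece is controlled by $C h^{2\gamma}\|g\|_{-1}$. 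The quadratic remainder $B(e, e, \Pi_h \Theta)$ is bounded via \eqref{boundB1} and stability of $\Pi_h$ by $C\trinl e\trinr_2^2\trinl \Theta\trinr_2 \le C h^{2\gamma}\|g\|_{-1}$. Taking the supremum over $g$ with $\|g\|_{-1} = 1$ yields $\trinl e\trinr_1 \le C h^{2\gamma}$.

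\emph{Main obstacle.} The delicate step is establishing the uniform discrete inf--sup for $\mathcal{L}$ on $\bV_h$. It rests on the $\bH^{2+\gamma}$ regularity of the linearized dual problem, the convergence $\trinl \Theta - \Pi_h \Theta\trinr_2 \to 0$ from Lemma~\ref{interpolant}, and a quantitative perturbation from the continuous inf--sup supplied by Theorem~\ref{th2.5}. This is also precisely where the mesh restriction $h \le h_1$ and the smallness of the ball $B_{\rho_2}(\bar u)$ enter in an essential way, keeping the isomorphism constants of $\mathcal{L}_u$ uniformly controlled.
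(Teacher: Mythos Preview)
Your proposal is correct and follows the standard Brezzi--Rappaz--Raviart strategy (linearize, establish a discrete inf--sup for $\mathcal{L}=\mathcal{A}+\mathcal{B}'(\Psi_u)$ via a Schatz-type perturbation, then run Aubin--Nitsche duality for the lower-order norm). The paper itself does not spell out a proof of this theorem: it simply states that the estimates ``follow from the error estimates for the approximation of the von K\'arm\'an equations using conforming finite element methods; see \cite{Brezzi,ng1}.'' Those references use precisely the framework you outline, so your argument is essentially the detailed version of what the paper defers to the literature.

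One minor remark: in part~(b) you bound $B(e,e,\Pi_h\Theta)$ by $C\trinl e\trinr_2^2\trinl\Theta\trinr_2$, invoking ``stability of $\Pi_h$.'' The nodal interpolant $\Pi_h$ is not $H^2$-stable in general, so the clean way is to write $\trinl\Pi_h\Theta\trinr_2\le\trinl\Theta\trinr_2+\trinl\Theta-\Pi_h\Theta\trinr_2\le C\trinl\Theta\trinr_{2+\gamma}\le C\|g\|_{-1}$ using Lemma~\ref{interpolant} and the regularity of the dual solution. This is cosmetic and does not affect the argument.
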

\noindent We proceed to establish several results which  will be essential to prove  Theorem \ref{aux_dis}. The proof of Theorem \ref{ee1} follows from the error estimates for the approximation of \vket using conforming finite element methods; see \cite{Brezzi,ng1}.
 
 \bigskip
 \noindent{\bf An auxiliary linear problem and discretization}

 \medskip
 For a given ${\bf g}=(g_1,g_2) \in \bV'$, let $T \in {\mathcal L}(\bV', \bV)$ be defined by 
 $T {\bf  g}:= \boldsymbol{\xi} =(\xi_1, \xi_2) \in \bV$ where $\boldsymbol{\xi}$ solves the system of biharmonic equations given by:
 \begin{subequations} \label{bih}
\begin{align}
&  \Delta^2  \xi_1 = g_1 \,\, \quad \mbox{ in } \Omega, \label{state_bih1} \\
&  \Delta^2  \xi_2 = g_2 \quad \mbox{ in } \Omega,  \label{state_bih2} \\
& \xi_1=0,\,\frac{\partial \xi_1}{\partial \nu}=0\text{ and } \xi_2=0,\,\frac{\partial  \xi_2}{\partial \nu} =0
\text{  on }\partial\Omega. \label{bc3}
\end{align}
\end{subequations}
 \noindent Equivalently, $\boldsymbol{\xi} \in \bV$ solves $A(\boldsymbol{\xi}, \Phi) = \big\langle {\bf g}, \Phi \big\rangle_{\bV', \bV} \quad \forall \Phi \in \bV$.
 
 \medskip
\noindent Also, let $T_h \in {\mathcal L}(\bV', \bV_h)$ be defined by 
 $T_h {\bf  g}:= \boldsymbol{\xi}_h $ if $\boldsymbol{\xi}_h \in \bV_h$ solves the discrete problem
  \begin{equation} \label{dis_bih}
  A(\boldsymbol{\xi}_h, \Phi_h) = \big\langle {\bf g}, \Phi_h \big\rangle_{\bV', \bV} \quad \forall \Phi_h \in \bV_h.
  \end{equation}
 \begin{lem}{\it (A bound for $T_h$)} \label{bd_Th}
 There exists a constant $C>0$, independent of $h$, such that 
 $$\|T_h\|_{{\mathcal L}(\bV', \bV)} \le C. $$
 \end{lem}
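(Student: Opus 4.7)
The plan is a direct application of the coercivity of the bilinear form $A(\cdot,\cdot)$ stated in \eqref{coercivity}, exploiting the fact that the discrete problem \eqref{dis_bih} is posed on the conforming subspace $\bV_h \subset \bV$ and admits the discrete solution itself as a legitimate test function.

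First, I would observe that existence and uniqueness of $\boldsymbol{\xi}_h \in \bV_h$ follow from the Lax--Milgram lemma, since $\bV_h$ is a closed (finite-dimensional) subspace of $\bV$, and the bilinear form $A(\cdot,\cdot)$ is bounded and coercive on $\bV$ (and hence on $\bV_h$) by \eqref{boundA}--\eqref{coercivity}. This makes the operator $T_h$ well-defined.

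Second, I would choose the test function $\Phi_h = \boldsymbol{\xi}_h$ in \eqref{dis_bih} and apply coercivity:
\begin{equation*}
\trinl \boldsymbol{\xi}_h \trinr_2^2 \;\le\; A(\boldsymbol{\xi}_h, \boldsymbol{\xi}_h) \;=\; \langle {\bf g}, \boldsymbol{\xi}_h \rangle_{\bV',\bV} \;\le\; \|{\bf g}\|_{\bV'}\, \trinl \boldsymbol{\xi}_h \trinr_2 .
\end{equation*}
Dividing by $\trinl \boldsymbol{\xi}_h \trinr_2$ (the case $\boldsymbol{\xi}_h = 0$ being trivial) yields $\trinl T_h {\bf g} \trinr_2 = \trinl \boldsymbol{\xi}_h \trinr_2 \le \|{\bf g}\|_{\bV'}$, which gives the claimed uniform bound (in fact with $C = 1$, since the coercivity constant in \eqref{coercivity} is $1$ with the chosen seminorm).

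There is no real obstacle here: the argument is standard and essentially the discrete analogue of the a priori bound for the continuous biharmonic system $T {\bf g} = \boldsymbol{\xi}$ which comes from Lemma~\ref{poa}(i). The key point to emphasize is that conformity $\bV_h \subset \bV$ is what allows us to insert $\Phi_h = \boldsymbol{\xi}_h$ and inherit the continuous coercivity constant directly, so that the resulting bound on $T_h$ is independent of $h$.
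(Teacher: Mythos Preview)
Your proof is correct and follows essentially the same approach as the paper, which simply invokes the definition of $T_h{\bf g}$ together with the coercivity \eqref{coercivity} of $A(\cdot,\cdot)$. Your write-up is in fact more detailed (explicit test-function choice, Lax--Milgram for well-posedness, and the observation that $C=1$), but the underlying argument is identical.
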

\begin{proof}
The definition of $T_h{\bf g}$ along with coercivity property of the bilinear form $A(\cdot,\cdot)$ lead to the required result.
\end{proof}
\begin{lem}\label{BihErrEst}\cite{Brenner}({\it Error estimates})
Let $\boldsymbol{\xi}$ and $\boldsymbol{\xi}_h$ solve \eqref{bih} and \eqref{dis_bih} respectively. Then it holds:
  \begin{subequations} \label{sub}
  \begin{align}
 &\trinl \boldsymbol{\xi}-\boldsymbol{\xi}_h \trinr_{2} \le C h^{\gamma} \trinl{\bf g}\trinr_{-1} \quad \forall {\bf g} \in  {\bH}^{-1}(\Omega), \\
  &\trinl \boldsymbol{\xi}-\boldsymbol{\xi}_h \trinr_{1} \le C h^{2\gamma} \trinl{\bf g}\trinr_{-1} \quad \forall {\bf g} \in  {\bH}^{-1}(\Omega),
  \end{align}
  \end{subequations}
  $\gamma \in (1/2,1]$ being the index of elliptic regularity. That is, $\trinl (T-T_h){\bf g} \trinr_{2} \le C h^{\gamma} \trinl{\bf g}\trinr_{-1}$ and
  $\trinl (T-T_h){\bf g} \trinr_{1} \le C h^{2\gamma} \trinl{\bf g}\trinr_{-1}$.
  \end{lem}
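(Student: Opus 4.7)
\medskip
\noindent \textbf{Proof proposal for Lemma \ref{BihErrEst}.}

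\medskip

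The plan is to treat the two estimates in a classical two-step fashion: first obtain the energy estimate~(a) by C\'ea's lemma combined with the interpolation bound and the elliptic regularity of the biharmonic operator, and then obtain~(b) by an Aubin--Nitsche duality argument that uses~(a) twice. I will set up both arguments component-wise and handle the two biharmonic problems independently, since the system~\eqref{bih} decouples.

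\medskip
\emph{Step 1 (energy-norm estimate).} Subtracting \eqref{dis_bih} from the weak form of \eqref{bih} tested on $\Phi_h\in \bV_h$ gives Galerkin orthogonality $A(\boldsymbol{\xi}-\boldsymbol{\xi}_h,\Phi_h)=0$ for all $\Phi_h\in \bV_h$. The coercivity \eqref{coercivity} and boundedness \eqref{boundA} of $A(\cdot,\cdot)$ produce the C\'ea-type inequality
\[
\trinl \boldsymbol{\xi}-\boldsymbol{\xi}_h \trinr_{2}\;\le\; \inf_{\Phi_h\in \bV_h}\trinl \boldsymbol{\xi}-\Phi_h \trinr_{2}
\;\le\;\trinl \boldsymbol{\xi}-\Pi_h\boldsymbol{\xi} \trinr_{2}.
\]
Lemma~\ref{poa}(ii) yields $\boldsymbol{\xi}=T\mathbf{g}\in{\bH}^{2+\gamma}(\Omega)$ with $\trinl\boldsymbol{\xi}\trinr_{2+\gamma}\le C\trinl\mathbf{g}\trinr_{-1}$, and the interpolation estimate \eqref{inte} (with $m=2$) gives $\trinl \boldsymbol{\xi}-\Pi_h\boldsymbol{\xi} \trinr_{2}\le C h^{\gamma}\trinl\boldsymbol{\xi}\trinr_{2+\gamma}$, whence~(a) follows.

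\medskip
\emph{Step 2 (improved estimate in $H^{1}$).} This is the step that requires the most care, and is the main obstacle in the sense that one must exhibit a suitable dual problem, check that its solution has the right regularity, and pay attention to the exponent that one actually gains. Since $\mathrm{H}^{-1}(\Omega)$ is the dual of $H^1_0(\Omega)$, I write
\[
\trinl \boldsymbol{\xi}-\boldsymbol{\xi}_h\trinr_{1}
\;=\;\sup_{\mathbf{w}\in {\bH}^{-1}(\Omega),\ \trinl\mathbf{w}\trinr_{-1}\le 1}
\big\langle \mathbf{w},\,\boldsymbol{\xi}-\boldsymbol{\xi}_h\big\rangle_{\bH^{-1},\bH^{1}_{0}}.
\]
For each such $\mathbf{w}$, let $\boldsymbol{\phi}\in \bV$ solve the dual (biharmonic) problem $A(\boldsymbol{\phi},\Phi)=\langle \mathbf{w},\Phi\rangle$ for all $\Phi\in \bV$. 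Since $A$ is symmetric, this is again the biharmonic system with datum $\mathbf{w}\in{\bH}^{-1}(\Omega)$, so Lemma~\ref{poa}(ii) gives $\boldsymbol{\phi}\in{\bH}^{2+\gamma}(\Omega)$ with $\trinl\boldsymbol{\phi}\trinr_{2+\gamma}\le C\trinl \mathbf{w}\trinr_{-1}$. Using $\mathbf{w}$ as the test datum and choosing $\Phi=\boldsymbol{\xi}-\boldsymbol{\xi}_h\in \bV$, then applying Galerkin orthogonality with $\Pi_h\boldsymbol{\phi}\in \bV_h$, I obtain
\[
\big\langle \mathbf{w},\boldsymbol{\xi}-\boldsymbol{\xi}_h\big\rangle
=A(\boldsymbol{\xi}-\boldsymbol{\xi}_h,\boldsymbol{\phi})
=A(\boldsymbol{\xi}-\boldsymbol{\xi}_h,\boldsymbol{\phi}-\Pi_h\boldsymbol{\phi})
\le\trinl\boldsymbol{\xi}-\boldsymbol{\xi}_h\trinr_{2}\,\trinl\boldsymbol{\phi}-\Pi_h\boldsymbol{\phi}\trinr_{2}.
\]
Estimate~(a) bounds the first factor by $Ch^{\gamma}\trinl\mathbf{g}\trinr_{-1}$, and \eqref{inte} combined with the regularity of $\boldsymbol{\phi}$ bounds the second factor by $Ch^{\gamma}\trinl\mathbf{w}\trinr_{-1}$. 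Taking the supremum over $\mathbf{w}$ with $\trinl\mathbf{w}\trinr_{-1}\le 1$ yields~(b).

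\medskip
The only place where something non-routine happens is the regularity step in the duality argument, where the biharmonic shift from ${\bH}^{-1}$ to ${\bH}^{2+\gamma}$ (Lemma~\ref{poa}(ii)) is precisely what allows the exponent in~(b) to double from $h^{\gamma}$ to $h^{2\gamma}$; without that shift one would be stuck at $h^{\gamma}$. Everything else is standard C\'ea/Aubin--Nitsche machinery adapted to fourth-order conforming elements.
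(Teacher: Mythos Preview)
Your proof is correct and follows the classical C\'ea/Aubin--Nitsche route that one expects for conforming discretisations of fourth-order problems. The paper itself does not give a proof of this lemma at all: it is simply quoted from \cite{Brenner} as a standard result, so there is no ``paper's own proof'' to compare with. Your argument is precisely the standard one underlying that citation; the only point worth double-checking is that the interpolant $\Pi_h\boldsymbol{\phi}$ indeed lands in $\bV_h$ (i.e., preserves the homogeneous clamped boundary data), which holds for the Argyris and Bogner--Fox--Schmit operators applied to functions in $\bV=H^2_0(\Omega)^2$.
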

  \begin{rem} When ${\bf g}=(g,0)$, we denote  $T{\bf g}$ (resp. $T_h{\bf g}$) as $Tg$ (resp. $T_hg$), purely for notational convenience.
  \end{rem}
  
  \bigskip
 \noindent{\bf A nonlinear mapping and its properties}

  \medskip
  Define a nonlinear mapping ${\mathscr N}: \bV \times L^2(\omega) \rightarrow \bV$ by 
  $$ {\mathscr N} (\Psi, u):= \Psi + T[{\mathcal B}(\Psi)-(F + {\mathbf C}{\bf u})], \qquad {\bf u} =(u,0).$$
  
  \smallskip
 \noindent  Now ${\mathscr N} (\Psi, u)=0$ if and only if  $(\Psi, u)$ solves \eqref{of}; that is,
  ${\mathcal A} \Psi + {\mathcal B}(\Psi) = F + {\mathbf C}{\bf u} \quad {\rm in} \; \bV'.$
  
  \medskip
 \noindent Similarly, define   a nonlinear mapping ${\mathscr N}_h: \bV \times L^2(\omega) \rightarrow \bV$ by 
  $$ {\mathscr N}_h (\Psi, u):= \Psi + T_h[{\mathcal B}(\Psi)-(F + {\mathbf  C}{\bf u})], \qquad {\bf u} =(u,0).$$
 Note that,  ${\mathscr N}_h (\Psi, u)=0$ if and only if  $\Psi \in \bV_h$ and $\Psi= \Psi_{u,h}$ solves \eqref{wform2}.

\medskip
\noindent The derivative mapping $\partial_\Psi  {\mathscr N} (\Psi, u)  \; (\mbox{resp.} \; \partial_\Psi  {\mathscr N}_h (\Psi, u)) \in {\mathcal L}(\bV)$  is defined by 
$$\partial_\Psi  {\mathscr N} (\Psi, u)(\Phi) = \Phi + T[{\mathcal B}'(\Psi) \Phi] \quad \forall \Phi \in \bV.$$
$$\big(\mbox{resp.} \; \partial_\Psi  {\mathscr N}_h (\Psi, u)(\Phi) = \Phi + T_h[{\mathcal B}'(\Psi) \Phi] \quad \forall \Phi \in \bV.\big)$$

\medskip

With definitions of nonsingular solution (see Definition \ref{isolated}), the linear mapping $T$ and the derivative mapping $\partial_\Psi  {\mathscr N} (\Psi, u)$, we obtain the following result, the proof of which is skipped. 
\begin{lem}\cite{cmj} If $({\bar \Psi}, {\bar u}) \in \bV \times L^2(\omega)$ is a nonsingular solution of \eqref{wv}, then $\partial_\Psi  {\mathscr N} (\bar \Psi, \bar u)$ is an automorphism in $\bV$. The converse also holds true.
\end{lem}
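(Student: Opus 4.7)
The plan is to identify $\partial_\Psi \mathscr{N}(\bar \Psi,\bar u)$ as a compact perturbation of the identity on $\bV$, and then invoke the Fredholm alternative together with the definition of a nonsingular solution (Definition \ref{isolated}).

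First I would observe that the linear map $T\in {\mathcal L}(\bV',\bV)$ coincides with ${\mathcal A}^{-1}$: by construction $T{\bf g}$ is the unique element of $\bV$ with $A(T{\bf g},\Phi)=\langle {\bf g},\Phi\rangle$ for every $\Phi\in\bV$, which is exactly the equation ${\mathcal A}(T{\bf g})={\bf g}$ in $\bV'$. Consequently
\begin{equation*}
\partial_\Psi {\mathscr N}(\bar \Psi,\bar u)\,\Phi \;=\; \Phi+{\mathcal A}^{-1}{\mathcal B}'(\bar \Psi)\Phi \;=\; {\mathcal A}^{-1}\boldsymbol L \Phi,\qquad \Phi\in\bV,
\end{equation*}
where $\boldsymbol L={\mathcal A}+{\mathcal B}'(\bar \Psi)$ is the linearization appearing in Definition \ref{isolated}. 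Since ${\mathcal A}^{-1}$ is an isomorphism from $\bV'$ onto $\bV$ by Lemma \ref{poa}(i), the operator $\partial_\Psi {\mathscr N}(\bar \Psi,\bar u)$ is an automorphism of $\bV$ if and only if $\boldsymbol L$ is an isomorphism from $\bV$ onto $\bV'$.

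Next I would establish that $K:={\mathcal A}^{-1}{\mathcal B}'(\bar\Psi):\bV\to\bV$ is compact. Fix $\gamma_0\in(1/2,\gamma)$. Using the definition of ${\mathcal B}'(\bar\Psi)$, the symmetry of $B(\cdot,\cdot,\cdot)$, the a priori regularity $\bar\Psi\in {\bH}^{2+\gamma}(\Omega)$ from Lemma \ref{ap}, and estimate \eqref{boundB3}, one gets for every $\Phi,\boldsymbol\xi\in\bV$
\begin{equation*}
\big|\langle {\mathcal B}'(\bar\Psi)\Phi,\boldsymbol\xi\rangle\big|
=\big|B(\bar\Psi,\Phi,\boldsymbol\xi)+B(\Phi,\bar\Psi,\boldsymbol\xi)\big|
\;\le\; C\,\trinl\bar\Psi\trinr_{2+\gamma_0}\trinl\Phi\trinr_2\trinl\boldsymbol\xi\trinr_1.
\end{equation*}
Thus ${\mathcal B}'(\bar\Psi)$ is bounded from $\bV$ into ${\bH}^{-1}(\Omega)$. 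By Lemma \ref{poa}(ii), ${\mathcal A}^{-1}$ maps ${\bH}^{-1}(\Omega)$ continuously into ${\bH}^{2+\gamma}(\Omega)$, and the embedding ${\bH}^{2+\gamma}(\Omega)\hookrightarrow {\bH}^{2}(\Omega)=\bV$ is compact by Rellich--Kondrachov. Composing, $K$ is a compact operator on $\bV$.

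Finally, writing $\partial_\Psi {\mathscr N}(\bar\Psi,\bar u)=I_{\bV}+K$ with $K$ compact, the Fredholm alternative asserts that $I_{\bV}+K$ is an automorphism of $\bV$ precisely when it is injective. Injectivity reads $\Phi+{\mathcal A}^{-1}{\mathcal B}'(\bar\Psi)\Phi=0$, which after applying ${\mathcal A}$ is equivalent to $\boldsymbol L\Phi=0$ in $\bV'$. Hence $\partial_\Psi {\mathscr N}(\bar\Psi,\bar u)$ is an automorphism if and only if $\boldsymbol L\Phi=0\Rightarrow \Phi=\boldsymbol 0$, which is the nonsingularity condition in Definition \ref{isolated}. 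This proves both implications. The only nontrivial step is the compactness of $K$; all the ingredients needed for it (regularity of $\bar\Psi$, trilinear estimate \eqref{boundB3}, and elliptic regularity of the biharmonic operator) are already in hand from Section \ref{sec:weakformulation}.
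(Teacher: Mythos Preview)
Your proof is correct. The paper itself omits any argument for this lemma (citing \cite{cmj} and writing ``the proof of which is skipped''), so there is nothing to compare against; your route via the identification $T={\mathcal A}^{-1}$, compactness of $K={\mathcal A}^{-1}{\mathcal B}'(\bar\Psi)$, and the Fredholm alternative is exactly the standard and expected one.
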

We want to establish that if $({\bar \Psi}, \bar u)$ is a nonsingular solution, then the derivative mapping  $\partial_\Psi  {\mathscr N}_h (\cdot, \cdot)$ is an automorphism in $\bV$, with respect to small perturbations of its arguments. That is, if $\trinl { \Psi}- {\bar \Psi} \trinr_2$ and $\|u - \bar u\|_{L^2(\omega)} $ are small enough, then   $\partial_\Psi  {\mathscr N}_h (\Psi, u)$  is  an automorphism in $\bV$. 
The next two lemmas will be useful in proving this result.
\begin{lem} \label{aux}Let $\bar \Psi \in \bV$ be a nonsingular solution of \eqref{of}. Then,  $\forall \epsilon >0 $, $\exists h_{\epsilon}>0$  such that 
\begin{equation}\label{tes}
\| T[B'(\bar \Psi)] - T_h [B'(\Psi)]\|_{{\mathcal L}(\bV)} < \epsilon \quad \forall \Psi \in B_{\rho_\epsilon}(\bar \Psi),
\end{equation}
whenever $0 < h < h_\epsilon$.
\end{lem}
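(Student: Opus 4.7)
The natural strategy is to split the operator difference by inserting the ``cross'' term $T_h[\mathcal{B}'(\bar\Psi)]$, giving the triangle inequality
\begin{equation*}
\bigl\| T[\mathcal{B}'(\bar\Psi)] - T_h[\mathcal{B}'(\Psi)]\bigr\|_{\mathcal L(\bV)}
\le
\bigl\| (T-T_h)\mathcal{B}'(\bar\Psi)\bigr\|_{\mathcal L(\bV)}
+
\bigl\| T_h\bigl(\mathcal{B}'(\bar\Psi)-\mathcal{B}'(\Psi)\bigr)\bigr\|_{\mathcal L(\bV)}.
\end{equation*}
The first term is an $h$-error controlled by the biharmonic error estimate of Lemma~\ref{BihErrEst}, while the second is a continuity term in $\Psi$ controlled by a $\rho_\epsilon$-smallness assumption. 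Choosing first $\rho_\epsilon$ small enough to bound the second term by $\epsilon/2$, and then $h_\epsilon$ small enough to bound the first term by $\epsilon/2$, will give the result.

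\textbf{Bounding the first term.} The key point is that $\mathcal{B}'(\bar\Psi)\Phi$ lies in the smaller space $\bH^{-1}(\Omega)$ (not just $\bV'$) with a uniform bound. Indeed, by Lemma~\ref{ap} the nonsingular solution $\bar\Psi$ belongs to $\bH^{2+\gamma}(\Omega)$, so using \eqref{opad} together with the sharp boundedness estimate \eqref{boundB3} gives
\begin{equation*}
|\langle \mathcal{B}'(\bar\Psi)\Phi,\Xi\rangle|
= |B(\bar\Psi,\Phi,\Xi)+B(\Phi,\bar\Psi,\Xi)|
\le C_\gamma \trinl\bar\Psi\trinr_{2+\gamma_0}\,\trinl\Phi\trinr_2\,\trinl\Xi\trinr_1,
\end{equation*}
for any fixed $\gamma_0\in(1/2,\gamma)$, so $\trinl \mathcal{B}'(\bar\Psi)\Phi\trinr_{-1}\le C\trinl\Phi\trinr_2$. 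Applying Lemma~\ref{BihErrEst} with $\mathbf g=\mathcal{B}'(\bar\Psi)\Phi$ then yields
\begin{equation*}
\trinl (T-T_h)\mathcal{B}'(\bar\Psi)\Phi\trinr_2 \le C h^\gamma \trinl \mathcal{B}'(\bar\Psi)\Phi\trinr_{-1} \le C h^\gamma \trinl\Phi\trinr_2,
\end{equation*}
so the first term is $\mathcal O(h^\gamma)$ and vanishes as $h\to 0$.

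\textbf{Bounding the second term.} Because $\mathcal{B}$ is quadratic, $\mathcal{B}'$ is linear in its argument and $\mathcal{B}'(\bar\Psi)-\mathcal{B}'(\Psi)=\mathcal{B}'(\bar\Psi-\Psi)$. The global boundedness \eqref{boundB1} gives
\begin{equation*}
\bigl\|\mathcal{B}'(\bar\Psi-\Psi)\bigr\|_{\mathcal L(\bV,\bV')}
\le 2 C_b\trinl\bar\Psi-\Psi\trinr_2.
\end{equation*}
Combining with the uniform bound $\|T_h\|_{\mathcal L(\bV',\bV)}\le C$ of Lemma~\ref{bd_Th}, one concludes
\begin{equation*}
\bigl\| T_h(\mathcal{B}'(\bar\Psi)-\mathcal{B}'(\Psi))\bigr\|_{\mathcal L(\bV)}
\le C\trinl\bar\Psi-\Psi\trinr_2 \le C\rho_\epsilon \qquad \forall \Psi\in B_{\rho_\epsilon}(\bar\Psi).
\end{equation*}

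\textbf{Conclusion.} Pick $\rho_\epsilon:=\epsilon/(2C)$ so the second term is at most $\epsilon/2$, then choose $h_\epsilon>0$ so that $Ch^\gamma<\epsilon/2$ for every $0<h<h_\epsilon$. The only subtle step is the first one: if one only used the coarse bound \eqref{boundB1}, $\mathcal{B}'(\bar\Psi)\Phi$ would lie in $\bV'$ but not in $\bH^{-1}(\Omega)$, and Lemma~\ref{BihErrEst} could not be invoked. The extra regularity $\bar\Psi\in\bH^{2+\gamma}(\Omega)$ combined with the refined trilinear estimate \eqref{boundB3} is precisely what moves $\mathcal{B}'(\bar\Psi)\Phi$ into $\bH^{-1}(\Omega)$ and makes the first term tend to zero uniformly in $\Phi$ on the unit ball of $\bV$. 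This is the crux of the argument.
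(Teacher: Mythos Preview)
Your proof is correct and follows essentially the same route as the paper: both insert the intermediate term $T_h[\mathcal{B}'(\bar\Psi)]$, bound $(T-T_h)\mathcal{B}'(\bar\Psi)$ by $Ch^\gamma$ using the extra regularity $\bar\Psi\in\bH^{2+\gamma}(\Omega)$ to place $\mathcal{B}'(\bar\Psi)\Phi$ in $\bH^{-1}(\Omega)$ and then invoke the biharmonic error estimate, and bound $T_h(\mathcal{B}'(\bar\Psi)-\mathcal{B}'(\Psi))$ via the uniform bound on $T_h$ together with \eqref{boundB1}. The only cosmetic differences are that the paper writes things out in terms of the auxiliary functions $\boldsymbol{\theta}(\bar\Psi),\boldsymbol{\theta}_h(\Psi),\boldsymbol{\theta}_h(\bar\Psi)$ and explicitly invokes the symmetry of $B$ in its first two arguments to handle the $B(\Phi,\bar\Psi,\Xi)$ term (a point you used implicitly when applying \eqref{boundB3}).
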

\begin{proof}
For a fixed ${\bf z} \in \bV$, let $T[B'(\bar \Psi) {\bf z}]=: {\boldsymbol \theta}(\bar \Psi) \in \bV$
and $T_h[B'( \Psi) {\bf z}]=: {\boldsymbol \theta}_h(\Psi) \in \bV_h$.
Then ${\boldsymbol \theta}(\bar \Psi)$ and ${\boldsymbol \theta}_h( \Psi)$, respectively solve 
\begin{align}
 A({\boldsymbol \theta}(\bar \Psi), \Phi)&=\big \langle B'(\bar  \Psi) {\bf z}, \Phi \big \rangle_{\bV', \bV} =  B({\bar \Psi}, {\bf z}, \Phi) +  B({\bf z}, {\bar \Psi},  \Phi)  \quad \forall \Phi \in \bV, \label{a1}\\
A({\boldsymbol \theta}_h( \Psi), \Phi_h)& = \big \langle B'( \Psi) {\bf z}, \Phi_h \big \rangle_{\bV', \bV} = B({ \Psi}, {\bf z}, \Phi_h) +  B({\bf z}, { \Psi},  \Phi_h)  \quad \forall \Phi_h \in \bV_h.  \label{a2}
\end{align}
Let ${\boldsymbol \theta}_h({\bar \Psi}) \in \bV_h$ be the solution to the intermediate problem defined by 
\begin{align}
& A({\boldsymbol \theta}_h(\bar \Psi), \Phi_h)= \big \langle B'(\bar \Psi) {\bf z}, \Phi_h \big \rangle_{\bV', \bV}= B({\bar \Psi}, {\bf z}, \Phi_h) +  B({\bf z}, {\bar \Psi},  \Phi_h)  \quad \forall \Phi_h \in \bV_h. \label{a3} 
\end{align}
The triangle inequality yields  
\begin{equation}\label{trian}
\trinl {\boldsymbol \theta}(\bar \Psi) -{\boldsymbol \theta}_h( \Psi) \trinr_2 \le  
\trinl {\boldsymbol \theta}(\bar \Psi) -{\boldsymbol \theta}_h( \bar \Psi) \trinr_2 + 
\trinl {\boldsymbol \theta}_h(\bar \Psi) -{\boldsymbol \theta}_h( \Psi) \trinr_2.
\end{equation}
To estimate the first term in the right hand side of \eqref{trian}, consider \eqref{a1} and \eqref{a3}; use the facts that $\bV_h \subset \bV$, the error $({\boldsymbol \theta}(\bar \Psi) -{\boldsymbol \theta}_h( \bar \Psi) )$
is orthogonal to $\bV_h$ in the energy norm, the coercivity of $A(\cdot, \cdot)$, the interpolation estimate given in Lemma \ref{interpolant} and the fact that ${\bar \Psi} \in {\bH}^{2 +\gamma}(\Omega)$ to obtain 
\begin{equation}\label{trian1}
\trinl {\boldsymbol \theta}(\bar \Psi) -{\boldsymbol \theta}_h( \bar \Psi) \trinr_2 \le C h^{\gamma}   \trinl  {\boldsymbol \theta}(\bar \Psi) \trinr_{2 + \gamma} \le 
  C h^{\gamma}   \trinl  {\mathcal B}'(\bar \Psi) {\bf z} \trinr_{-1}.
\end{equation}
From definition of $ {\mathcal B}'(\bar \Psi) {\bf z} $, \eqref{boundB2} and the fact that $B(\cdot,\cdot,\cdot)$ is symmetric in first and second variables, it follows that
\begin{align}
\trinl  {\mathcal B}'(\bar \Psi) {\bf z} \trinr_{-1}  & = \sup
 \frac{ \left|  \big \langle {\mathcal B}'(\bar \Psi) {\bf z}, \Phi    \big\rangle \right| } 
{ \trinl \Phi \trinr_{1}} \nonumber \\
& \le  \sup
 \frac{  |B({\bar \Psi}, {\bf z}, \Phi) +  B({\bf z}, {\bar \Psi},  \Phi)|   }
{ \trinl \Phi \trinr_{1}}  \nonumber \\
& \lesssim \trinl \bar \Psi \trinr_{2+\gamma} \trinl {\bf z} \trinr_{2}. \label{bdash}
\end{align}
A substitution of \eqref{bdash} in \eqref{trian1} leads to 
\begin{equation}\label{trian2}
\trinl {\boldsymbol \theta}(\bar \Psi) -{\boldsymbol \theta}_h( \bar \Psi) \trinr_2 \le
 C h^{\gamma}  \trinl \bar \Psi \trinr_{2} \trinl {\bf z} \trinr_{2}.
 \end{equation}
 To estimate the second term on the right hand side of \eqref{trian}, subtract \eqref{a2} and \eqref{a3}, choose $\Phi_h=  {\boldsymbol \theta}_h(\bar \Psi) -{\boldsymbol \theta}_h( \Psi) $, use \eqref{coercivity} and \eqref{boundB1} to obtain

 \begin{equation} \label{trian3}
 \trinl {\boldsymbol \theta}_h(\bar \Psi) -{\boldsymbol \theta}_h( \Psi) \trinr_2 
 \le C \trinl    \bar \Psi - {\Psi} \trinr_2 \trinl {\bf z}\trinr_2.
 \end{equation} 
 A use of \eqref{trian2} and \eqref{trian3} in \eqref{trian} leads to the required result, when $h_\epsilon$ and $\rho_\epsilon$ are chosen sufficiently small.
\end{proof}
\noindent The next lemma is a standard result in Banach spaces and hence we refrain from providing a proof.
\begin{lem} \label{banach}
Let $X$ be a Banach space,
$A\in{\mathcal{L}}(X)$ be  invertible  and $B\in{\mathcal{L}}(X)$. If
$\|A-B\|_{{\mathcal{L}}(X)}<1/\|A^{-1}\|_{{\mathcal{L}}(X)}$, then
$B$ is invertible. If
$\|A-B\|_{{\mathcal{L}}(X)}<1/(2\|A^{-1}\|_{{\mathcal{L}}(X)})$ then
$\|B^{-1}\|_{{\mathcal{L}}(X)}\leq 2\|A^{-1}\|_{{\mathcal{L}}(X)}$. 
\end{lem}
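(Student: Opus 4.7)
The plan is to reduce everything to the standard Neumann series argument. I would begin with the algebraic factorization
\[
B = A - (A-B) = A\bigl(I - A^{-1}(A-B)\bigr),
\]
which makes sense because $A$ is invertible, so composition with $A$ on the left is a bijection of $\mathcal{L}(X)$. This shifts the question of invertibility of $B$ to the question of invertibility of $I - T$ where $T := A^{-1}(A-B) \in \mathcal{L}(X)$.

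Next I would control the norm of $T$ by submultiplicativity: $\|T\|_{\mathcal{L}(X)} \le \|A^{-1}\|_{\mathcal{L}(X)}\|A-B\|_{\mathcal{L}(X)}$. Under the first hypothesis this gives $\|T\|_{\mathcal{L}(X)} < 1$, so the Neumann series $\sum_{k\ge 0} T^k$ converges absolutely in $\mathcal{L}(X)$ and provides the inverse of $I-T$, with the classical bound $\|(I-T)^{-1}\|_{\mathcal{L}(X)} \le 1/(1-\|T\|_{\mathcal{L}(X)})$. Consequently
\[
B^{-1} = (I-T)^{-1} A^{-1}
\]
exists and lies in $\mathcal{L}(X)$, establishing the first assertion.

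For the quantitative bound in the second assertion, the sharper hypothesis forces $\|T\|_{\mathcal{L}(X)} \le 1/2$, so the Neumann estimate yields $\|(I-T)^{-1}\|_{\mathcal{L}(X)} \le 1/(1 - 1/2) = 2$. Multiplying by $\|A^{-1}\|_{\mathcal{L}(X)}$ gives $\|B^{-1}\|_{\mathcal{L}(X)} \le 2\|A^{-1}\|_{\mathcal{L}(X)}$, as desired.

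There is no genuine obstacle here: the proof is entirely formal once the factorization is in place, and convergence of the Neumann series is exactly what the hypothesis on $\|A-B\|$ is designed to guarantee. The only minor point to be careful about is that the factorization uses $A$ on the \emph{left}, so one obtains $B^{-1} = (I-T)^{-1}A^{-1}$ rather than $A^{-1}(I-T)^{-1}$; the norm bound is the same either way.
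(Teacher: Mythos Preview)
Your argument is correct and is exactly the standard Neumann-series proof; the paper itself omits the proof entirely, remarking only that the lemma is a standard result in Banach spaces.
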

\noindent The following theorem is a consequence of Lemmas \ref{aux}-\ref{banach}.
\begin{thm}[\it Invertibility of $\partial_{\Psi} {\mathscr N}_{h}(\Psi,u)$] \label{invert}
Let 
$({\bar \Psi}, {\bar u}) \in \bV \times L^2(\omega)$ be a nonsingular solution of \eqref{wv}. Then, there exist $h_0>0$ and
$\rho_0>0$ such that, for all $0<h<h_0$ and all ${ \Psi}\in
B_{\rho_0}(\bar \Psi)$, $ {\mathscr N}_h ( \Psi,  u)$ is an automorphism in $\bV$ and 
\[\|{\partial_{\Psi}{\mathscr{N}}_h}({{\Psi}},{{u}})^{-1}
\|_{{\mathcal{L}}(\bV)}\leq
2 \|{\partial_{\Psi}{\mathscr{N}}}({{\bar \Psi}},{{\bar u}})^{-1}
\|_{{\mathcal{L}}(\bV)}.\]
\end{thm}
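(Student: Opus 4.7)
The key observation is that the derivative mapping $\partial_{\Psi}{\mathscr N}_h(\Psi,u)$ actually does not depend on $u$: by the defining formula it equals the operator $I + T_h\!\circ\!{\mathcal B}'(\Psi)$ acting on $\bV$. Similarly, $\partial_{\Psi}{\mathscr N}({\bar\Psi},{\bar u})$ equals $I + T\!\circ\!{\mathcal B}'({\bar\Psi})$. Therefore the difference between the continuous and discrete derivatives can be written as
\[
\partial_{\Psi}{\mathscr N}({\bar\Psi},{\bar u}) - \partial_{\Psi}{\mathscr N}_h(\Psi,u) = T\!\circ\!{\mathcal B}'({\bar\Psi}) - T_h\!\circ\!{\mathcal B}'(\Psi),
\]
which is precisely the object controlled by Lemma~\ref{aux}. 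This reduces the proof to a standard Banach-space perturbation argument.

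\textbf{Step 1.} Since $({\bar\Psi},{\bar u})$ is a nonsingular solution of \eqref{wv}, the lemma preceding the statement (the characterization of nonsingular solutions via $\partial_{\Psi}{\mathscr N}$) implies that $A := \partial_{\Psi}{\mathscr N}({\bar\Psi},{\bar u})$ is an automorphism of $\bV$. In particular, set
\[
M := \|A^{-1}\|_{{\mathcal L}(\bV)} < \infty.
\]

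\textbf{Step 2.} Apply Lemma~\ref{aux} with the choice $\epsilon = \dfrac{1}{2M}$. It produces constants $h_\epsilon > 0$ and $\rho_\epsilon > 0$ such that, whenever $0<h<h_\epsilon$ and $\Psi \in B_{\rho_\epsilon}({\bar\Psi})$,
\[
\bigl\| T\,{\mathcal B}'({\bar\Psi}) - T_h\,{\mathcal B}'(\Psi) \bigr\|_{{\mathcal L}(\bV)} < \frac{1}{2M} = \frac{1}{2\|A^{-1}\|_{{\mathcal L}(\bV)}}.
\]
Set $h_0 := h_\epsilon$ and $\rho_0 := \rho_\epsilon$, and denote $B := \partial_{\Psi}{\mathscr N}_h(\Psi,u) = I + T_h\,{\mathcal B}'(\Psi)$. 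By the identity in the opening observation, the displayed inequality is exactly the statement $\|A-B\|_{{\mathcal L}(\bV)} < 1/(2\|A^{-1}\|_{{\mathcal L}(\bV)})$.

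\textbf{Step 3.} Invoke the Banach perturbation result Lemma~\ref{banach} with this $A$ and $B$. It yields that $B = \partial_{\Psi}{\mathscr N}_h(\Psi,u)$ is invertible in ${\mathcal L}(\bV)$ and
\[
\|\partial_{\Psi}{\mathscr N}_h(\Psi,u)^{-1}\|_{{\mathcal L}(\bV)} \le 2\|A^{-1}\|_{{\mathcal L}(\bV)} = 2\|\partial_{\Psi}{\mathscr N}({\bar\Psi},{\bar u})^{-1}\|_{{\mathcal L}(\bV)},
\]
which is exactly the quantitative bound claimed.

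\textbf{Main difficulty.} There is no real obstacle: the heavy lifting has already been done in Lemma~\ref{aux}, where both the finite element approximation error $T - T_h$ (handled by the biharmonic estimate of Lemma~\ref{BihErrEst}) and the continuity of ${\mathcal B}'$ in $\Psi$ (handled by the trilinear estimates \eqref{boundB1}--\eqref{boundB2} and ${\bar\Psi}\in {\bH}^{2+\gamma}(\Omega)$) were combined into a single uniform smallness statement. The one point to emphasize in the write-up is the $u$-independence of $\partial_{\Psi}{\mathscr N}_h(\Psi,u)$, so that the $u$ variable plays no role and the conclusion holds for every $u\in L^2(\omega)$ (this is what permits the later application of the theorem across a full neighbourhood of $\bar u$).
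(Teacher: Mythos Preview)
Your proof is correct and follows essentially the same approach as the paper: choose $\epsilon = 1/(2\|\partial_{\Psi}{\mathscr N}(\bar\Psi,\bar u)^{-1}\|_{{\mathcal L}(\bV)})$ in Lemma~\ref{aux}, identify the operator difference $\partial_{\Psi}{\mathscr N}(\bar\Psi,\bar u)-\partial_{\Psi}{\mathscr N}_h(\Psi,u)$ with $T{\mathcal B}'(\bar\Psi)-T_h{\mathcal B}'(\Psi)$, and apply Lemma~\ref{banach}. Your explicit remark on the $u$-independence of $\partial_{\Psi}{\mathscr N}_h(\Psi,u)$ is a helpful clarification that the paper leaves implicit.
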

\begin{proof}
Choose $h_0:= h_{\epsilon}, \; \rho_0:= \rho_{\epsilon} $ and $\epsilon= \frac{1}{2 \|{\partial_{\Psi}{\mathscr{N}}}({{\bar \Psi}},{{\bar u}})^{-1}
\|_{{\mathcal{L}}(\bV)}}$ in Lemma \ref{aux}.

\medskip
For every $0 < h< h_0$ and for all $\Psi \in B_{\rho_0}({\bar \Psi})$, the definitions of the derivatives of 
$\partial_\Psi {\mathscr N}$, $\partial_\Psi {\mathscr N}_h$ and \eqref{tes} yield
\begin{align*}
\|{\partial_{\Psi}{\mathscr{N}}}({{\bar \Psi}},{{\bar u}}) -{\partial_{\Psi}{\mathscr{N}_h}}({{ \Psi}},{{ u}})
\|_{{\mathcal{L}}(\bV)} & = \| T[B'(\bar \Psi)] - T_h [B'(\Psi)]\|_{{\mathcal L}(\bV)} < \epsilon. 
\end{align*}
Now, an application of Lemma \ref{banach} yields the required result.
\end{proof}
\noindent We now proceed to provide a proof Theorem \ref{aux_dis}, which is the main result of this subsection.

\medskip
\noindent{\it \textbf {Proof of Theorem \ref{aux_dis}}}:  

\medskip 
\noindent Let $({\bar \Psi}, {\bar u}) \in \bV \times L^2(\omega)$ be a nonsingular solution of \eqref{of}. 

\medskip
\noindent We need to establish that there exist $\rho_1, \rho_2>0$ and $h_1 >0$ such that, for all $0 < h< h_1$ and ${u} \in B_{\rho_2}({\bar u})$,  ${\mathscr N}_h(\Psi,u) =0$ admits a unique solution in 
$B_{\rho_1}(\bar \Psi)$.

\medskip
\noindent Let $\rho_0$ and $h_0$ be the positive constants as defined in Theorem \ref{invert}.  For $\rho \le \rho_0$, $h \le h_0$ and $u \in B_{\rho_2}({\bar u})$, define a mapping ${\mathscr G}(\cdot,u): B_{\rho}({\bar \Psi}) \rightarrow \bV$ by 
$${\mathscr G}(\Psi,u) = \Psi - [\partial_{\Psi} {\mathscr N}_h({\bar \Psi}, {\bar u})]^{-1} {\mathscr N}_h(\Psi, u).$$
Any fixed point of ${\mathscr G}(\cdot,u)$ is a solution of the discrete nonlinear problem ${\mathscr N}_h(\Psi, u)=0$.
In the next two steps, we establish that   (i)  ${\mathscr G}(\cdot, u)$ maps $B_{\rho}({\bar \Psi}) $ into itself; and  (ii) ${\mathscr G}(\cdot, u)$ is a strict contraction, if  $\rho$ is small enough.
\medskip

\noindent{\it Step 1:} The definition of ${\mathscr G}(\cdot, u)$, an addition of the zero term ${\mathscr N}({\bar \Psi}, {\bar u})$, an addition and subtraction of an intermediate  term 
and the Taylor's Theorem yield 
\begin{align}
\trinl {\mathscr G}(\Psi,u)- {\bar \Psi} \trinr_2 & = 
\trinl (\Psi- {\bar \Psi}) - [\partial_{\Psi} {\mathscr N}_h({\bar \Psi}, {\bar u})]^{-1} {\mathscr N}_h(\Psi, u) \trinr_2 \nonumber \\
& \le  \trinl [\partial_{\Psi}  {\mathscr N}_h({\bar \Psi}, {\bar u})]^{-1}  \left\{
[\partial_{\Psi} {\mathscr N}_h({\bar \Psi}, {\bar u})]  (\Psi- {\bar \Psi}) -
[{\mathscr N}_h(\Psi, u)-{\mathscr N}_h({\bar \Psi}, u)  ]  \right\} \trinr_2 \nonumber \\
 &\qquad + \trinl [\partial_{\Psi}  {\mathscr N}_h({\bar \Psi}, {\bar u})]^{-1}   [{\mathscr N}(\bar \Psi, \bar u) - {\mathscr N}_h(\bar \Psi, u) ]  
\trinr_2. \label{int1}
\end{align}

A use of Theorem \ref{invert}, Taylor formula for the second expression in the first term of the right hand side of \eqref{int1} along with the fact that the expression for the derivative $\partial_{\Psi}$ is independent of $u$ yields for $\Psi_{t}= \bar \Psi + t(\Psi - \bar \Psi), \; 0 < t < 1$, 
\begin{align}
\trinl {\mathscr G}(\Psi,u)- {\bar \Psi} \trinr_2 &  \le 
C \trinl \partial_{\Psi} {\mathscr N}_h({\bar \Psi}, {\bar u})]  (\Psi- {\bar \Psi}) -
\int_0^1 \partial_{\Psi}  {\mathscr N}_h({ \Psi_t},  u) (\Psi- {\bar \Psi})   \trinr_2 \nonumber \\
& \quad + C \trinl    [{\mathscr N}(\bar \Psi, \bar u) - {\mathscr N}_h(\bar \Psi, u) ]  
\trinr_2. \nonumber 
\end{align} 
With definitions of ${\mathscr N}(\cdot, \cdot)$ and ${\mathscr N}_h(\cdot,\cdot)$, and the triangle inequality in the above expression, we obtain
\begin{align} \label{main}
\trinl {\mathscr G}(\Psi,u)- {\bar \Psi} \trinr_2   \le& 
C \int_{0}^1 \trinl \partial_{\Psi} {\mathscr N}_h({\bar \Psi}, {\bar u})] - 
\partial_{\Psi}  {\mathscr N}_h({ \Psi_t}, u)  \trinr_2 \: dt \times \trinl \Psi- {\bar \Psi} \trinr_2 \nonumber \\
&+ \; C \trinl (T-T_h) ({\mathcal B}(\bar \Psi) - f)\trinr_2 + C \trinl (T-T_h) ({\mathcal C}{\bar u}) \trinr_2 \nonumber \\
& + \; C \trinl T_h ({\mathcal C}{\bar u} - {\mathcal C}u) \trinr_2=: T_1 + T_2+ T_3+ T_4.
\end{align}
We now estimate the terms $T_1$ to $T_4$.
With the definition of $\partial_{\Psi}{\mathscr N}_h(\cdot,\cdot)$, Lemma \ref{bd_Th}, the definition of ${\mathcal B}'(\cdot)$ and  \eqref{boundB1}, it yields 
\begin{align}
\| \partial_{\Psi} {\mathscr N}_h({\bar \Psi}, {\bar u}) - 
\partial_{\Psi}  {\mathscr N}_h({ \bar \Psi + t(\Psi - \bar \Psi)}, u)  \|_{\cL(V)} \:
&= \|  T_h({\mathcal B}'({\bar \Psi} + t (\Psi- \bar \Psi))) - T_h({\mathcal B}'(\bar \Psi)) \|_{\cL(V)} \nonumber \\
& \le C \trinl \Psi- \bar \Psi \trinr_2. \label{t1}
\end{align} 
A use of the fact that $\bar \Psi \in {\bH}^{2+\gamma}(\Omega)$, ${\mathcal B}(\bar \Psi) \in {\bH}^{-1}(\Omega)$ and $f \in H^{-1}(\Omega)$ along with  \eqref{sub} lead to an estimate for $T_2$ as 
\begin{align}
\trinl (T-T_h) ({\mathcal B}(\bar \Psi) - f)\trinr_2 \le Ch^{\gamma} (\trinl \bar \Psi \trinr_2^2  + \|f \|_{H^{-1} (\Omega)})), \label{t2}
\end{align}
where $\gamma \in (1/2,1]$ is the elliptic regularity index. 
Since $\bar u \in L^2(\omega)$, $T_3$ can also be estimated using \eqref{sub} as 
\begin{align} \label{t3}
\trinl (T-T_h) ({\mathcal C}{\bar u}) \trinr_2 \le Ch^{\gamma} \|\bar u\|_{L^2(\omega)}.
\end{align}
The boundedness of $T_h$ from Lemma \ref{bd_Th} leads to 
\begin{align} \label{t4}
\trinl T_h ({\mathcal C}{\bar u} - {\mathcal C}u) \trinr_2\leq C\trinl {\mathcal C}{\bar u} - {\mathcal C}u \trinr \le C \|\bar u -u\|_{L^2(\omega)}.
\end{align}
The substitutions of \eqref{t1}-\eqref{t4} in \eqref{main} yield 
$$
\trinl {\mathscr G}(\Psi,u)- {\bar \Psi} \trinr_2   \le \hat{C} (h^{\gamma} + \rho^2).$$
Choose  $\hat{\rho}_1 \le \min\left\{\rho_0, \frac{1}{2 \hat {C}} \right \}, \; \hat{\rho}_2= {\hat{\rho}}_1^2,$ and 
$\hat{h}_1 = \min \left \{ {h_0^{1/\gamma}, \left(\frac{\hat{\rho}_1}{2 \hat{C}} \right)^{1/\gamma} } \right\}$. For all $0 < h < {\hat h}_1$ and all $u \in B_{\hat \rho_2}(\bar u)$, ${\mathscr G}(\Psi,u)$  is a mapping from $B_{{\hat \rho}_1}(\bar \Psi) $ into itself.
\bigskip

\noindent {\it Step 2: } Let $\Psi_1, \Psi_2  \in B_{{\hat \rho}_1}(\bar \Psi), \; $ $0 < h < {\hat h}_1$ and $u \in B_{\hat \rho_2}(\bar u)$. The definition of the mapping $ {\mathscr G}(\cdot,u)$ and standard calculations lead to 
\begin{align}
&\trinl {\mathscr G}(\Psi_1,u) -  {\mathscr G}(\Psi_2,u) \trinr_2  = 
\trinl \Psi_1 - \Psi_2 -  [\partial_{\Psi}  {\mathscr N}_h({\bar \Psi}, {\bar u})]^{-1} 
\left\{{\mathscr N}_h(\Psi_1, u) - {\mathscr N}_h(\Psi_2, u)\right\}   \trinr_2 \nonumber 
\\
& = \trinl [\partial_{\Psi}  {\mathscr N}_h({\bar \Psi}, {\bar u})]^{-1}  
\left\{ \partial_{\Psi}  {\mathscr N}_h({\bar \Psi}, {\bar u}) (\Psi_1 - \Psi_2)- 
\int_0^1 \partial_{\Psi} {\mathscr N}_h(\Psi_2 + t(\Psi_1-\Psi_2), u)(\Psi_1-\Psi_2) \: dt  \right\}
\trinr_2.
\end{align} 
Now a use of Theorem \ref{invert} and a repetition of arguments used in \eqref{t1} lead to the result that, there exists a positive constant ${\tilde C}$ independent of ${\hat \rho}_1$ and $h$ such that 
\begin{align}
&\trinl {\mathscr G}(\Psi_1,u) -  {\mathscr G}(\Psi_2,u) \trinr_2 \le {\tilde C}{\hat \rho}_1^2.
 \end{align} 
A  choice of $\rho_1= \min\left \{ \rho_0,  \frac{1}{2 \hat {C}},  \frac{1}{\sqrt{2 \tilde {C}}} \right\}$, $\rho_2= \rho_1^2$, and $h_1 = \min \left \{ {h_0^{1/\gamma}, \left(\frac{\rho_1}{2 \hat{C}} \right)^{1/\gamma} } \right\}$ leads to the result that for all $0 < h< h_1$
and $u \in B_{\rho_2}(\bar u)$, $ {\mathscr G}(\cdot, u)$  is a strict contraction in $B_{\rho_1}({\bar \Psi}) $.

\medskip

\noindent This concludes the proof of Theorem \ref{aux_dis}. \hfill{$\Box$}

\bigskip 
 We have established that, for $0< h< h_1$, $u \in B_{\rho_2}({\bar u})$, ${\mathscr N}_h(\Psi_h,u)=0$ admits a unique solution $\Psi_{u,h} \in B_{\rho_1}({\bar \Psi}) \cap \bV_h$. Also, $\partial_{\Psi}  {\mathscr N}_h(\Psi_{u,h},u)$ is an automorphism in $\bV$. 
Hence, the mapping $G_h: B_{\rho_2}({\bar u}) \rightarrow B_{\rho_1}({\bar \Psi}) \cap \bV_h$ defined by $G_h(u)= \Psi_{u,h}$ satisfies ${\mathscr N}_h( G_h(u),u)=0$.  The implicit theorem yields that $G_h$ is of class $C^{\infty}$ in the interior of the ball.

\medskip
\noindent This fact, along with Theorem \ref{ee1} yields the following lemma. 
\begin{lem}\label{fourpointfive}
 For $u, \:{\hat u} \in B_{\rho_2}({\bar u}), \: 0 < h < h_1$,  the solutions $\Psi_{u}$ and $\Psi_{{\hat u},h}$ to \eqref{wv1} and \eqref{wform2}, with controls chosen as $u$ and ${\hat u}$ respectively, satisfy
 \begin{align*}
 \trinl \Psi_{u}- \Psi_{{\hat u},h}\trinr_2 \le C (h^{\gamma} + \|u- {\hat u}\|_{L^2(\omega)}), \; 
 \end{align*}
 $\gamma \in (1/2,1]$ being the elliptic regularity index.
\end{lem}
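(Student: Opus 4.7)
The plan is to use a triangle inequality with an intermediate continuous solution. Specifically, I would introduce $\Psi_{\hat u}$, the solution of \eqref{wv1} with control $\hat u$, and write
\begin{equation*}
\trinl \Psi_{u} - \Psi_{\hat u,h} \trinr_2 \le \trinl \Psi_{u} - \Psi_{\hat u} \trinr_2 + \trinl \Psi_{\hat u} - \Psi_{\hat u,h} \trinr_2.
\end{equation*}
The first term is a purely continuous quantity measuring Lipschitz dependence of the state on the control, while the second term is the discretization error for a fixed admissible control, which is directly controlled by Theorem~\ref{ee1}(a) to give $Ch^\gamma$.

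For the first term, I would invoke the smoothness of the control-to-state map established in Theorem~\ref{th2.5}: since $(\bar\Psi,\bar u)$ is a nonsingular solution, $G: u \mapsto \Psi_{u}$ is $C^\infty$ on $\mathcal O(\bar u)$, and $G'(u) = ({\mathcal A} + {\mathcal B}'(\Psi_{u}))^{-1}{\mathbf C}$ is uniformly bounded in ${\mathcal L}(L^2(\omega),\bV)$ on a (possibly smaller) neighbourhood, which contains $B_{\rho_2}(\bar u)$ after shrinking $\rho_2$ if necessary. The mean value theorem in Banach spaces then yields
\begin{equation*}
\trinl \Psi_{u} - \Psi_{\hat u} \trinr_2 = \trinl G(u) - G(\hat u) \trinr_2 \le \sup_{t \in [0,1]} \|G'(\hat u + t(u-\hat u))\|_{{\mathcal L}(L^2(\omega),\bV)} \; \|u - \hat u\|_{L^2(\omega)} \le C \|u - \hat u\|_{L^2(\omega)},
\end{equation*}
using that the segment joining $u$ and $\hat u$ remains inside $B_{\rho_2}(\bar u)$ by convexity.

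Combining the two bounds gives the stated estimate. No step is genuinely hard here: the result is essentially a packaging of the deep ingredients already assembled, namely the discrete well-posedness and $O(h^\gamma)$ energy-norm error estimate of Theorem~\ref{ee1}, together with the $C^\infty$ dependence of the continuous solution on the control from Theorem~\ref{th2.5}. The only mild subtlety is to make sure $\rho_2$ is taken small enough so that both $u$ and $\hat u$ (and the segment between them) lie in the neighbourhood on which $G$ is smooth with uniformly bounded derivative; this is harmless since $\rho_2$ was already chosen as a free small parameter in the statement of Theorem~\ref{aux_dis}.
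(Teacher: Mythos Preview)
Your proof is correct, but it takes a different route from the paper's own argument. Both proofs start with a triangle inequality, but with different intermediate points: you insert the \emph{continuous} solution $\Psi_{\hat u}$ and bound $\trinl \Psi_u - \Psi_{\hat u}\trinr_2$ via the Lipschitz continuity of the continuous control-to-state map $G$ (Theorem~\ref{th2.5}), then apply Theorem~\ref{ee1}(a) to $\trinl \Psi_{\hat u} - \Psi_{\hat u,h}\trinr_2$. The paper instead inserts the \emph{discrete} solution $\Psi_{u,h}$, applies Theorem~\ref{ee1}(a) to $\trinl \Psi_u - \Psi_{u,h}\trinr_2$, and then proves a Lipschitz bound for the discrete map $G_h$ by differentiating the identity ${\mathscr N}_h(G_h(u),u)=0$ to obtain $G_h'(u)(v) = -[\partial_\Psi {\mathscr N}_h(\Psi_{u,h},u)]^{-1} T_h({\mathcal C}v)$ and invoking Theorem~\ref{invert} and Lemma~\ref{bd_Th}. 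Your route is arguably more economical for this lemma in isolation, since it avoids differentiating the discrete fixed-point map. The paper's route, however, yields as a by-product the discrete Lipschitz estimate $\trinl \Psi_{u,h} - \Psi_{\hat u,h}\trinr_2 \le C\|u-\hat u\|_{L^2(\omega)}$ (labelled \eqref{apriori_state}), which is reused later in the post-processing analysis (see \eqref{temp10} in the proof of Theorem~\ref{PP_aux}); your argument does not deliver this as a side result.
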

\begin{proof}
The triangle inequality yields 
\begin{align} \label{tr1}
\trinl \Psi_{u}- \Psi_{{\hat u},h}\trinr_2 \le  \trinl \Psi_{u}- \Psi_{u,h}\trinr_2 +
\trinl \Psi_{u,h}- \Psi_{{\hat u},h}\trinr_2.  
\end{align}
Theorem \ref{ee1} yields the estimate for the first term on the right hand side of \eqref{tr1}
as 
$$ \trinl \Psi_{u}- \Psi_{u,h}\trinr_2 \le Ch^{\gamma}.$$
\smallskip
\noindent From the expression  ${\mathscr N}_h(G_h(u),u) =0$ and the definition of ${\mathscr N}_h$, we obtain 
$$G'_h(u)(v)= -[\partial_{\Psi}{\mathscr N}_h(\Psi_{u,h}, u )]^{-1} T_h({\mathcal C}v),$$
where $u$ belongs to the interior of $B_{\rho_2}({\bf u})$.

\medskip
\noindent Hence  Lemma \ref{bd_Th} and Theorem \ref{invert} yield
\begin{align} 
\trinl \Psi_{u,h}- \Psi_{{\hat u},h}\trinr_2 & = \trinl  G_h(u) - G_h({\hat u}) \trinr_2 \nonumber \notag\\
& =  \trinl \int_{0}^1 [\partial_{\Psi}{\mathscr N}_h(\Psi_h({u_t}), u_t )]^{-1}
 T_h({\mathcal C}(u-{\hat u}))   \trinr_2 \nonumber \notag\\
 & \le C \|u - {\hat u} \|_{L^2(\omega)},\label{apriori_state}
\end{align}
with $u_t= {\hat u} + t(u- \hat{u})$. A substitution of the estimate in \eqref{tr1} yields the required result.

\end{proof}

\subsection{Auxiliary discrete problem for the adjoint equations}\label{sec3.3}
Define  an auxiliary continuous problem associated with the adjoint equations as follows:

\bigskip

\noindent Seek $\Theta_{u}\in \bV$ such that 
  \begin{align}
   A(\Phi, \Theta_{u})+B(\Psi_{u}, \Phi, \Theta_{u}) + B(\Phi, \Psi_{u}, \Theta_{u})=(\Psi_{u}- \Psi_d, \Phi) \; \fl  \Phi \in \bV,  
  \label{adj1}
  \end{align}
where  $\Psi_{u} \in \bV$ is the solution of \eqref{wv1} and $\Psi_d$ is given.
A conforming finite element discretization for \eqref{adj1} is defined as: 

Seek $\Theta_{u,h}\in \bV_h$ such that 
  \begin{align} 
 A(\Phi_h, \Theta_{u,h})+B(\Psi_{u,h}, \Phi_h, \Theta_{u,h}) + B(\Phi_h, \Psi_{u,h}, \Theta_{u,h})=(\Psi_{u,h}- \Psi_d, \Phi_h) \; \fl  \Phi_h \in \bV_h.
  \label{adj2}
  \end{align}
The main results  of this subsection will be on the existence of solution of the discrete adjoint problem in \eqref{adj2} and its error estimates. They are stated now.
\begin{thm} \label{aux_dis1}
Let $({\bar \Psi}, {\bar u}) \in \bV \times L^2(\omega)$ be a nonsingular solution of \eqref{of}. 
Then, there exist $0 < \rho_3 \le \rho_2$ and $h_3 >0$ such that, for all $0 < h \le h_3$ and ${u} \in B_{\rho_3}({\bar u})$, \eqref{adj2} admits a unique solution.
\end{thm}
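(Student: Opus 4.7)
The plan is to exploit the fact that, in contrast to the state equation, the adjoint system \eqref{adj2} is \emph{linear} in the unknown $\Theta_{u,h}$. Since $\bV_h$ is finite-dimensional, existence and uniqueness reduce to the statement that the associated square linear system is nonsingular, or equivalently, that the homogeneous problem admits only the trivial solution.

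First I would fix $\rho_3 \le \rho_2$ and $h_3 \le h_1$, so that Theorem \ref{aux_dis} furnishes a unique $\Psi_{u,h}\in B_{\rho_1}(\bar\Psi)\subseteq B_{\rho_0}(\bar\Psi)$ for every $u\in B_{\rho_3}(\bar u)$ and $0<h\le h_3$. Theorem \ref{invert} then applies to $\Psi_{u,h}$ and yields that $\partial_\Psi \mathscr{N}_h(\Psi_{u,h},u)=I+T_h\mathcal{B}'(\Psi_{u,h})$ is an automorphism of $\bV$, with inverse bounded uniformly in $h$ and $u$. Because $T_h$ takes values in $\bV_h$, this derivative preserves $\bV_h$ and therefore restricts to a bijection of the finite-dimensional space $\bV_h$. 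Using the identity $\xi_h=T_h\mathcal{A}\xi_h$, valid for every $\xi_h\in \bV_h$ (a direct consequence of the coercivity of $A$ and the definition of $T_h$), one rewrites
\begin{equation*}
\partial_\Psi \mathscr{N}_h(\Psi_{u,h},u)\xi_h = T_h\bigl(\mathcal{A}\xi_h+\mathcal{B}'(\Psi_{u,h})\xi_h\bigr),
\end{equation*}
so invertibility on $\bV_h$ is equivalent to the non-degeneracy on $\bV_h\times\bV_h$ of the discrete linearized state bilinear form
\begin{equation*}
\mathcal{S}_h(\xi_h,\Phi_h):=A(\xi_h,\Phi_h)+B(\Psi_{u,h},\xi_h,\Phi_h)+B(\xi_h,\Psi_{u,h},\Phi_h).
\end{equation*}
The left-hand side of \eqref{adj2} is precisely $(\Phi_h,\Theta_h)\mapsto \mathcal{S}_h(\Phi_h,\Theta_h)$; in any basis of $\bV_h$ its matrix is the transpose of the matrix of $\mathcal{S}_h$, hence also invertible. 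This yields the unique solvability of \eqref{adj2} in $\bV_h$.

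The step that I expect to require the most care is the duality argument: one has to translate cleanly, without circularity, the automorphism property of $\partial_\Psi\mathscr{N}_h(\Psi_{u,h},u)$ on the infinite-dimensional space $\bV$ asserted by Theorem \ref{invert} into invertibility of the finite-dimensional linear system underlying \eqref{adj2}. A safe alternative, should the direct transpose argument prove awkward, is to mimic the perturbation proof of Theorem \ref{invert} directly for the adjoint operator $I+T_h\mathcal{B}'(\Psi)^*$: at $(\bar\Psi,\bar u)$ the continuous operator $I+T\mathcal{B}'(\bar\Psi)^*$ is an automorphism of $\bV$ by duality from the nonsingular solution assumption on $(\bar\Psi,\bar u)$, and an easy analogue of \eqref{tes} for $\mathcal{B}'(\cdot)^*$ combined with Lemma \ref{banach} then gives invertibility of $I+T_h\mathcal{B}'(\Psi_{u,h})^*$ for $\Psi_{u,h}$ close to $\bar\Psi$ and $h$ small, from which \eqref{adj2} is uniquely solvable.
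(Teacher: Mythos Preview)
Your proposal is correct, and in fact your ``safe alternative'' is precisely the route the paper takes: it introduces the adjoint linear map $\mathscr{F}_{\Psi,h}(\Phi)=\Phi+T_h[\mathcal{B}'(\Psi)^*\Phi]$, reruns the perturbation argument of Theorem~\ref{invert} (together with Lemma~\ref{fourpointfive} to guarantee $\Psi_{u,h}\in B_{\rho_0}(\bar\Psi)$) to show $\mathscr{F}_{\Psi_{u,h},h}$ is an automorphism of $\bV$ with $\|\mathscr{F}_{\Psi_{u,h},h}^{-1}\|_{\mathcal L(\bV)}\le 2\|\mathscr{F}_{\bar\Psi}^{-1}\|_{\mathcal L(\bV)}$, and then rewrites \eqref{adj2} as $\mathscr{F}_{\Psi_{u,h},h}(\Theta_{u,h})=T_h(\Psi_{u,h}-\Psi_d)$.

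Your primary argument---restricting the already-invertible $\partial_\Psi\mathscr N_h(\Psi_{u,h},u)$ to $\bV_h$, identifying it via $\xi_h=T_h\mathcal A\xi_h$ with the discrete linearized form $\mathcal S_h$, and concluding by transposition of the stiffness matrix---is a genuinely different and slightly more economical path for Theorem~\ref{aux_dis1} itself, since it recycles Theorem~\ref{invert} rather than repeating its proof. The trade-off is that the transpose argument yields invertibility on $\bV_h$ but not directly the uniform-in-$h$ bound on $\mathscr{F}_{\Psi_{u,h},h}^{-1}$ as an operator on all of $\bV$; the paper needs that bound immediately afterwards in the proof of Theorem~\ref{ee2}, which is why it opts for the direct perturbation approach you flagged as the alternative.
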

\begin{thm} \label{ee2}
 Let $({\bar \Psi}, {\bar u}) \in \bV \times L^2(\omega)$ be a nonsingular solution of \eqref{of}.   Then, for ${u} \in B_{\rho_3}({\bar u})$ and $0 < h< h_3$, the solutions $\Theta_{u}$ and $\Theta_{u,h}$ of \eqref{adj1} and \eqref{adj2} satisfy the error estimates:
\begin{equation} \label{adjest}
(a) \; \trinl \Theta_{u}-\Theta_{u,h}\trinr_{2} \le C h^\gamma \qquad (b) \;  \; \trinl\Theta_{u}-\Theta_{u,h}\trinr_{1} \le C h^{2\gamma},
\end{equation} 
where $\gamma \in (1/2,1]$ denotes the index of elliptic regularity.
\end{thm}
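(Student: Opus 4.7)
\medskip
\noindent\textbf{Proof proposal.} The plan is to mirror the analysis of the state equation carried out in Section 3.2, exploiting the fact that, once the state is frozen, the adjoint equation is \emph{linear} in $\Theta$. Introduce the two linearized bilinear forms
\begin{equation*}
a_u(\Phi,\Xi) := A(\Phi,\Xi) + B(\Psi_u,\Phi,\Xi) + B(\Phi,\Psi_u,\Xi), \qquad a_{u,h}(\Phi,\Xi) := A(\Phi,\Xi) + B(\Psi_{u,h},\Phi,\Xi) + B(\Phi,\Psi_{u,h},\Xi).
\end{equation*}
The first task is to establish an $h$-independent discrete inf-sup condition for $a_{u,h}$ on $\bV_h \times \bV_h$. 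The starting observation is that nonsingularity of $(\bar\Psi,\bar u)$ is equivalent to the continuous inf-sup stability of $a_{\bar u}$ on $\bV$ (since $\mathcal{A}^* + \mathcal{B}'(\bar\Psi)^*$ is an isomorphism on $\bV$, by the adjoint version of Theorem \ref{th2.5}). A perturbation argument of the kind developed in Lemma \ref{aux}, Lemma \ref{banach} and Theorem \ref{invert}, combined with the state estimate $\trinl\bar\Psi - \Psi_{u,h}\trinr_2 \le C(h^\gamma + \|\bar u - u\|_{L^2(\omega)})$ from Lemma \ref{fourpointfive}, should then yield both existence and uniqueness of $\Theta_{u,h}$ (proving Theorem \ref{aux_dis1}) and the required uniform discrete inf-sup bound for $a_{u,h}$.

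Next, introduce the intermediate discrete adjoint $\tilde\Theta_{u,h} \in \bV_h$ solving
\begin{equation*}
a_u(\Phi_h,\tilde\Theta_{u,h}) = (\Psi_u - \Psi_d,\Phi_h) \quad\forall \Phi_h \in \bV_h,
\end{equation*}
and split $\Theta_u - \Theta_{u,h} = (\Theta_u - \tilde\Theta_{u,h}) + (\tilde\Theta_{u,h} - \Theta_{u,h})$. The first term is a standard Galerkin error for a linear fourth-order problem: using the regularity $\Theta_u \in \bH^{2+\gamma}(\Omega)$ (derived as in Lemma \ref{ap} and Lemma \ref{linapriori} via Lemma \ref{poa}(ii) and \eqref{boundB3}), a C\'ea-type argument, and the interpolation bound of Lemma \ref{interpolant}, I expect $\trinl\Theta_u - \tilde\Theta_{u,h}\trinr_2 \lesssim h^\gamma$. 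For the second term, subtracting the equations for $\tilde\Theta_{u,h}$ and $\Theta_{u,h}$ gives
\begin{equation*}
a_{u,h}(\Phi_h,\tilde\Theta_{u,h}-\Theta_{u,h}) = (\Psi_u - \Psi_{u,h},\Phi_h) + B(\Psi_{u,h}-\Psi_u,\Phi_h,\tilde\Theta_{u,h}) + B(\Phi_h,\Psi_{u,h}-\Psi_u,\tilde\Theta_{u,h}).
\end{equation*}
The discrete inf-sup condition on $a_{u,h}$, the bound $\trinl\tilde\Theta_{u,h}\trinr_2 \lesssim 1$, boundedness of $B(\cdot,\cdot,\cdot)$ from \eqref{boundB1}, and Theorem \ref{ee1}(a) then yield $\trinl\tilde\Theta_{u,h}-\Theta_{u,h}\trinr_2 \lesssim h^\gamma$. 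Combining gives \eqref{adjest}(a).

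For the improved $H^1$-estimate \eqref{adjest}(b), I would use an Aubin-Nitsche duality argument: for $\mathbf{g}\in\bH^{-1}(\Omega)$ let $\boldsymbol\zeta\in\bV$ solve $a_u(\boldsymbol\zeta,\Phi) = \langle\mathbf{g},\Phi\rangle$ for all $\Phi\in\bV$; by Lemma \ref{poa}(ii) together with \eqref{boundB3} one gets $\boldsymbol\zeta \in \bH^{2+\gamma}(\Omega)$ with $\trinl\boldsymbol\zeta\trinr_{2+\gamma} \lesssim \trinl\mathbf{g}\trinr_{-1}$. Choosing $\mathbf{g}$ as a representative of $\Theta_u - \Theta_{u,h}$ in $\bH^{-1}(\Omega)$, combining the duality identity with the (perturbed) Galerkin orthogonality of $\Theta_u-\Theta_{u,h}$ with respect to $a_{u,h}$, exploiting interpolation of $\boldsymbol\zeta$ and the previously proved $H^2$-bound plus Theorem \ref{ee1}(b) for the perturbation terms, should produce the exponent $h^{2\gamma}$.

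The main obstacle I anticipate is the derivation of the uniform discrete inf-sup for $a_{u,h}$: unlike the coercive bilinear form $A$ used for the state equation, $a_{u,h}$ is only inf-sup stable, and the perturbation analysis must simultaneously absorb a Galerkin defect of size $O(h^\gamma)$ and the smallness of $\trinl\Psi_u - \Psi_{u,h}\trinr_2$ and $\|u-\bar u\|_{L^2(\omega)}$. Tuning $\rho_3$ and $h_3$ so that the resulting constant is genuinely $h$-independent on the relevant balls is the delicate point, but the same template that worked for $\partial_\Psi\mathscr{N}_h$ in Theorem \ref{invert} should carry through by duality.
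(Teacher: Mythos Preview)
Your proposal is correct and follows essentially the same strategy as the paper. The only tactical difference is in part (a): instead of introducing the intermediate $\tilde\Theta_{u,h}$ and splitting, the paper works in the operator framework $\mathscr{F}_{\Psi_{u,h},h}(\Phi)=\Phi+T_h[\mathcal B'(\Psi_{u,h})^*\Phi]$ (whose invertibility on $\bV$ is exactly your discrete inf-sup condition for $a_{u,h}$, so your ``main obstacle'' is already handled by the proof of Theorem~\ref{aux_dis1}), applies $\mathscr{F}_{\Psi_{u,h},h}$ directly to $\Theta_u-\Theta_{u,h}$, and expands using the continuous and discrete adjoint equations into three pieces --- a $T(\Psi_u-\Psi_{u,h})$ term, a $(T-T_h)(\cdots)$ term, and a $T[(\mathcal B'(\Psi_{u,h})^*-\mathcal B'(\Psi_u)^*)\Theta_u]$ term --- each bounded by $Ch^\gamma$. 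This is slightly more compact than your C\'ea-plus-perturbation split, but uses the same ingredients. For part (b) the paper does exactly the Aubin--Nitsche argument you describe: the dual problem is posed with coefficient $\Psi_u$, the choice is $\mathbf g=-\Delta(\Theta_u-\Theta_{u,h})$, and the key extra inputs beyond (a) are the $H^1$ state estimate from Theorem~\ref{ee1}(b) and the approximation $\trinl\boldsymbol\chi_{\mathbf g}-\boldsymbol\chi_{\mathbf g,h}\trinr_2\le Ch^\gamma\trinl\mathbf g\trinr_{-1}$ for the dual, which you anticipated.
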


\noindent{\bf A linear mapping and its properties:}

\medskip

As in the case of the derivative mapping defined in the previous subsection for state equations, define the linear mapping ${\mathscr F}_{\Psi} (\mbox{resp. } {\mathscr F}_{\Psi,h}  ) \in {\mathcal L}(\bV)$ by
$${\mathscr F}_{\Psi}(\Phi) = \Phi + T[{\mathcal B}'(\Psi)^* \Phi] \quad \forall \Phi \in \bV, $$
$$\left( {\rm resp.} \; {\mathscr F}_{\Psi,h}(\Phi) = \Phi + T_h[{\mathcal B}'(\Psi)^* \Phi] \quad \forall \Phi \in \bV \right)$$
where ${\mathcal B}'(\Psi)^*$ is the adjoint operator corresponding to  ${\mathcal B}'(\Psi)$ (see \eqref{opad}).

\noindent The next lemma is easy to establish and hence the proof is skipped.
\begin{lem}  The mapping ${\mathscr F}_{\Psi}$   is an automorphism in $\bV$ if and only if  $ \Psi \in \bV $ is a nonsingular solution of \eqref{wv}.
\end{lem}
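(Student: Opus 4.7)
My strategy is to rewrite ${\mathscr F}_{\Psi}$ in a form that makes the connection with the previously established characterization of nonsingularity transparent, namely
\[
{\mathscr F}_{\Psi} \;=\; I + T\,{\mathcal B}'(\Psi)^{*} \;=\; T\bigl({\mathcal A} + {\mathcal B}'(\Psi)^{*}\bigr),
\]
using $T = {\mathcal A}^{-1}$. By Lemma~\ref{poa}(i), $T \in {\mathcal L}(\bV',\bV)$ is an isomorphism; so ${\mathscr F}_{\Psi}$ is an automorphism of $\bV$ if and only if ${\mathcal A} + {\mathcal B}'(\Psi)^{*}$ is an isomorphism from $\bV$ onto $\bV'$. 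This reduces the task to comparing the invertibility of ${\mathcal A} + {\mathcal B}'(\Psi)^{*}$ with that of ${\mathcal A} + {\mathcal B}'(\Psi)$, the latter being the one that characterizes nonsingularity of $\Psi$ (as recalled in Theorem~\ref{th2.5} and in the lemma preceding Theorem~\ref{invert}).

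The next step is to observe that because $A(\cdot,\cdot)$ is symmetric, ${\mathcal A}^{*} = {\mathcal A}$, so
\[
\bigl({\mathcal A} + {\mathcal B}'(\Psi)\bigr)^{*} \;=\; {\mathcal A} + {\mathcal B}'(\Psi)^{*}
\]
as elements of ${\mathcal L}(\bV,\bV')$ (after the standard identification via reflexivity of the Hilbert space $\bV$). Since a bounded linear operator between reflexive Banach spaces is an isomorphism if and only if its adjoint is, it follows that ${\mathcal A} + {\mathcal B}'(\Psi)^{*}$ is an isomorphism from $\bV$ onto $\bV'$ exactly when ${\mathcal A} + {\mathcal B}'(\Psi)$ is. Composing with $T$ once again, the latter condition is equivalent to $\partial_{\Psi}{\mathscr N}(\Psi,u) = I + T\,{\mathcal B}'(\Psi)$ being an automorphism of $\bV$, which by the lemma cited from \cite{cmj} is equivalent to $\Psi$ being a nonsingular solution of \eqref{wv}. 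Chaining these equivalences completes the argument.

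\textbf{Expected obstacle.} The proof is essentially bookkeeping; the only point that requires a brief justification rather than mere citation is the identity $({\mathcal A}+{\mathcal B}'(\Psi))^{*} = {\mathcal A} + {\mathcal B}'(\Psi)^{*}$, which uses the symmetry of $A$ together with the definition of ${\mathcal B}'(\Psi)^{*}$ given in \eqref{opad}. Once this is in place, the rest is an application of the isomorphism $T$ on each side and the duality principle for invertibility.
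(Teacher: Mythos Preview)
Your argument is correct. The paper itself skips the proof of this lemma, stating only that it ``is easy to establish''; your route via the factorization ${\mathscr F}_{\Psi}=T({\mathcal A}+{\mathcal B}'(\Psi)^{*})$, the symmetry ${\mathcal A}^{*}={\mathcal A}$, and the duality principle for invertibility is precisely the natural way to fill in the details, and it matches the remark after \eqref{adjof} that ${\mathcal A}^{*}+{\mathcal B}'(\Psi_{u})^{*}$ is an isomorphism by Theorem~\ref{th2.5}.
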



\noindent{\it \textbf {Proof of Theorem \ref{aux_dis1}}:}

\medskip
Proceeding as in the proof of Theorem \ref{invert}, we can assume that $h_0$ is chosen so that, for all $0 < h< h_0$ and all ${\Psi} \in B_{\rho_0} ({\bar \Psi})$, ${\mathscr F}_{\Psi,h}$ is an automorphism in $\bV$. In particular, by  using Lemma \ref{fourpointfive}, there exist $0 < h_3 \le h_2$ and $0 < \rho_3 \le \rho_2$ such that, for all $0 < h \le h_3$ and all $u \in B_{\rho_3}({\bar u})$,  ${\mathscr F}_{\Psi_{u,h},h}$ is an automorphism in $\bV$ and 
$$ \|{\mathscr F}_{\Psi_{u,h},h}^{-1}  \|_{\cL(\bV)} \le 2 \| {\mathscr F}_{\bar \Psi}^{-1}\|_{\cL(\bV)}.$$

\noindent We can also assume that ${\mathscr F}_{\Psi_{u}}$ is an automorphism in $\bV$ for all 
$u \in B_{\rho_3}({\bar u})$.

\medskip
\noindent Now we establish that $\Theta_{u,h} \in \bV_h$ is a solution of \eqref{adj2} if and only if 
 ${\mathscr F}_{\Psi_{u,h},h}(\Theta_{u,h}) = {\boldsymbol \eta }_h $, where ${\boldsymbol \eta }_h \in \bV_h$ satisfies
$T_h(\Psi_{u} - \Psi_d) = {\boldsymbol \eta}_h$.  

\medskip
\noindent With definitions of ${\mathscr F}_{\Psi_{u,h},h}$ and the operator $T_h$, it yields 
 \begin{align*}
{\mathscr F}_{\Psi_{u,h},h}(\Theta_{u,h}) &= {\boldsymbol  \eta }_h 
\Longleftrightarrow \Theta_{u,h} + T_h[{\mathcal B}'(\Psi_{u,h})^* \Theta_{u,h}] =  {\boldsymbol \eta }_h  \nonumber \\
& \Longleftrightarrow A (\Theta_{u,h}, \Phi_h) + \big \langle{\mathcal B}'(\Psi_{u,h})^* \Theta_{u,h}, \Phi_h \big  \rangle_{\bV', \bV} = (\Psi_{u,h} - \Psi_d, \Phi_h) \; \forall \Phi_h \in \bV_h.
\end{align*}
That is, $\Theta_{u,h} \in \bV_h$ solves \eqref{adj2}. \hfill{$\Box$}

\bigskip
\noindent{\it \textbf {Proof of Theorem \ref{ee2}}:}

\medskip
The problem under consideration  being  linear, it is straight forward to obtain the required estimates. We will sketch the steps of the proof.

\medskip 
\noindent The space $\bV_h$ is a subspace of $\bV$ and hence  \eqref{adj1} holds true for test functions in $\bV_h$. 

\medskip

The definition of ${\mathscr F}_{\Psi_{u,h},h}$, and of the continuous and discrete adjoint problems lead to 
\begin{align*}
&{\mathscr F}_{\Psi_{u,h},h}(\Theta_{u}-\Theta_{u,h})={\mathscr F}_{\Psi_{u,h},h}(\Theta_{u})-{\mathscr F}_{\Psi_{u,h},h}(\Theta_{u,h})\\
&=\Theta_{u}+T_h[B'(\Psi_{u,h})^*\Theta_{u}]-T_h(\Psi_{u,h}-\Psi_d)\\
&=T(\Psi_{u}-\Psi_d)-T[B'(\Psi_{u})^*\Theta_{u}]+T_h[B'(\Psi_{u,h})^*\Theta_{u}]-T_h(\Psi_{u,h}-\Psi_d)\\
&=T(\Psi_{u}-\Psi_{u,h})+(T-T_h)(\Psi_{u,h}-\Psi_d-B'(\Psi_{u,h})^*\Theta_{u})+T[B'(\Psi_{u,h})^*\Theta_{u}-B'(\Psi_{u})^*\Theta_{u}].
\end{align*}
Since $F_{\Psi_{u,h},h}$ is an automorphism in $\bV_h$, the boundedness of $T$ leads to
\begin{align*}
\trinl\Theta_{u}-\Theta_{u,h}\trinr_2&\lesssim \trinl\Psi_{u}-\Psi_{u,h}\trinr_2+\|T-T_h\|\trinl \Psi_{u,h}-\Psi_d-B'(\Psi_{u,h})^*\Theta\trinr_2\\
&\quad+\|B'(\Psi_{u,h}-\Psi_{u})^*\Theta_{u}\|_{\bV'}.
\end{align*}
A use of Theorem~\ref{ee1}(a) and Lemma~\ref{BihErrEst} leads to the first estimate in \eqref{adjest}.
%

\bigskip 
\noindent To establish the second estimate in \eqref{adjest},  define an auxiliary problem and its discretization. 

\medskip
\noindent For all ${\bf g} \in {\bH}^{-1}(\Omega)$, let ${\boldsymbol \chi}_{\bf g} \in \bV$ and
 ${\boldsymbol \chi}_{{\bf g},h} \in \bV_h$ be the solutions to the equations 
 \begin{align}
 & A({\boldsymbol \chi}_{\bf g}, \Phi) + B(\Psi_{u}, {\boldsymbol \chi}_{\bf g}, \Phi) + B({\boldsymbol \chi}_{\bf g}, \Psi_{u},\Phi) = \langle {\bf g}, \Phi\rangle \quad \forall \Phi \in \bV.  \label{gg1} \\
&  A({\boldsymbol \chi}_{{\bf g},h}, \Phi_h) + B(\Psi_{u}, {\boldsymbol \chi}_{{\bf g},h}, \Phi_h) + B({\boldsymbol \chi}_{{\bf g},h}, \Psi_{u},\Phi_h) = \langle {\bf g}, \Phi_h\rangle \quad \forall \Phi_h \in \bV_h. \label{gg2}
 \end{align}
 The well-posedness of \eqref{gg1} implies that $\trinl{\boldsymbol \chi}_{\bf g}\trinr_2\lesssim \trinl{\bf g}\trinr_{-1}$ and  $\trinl{\boldsymbol \chi}_{\bf g}\trinr_{2+\gamma}\lesssim \trinl{\bf g}\trinr_{-1}$.
\noindent By proceeding as as in the proof of (a), we can establish that 
 \begin{align} \label{chi}
 \trinl {\boldsymbol \chi}_{\bf g} -  {\boldsymbol \chi}_{{\bf g},h} \trinr_2 \le C h^{\gamma}\trinl{\bf g}\trinr_{-1}, 
 \end{align}
 where the constant $C$ depends on $\trinl\Psi\trinr_2$, and $\gamma \in (1/2,1]$ is the index of elliptic regularity. 
 
\noindent
From \eqref{adj1} and \eqref{adj2}, it follows that 
\begin{equation} \label{adj3}
\begin{array}{l}
\displaystyle 
   A(\Phi_h, \Theta_{u} - \Theta_{u,h}) + B(\Psi_{u}, \Phi_h, \Theta_{u})
   +B(\Phi_h, \Psi_{u}, \Theta_{u})\vspace{2.mm}\\	
	-B(\Psi_{u,h}, \Phi_h,\Theta_{u,h}) 	
	- B(\Phi_h, \Psi_{u,h},\Theta_{u,h})
	=(\Psi_{u}- \Psi_{u,h}, \Phi_h) \quad  \mbox{for all}\ \Phi_h \in \bV_h.   
  \end{array}
\end{equation} 

\noindent  Choose $\Phi= \Theta_{u}-\Theta_{u,h}$ in \eqref{gg1} and adjustment of terms yield
\begin{align}
( {\bf g}, \Theta_{u}-\Theta_{u,h} ) & = A ({\boldsymbol \chi}_{\bf g}- {\boldsymbol \chi}_{{\bf g},h},  \Theta_{u}-\Theta_{u,h}) + B (\Psi_{u},{\boldsymbol \chi}_{\bf g}- {\boldsymbol \chi}_{{\bf g},h}  , \Theta_{u}-\Theta_{u,h}) \nonumber \\
& \; \; +B ({\boldsymbol \chi}_{\bf g}- {\boldsymbol \chi}_{{\bf g},h}  , \Psi_{u}, \Theta_{u}-\Theta_{u,h})+ B (\Psi_{u},{\boldsymbol \chi}_{{\bf g},h},\Theta_{u}-\Theta_{u,h})
\notag \\
& \; \; + B({\boldsymbol \chi}_{{\bf g},h},\Psi_{u}, \Theta_{u}-\Theta_{u,h})
+A ({\boldsymbol \chi}_{{\bf g},h},  \Theta_{u}-\Theta_{u,h}). \label{pre_gbdd}
\end{align}
Choose $\Phi_h={\boldsymbol \chi}_{{\bf g},h}$ in \eqref{adj3} and combine with \eqref{pre_gbdd} to obtain
 \begin{align}
  ( {\bf g}, \Theta_{u}-\Theta_{u,h} ) & = A ({\boldsymbol \chi}_{\bf g}- {\boldsymbol \chi}_{{\bf g},h},  \Theta_{u}-\Theta_{u,h}) + B (\Psi_{u},{\boldsymbol \chi}_{\bf g}- {\boldsymbol \chi}_{{\bf g},h}  , \Theta_{u}-\Theta_{u,h}) \nonumber \\
  & \quad +B ({\boldsymbol \chi}_{\bf g}- {\boldsymbol \chi}_{{\bf g},h}  , \Psi_{u}, \Theta_{u}-\Theta_{u,h})+ B(\Psi_{u,h} - \Psi_u, {\boldsymbol \chi}_{{\bf g},h}, \Theta_{u,h}  ) \notag\\
  &\quad+B( {\boldsymbol \chi}_{{\bf g},h},  \Psi_{u,h} - \Psi_u, \Theta_{u,h}  )+ (\Psi_{u}-\Psi_{u,h},{\boldsymbol \chi}_{{\bf g},h} )\notag \\
  & = A ({\boldsymbol \chi}_{\bf g}- {\boldsymbol \chi}_{{\bf g},h},  \Theta_{u}-\Theta_{u,h}) + B (\Psi_{u},{\boldsymbol \chi}_{\bf g}- {\boldsymbol \chi}_{{\bf g},h}  , \Theta_{u}-\Theta_{u,h}) \notag\\
 &\quad+B ({\boldsymbol \chi}_{\bf g}- {\boldsymbol \chi}_{{\bf g},h}  , \Psi_{u}, \Theta_{u}-\Theta_{u,h})+ B(\Psi_{u,h} - \Psi_u, {\boldsymbol \chi}_{{\bf g},h} -{\boldsymbol \chi}_{\bf g}, \Theta_{u,h}  )\notag\\
 &\quad+  B(\Psi_{u,h} - \Psi_u, {\boldsymbol \chi}_{\bf g}, \Theta_{u,h}  ) +
  B( {\boldsymbol \chi}_{{\bf g},h} -  {\boldsymbol \chi}_{\bf g},  \Psi_{u,h} - \Psi_u, \Theta_{u,h}  )
  \nonumber \\
  & \quad + B(  {\boldsymbol \chi}_{\bf g},  \Psi_{u,h} - \Psi_u, \Theta_{u,h}  ) + (\Psi_{u}-\Psi_{u,h},{\boldsymbol \chi}_{{\bf g},h} ).\label{g_bdd}
 \end{align}
 A choice of ${\bf g}= -\Delta (\Theta_{u}-\Theta_{u,h})$ in the above equation \eqref{g_bdd} and then integration by parts, and a use of boundedness properties \eqref{boundA}, \eqref{boundB1} and \eqref{boundB2} lead to 
 \begin{align}
 \trinl\Theta_{u}-\Theta_{u,h}\trinr_1^2&\lesssim \trinl{\boldsymbol \chi}_{\bf g}- {\boldsymbol \chi}_{{\bf g},h}\trinr_2\trinl  \Theta_{u}-\Theta_{u,h}\trinr_2+\trinl\Psi_{u,h} - \Psi_u\trinr_1\trinl{\boldsymbol \chi}_{\bf g}\trinr_{2+\gamma}+\trinl\Psi_{u,h} - \Psi_u\trinr\trinl{\boldsymbol \chi}_{{\bf g},h}\trinr\notag\\
 &\lesssim \trinl{\boldsymbol \chi}_{\bf g}- {\boldsymbol \chi}_{{\bf g},h}\trinr_2\trinl  \Theta_{u}-\Theta_{u,h}\trinr_2+\trinl\Psi_{u,h} - \Psi_u\trinr_1\trinl{\boldsymbol \chi}_{\bf g}\trinr_{2+\gamma}\notag\\
 &\quad+\trinl\Psi_{u,h} - \Psi_u\trinr\trinl{\boldsymbol \chi}_{\bf g}-{\boldsymbol \chi}_{{\bf g},h}\trinr+\trinl\Psi_{u,h} - \Psi_u\trinr\trinl{\boldsymbol \chi}_{\bf g}\trinr.
 \end{align}
 Note that $\trinl{\bf g}\trinr_{-1}=\trinl \Theta_{u}-\Theta_{u,h}\trinr_{1}$, and the well posedness of \eqref{gg1} implies $\trinl{\boldsymbol \chi}_{\bf g}\trinr_2\leq \trinl \Theta_{u}-\Theta_{u,h}\trinr_{1}$ and   $\trinl{\boldsymbol \chi}_{\bf g}\trinr_{2+\gamma}\leq \trinl \Theta_{u}-\Theta_{u,h}\trinr_{1}$. This, and estimates \eqref{chi}, part (a) of \eqref{adjest} and part (b) of \eqref{ee1_est} lead to part (b) estimate of \eqref{adjest}. \hfill{$\Box$}

 \bigskip
 
\noindent As for the case of the state equations (see Lemma \ref{fourpointfive}), we have the following result.
 \begin{lem}\label{fourpointfiveadj}
 For $u, \:{\hat u} \in B_{\rho_3}({\bar u}), \: 0 < h < h_3$, the solutions $\Theta_{u}$ and $\Theta_{{\hat u},h}$ to \eqref{adj1} and \eqref{adj2} with corresponding controls chosen as $u$ and ${\hat u}$ respectively, satisfy
 \begin{align*}
 \trinl \Theta_{u}- \Theta_{{\hat u},h}\trinr_2 \le C (h^{\gamma} + \|u- {\hat u}\|_{L^2(\omega)}), \; 
 \end{align*}
 $\gamma \in (1/2,1]$ being the elliptic regularity index.
\end{lem}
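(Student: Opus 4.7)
The plan is to mimic exactly the structure of the proof of Lemma~\ref{fourpointfive}, applying the triangle inequality
\[
\trinl \Theta_{u}- \Theta_{{\hat u},h}\trinr_2 \le \trinl \Theta_{u}- \Theta_{u,h}\trinr_2 + \trinl \Theta_{u,h}- \Theta_{{\hat u},h}\trinr_2.
\]
The first term is already controlled by $Ch^{\gamma}$ thanks to Theorem~\ref{ee2}(a). All the work therefore goes into estimating the purely discrete difference $\trinl \Theta_{u,h}- \Theta_{{\hat u},h}\trinr_2$ by $C\|u-\hat u\|_{L^2(\omega)}$.

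For that second term, I would subtract the two instances of \eqref{adj2} (one written at control $u$, the other at $\hat u$) to see that $\Theta_{u,h}-\Theta_{\hat u,h}\in \bV_h$ satisfies, for every $\Phi_h \in \bV_h$,
\begin{align*}
&A(\Phi_h,\Theta_{u,h}-\Theta_{\hat u,h})+B(\Psi_{u,h},\Phi_h,\Theta_{u,h}-\Theta_{\hat u,h})+B(\Phi_h,\Psi_{u,h},\Theta_{u,h}-\Theta_{\hat u,h}) \\
&\qquad = (\Psi_{u,h}-\Psi_{\hat u,h},\Phi_h) - B(\Psi_{u,h}-\Psi_{\hat u,h},\Phi_h,\Theta_{\hat u,h}) - B(\Phi_h,\Psi_{u,h}-\Psi_{\hat u,h},\Theta_{\hat u,h}).
\end{align*}
Rewritten through the operator $\mathscr{F}_{\Psi_{u,h},h}$ introduced in Subsection~\ref{sec3.3}, this reads
\[
\mathscr{F}_{\Psi_{u,h},h}(\Theta_{u,h}-\Theta_{\hat u,h}) = T_h\bigl(\Psi_{u,h}-\Psi_{\hat u,h}\bigr) - T_h\bigl[\mathcal{B}'(\Psi_{u,h}-\Psi_{\hat u,h})^{*}\,\Theta_{\hat u,h}\bigr].
\]
By the choice of $h_3$ and $\rho_3$, the operator $\mathscr{F}_{\Psi_{u,h},h}$ is an automorphism on $\bV$ with inverse uniformly bounded in $h$ and $u$. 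Combining this with the boundedness of $T_h$ from Lemma~\ref{bd_Th} and the trilinear bound \eqref{boundB1} yields
\[
\trinl \Theta_{u,h}-\Theta_{\hat u,h}\trinr_2 \;\lesssim\; \trinl \Psi_{u,h}-\Psi_{\hat u,h}\trinr_2 \bigl(1 + \trinl \Theta_{\hat u,h}\trinr_2 \bigr).
\]

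To close the argument, I would note two points. First, the discrete adjoint state $\Theta_{\hat u,h}$ is uniformly bounded in $\bV$ for $\hat u\in B_{\rho_3}(\bar u)$ and $h<h_3$: combining Theorem~\ref{ee2}(a) with the a priori bound on the continuous adjoint $\Theta_{\hat u}$ (coming from Theorem~\ref{th2.5} and \eqref{adjof}) gives a $u$-independent constant. Second, the estimate $\trinl \Psi_{u,h}-\Psi_{\hat u,h}\trinr_2 \le C\|u-\hat u\|_{L^2(\omega)}$ is precisely \eqref{apriori_state} inside the proof of Lemma~\ref{fourpointfive}. Putting these together yields $\trinl \Theta_{u,h}-\Theta_{\hat u,h}\trinr_2 \le C\|u-\hat u\|_{L^2(\omega)}$, and combining with the $Ch^{\gamma}$ bound from the first triangle term delivers the stated estimate.

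The main obstacle I anticipate is bookkeeping: one must check that the bounded-invertibility constants for $\mathscr{F}_{\Psi_{u,h},h}$ really are uniform over $u\in B_{\rho_3}(\bar u)$ and $0<h\le h_3$, and that the nonlinear perturbation term $\mathcal{B}'(\Psi_{u,h}-\Psi_{\hat u,h})^{*}\Theta_{\hat u,h}$ is controlled in $\bV'$ (equivalently, in $\bH^{-1}$ after composing with $T_h$) in a way that preserves the linear dependence on $\|u-\hat u\|_{L^2(\omega)}$. Both follow from \eqref{boundB1} together with the uniform $\bV$-bound on $\Theta_{\hat u,h}$ and the uniform discrete Lipschitz bound on $\Psi_{\cdot,h}$, so no new estimates need to be developed.
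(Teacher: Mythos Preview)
Your proposal is correct and follows precisely the route the paper intends: the paper does not give an explicit proof of this lemma but merely states ``As for the case of the state equations (see Lemma~\ref{fourpointfive}), we have the following result,'' which is exactly the triangle-inequality-plus-discrete-Lipschitz argument you have written out. The details you supply (subtracting the two discrete adjoint equations, recasting via $\mathscr{F}_{\Psi_{u,h},h}$, invoking its uniform invertibility from the proof of Theorem~\ref{aux_dis1}, and closing with \eqref{apriori_state}) are the natural adjoint analogue of the state-equation proof and are all sound.
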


\section{Control discretization}\label{sec4}

First we describe the discretization of the control variable and then formulate the fully discrete problem.  This is followed by existence and convergence results for the discrete problem.
{
We make the following assumptions:
\begin{itemize}
	\item[{\bf (A1)}] Let $\omega\subset \Omega$ be a polygonal domain.
	\item[{\bf (A2)}] Assume that $\cT_h$ restricted to $\omega$ yields a triangulation for $\bar{\omega}$.
\end{itemize}
Note that the above assumptions are not very restrictive in practical situations. In case $\omega$ is not a polygonal domain, it can be approximated by a polygonal domain. The second assumption can be realized easily by choosing an initial triangulation appropriately. }

\medskip
\noindent Set
\begin{align*}
U_{h, ad}&=\left\{{{u}}\in L^2(\omega):  u|_T\in P_0(T), \; u_a \leq  u \leq
u_b \mbox{ for all }T\in
\mathcal{T}_h \right\}.
\end{align*}

\noindent The discrete control problem associated with \eqref{wform} is defined as follows:  
\begin{subequations}\label{discrete_cost}
	\begin{align} 
	&  \min_{(\Psi_h, {u}_h) \in  \bV_h \times  U_{h,ad}} J(\Psi_h, u_h) \,\,\, \textrm{ subject to } \\
	&  A(\Psi_h,\Phi_h)+B(\Psi_h,\Psi_h,\Phi_h)=(F + {\mathbf C}{\bf u}_h, \Phi_h),\label{dis_cost_b}
	\end{align}
\end{subequations}
for all $\Phi_h\in \bV_h$.

Recall that $(\Psi_h, u_h)$ satisfies \eqref{dis_cost_b} if
and only if
\begin{equation}
{\mathscr N}_h(  \Psi_h, u_h)=0.
\end{equation}

\noindent We aim to study the existence of local minima of \eqref{discrete_cost}
 which approximate the local minima of \eqref{wform}. This can be
established for nonsingular local solutions of \eqref{discrete_cost}.

\bigskip
\noindent The following lemma is crucial  in establishing the existence of solution of \eqref{discrete_cost} in Theorem  \ref{T4.11}.
\begin{lem} \label{aux_new}
 Let $({\bar \Psi}, {\bar u})\in\bV\times L^2(\omega)$ be a nonsingular solution of \eqref{wform}.   If ${u}_h \in B_{\rho_2}({\bar u})$ and $u_h \rightharpoonup u$ weakly, then  $\Psi_{u_h,h}$ converges to  $\Psi_{u}$ in ${\bH}^{2}(\Omega)$, where $\rho_2>0$ defined in the proof of Theorem \ref{aux_dis} denotes the radius of the ball $B_{\rho_2}(\bar{u})$ such that the discrete state equation \eqref{wform2} admits a unique solution, when the mesh parameter $h$, is chosen sufficiently small.
\end{lem}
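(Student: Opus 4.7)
The plan is to obtain strong convergence by inserting the intermediate discrete quantity $\Psi_{u,h}$ (the discrete solution with control $u$) and analyzing the two resulting contributions with the tools of Section 3. First, since the ball $B_{\rho_2}(\bar u)\subset L^2(\omega)$ is convex and closed, it is weakly closed; hence the weak limit $u$ belongs to $B_{\rho_2}(\bar u)$. Theorem \ref{aux_dis} then guarantees, for all $0<h\le h_1$, the existence of the unique solutions $\Psi_{u_h,h},\Psi_{u,h}\in B_{\rho_1}(\bar\Psi)\cap \bV_h$ of \eqref{wform2} (with controls $u_h$ and $u$ respectively) and of the unique $\Psi_u\in B_{\rho_1}(\bar\Psi)$ of \eqref{wv1}.

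Using the triangle inequality
\begin{equation*}
\trinl \Psi_{u_h,h}-\Psi_u\trinr_2 \le \trinl \Psi_{u_h,h}-\Psi_{u,h}\trinr_2+\trinl \Psi_{u,h}-\Psi_u\trinr_2,
\end{equation*}
the second summand is $O(h^\gamma)$ by Theorem \ref{ee1}(a) and therefore vanishes as $h\to 0$, so only the first summand requires attention.

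For this summand, both endpoints lie in the range of the discrete control-to-state map $G_h$, which (as recorded after the proof of Theorem \ref{aux_dis}) is of class $C^\infty$ on the interior of $B_{\rho_2}(\bar u)$. Since the segment $u_t=u+t(u_h-u)$, $t\in[0,1]$, remains in $B_{\rho_2}(\bar u)$ by convexity, the fundamental theorem of calculus gives
\begin{equation*}
\Psi_{u_h,h}-\Psi_{u,h}=\int_0^1 G_h'(u_t)(u_h-u)\,dt.
\end{equation*}
The derivative formula from the proof of Lemma \ref{fourpointfive} yields the pointwise bound
\begin{equation*}
\trinl G_h'(u_t)(u_h-u)\trinr_2 \le \|[\partial_\Psi{\mathscr N}_h(\Psi_{u_t,h},u_t)]^{-1}\|_{\mathcal{L}(\bV)}\,\|T_h\|_{\mathcal{L}(\bV',\bV)}\,\trinl \mathbf{C}(\mathbf{u}_h-\mathbf{u})\trinr_{\bV'},
\end{equation*}
and the two operator norms are bounded uniformly in $h$ by Theorem \ref{invert} and Lemma \ref{bd_Th}; hence
\begin{equation*}
\trinl \Psi_{u_h,h}-\Psi_{u,h}\trinr_2 \le C\,\trinl \mathbf{C}(\mathbf{u}_h-\mathbf{u})\trinr_{\bV'}.
\end{equation*}

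The final and central step is to convert the weak convergence $u_h\rightharpoonup u$ in $L^2(\omega)$ into strong convergence of $\mathbf{C}(\mathbf{u}_h-\mathbf{u})$ in $\bV'=\bH^{-2}(\Omega)$. This follows from the compact embedding $L^2(\Omega)\hookrightarrow \bH^{-2}(\Omega)$, obtained by duality from the compact Sobolev embedding $\bH^2_0(\Omega)\hookrightarrow L^2(\Omega)$: since $\mathbf{C}(\mathbf{u}_h-\mathbf{u})\rightharpoonup 0$ weakly in $L^2(\Omega)$, it converges strongly to $0$ in $\bV'$. Combining the pieces gives $\Psi_{u_h,h}\to\Psi_u$ in $\bV$, which is the claim. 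The main conceptual hurdle is keeping all bounds on $G_h$ and $G_h'$ uniform in $h$ despite the nonlinearity; this $h$-uniformity is precisely what the analysis of Section 3 (in particular Theorem \ref{invert} and Lemma \ref{bd_Th}) provides.
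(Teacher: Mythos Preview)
Your proof is correct and takes a genuinely different route from the paper's. The paper splits via the \emph{continuous} solution with control $u_h$, writing $\trinl\Psi_{u_h,h}-\Psi_u\trinr_2\le\trinl\Psi_{u_h,h}-\Psi_{u_h}\trinr_2+\trinl\Psi_{u_h}-\Psi_u\trinr_2$; the first term is $O(h^\gamma)$ by Theorem~\ref{ee1}, and for the second the paper uses the a priori bound \eqref{app2} to get $(\Psi_{u_h})_h$ bounded in $\bH^{2+\gamma}(\Omega)$, then the compact embedding $\bH^{2+\gamma}\hookrightarrow\bH^2$ and passage to the limit in the weak formulation to identify the limit as $\Psi_u$. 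You instead split via the \emph{discrete} solution $\Psi_{u,h}$ with the limiting control, handle $\trinl\Psi_{u,h}-\Psi_u\trinr_2$ by Theorem~\ref{ee1}, and for $\trinl\Psi_{u_h,h}-\Psi_{u,h}\trinr_2$ invoke the uniform Lipschitz bound on $G_h$ (Theorem~\ref{invert} and Lemma~\ref{bd_Th}) together with the dual compact embedding $L^2(\Omega)\hookrightarrow\bV'$. Your argument is arguably cleaner: it avoids the subsequence extraction and the nonlinear limit passage, reuses the machinery of Section~3 more fully, and does not need the higher regularity $\bH^{2+\gamma}$. The paper's route is more classical and makes the role of the extra regularity explicit, at the cost of a slightly less direct argument.
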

\begin{proof}
Let $(u_h)_h$ be a sequence in $ B_{\rho_2}({\bar u}) \cap U_{ad}$ converging weakly to $u$. The result \eqref{app2}  in Lemma \ref{ap} yields that $\Psi_{u}$ and $\Psi_{u_h}$ belong to ${\bH}^{2+\gamma}(\Omega)$ and are bounded in  ${\bH}^{2+\gamma}(\Omega)$. Thus, there exists a subsequence (still denoted by the same notation) such that 
\begin{align*}
& \Psi_{u_h}  \rightharpoonup \tilde{\Psi} \mbox{  in } {\bH}^{2+\gamma}(\Omega), \\
  & \Psi_{u_h}  \rightarrow  \tilde{\Psi} \mbox{  in } {\bH}^{2}(\Omega).
\end{align*}
Note that $\Psi_{u_h}$ satisfies 
$$A (\Psi_{u_h}, \Phi) + B(\Psi_{u_h}, \Psi_{u_h}, \Phi) = \big \langle F + {\mathcal C}{\bf u}_h, \Phi \big \rangle_{\bV', \bV} \quad \forall \Phi \in \bV.$$
By passing to the limit, we have $\tilde{\Psi}=\Psi_{u_h}$. That is,  $\Psi_{u_h}  \rightarrow  \tilde\Psi_{u} \mbox{  in } {\bH}^{2}(\Omega)$. 

\medskip
\noindent Now a combination of this convergence result with Theorem \ref{ee1}, along with the triangle inequality and the fact that $u_h$ is bounded yield that $\Psi_{u_h,h}$ converges to $\Psi_{u}$ in ${\bH}^2(\Omega)$.
\end{proof}

\begin{cor} A result analogous to Lemma \ref{aux_new} holds true for the convergence of the solutions of the continuous and discrete adjoint problems as well. That is, for a nonsingular solution $({\bar \Psi}, {\bar u}) \in \bV \times L^2(\omega)$ of \eqref{of}, if ${u}_h \in B_{\rho_3}({\bar u})$ and $u_h \rightharpoonup u$ weakly in $L^2(\omega)$, then  $\Theta_h(u_h)$ converges to $ \Theta_{u}$ in ${\bH}^{2}(\Omega)$, where $\rho_3 >0$ is as defined in Theorem \ref{aux_dis1}.
\end{cor}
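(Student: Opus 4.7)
The plan is to mimic the argument of Lemma~\ref{aux_new}, but for the linear adjoint problem, and then combine it with the error estimate of Theorem~\ref{ee2}. First I would use the triangle inequality
\[
\trinl\Theta_{u_h,h}-\Theta_{u}\trinr_2 \le \trinl\Theta_{u_h,h}-\Theta_{u_h}\trinr_2 + \trinl\Theta_{u_h}-\Theta_{u}\trinr_2,
\]
where $\Theta_{u_h}$ is the continuous adjoint corresponding to the control $u_h$. By Theorem~\ref{ee2} (applied with control $u_h\in B_{\rho_3}(\bar u)$), the first term is bounded by $Ch^{\gamma}\to0$, so it suffices to handle the continuous-in-control convergence $\Theta_{u_h}\to\Theta_u$ in $\bH^2(\Omega)$.

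For this continuous convergence, I would first invoke Lemma~\ref{L2.5} to deduce $\Psi_{u_h}\to\Psi_u$ strongly in $\bV$ along the weakly convergent sequence $u_h\rightharpoonup u$. A priori estimates on the linear adjoint system~\eqref{adj1} are obtained as follows: since $({\bar \Psi},{\bar u})$ is nonsingular and $u_h\in {\mathcal O}(\bar u)$, Theorem~\ref{th2.5} guarantees that $\cA^*+\cB'(\Psi_{u_h})^*$ is an isomorphism from $\bV$ onto $\bV'$ with inverse uniformly bounded with respect to $u_h$; this, together with the uniform $\bH^{2+\gamma}$ bound on $\Psi_{u_h}$ from Lemma~\ref{ap}, gives $\trinl\Theta_{u_h}\trinr_2\le C$. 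Repeating the bootstrap argument used in the proof of Lemma~\ref{linapriori} (with $\cB'(\Psi_{u_h})^*\Theta_{u_h}$ in $\bH^{-1}(\Omega)$, via estimate~\eqref{boundB3}) yields the uniform bound $\trinl\Theta_{u_h}\trinr_{2+\gamma}\le C$.

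Then I would extract a subsequence (not relabelled) with $\Theta_{u_h}\rightharpoonup\tilde\Theta$ in $\bH^{2+\gamma}(\Omega)$ and, by compact embedding, $\Theta_{u_h}\to\tilde\Theta$ strongly in $\bH^2(\Omega)$. Passing to the limit in
\[
A(\Phi,\Theta_{u_h}) + B(\Psi_{u_h},\Phi,\Theta_{u_h}) + B(\Phi,\Psi_{u_h},\Theta_{u_h}) = (\Psi_{u_h}-\Psi_d,\Phi), \quad \Phi\in\bV,
\]
is straightforward: the linear term is weakly continuous, and the two trilinear terms pass to the limit because $\Psi_{u_h}\to\Psi_u$ strongly in $\bV$ while $\Theta_{u_h}$ is bounded in $\bV$ (and conversely). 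The limit $\tilde\Theta$ thus solves \eqref{adj1} with data $\Psi_u-\Psi_d$, and by uniqueness of the linear adjoint solution (again from Theorem~\ref{th2.5}) $\tilde\Theta=\Theta_u$. Since every subsequence has a further subsequence converging strongly in $\bH^2$ to the same limit, the whole sequence $\Theta_{u_h}$ converges strongly in $\bH^2(\Omega)$ to $\Theta_u$, which together with the previous step concludes the proof.

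The main technical point to be careful about is the passage to the limit in the trilinear terms $B(\Psi_{u_h},\Phi,\Theta_{u_h})$ and $B(\Phi,\Psi_{u_h},\Theta_{u_h})$, where one factor is only weakly convergent; here the Sobolev embedding encoded in \eqref{boundB1}-\eqref{boundB2}, combined with the strong $\bH^2$ convergence of $\Psi_{u_h}$ obtained via the $\bH^{2+\gamma}$ bound and compactness, makes it routine. The only subtlety is ensuring that $u_h\in B_{\rho_3}(\bar u)$ so that Theorem~\ref{th2.5} applies uniformly, which is part of the hypothesis.
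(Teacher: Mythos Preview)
Your proposal is correct and follows precisely the route the paper intends: the corollary is stated without proof as an analogue of Lemma~\ref{aux_new}, and your argument mirrors that lemma's structure (uniform $\bH^{2+\gamma}$ bounds, compactness, identification of the limit, combination with the discrete error estimate of Theorem~\ref{ee2}).

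One small remark: your citation of Lemma~\ref{L2.5} for the strong convergence $\Psi_{u_h}\to\Psi_u$ in $\bV$ is slightly off, since that lemma is stated specifically for weak limits equal to $\bar u$. What you actually need is the more general statement (weak limit $u\in B_{\rho_3}(\bar u)$) that the paper proves directly within the proof of Lemma~\ref{aux_new} itself, using the $\bH^{2+\gamma}$ bound from Lemma~\ref{ap} and uniqueness of the state in $\mathcal{O}(\bar\Psi)$. Citing that argument rather than Lemma~\ref{L2.5} would make the sketch airtight.
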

The next theorem states the existence of at least one solution of the discrete control problem stated in \eqref{discrete_cost} and the convergence results for the control and state variables. Since the proof is quite standard (for example, see the proof of Theorem 4.11 in \cite{cmj}), it is skipped.  
\begin{thm}\label{T4.11}
Let $({\bar \Psi}, {\bar u}) \in \bV \times L^2(\omega) $ be a nonsingular solution of \eqref{wform}.   Then there exists $h_2
> 0$ such that, for all $0<h < h_2$, \eqref{discrete_cost} has at least one solution. If furthermore
$({\bar \Psi}, {\bar u})$ is a strict local minimum of \eqref{wform}, then for all $0<h < h_2$, \eqref{discrete_cost} has a local minimum $({\bar \Psi}_h, {\bar u}_h)$ in a neighborhood of
$({\bar \Psi}, {\bar u})$  and the following results hold:
\[
\lim_{h\to 0} j_h(\bar u_h)=j(\bar u), \
\lim_{h\to
0}\|\bar u -\bar u_h  \|_{L^2(\omega)} = 0
\mbox{ and } \lim_{h\to
0} \trinl \bar{\Psi} -\bar{\Psi}_h \trinr_2 = 0,
\]
where $j_h(\bar u_h) = {\mathscr N}( {\bar\Psi}_h, {\bar u}_h)$.
\end{thm}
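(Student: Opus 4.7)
\textbf{Proof plan for Theorem \ref{T4.11}.} The plan has two parts: existence of discrete local minima, and their convergence to $(\bar\Psi,\bar u)$.

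\textbf{Part 1 (Existence).} I would restrict the discrete problem to the closed set
$$
\mathcal{K}_h := \bigl(B_{\rho_1}(\bar\Psi)\cap \bV_h\bigr)\times \bigl(\overline{B_{\rho_2}(\bar u)}\cap U_{h,ad}\bigr),
$$
with $\rho_1,\rho_2,h_1$ furnished by Theorem~\ref{aux_dis}. For $0<h<h_1$ and $u_h\in \overline{B_{\rho_2}(\bar u)}\cap U_{h,ad}$, the state $\Psi_{u_h,h}$ exists, is unique in $B_{\rho_1}(\bar\Psi)$, and depends Lipschitz continuously on $u_h$ by Lemma~\ref{fourpointfive}. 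Since $U_{h,ad}$ is a finite–dimensional closed convex bounded subset of $L^2(\omega)$ and the reduced cost $u_h\mapsto J(\Psi_{u_h,h},u_h)$ is continuous, a minimizer $(\bar\Psi_h,\bar u_h)$ of \eqref{discrete_cost} restricted to $\mathcal{K}_h$ exists by the Weierstrass theorem. This yields at least one solution of the discrete problem.

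\textbf{Part 2 (Convergence to a strict local minimum).} Assume now that $(\bar\Psi,\bar u)$ is a strict local minimum, so $J(\bar\Psi,\bar u)<J(\Psi_u,u)$ for every admissible pair in some neighborhood of $(\bar\Psi,\bar u)$, $u\neq\bar u$. Fix $\rho \le \min(\rho_1,\rho_2,\rho_3)$ small enough that $(\bar\Psi,\bar u)$ is the unique minimizer inside this neighborhood. I would take $P_h$ to be the elementwise $L^2$-projection onto piecewise constants; since $u_a\le \bar u\le u_b$ a.e., the averages $P_h\bar u$ lie in $[u_a,u_b]$, so $P_h\bar u\in U_{h,ad}$, and $P_h\bar u\to\bar u$ in $L^2(\omega)$. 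For $h$ small enough, $P_h\bar u\in B_{\rho_2}(\bar u)$. Let $(\bar\Psi_h,\bar u_h)$ be the discrete minimizer on $\mathcal{K}_h$ from Part~1; by its minimality,
\begin{equation}\label{eq:compare_plan}
J(\bar\Psi_h,\bar u_h)\ \le\ J(\Psi_{P_h\bar u,h},P_h\bar u).
\end{equation}
Lemma~\ref{fourpointfive} gives $\trinl \Psi_{P_h\bar u,h}-\bar\Psi\trinr_2\lesssim h^\gamma+\|P_h\bar u-\bar u\|_{L^2(\omega)}\to 0$, and continuity of $J$ yields $J(\Psi_{P_h\bar u,h},P_h\bar u)\to J(\bar\Psi,\bar u)$.

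\textbf{Part 3 (Identification of the limit).} Since $\bar u_h$ is bounded in $L^2(\omega)$, a subsequence (not relabelled) satisfies $\bar u_h\rightharpoonup u^\star$ weakly in $L^2(\omega)$ with $u^\star\in U_{ad}$; by Lemma~\ref{aux_new}, $\bar\Psi_h=\Psi_{\bar u_h,h}\to\Psi_{u^\star}$ strongly in $\bV$, and $u^\star\in \overline{B_{\rho_2}(\bar u)}$. The weak lower semicontinuity of $v\mapsto\|v\|_{L^2(\omega)}^2$, together with strong convergence of $\bar\Psi_h$ in $\bV\hookrightarrow{\bf L}^2(\Omega)$, gives
\begin{equation}\label{eq:lsc_plan}
J(\Psi_{u^\star},u^\star)\ \le\ \liminf_{h\to 0} J(\bar\Psi_h,\bar u_h)\ \le\ \limsup_{h\to 0} J(\bar\Psi_h,\bar u_h)\ \le\ J(\bar\Psi,\bar u).
\end{equation}
Because $(\bar\Psi,\bar u)$ is the strict local minimum on the chosen neighborhood and $(\Psi_{u^\star},u^\star)$ lies in that neighborhood, \eqref{eq:lsc_plan} forces $u^\star=\bar u$ and $\Psi_{u^\star}=\bar\Psi$. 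As the limit is unique, the whole sequence converges. Combining \eqref{eq:compare_plan}--\eqref{eq:lsc_plan} yields $j_h(\bar u_h)=J(\bar\Psi_h,\bar u_h)\to J(\bar\Psi,\bar u)=j(\bar u)$, and using the explicit quadratic structure
$$
\tfrac{\alpha}{2}\|\bar u_h\|_{L^2(\omega)}^2 = J(\bar\Psi_h,\bar u_h)-\tfrac12\|\bar\Psi_h-\Psi_d\|_{L^2(\Omega)}^2
$$
together with the strong convergence $\bar\Psi_h\to\bar\Psi$ in $\bV$ gives $\|\bar u_h\|_{L^2(\omega)}\to\|\bar u\|_{L^2(\omega)}$, which combined with weak convergence upgrades to strong convergence $\bar u_h\to\bar u$ in $L^2(\omega)$. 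The state convergence $\trinl \bar\Psi-\bar\Psi_h\trinr_2\to 0$ then follows from Lemma~\ref{fourpointfive} applied with $u=\bar u$ and $\hat u=\bar u_h$.

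\textbf{Main obstacle.} The delicate point is justifying that the weak limit $u^\star$ lies \emph{inside} the neighborhood where strict minimality applies. This is why I keep the constraint $\bar u_h\in \overline{B_{\rho_2}(\bar u)}$ closed, so that $u^\star\in\overline{B_{\rho_2}(\bar u)}$ by weak closedness, and why I must choose $\rho$ small enough at the outset to guarantee that the minimizers constructed in Part~1 remain in the uniqueness neighborhood; combined with Lemma~\ref{aux_new}, this pins $u^\star=\bar u$ and precludes the discrete minima from escaping to a competing local minimum of \eqref{wform}.
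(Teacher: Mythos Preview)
Your proposal is correct and follows precisely the standard argument the paper has in mind: the paper does not give its own proof but explicitly defers to \cite[Theorem~4.11]{cmj}, and your localization/compactness/lower-semicontinuity scheme (restrict to $\overline{B_{\rho_2}(\bar u)}\cap U_{h,ad}$, compare with $P_h\bar u$, extract a weak limit, invoke Lemma~\ref{aux_new}, then upgrade to strong convergence via norm convergence) is exactly that argument. One small point you leave implicit but which is needed for the statement as phrased: once you have $\bar u_h\to\bar u$ strongly, for $h$ small $\bar u_h$ lies in the \emph{interior} of $B_{\rho_2}(\bar u)$, so your constrained minimizer on $\mathcal K_h$ is in fact an unconstrained local minimizer of \eqref{discrete_cost}; this closes the loop with your ``main obstacle'' and deserves one explicit sentence.
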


Let  $({\bar \Psi}, {\bar u})$  be a nonsingular
strict local minimum of \eqref{wform} and  $\{({\bar \Psi}_h, {\bar u}_h)  \}_{h \le
h_3}$ be a sequence of local minima of problems \eqref{discrete_cost} converging
to $({\bar \Psi}, {\bar u})$ in $\bV \times L^2(\omega)$ , with ${\bar u}_h \in B_{\rho_3}(\bar u)$, where
$h_3$ and $\rho_3$ are given by Theorem \ref{aux_dis1}. Then every element ${\bar u}_h$  from a sequence
$\{\bar u_h\}_{h \le h_3}$ is a local solution of the problem with a discrete reduced cost functional 
\begin{align} \label{reduced_discrete}
\min_{{{u}}\in  U_{h,ad}} j_h({{u}}) := {\mathscr N}_h(\Psi_{u,h}, u)
\end{align}
where $\Psi_{u,h}= G_h( u )$.

\medskip
\noindent In the next lemma, we establish the optimality condition  for the discrete control problem and the uniform convergence of the controls.
\begin{lem}
 Let $\bar{u}_h$ be a solution to problem
\eqref{reduced_discrete}, and let
${\bar \Psi}_h, \: {\bar \Theta}_h \in \bV_h$ denote the
corresponding discrete state and adjoint state. Then ${\bf \bar u}_h=({\bar u}_h,0)$
satisfies
\begin{align}\label{dis_opt}
\int_{\omega} ({\mathbf {C}}^* {\bar \Theta}_h +\alpha \bar{\bf u}_h ) 
\cdot ({{ \bf u}}_h-\bar{\mathbf{u}}_h)  \, dx \geq 0\ \mathrm{ for\ all\
}{\mathbf{u}}_h  = (u_h,0) ,   \; u_h \in  U_{h,ad}.
\end{align}
Also, $\displaystyle \lim_{h \rightarrow 0} \| {\bar u} - {\bar u}_h \|_{L^{\infty}(\omega)}=0$.
\end{lem}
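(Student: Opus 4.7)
The statement has two parts: the discrete first–order optimality inequality \eqref{dis_opt}, and the uniform convergence of the controls. I will treat them in turn.

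\emph{Derivation of \eqref{dis_opt}.} Since $\bar u_h$ is a local minimum of the reduced discrete cost $j_h$ over the convex set $U_{h,ad}$, the standard variational inequality for smooth functionals on convex sets gives $j_h'(\bar u_h)(u_h-\bar u_h)\ge 0$ for every $u_h\in U_{h,ad}$. The task reduces to identifying $j_h'(\bar u_h)v$ in the same form as the continuous case \eqref{fd}. The remarks immediately after the proof of Theorem \ref{aux_dis} ensure that $G_h$ is of class $C^{\infty}$ in the interior of $B_{\rho_2}(\bar u)$; differentiating the identity $\mathscr{N}_h(G_h(u),u)=0$ one shows that $\mathbf z_{v,h}:=G_h'(u)v\in\bV_h$ is the unique solution of the linearized discrete state equation
\begin{align*}
A(\mathbf z_{v,h},\Phi_h)+B(\Psi_{u,h},\mathbf z_{v,h},\Phi_h)+B(\mathbf z_{v,h},\Psi_{u,h},\Phi_h)=(\mathcal{C}v,\varphi_{1,h})\qquad\forall\Phi_h\in\bV_h.
\end{align*}
Testing the discrete adjoint equation \eqref{adj2} at $u=\bar u_h$ with $\Phi_h=\mathbf z_{v,h}$ and the above linearization with $\Phi_h=\bar\Theta_h$, then using the adjoint relationship between $\mathcal{B}'(\bar\Psi_h)$ and $\mathcal{B}'(\bar\Psi_h)^*$ (see \eqref{opad}) to match the trilinear terms, yields the duality identity $(\bar\Psi_h-\Psi_d,\mathbf z_{v,h})=(\mathcal{C}v,\bar\theta_{1,h})$. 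Combining with the chain–rule expression $j_h'(\bar u_h)v=(\bar\Psi_h-\Psi_d,\mathbf z_{v,h})+\alpha(\bar u_h,v)_{L^2(\omega)}$ produces $j_h'(\bar u_h)v=(\mathcal{C}^*\bar\theta_{1,h}+\alpha \bar u_h,v)_{L^2(\omega)}$, which is precisely \eqref{dis_opt}.

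\emph{Uniform convergence.} Theorem \ref{T4.11} already supplies $\|\bar u-\bar u_h\|_{L^2(\omega)}\to 0$ and $\trinl\bar\Psi-\bar\Psi_h\trinr_2\to 0$. The corollary following Lemma \ref{aux_new}, applied to the adjoint problems, then gives $\trinl\bar\Theta-\bar\Theta_h\trinr_2\to 0$. Since $\Omega\subset\mathbb R^2$, the Sobolev embedding $H^2(\Omega)\hookrightarrow C^0(\bar\Omega)$ upgrades this to $\|\bar\theta_1-\bar\theta_{1,h}\|_{L^\infty(\Omega)}\to 0$. On the other hand, testing \eqref{dis_opt} elementwise with the admissible perturbations $\bar u_h\pm\varepsilon\chi_T\in U_{h,ad}$ (feasible by assumption (A2), which ensures $\bar\omega$ is a union of mesh elements) yields the pointwise discrete projection formula
\begin{align*}
\bar u_h\big|_T=\pi_{[u_a,u_b]}\!\Big(-\tfrac{1}{\alpha|T|}\int_T\mathcal{C}^*\bar\theta_{1,h}\,dx\Big),\qquad T\in\mathcal T_h,\ T\subset\bar\omega,
\end{align*}
to be contrasted with the continuous pointwise formula \eqref{rep}. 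Using that $\pi_{[u_a,u_b]}$ is $1$-Lipschitz, a direct elementwise estimate gives
\begin{align*}
\|\bar u-\bar u_h\|_{L^\infty(\omega)}\le \tfrac{1}{\alpha}\max_{T\subset\bar\omega}\mathrm{osc}_T(\mathcal{C}^*\bar\theta_1)+\tfrac{1}{\alpha}\|\bar\theta_1-\bar\theta_{1,h}\|_{L^\infty(\omega)}.
\end{align*}
The second term vanishes as $h\to 0$ by the $L^\infty$-convergence just established. The first term also vanishes because $\mathcal{C}^*\bar\theta_1=\bar\theta_1|_\omega$ lies in $H^{2+\gamma}(\Omega)\hookrightarrow C^0(\bar\Omega)$, hence is uniformly continuous on $\bar\omega$, and the mesh is quasi-uniform. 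This yields $\|\bar u-\bar u_h\|_{L^\infty(\omega)}\to 0$.

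\emph{Main obstacle.} The only non–cosmetic step is the upgrade from $\bV$-convergence of the discrete adjoint to $L^\infty$-convergence of $\bar\theta_{1,h}$; this rests on the $2$D Sobolev embedding together with the adjoint version of Lemma \ref{aux_new}, both of which are already available. Once this uniform convergence is in hand, the remaining comparison of the continuous and discrete projection formulae is a routine oscillation estimate.
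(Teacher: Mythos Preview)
Your proof is correct and follows essentially the same route as the paper: derive \eqref{dis_opt} from the local optimality of $j_h$ together with the discrete adjoint representation of $j_h'(\bar u_h)$, then extract the elementwise projection formula for $\bar u_h$ and compare it pointwise with \eqref{rep} using the Lipschitz continuity of $\pi_{[u_a,u_b]}$ and the $L^\infty$-convergence of the discrete adjoint obtained from the Sobolev embedding $H^2(\Omega)\hookrightarrow C^0(\bar\Omega)$. The only cosmetic difference is that the paper uses the mean-value theorem to produce a point $x_T\in T$ and splits $|\mathcal C^*\bar\theta_{1h}(x_T)-\mathcal C^*\bar\theta_1(x)|$ at $x_T$, whereas you split via the oscillation of $\mathcal C^*\bar\theta_1$ over $T$; both variants give the same conclusion.
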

\begin{proof}
For the first part, we use the optimality condition for the reduced discrete cost functional. That is,
$$ j_h'({{\bar u}_h})(u_h-{\bar u}_h) = \int_{\omega} ({\rm C}^* {\bar \theta}_{1h} + \alpha {\bar u}_h)(u_h - {\bar u}_h) \dx \ge 0,$$
from which the required result \eqref{dis_opt} follows, as ${\bf \bar u}_h=({\bar u}_h,0)$,  $ {\mathbf{u}}_h  = (u_h,0)$ and ${\bar \Theta}_h=({\bar \theta}_{1h}, {\bar \theta}_{2h})$.

\medskip
\noindent From \eqref{dis_opt}, we can express the discrete control as the projection of the adjoint variable on $[u_a,u_b]$. 
That is, 
$$ {\bar u}_h|_T = \pi_{[u_a,u_b]}\left(  -\frac{1}{\alpha |T|} \int_T ({\rm C}^* \theta_{1h} )(x) \dx\right). $$
For $x \in T$, the projection formula for the continuous control in \eqref{rep}, the mean value theorem and the Lipschitz continuity of the projection operator yield 
\begin{align*}
|{\bar u}_h(x)- {\bar u}(x)| & \le \left| \frac{1}{\alpha |T|} \int_T  ({\rm C}^* \theta_{1h} )(s) \: ds  - \frac{1}{\alpha} ({\rm C}^* {\bar \theta}_{1h} )(x)\right| \\
&= \frac{1}{\alpha} \left|   ({\rm C}^* {\bar \theta}_{1h} )(x_T) -  ({\rm C}^* {\bar \theta}_{1} )(x) \right| \\
& \le \left|   ({\rm C}^* {\bar \theta}_{1h} )(x_T) -  ({\rm C}^* {\bar \theta}_{1} )(x_T) \right| + 
\left|   ({\rm C}^* {\bar \theta}_{1} )(x_T) -  ({\rm C}^* {\bar \theta}_{1} )(x) \right| \\
& \le C \left(\trinl {\bar \Theta}_h - {\bar \Theta} \trinr_{\infty} + |x_T-x|\right) \\
& \le C \left( \trinl {\bar \Theta}_h - {\bar \Theta} \trinr_{\infty} +h \right),
\end{align*}
for some $x_T \in T$, and the result follows from the Sobolev imbedding result together with Lemma~\ref{fourpointfiveadj} and Theorem~\ref{T4.11}.
\end{proof}

\section{Error Estimates}

In this section, we develop error estimates for the state, adjoint and control variables. 

\medskip
Let  $({\bar \Psi}, {\bar u})$  be a nonsingular
strict local minimum of \eqref{wform} satisfying the second order optimality condition in Theorem \ref{ssoc} (or equivalently \eqref{ssoc2}). Let   $\{({\bar \Psi}_h, {\bar u}_h)  \}_{h \le
h_3}$ be a sequence of local minima of problems \eqref{discrete_cost} converging
to $({\bar \Psi}, {\bar u})$ in $\bV \times L^2(\omega)$ , with ${\bar u}_h \in B_{\rho_3}(\bar u)$, where
$h_3$ and $\rho_3$ are given by Theorem \ref{aux_dis1}. Since $h \le h_3$ and ${\bar u}_h \in B_{\rho_3}({\bar u})$, ${\bar u}_h$ is a local minimum of \eqref{reduced_discrete}. 

\medskip

First we state a lemma which is essential for the proof of the main convergence result in Theorem \ref{conv}. For a proof see \cite[Lemmas 4.16 \& 4.17]{cmj}.

\begin{lem}\label{aux_lemma}
(a) Let the second order optimality condition \eqref{ssoc2} hold true. Then, there exists a mesh size $h_4$ with
$0 < h_4 \le h_3$ such that 
\begin{align} \label{ssfo}
\frac{\delta}{2} \|{\bar u}-{\bar u}_h\|^2_{L^2(\omega)} \le \left( j'({\bar u}) -j'({\bar u}_h)\right)({\bar u}-{\bar u}_h) \; \; \forall\, 0 < h <h_4.
\end{align}
(b) There exist a mesh size $h_5$ with
$0 < h_5 \le h_4$ and a constant $C>0$ such that, for every $0 < h \le h_5$, there exists $u_h^* \in U_{h,ad}$ satisfying 
\begin{align} \label{uhstar}
(i) \; j'({\bar u})({\bar u}-u_h^*) =0 \qquad (ii) \; \|{\bar u}-u_h^*\|_{L^{\infty}(\omega)} \le Ch.
\end{align} 
\end{lem}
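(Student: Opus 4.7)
For part (a), the plan is to argue by contradiction. Suppose a subsequence $h_k\to 0$ exists along which the inequality fails. Set $\rho_k:=\|{\bar u}-{\bar u}_{h_k}\|_{L^2(\omega)}$ and $v_k:=({\bar u}-{\bar u}_{h_k})/\rho_k$. By Taylor's theorem in the $\Psi$--$u$ coordinates of the reduced functional, there is $\hat u_{h_k}$ on the segment $[{\bar u},{\bar u}_{h_k}]$ such that $(j'({\bar u})-j'({\bar u}_{h_k}))({\bar u}-{\bar u}_{h_k}) = j''(\hat u_{h_k})({\bar u}-{\bar u}_{h_k})^2$; dividing the failure inequality by $\rho_k^2$ gives $j''(\hat u_{h_k})v_k^2 < \delta/2$. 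Extract a weak limit $v_k\rightharpoonup v$ in $L^2(\omega)$. Using the compactness arguments in Lemma \ref{L2.5} and the strong convergence $\Theta_{\hat u_{h_k}}\to {\bar\Theta}$ obtained from $\hat u_{h_k}\to{\bar u}$ in $L^2(\omega)$, the corresponding linearised states ${\bf z}_{v_k}$ (solutions of \eqref{E2.7} at $u=\hat u_{h_k}$) converge strongly in $\bV$ to ${\bf \bar z}_v$.

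The next step is to show that $v\in{\mathscr C}_{\bar u}^{\tau}$. The sign conditions in \eqref{cases3} are inherited from ${\bar u}+\rho_k v_k\in[u_a,u_b]$, and the requirement $v(x)=0$ on $\{|{\bar d}|>\tau\}$ follows from the uniform convergence $\|{\bar u}_h-{\bar u}\|_{L^\infty(\omega)}\to 0$ established in the previous lemma, because on $\{|{\bar d}|>\tau\}$ both ${\bar u}$ and ${\bar u}_{h_k}$ eventually saturate the same endpoint $u_a$ or $u_b$ and hence $v_k\equiv 0$ there for large $k$. Passing to the limit in the failure inequality via the representation \eqref{sd}, one obtains $\alpha\|v\|_{L^2(\omega)}^2 + \|{\bf \bar z}_v\|_{L^2(\Omega)}^2 + \int_\Omega[[{\bf \bar z}_v,{\bf \bar z}_v]]{\bar\Theta}\,dx = j''({\bar u})v^2 \le \delta/2$. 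Combined with \eqref{ssoc2} this forces $v=0$ and ${\bf \bar z}_v=0$. To rule out this residual case, strong convergence $v_k\to 0$ in $L^2(\omega)$ is deduced by isolating the $\alpha\|v_k\|_{L^2(\omega)}^2$ piece of $j''(\hat u_{h_k})v_k^2$ and using the strong $\bV$-convergence of ${\bf z}_{v_k}$; this yields $\|v_k\|_{L^2(\omega)}\to 0$, contradicting $\|v_k\|_{L^2(\omega)}=1$. The main obstacle is the sign-condition bookkeeping that places the weak limit $v$ in the strict critical cone ${\mathscr C}_{\bar u}^{\tau}$ rather than merely in the larger tangent cone ${\mathscr C}_{U_{ad}}({\bar u})$.

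For part (b), the strategy is a triangle-by-triangle construction. Split $\mathcal T_h=\mathcal T_1\cup\mathcal T_2$ with $\mathcal T_1:=\{T\in\mathcal T_h:|{\bar d}(x)|\ge\tau/2 \text{ for all } x\in T\}$ and $\mathcal T_2:=\mathcal T_h\setminus\mathcal T_1$. On every $T\in\mathcal T_1$, ${\bar d}$ has a strict sign, so the projection formula \eqref{rep} forces ${\bar u}|_T$ to equal the constant $u_a$ or $u_b$; set $u_h^*|_T:={\bar u}|_T$, which trivially gives $\int_T{\bar d}(u_h^*-{\bar u})\,dx=0$. On each $T\in\mathcal T_2$, choose the constant $u_h^*|_T\in[u_a,u_b]$ so that the same vanishing identity holds; the candidate $u_h^*|_T = \bigl(\int_T{\bar d}\,{\bar u}\,dx\bigr)/\bigl(\int_T{\bar d}\,dx\bigr)$ works whenever the denominator is nonzero and ${\bar d}$ does not change sign on $T$ (it is then a convex combination of values of ${\bar u}|_T$, hence in $[u_a,u_b]$), and for the exceptional transition triangles one can instead take $u_h^*|_T=u_a$ or $u_b$ (see below). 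Summing yields $j'({\bar u})({\bar u}-u_h^*)=\int_\omega{\bar d}({\bar u}-u_h^*)\,dx=0$, which is (i).

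For (ii), the key ingredient is regularity: ${\bar\theta}_1\in H^{2+\gamma}(\Omega)$ with $\gamma>1/2$ by Section \ref{sec3.3} and the two-dimensional Sobolev embedding $H^{2+\gamma}(\Omega)\hookrightarrow C^{1}({\bar\Omega})$ imply that ${\bar\theta}_1$ is Lipschitz; combined with \eqref{rep} and the $1$-Lipschitz property of $\pi_{[u_a,u_b]}$, ${\bar u}$ is Lipschitz on ${\bar\omega}$ and so is ${\bar d}$. On $T\in\mathcal T_1$, $u_h^*={\bar u}$ pointwise. On $T\in\mathcal T_2$, $u_h^*|_T\in[\inf_T{\bar u},\sup_T{\bar u}]$, hence $\|u_h^*-{\bar u}\|_{L^\infty(T)}\le\mathrm{Lip}({\bar u})\,h\le Ch$. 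The delicate point in the construction is the admissibility of $u_h^*|_T$ on transition triangles where ${\bar d}$ may change sign; on these, Lipschitz continuity of ${\bar d}$ implies $|{\bar d}|\le\tau/2+\mathrm{Lip}({\bar d})h$ on all of $T$, so for $h$ small the projection formula places ${\bar u}$ within $O(h)$ of an endpoint, and replacing $u_h^*|_T$ by that endpoint preserves both $j'({\bar u})({\bar u}-u_h^*)=0$ (up to an $O(h^3)$ correction which can be absorbed into the remaining free triangles) and the $L^\infty$ bound. This admissibility issue is the main technical obstacle in part (b).
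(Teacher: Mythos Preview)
The paper does not give its own proof of this lemma; it simply refers to \cite[Lemmas 4.16 \& 4.17]{cmj}. Your proposal is therefore a reconstruction of that argument, and for part~(a) it follows the standard route of \cite{cmj} faithfully: contradiction, normalisation, weak limit, and verification that the limit lies in the enlarged critical cone $\mathscr{C}_{\bar u}^\tau$. One imprecision: the claim that ``$v_k\equiv 0$ on $\{|{\bar d}|>\tau\}$ for large $k$'' is not literally true because triangles may straddle the boundary of this set. The correct statement is that for each $\varepsilon>0$ one has $v_k\equiv 0$ on $\{|{\bar d}|\ge \tau+\varepsilon\}$ for $k$ large (triangles meeting this compact set are eventually contained in $\{|{\bar d}|>\tau\}$, and there saturation of ${\bar u}_{h_k}$ follows from the uniform convergence already proved). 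Letting $\varepsilon\downarrow 0$ then gives $v=0$ on $\{|{\bar d}|>\tau\}$.

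For part~(b) your triangle-by-triangle weighted average is exactly the construction in \cite{cmj}, but you manufacture a difficulty that does not exist and then resolve it by a non-rigorous ``absorption'' device. The issue of triangles where ${\bar d}$ changes sign disappears for $h$ small enough. Indeed, if ${\bar d}(x_1)>0$ and ${\bar d}(x_2)<0$ for $x_1,x_2\in T$, the projection formula \eqref{rep} forces ${\bar u}(x_1)=u_a$ and ${\bar u}(x_2)=u_b$; since ${\bar u}$ is Lipschitz (as you yourself argue via ${\bar\theta}_1\in H^{2+\gamma}\hookrightarrow C^1$), this gives $u_b-u_a\le \mathrm{Lip}({\bar u})\,h$, which is impossible for $h<(u_b-u_a)/\mathrm{Lip}({\bar u})$ when $u_a<u_b$. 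Hence for $h$ below this threshold ${\bar d}$ has a definite sign (possibly with zeros) on every triangle, the weighted average $u_h^*|_T=\bigl(\int_T{\bar d}\,{\bar u}\bigr)/\bigl(\int_T{\bar d}\bigr)$ is a genuine convex combination of values of ${\bar u}|_T$ and therefore admissible, and the exact identity $\int_T{\bar d}({\bar u}-u_h^*)=0$ holds on each $T$ without any correction. Your ``$O(h^3)$ absorption into the remaining free triangles'' should be dropped: it is not needed, and as written it does not yield the \emph{exact} equality $j'({\bar u})({\bar u}-u_h^*)=0$ required in~(i).
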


\noindent The following theorem establishes the convergence rates for control, state and adjoint variables. 
\begin{thm} \label{conv}
Let  $({\bar \Psi}, {\bar u})$  be a nonsingular
strict local minimum of \eqref{wform}  and  $\{({\bar \Psi}_h, {\bar u}_h)  \}_{h \le
h_3}$ be a solution to  \eqref{discrete_cost} converging
to $({\bar \Psi}, {\bar u})$ in $\bV \times L^2(\omega)$, for a sufficiently small mesh-size $h$ with ${\bar u}_h \in B_{\rho_3}(\bar u)$, where $\rho_3$ is given in Theorem \ref{aux_dis1}. Let $\bar \Theta $ and ${\bar \Theta}_h $ be the corresponding continuous and discrete adjoint state variables, respectively.  Then, 
there exists a constant $C>0$ such that, for all $0<h\leq h_5$, we have
\begin{align*}
(i) \; \|\bar{{u}}-\bar{{u}}_h\|_{L^2(\omega)}\leq C h  \quad (ii)\; \trinl \bar \Psi- {\bar \Psi}_h\trinr_2 \le Ch^{\gamma} \quad (iii) \; \trinl \bar \Theta- {\bar \Theta}_h\trinr_2 \le Ch^{\gamma},
\end{align*}
$\gamma \in (1/2,1]$ being the index of elliptic regularity.

\end{thm}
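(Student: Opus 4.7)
\textbf{Proof plan for Theorem \ref{conv}.} The plan is to establish the control estimate (i) first via the second order sufficient condition, and then deduce the state and adjoint estimates (ii)--(iii) by invoking Lemmas \ref{fourpointfive} and \ref{fourpointfiveadj} with $u=\bar u$ and $\hat u=\bar u_h$.

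For (i), the starting point is Lemma \ref{aux_lemma}(a), which gives
$$\frac{\delta}{2}\|\bar u-\bar u_h\|^2_{L^2(\omega)}\le \big(j'(\bar u)-j'(\bar u_h)\big)(\bar u-\bar u_h)= -j'(\bar u)(\bar u_h-\bar u)+j'(\bar u_h)(\bar u_h-\bar u).$$
Continuous optimality \eqref{opt3} yields $j'(\bar u)(\bar u_h-\bar u)\ge 0$, so the first term is nonpositive and may be discarded. The second term is then split through the auxiliary control $u_h^*\in U_{h,ad}$ produced by Lemma \ref{aux_lemma}(b):
$$j'(\bar u_h)(\bar u_h-\bar u)=j'(\bar u_h)(\bar u_h-u_h^*)+j'(\bar u_h)(u_h^*-\bar u).$$
For the second piece I would insert $j'(\bar u)(u_h^*-\bar u)=0$ (which holds by \eqref{uhstar}(i)) and write it as $(j'(\bar u_h)-j'(\bar u))(u_h^*-\bar u)$; using the formula \eqref{fd} together with the Lipschitz dependence of $\theta_1(u)$ on $u$ (a direct consequence of Lemma \ref{fourpointfiveadj} with $h\to 0$, or equivalently of Theorem \ref{th2.5}) this is bounded by $C\|\bar u_h-\bar u\|_{L^2(\omega)}\|u_h^*-\bar u\|_{L^2(\omega)}\le Ch\,\|\bar u_h-\bar u\|_{L^2(\omega)}$ by \eqref{uhstar}(ii).

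For the first piece I would further decompose using the discrete reduced cost $j_h$:
$$j'(\bar u_h)(\bar u_h-u_h^*)=\big(j'(\bar u_h)-j_h'(\bar u_h)\big)(\bar u_h-u_h^*)+j_h'(\bar u_h)(\bar u_h-u_h^*).$$
The second summand is $\le 0$ by the discrete optimality condition \eqref{dis_opt}. For the first summand, the definition of $j'$ and $j_h'$ via \eqref{fd} gives $(j'(\bar u_h)-j_h'(\bar u_h))(v)=\int_\omega \mathcal{C}^*(\theta_1(\bar u_h)-\theta_{1,h}(\bar u_h))\,v\,dx$, so Theorem \ref{ee2}(b) applied to $u=\bar u_h\in B_{\rho_3}(\bar u)$ yields a bound of order $h^{2\gamma}\|v\|_{L^2(\omega)}$. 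Taking $v=\bar u_h-u_h^*$ and using $\|u_h^*-\bar u\|_{L^2(\omega)}\le Ch$ gives $Ch^{2\gamma}(\|\bar u_h-\bar u\|_{L^2(\omega)}+h)$. Assembling all the inequalities and using $2\gamma>1$,
$$\frac{\delta}{2}\|\bar u-\bar u_h\|^2_{L^2(\omega)}\le C h\,\|\bar u-\bar u_h\|_{L^2(\omega)}+Ch^{1+2\gamma},$$
and a standard Young inequality $Ch\,x\le \tfrac{\delta}{4}x^2+Ch^2$ then absorbs the linear term and produces $\|\bar u-\bar u_h\|_{L^2(\omega)}\le Ch$.

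For (ii) and (iii), I would simply apply Lemma \ref{fourpointfive} with $u=\bar u$, $\hat u=\bar u_h$ and use (i):
$$\trinl\bar\Psi-\bar\Psi_h\trinr_2\le C\big(h^\gamma+\|\bar u-\bar u_h\|_{L^2(\omega)}\big)\le Ch^\gamma,$$
and the same argument with Lemma \ref{fourpointfiveadj} gives (iii), since $\gamma\le 1$ makes $h$ dominated by $h^\gamma$. The main obstacle in the whole argument is managing the cross term $(j'(\bar u_h)-j_h'(\bar u_h))(\bar u_h-u_h^*)$, because a naive bound would only produce $h^\gamma$; the improved $L^2$-adjoint error $h^{2\gamma}>h$ from Theorem \ref{ee2}(b) is essential to recover the linear order $h$ for the control.
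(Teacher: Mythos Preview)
Your proposal is correct and follows essentially the same route as the paper: both start from Lemma \ref{aux_lemma}(a), insert the auxiliary $u_h^*$ from Lemma \ref{aux_lemma}(b), exploit the continuous and discrete first-order optimality conditions together with $j'(\bar u)(u_h^*-\bar u)=0$, and use the $h^{2\gamma}$ adjoint discretization error from Theorem \ref{ee2}(b) to close the argument; parts (ii)--(iii) are then immediate from Lemmas \ref{fourpointfive} and \ref{fourpointfiveadj}. The only cosmetic difference is the order of the splitting: the paper first inserts $j_h'(\bar u_h)$ into $(j'(\bar u)-j'(\bar u_h))(\bar u-\bar u_h)$ and then brings in $u_h^*$, whereas you first discard the continuous optimality term and then insert $u_h^*$ and $j_h'$; the resulting bounds are identical up to a harmless extra $Ch^{1+2\gamma}$ term.
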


\begin{proof} 

For $0<h\leq h_5$, from \eqref{ssfo}, we have 
\begin{equation}\label{E5.100}
\begin{array}{l}
\displaystyle{
\frac{\delta}{2}\|\bar{{u}}-\bar{{u}}_h\|^2_{L^2(\omega)}\leq
(j'(\bar{{u}})-j'( \bar u_h ))(\bar{{u}}- \bar u_h
) }
\vspace{2.mm}\\
\displaystyle{ = (j'(\bar{{u}}) -
j_h'(\bar u_h))(\bar{{u}}- \bar u_h )+ (j'_h(\bar u_h) - j'( \bar u_h ))(\bar u-
\bar u_h ). }
\end{array}
\end{equation}
We now proceed to estimate the two terms in the right hand side of \eqref{E5.100}. From first order optimality conditions for continuous and discrete problems,  we have
\begin{eqnarray*}
&
j'(\bar u)( \bar u_h -\bar u)\geq 0,\quad j'_h({\bar u}_h)( u_h^* -\bar u_h)\geq 0.
\end{eqnarray*}
Also, $0\leq \displaystyle j'_h({\bar u}_h)( u_h^* -\bar u_h)
= j'_h(\bar  u_h)( u_h^* -\bar u  )+
j'_h(\bar u_h)(\bar u -\bar u_h)
$ holds.

\noindent For $\bar{u}_h\in B_{\rho_3}(\bar{u})$, the above expressions, \eqref{adjest}, stability of the continuous adjoint solution and \eqref{uhstar} lead to
\begin{align}
&j'(\bar u)(\bar u -  \bar u_h ) - j'_h(\bar u_h)(\bar u  -\bar u_h)
\leq j'_h(\bar u_h)( u_h^* -\bar u  )\notag\\
&= j'_h(\bar u_h)( u_h^* -\bar u  ) - j'(\bar u) (  u_h^* -\bar u )\notag\\
&=\int_{\omega}({\mathcal {C}^*}(\bar \theta_{1h}-\bar\theta_{1})+ \alpha(\bar{{u}}_h-
\bar{{u}})) ({{u}}_h^*-\bar{{u}}) \dx\notag\\
&\leq C\left(\trinl \bar \Theta_h-\bar\Theta \trinr +
\|\bar{{u}}_h-\bar{{u}}\|_{L^2(\omega)} \right)
 \|{{u}}_h^*-\bar{{u}}\|_{L^2(\omega)}\notag\\
 &\leq Ch \left(\trinl \bar\Theta_h- \Theta_{{\bar u}_h} \trinr
  +\trinl  \Theta_{{\bar u}_h} - \bar \Theta \trinr +
\|\bar u_h-\bar u\|_{L^2(\omega)}\right) \nonumber \\
&\leq Ch\left( h^{2 \gamma}  +  \|\bar u_h-\bar u\|_{L^2(\omega)}\right).\label{E5.102}
\end{align}
The estimate \eqref{adjest} yields
\begin{align}\label{E5.101}
\displaystyle{ (j'_h(\bar u_h) - j'( \bar u_h ))(\bar u-
\bar u_h ) } &
\displaystyle{ = \int_{\omega} ({\mathcal C}^*(\bar \theta_{1h}-\theta_1({\bar u}_h)+ \alpha (\bar u_h-\bar u_h)) (\bar u -\bar u_h) \dx }
\nonumber \\
 & \leq C \trinl \bar \Theta_h-  \Theta_{{\bar u}_h} \trinr   \|\bar{{u}}-
\bar{{u}}_h\|_{L^2(\omega)}
\nonumber \\
& 
 \leq Ch^{2 \gamma}  \|\bar u-\bar u_h\|_{L^2(\omega)}.
\end{align}

\noindent A substitution of the expression \eqref{E5.102}-\eqref{E5.101} in \eqref{E5.100} along with the Young's inequality yields the first required estimate.

\medskip

A use of the control estimate $(i)$ in Lemmas \ref{fourpointfive} and \ref{fourpointfiveadj} yield the required estimates for state and adjoint variables in $(ii)$ and $(iii)$ respectively. This concludes the proof.
\end{proof}


%
%
\subsection{Post processing for control}

A post processing of control helps us obtain improved error estimates for control. Also,  error estimates for the state and adjoint variables in $H^1$ and $L^2$ norms are derived.  {Recall the assumptions {\bf (A1)} and {\bf (A2)} on $\omega$ and $\cT_h$ as described in Section~\ref{sec4}. }

\begin{defn}[Interpolant] Define the projection $\cP_h: C(\bar\Omega)\to U_{h,ad}$ by
\begin{equation*}
(\cP_h\chi)(x)=\chi(S_i)\quad\fl x\in T_i\in\cT_h,
\end{equation*}
where $S_i$ denotes the centroid of the triangle $T_i$.
\end{defn}

\begin{defn}[Post processed control] The post processed control $\widetilde{\bar{u}}_h$ is defined as:
\begin{equation}\label{defn_post_proc}
\widetilde{\bar{u}}_h=\Pi_{[u_a,u_b]}\left(-\frac{1}{\alpha}(C^*\bar\Theta_h)(x)\right),
\end{equation}
where $\bar\Theta_h$ is the discrete adjoint variable corresponding to the control $\bar{u}_h$.
\end{defn}
{
Let $\cT_h^1={\cT}_h^{1,1}\cup {\cT}_h^{1,2} $ denote the union of active and inactive set of triangles contained in $\omega$}, where $\bar{u}(x)$ satisfies 
\begin{align*}
&\bar{u}  \equiv u_a \; \mbox { on } T; \;
\: \bar{u}  \equiv u_b \; \mbox { on } T \;\;(\text{in the active part } \cT_h^{1,1}),\\
&u_a < \bar{u} < u_b \; \mbox { on } T \;\;  (\text{in the inactive part } \cT_h^{1,2}),
\end{align*}
 and ${\cT}_h^2 := {\cT}_h \setminus {\cT}_h^1$, the set of triangles, where $\bar{u}$ takes on the values $u_a $ (resp.  $u_b$) as well as values
 greater that $u_a$ (resp.  lesser than $u_b$).
Let $\Omega_h^1= int \left( \cup_{T \in {\cT}_h^1} T\right)$ (where the notation $int$ denotes the interior) be the uncritical part and let $\Omega_h^{1,1}$ and $\Omega_h^{1,2}$
be the union of the triangles in the active  and inactive parts,
respectively. That is, $\Omega_h^1 = int \left(\overline{\Omega_h^{1,1}
\cup \Omega_h^{1,2}} \right)$ with $\Omega_h^{1,1}= int \left( \cup_{T
\in {\cT}_h^{1,1}} T\right)$, $\Omega_h^{1,2}= int \left(
\cup_{T \in {\cT}_h^{1,2}} T\right)$. Define $\Omega_h^2= int
\left( \cup_{T \in {\cT}_h^2} T\right)$ as the critical part of
${\cT}_h$.
We make an assumption on $\Omega_h^2$, the set of critical triangles which is fulfilled in practical cases \cite{FreiRannacherWollner13}:
\begin{itemize}
	\item[{\bf (A3)}] Assume $\displaystyle|\Omega_h^2| = \sum_{T \in {\cT}_h^2 } |T| < Ch$, for some positive constant $C$ independent of $h$.
\end{itemize}
  \begin{equation} \label{assumption}
   \qquad |\Omega_h^2| = \sum_{T \in {\cT}_h^2 } |T| < Ch,
  \end{equation}
for some positive constant $C$ independent of $h$.  
This implies that  the mesh domain of the critical cells is sufficiently small.

\noindent Use the splitting $ \bar{\Omega} =\bar{\Omega}_h^1 \cup \bar{\Omega}_h^2$,
 to define a discrete norm  $\|\cdot\|_{\bar h}$ for the control as
 $$ \|\bar{u}\|_{{\bar h}} := \trinl \bar{u}\trinr_{H^2(\Omega_h^1)} +  \trinl \bar{u}\trinr_{W^{1, \infty}(\Omega_h^2)},
$$
where $\displaystyle\trinl \bar{u}\trinr_{H^2(\Omega_h^1)}^2:= \sum_{T\in \Omega_h^1}\|\bar{u}\|_{H^2(T)}^2$ and $\displaystyle \trinl \bar{u}\trinr_{W^{1, \infty}(\Omega_h^2)}^2:= \sum_{T\in \Omega_h^2}\|\bar{u}\|_{W^{1, \infty}(T)}^2$.

\begin{lem}[Numerical integration estimate] \cite[Lemma 3.2]{MeyerRosh04} Let $g$ be a function belonging to $H^2(T_i)$ for all $i$
in a certain index set $I$. Then, there holds :
\begin{eqnarray} \label{ni}
\left|\int_{T_i} (g(x) - g(S_i)) \: dx \right| & \le &  Ch^2 \sqrt{|T_i|} \: |g|_{H^2(T_i)}
\end{eqnarray}

where $S_i$ denotes the centroid of $T_i$.
\end{lem}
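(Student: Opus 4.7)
The plan is to exploit the defining property of the centroid, namely that $\int_{T_i}(x - S_i)\,dx = 0$, so that integrating any affine polynomial against the deviation from its centroid value vanishes. Concretely, for every $p \in P_1(T_i)$,
$$
\int_{T_i}\bigl(p(x) - p(S_i)\bigr)\,dx = 0,
$$
because $p(x) - p(S_i) = \nabla p \cdot (x - S_i)$ is linear with zero centroid integral. Therefore, subtracting such a $p$ from $g$ leaves the left-hand side unchanged:
$$
\int_{T_i}\bigl(g(x) - g(S_i)\bigr)\,dx = \int_{T_i}\bigl((g-p)(x) - (g-p)(S_i)\bigr)\,dx.
$$

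The next step is to choose $p$ to be the Bramble--Hilbert linear approximant to $g$ on $T_i$, so that the standard local approximation estimates
$$
\|g - p\|_{L^2(T_i)} \le C h^2 |g|_{H^2(T_i)}, \qquad \|\nabla(g-p)\|_{L^2(T_i)} \le C h |g|_{H^2(T_i)}
$$
hold with $C$ independent of $h$ (by affine mapping to a reference triangle and quasi-uniformity of $\mathcal T_h$). Splitting the integral and applying the triangle inequality gives
$$
\left|\int_{T_i}(g(x)-g(S_i))\,dx\right| \le \left|\int_{T_i}(g-p)(x)\,dx\right| + |T_i|\,|(g-p)(S_i)|.
$$
For the first term, Cauchy--Schwarz and the $L^2$ approximation bound yield
$$
\left|\int_{T_i}(g-p)(x)\,dx\right| \le \sqrt{|T_i|}\,\|g-p\|_{L^2(T_i)} \le C h^2 \sqrt{|T_i|}\,|g|_{H^2(T_i)}.
$$

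For the second (pointwise) term, I would use the Sobolev embedding $H^2(\hat T)\hookrightarrow C(\hat T)$ on a reference triangle $\hat T$ followed by affine scaling to $T_i$, producing
$$
\|v\|_{L^\infty(T_i)} \le C\bigl(h^{-1}\|v\|_{L^2(T_i)} + \|\nabla v\|_{L^2(T_i)} + h\,|v|_{H^2(T_i)}\bigr),
$$
valid for any $v \in H^2(T_i)$. Applying this to $v = g - p$ and using $|g-p|_{H^2(T_i)} = |g|_{H^2(T_i)}$ together with the Bramble--Hilbert estimates above gives $\|g-p\|_{L^\infty(T_i)} \le C h\,|g|_{H^2(T_i)}$, so that
$$
|T_i|\,|(g-p)(S_i)| \le C h\,|T_i|\,|g|_{H^2(T_i)} = C h\,\sqrt{|T_i|}\cdot \sqrt{|T_i|}\,|g|_{H^2(T_i)} \le C h^2 \sqrt{|T_i|}\,|g|_{H^2(T_i)},
$$
since $\sqrt{|T_i|} \lesssim h$ by quasi-uniformity. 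Combining the two bounds proves the lemma.

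The only delicate point is the pointwise evaluation $(g-p)(S_i)$, since a generic $H^2$ function is continuous in two dimensions but only marginally so; the scaled embedding above has to be invoked with care, and is where the factor $h$ (rather than $h^2$) in the bound for $\|g-p\|_{L^\infty(T_i)}$ enters, matched exactly by the factor $|T_i|\sim h^2$ in front to recover the correct final scaling $h^2\sqrt{|T_i|}$. Everything else is routine use of Bramble--Hilbert and Cauchy--Schwarz.
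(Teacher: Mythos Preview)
The paper does not give its own proof of this lemma; it is quoted verbatim from \cite[Lemma~3.2]{MeyerRosh04} and used as a black box. So there is no argument in the paper to compare against.

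Your proof is correct. The key observation --- that the linear functional $g\mapsto\int_{T_i}(g(x)-g(S_i))\,dx$ annihilates $P_1(T_i)$ because $S_i$ is the centroid --- is exactly the reason the estimate gains two powers of $h$ rather than one, and you identify this cleanly. The subsequent Bramble--Hilbert step and the scaled embedding $H^2(T_i)\hookrightarrow C(\bar T_i)$ to control the point value $(g-p)(S_i)$ are handled correctly, including the bookkeeping that converts $|T_i|\,\|g-p\|_{L^\infty(T_i)}\lesssim h\,|T_i|\,|g|_{H^2(T_i)}$ into $h^2\sqrt{|T_i|}\,|g|_{H^2(T_i)}$ via $\sqrt{|T_i|}\lesssim h$.

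A slightly slicker packaging of the same idea, and the one typically found in the reference, is to apply the Bramble--Hilbert lemma directly to the \emph{functional} $L(g)=\int_{T_i}(g(x)-g(S_i))\,dx$ on the reference element: since $L$ is bounded on $H^2(\hat T)$ (point evaluation being continuous in two dimensions) and vanishes on $P_1(\hat T)$, one gets $|L(\hat g)|\le C|\hat g|_{H^2(\hat T)}$ immediately, and then a single affine scaling produces the stated bound. Your version unpacks this into an explicit choice of approximant $p$ and a separate $L^\infty$ bound, which is longer but perhaps more transparent about where each power of $h$ comes from. Either route is fine.
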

Also, the following result can be established using scaling
arguments. 
\begin{lem}[Scaling results] For $\bar{u} \in W^{1, \infty}(T_i) $, (resp. $H^2(T_i)$) with $T_i \in \cT_h$,
\begin{equation} \label{scale_1}
\|\bar{u}- {\cP}_h\bar{u} \|_{L^{\infty}(T_i)} \le Ch \|\bar{u} \|_{W^{1,
\infty}(T_i)},\: (\text{resp. }\|\bar{u}- {\cP}_h\bar{u}
\|_{0,T_i} \le Ch \|\bar{u} \|_{H^2(T_i)}).
\end{equation}
\end{lem}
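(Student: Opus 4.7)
The plan is to treat the two estimates separately: the $L^\infty$ bound follows directly from the fundamental theorem of calculus, while the $L^2$ bound requires the standard affine scaling to a reference element combined with the Bramble--Hilbert lemma.

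For the $L^\infty$ estimate, I would fix an arbitrary $x \in T_i$ and connect it to $S_i$ by the segment $t \mapsto S_i + t(x - S_i)$, $t \in [0,1]$. Since $\bar u \in W^{1,\infty}(T_i)$, the fundamental theorem of calculus gives
$$
\bar u(x) - \bar u(S_i) = \int_0^1 \nabla \bar u\bigl(S_i + t(x - S_i)\bigr) \cdot (x - S_i)\, dt,
$$
and bounding the integrand together with $|x - S_i| \le \mathrm{diam}(T_i) \le h$ immediately yields $|\bar u(x) - (\mathcal P_h \bar u)(x)| \le h\,\|\bar u\|_{W^{1,\infty}(T_i)}$. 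Taking the supremum over $x \in T_i$ delivers the first estimate.

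For the $L^2$ estimate, let $\hat T$ be a reference triangle with centroid $\hat S$, and let $F_i(\hat x) = J_i\hat x + b_i$ be the affine diffeomorphism mapping $\hat T$ onto $T_i$. Under shape-regularity, $\|J_i\|\sim h$ and $|\det J_i|\sim h^2$, so for $\hat u := \bar u \circ F_i$ the familiar scalings
$$
\|\bar u - \bar u(S_i)\|_{L^2(T_i)} \sim h\, \|\hat u - \hat u(\hat S)\|_{L^2(\hat T)}, \quad |\hat u|_{H^1(\hat T)} \sim |\bar u|_{H^1(T_i)}, \quad |\hat u|_{H^2(\hat T)} \sim h\, |\bar u|_{H^2(T_i)}
$$
hold. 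In dimension two, the Sobolev embedding $H^2(\hat T) \hookrightarrow C^0(\overline{\hat T})$ makes $\hat u(\hat S)$ well-defined and renders the linear operator $\hat u \mapsto \hat u - \hat u(\hat S)$ bounded from $H^2(\hat T)$ into $L^2(\hat T)$. This operator annihilates constants, so the Bramble--Hilbert (Deny--Lions) lemma yields
$$
\|\hat u - \hat u(\hat S)\|_{L^2(\hat T)} \le C\bigl(|\hat u|_{H^1(\hat T)} + |\hat u|_{H^2(\hat T)}\bigr).
$$
Scaling back and absorbing both seminorms into $\|\bar u\|_{H^2(T_i)}$ produces
$$
\|\bar u - \mathcal P_h \bar u\|_{L^2(T_i)} \le C h\,\bigl(|\bar u|_{H^1(T_i)} + h\,|\bar u|_{H^2(T_i)}\bigr) \le Ch\,\|\bar u\|_{H^2(T_i)},
$$
which is the second estimate.

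Neither step presents a genuine obstacle, as both are staples of finite element interpolation theory; the only conceptual subtlety worth flagging is that in two dimensions an $H^1$-function generally cannot be evaluated pointwise at $S_i$, which is precisely why the hypothesis in the $L^2$ statement is formulated at the $H^2$ level and why the scaling argument has to be pushed to the reference element before Bramble--Hilbert is invoked.
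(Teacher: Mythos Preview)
Your proof is correct. The paper does not actually prove this lemma; it merely remarks, immediately before stating it, that ``the following result can be established using scaling arguments.'' Your $L^2$ argument is exactly such a scaling argument, and your $L^\infty$ argument via the fundamental theorem of calculus is, if anything, more direct than a scaling route would be (and uses the convexity of $T_i$ implicitly to keep the segment inside the element). One small remark: in your Bramble--Hilbert step the operator $\hat u \mapsto \hat u - \hat u(\hat S)$ annihilates only $P_0$, not $P_1$, so the clean form of Bramble--Hilbert does not directly give a pure $|\hat u|_{H^2}$ bound; what you wrote, namely $C(|\hat u|_{H^1}+|\hat u|_{H^2})$, is the right conclusion and follows by combining boundedness on $H^2(\hat T)$ with the Deny--Lions/Poincar\'e inequality $\inf_{c}\|\hat u - c\|_{H^2(\hat T)} \le C(|\hat u|_{H^1(\hat T)}+|\hat u|_{H^2(\hat T)})$. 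That is consistent with what you wrote, but it is worth making the logic explicit.
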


\begin{thm}\label{PP_aux}
Let $\Psi_{\bar{u},h}$ and $\Psi_{\cP_h\bar{u},h}$ be solutions of \eqref{wform2} with respect to control $\bar{u}$ and post processed control $\cP_h\bar{u}$, respectively. Then the following error estimate holds true:
\begin{equation*}
\trinl \Psi_{\bar{u},h}-\Psi_{\cP_h\bar{u},h}\trinr\leq C h^2\|\bar{u}\|_{\bar{h}}.
\end{equation*}
\end{thm}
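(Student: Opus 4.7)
Set $\mathbf{g}:=\Psi_{\bar u,h}-\Psi_{\cP_h\bar u,h}$. My plan is an Aubin--Nitsche duality argument tailored to the centroid projection. Subtracting the two instances of \eqref{wform2} evaluated at $\bar u$ and $\cP_h\bar u$, the symmetry of $B$ in its first two variables yields the discrete error equation
\[
A(\mathbf{g},\Phi_h)+B(\Psi_{\bar u,h},\mathbf{g},\Phi_h)+B(\mathbf{g},\Psi_{\cP_h\bar u,h},\Phi_h)=(\mathbf{C}(\bar{\bf u}-\cP_h\bar{\bf u}),\Phi_h)\quad\forall\,\Phi_h\in\bV_h.
\]
To control the $\bL^2(\Omega)$ norm of $\mathbf{g}$, I would introduce the (linearized) dual problem: find $\zeta\in\bV$ such that
\[
A(\Phi,\zeta)+B(\Psi_{\bar u,h},\Phi,\zeta)+B(\Phi,\Psi_{\cP_h\bar u,h},\zeta)=(\mathbf{g},\Phi)\quad\forall\,\Phi\in\bV,
\]
which is well-posed for $h$ small enough by the same invertibility argument used in Theorem \ref{invert} and Section \ref{sec3.3}. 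Lemma \ref{poa}(ii), together with the $\bL^2$ right-hand side, gives the regularity bound $\trinl\zeta\trinr_{2+\gamma}\le C\trinl\mathbf{g}\trinr$. Choosing $\Phi=\mathbf{g}$ in the dual equation and subtracting the error equation tested against $\Phi_h=\Pi_h\zeta$ yields the key identity
\[
\trinl\mathbf{g}\trinr^{2}=(\mathbf{C}(\bar{\bf u}-\cP_h\bar{\bf u}),\Pi_h\zeta)+A(\mathbf{g},\zeta-\Pi_h\zeta)+B(\Psi_{\bar u,h},\mathbf{g},\zeta-\Pi_h\zeta)+B(\mathbf{g},\Psi_{\cP_h\bar u,h},\zeta-\Pi_h\zeta).
\]

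The three Galerkin terms involving $\zeta-\Pi_h\zeta$ are handled by \eqref{boundA}, \eqref{boundB1}, the interpolation estimate \eqref{inte} and the dual regularity, together with the a priori bound obtained along the lines of \eqref{apriori_state}:
$\trinl\mathbf{g}\trinr_{2}\le C\|\bar u-\cP_h\bar u\|_{L^{2}(\omega)}\le Ch\|\bar u\|_{\bar h}$ (the last inequality being a consequence of \eqref{scale_1} applied cell by cell). Collecting the factors gives a bound of order $h^{1+\gamma}\|\bar u\|_{\bar h}\trinl\mathbf{g}\trinr$, which is $Ch^{2}\|\bar u\|_{\bar h}\trinl\mathbf{g}\trinr$ in the convex case $\gamma=1$ that is the setting of interest for the $h^{2}$ post-processing estimate.

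The delicate term is the consistency contribution $(\bar u-\cP_h\bar u,(\Pi_h\zeta)_1)_\omega$, which I would split according to the partition $\omega=\Omega_h^{1,1}\cup\Omega_h^{1,2}\cup\Omega_h^{2}$. On $\Omega_h^{1,1}$ the optimal control is constant ($u_a$ or $u_b$) so $\bar u=\cP_h\bar u$ and the contribution vanishes. On each $T\subset\Omega_h^{1,2}$ I write
\[
\int_T(\bar u-\bar u(S_T))(\Pi_h\zeta)_1=\int_T(\bar u-\bar u(S_T))\bigl((\Pi_h\zeta)_1-(\Pi_h\zeta)_1(S_T)\bigr)+(\Pi_h\zeta)_1(S_T)\int_T(\bar u-\bar u(S_T)),
\]
using $\bar u\in H^{2}(T)$ (since on inactive uncritical cells $\bar u=-\alpha^{-1}(\mathcal{C}^{*}\bar\theta_1)$ inherits the extra regularity of the adjoint state). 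The numerical integration estimate \eqref{ni} handles the second summand with the gain of an $h^{2}$-factor, while a local Taylor expansion (or Bramble--Hilbert on the bilinear functional $L(u,v)=\int_T(u-u(S_T))v$, which vanishes on $P_{1}(T)\times P_{0}(T)$) handles the first; summing over $T$ yields a bound $Ch^{2}\trinl\bar u\trinr_{H^{2}(\Omega_h^{1})}\trinl\mathbf{g}\trinr$. On the critical part $\Omega_h^{2}$, where $\bar u$ is only $W^{1,\infty}$, I would use \eqref{scale_1} to obtain $\|\bar u-\cP_h\bar u\|_{L^{\infty}(\Omega_h^{2})}\le Ch\trinl\bar u\trinr_{W^{1,\infty}(\Omega_h^{2})}$, the Sobolev embedding to control $\|(\Pi_h\zeta)_1\|_{L^{\infty}}\le C\trinl\zeta\trinr_{2+\gamma}\le C\trinl\mathbf{g}\trinr$, and crucially the small-measure hypothesis $|\Omega_h^{2}|\le Ch$ from (A3) to write $\|(\Pi_h\zeta)_1\|_{L^{1}(\Omega_h^{2})}\le|\Omega_h^{2}|\,\|(\Pi_h\zeta)_1\|_{L^{\infty}}$; the product of these three factors gives $Ch^{2}\trinl\bar u\trinr_{W^{1,\infty}(\Omega_h^{2})}\trinl\mathbf{g}\trinr$. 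Adding all contributions produces $\trinl\mathbf{g}\trinr^{2}\le Ch^{2}\|\bar u\|_{\bar h}\trinl\mathbf{g}\trinr$ and dividing by $\trinl\mathbf{g}\trinr$ yields the claimed estimate.

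The hard part will be the critical-cell bound: without the assumption $|\Omega_h^{2}|\le Ch$, only the $W^{1,\infty}$-based pointwise estimate is available there, and a direct Cauchy--Schwarz with $|\Omega_h^{2}|^{1/2}$ would yield the suboptimal rate $h^{3/2}$. It is precisely the combination of this measure hypothesis with the $L^{\infty}$ control of the interpolated dual state (available since in the convex case $\zeta\in\bH^{3}\hookrightarrow\bL^{\infty}$) that promotes the rate from $h^{3/2}$ to $h^{2}$.
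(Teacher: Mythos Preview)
Your overall strategy—a duality argument combined with the $\Omega_h^{1}\cup\Omega_h^{2}$ splitting and the centroid rule \eqref{ni}—is exactly the one the paper uses, and your treatment of the consistency term $(\bar u-\cP_h\bar u,\,\cdot\,)$ on the uncritical and critical parts is essentially identical to the paper's. The difference lies in which dual object you test against, and that difference costs you the full result.

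The paper introduces both the continuous dual $\bxi$ (your $\zeta$) \emph{and} its conforming Galerkin approximation $\bxi_h\in\bV_h$ solving
\[
A(\bz_h,\bxi_h)+B(\Psi_{\bar u,h},\bz_h,\bxi_h)+B(\bz_h,\Psi_{\cP_h\bar u,h},\bxi_h)=(\mathbf g,\bz_h)\quad\forall\,\bz_h\in\bV_h,
\]
and then inserts $\bz_h=\mathbf g$ in the discrete dual and $\Phi_h=\bxi_h$ in the discrete error equation. Because both equations live in $\bV_h$, the two left-hand sides coincide \emph{exactly}, yielding the clean identity
\[
\trinl\mathbf g\trinr^{2}=(\cC(\bar u-\cP_h\bar u),\bxi_h)
\]
with no Galerkin remainder whatsoever. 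The only remaining task is $\trinl\bxi_h\trinr_{2}\le C\trinl\mathbf g\trinr$, which follows from well-posedness of the discrete dual (the paper routes this through the error bound $\trinl\bxi-\bxi_h\trinr_{2}\lesssim h^{\gamma}\trinl\mathbf g\trinr$ together with $\trinl\bxi\trinr_{2}+\trinl\bxi\trinr_{2+\gamma}\le C\trinl\mathbf g\trinr$). The final estimate is therefore $Ch^{2}\|\bar u\|_{\bar h}$ for \emph{every} $\gamma\in(1/2,1]$, which is precisely what Theorem~\ref{PP_aux} asserts.

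By contrast, you test the continuous dual with $\Phi=\mathbf g$ and the error equation with $\Phi_h=\Pi_h\zeta$, producing the extra terms $A(\mathbf g,\zeta-\Pi_h\zeta)+B(\cdots,\zeta-\Pi_h\zeta)+B(\cdots,\zeta-\Pi_h\zeta)$. These are controlled only by $\trinl\mathbf g\trinr_{2}\,\trinl\zeta-\Pi_h\zeta\trinr_{2}\lesssim h\cdot h^{\gamma}\|\bar u\|_{\bar h}\trinl\mathbf g\trinr$, so your argument delivers $h^{1+\gamma}$ rather than $h^{2}$ when $\gamma<1$. You flag this yourself (``in the convex case $\gamma=1$ that is the setting of interest''), but the theorem as stated is not restricted to convex domains, and later results (Theorem~\ref{conv_centroid}, Theorem~\ref{est_post_proc_u}) rely on the unconditional $h^{2}$ bound here to obtain the rate $\min\{2\gamma,2\}$ rather than $\min\{2\gamma,1+\gamma\}$. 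So in the nonconvex regime your proof leaves a genuine gap.

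The fix is minimal: replace $\Pi_h\zeta$ by the discrete dual $\bxi_h$. Your splitting argument for the consistency term then goes through verbatim with $\bxi_h$ in place of $\Pi_h\zeta$ (indeed the paper bounds the $\Omega_h^{1}$ and $\Omega_h^{2}$ contributions by $Ch^{2}\|\bar u\|_{\bar h}\trinl\bxi_h\trinr_{2}$ in exactly the way you describe), and the Galerkin residuals simply disappear.
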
  

\begin{proof}
Consider the perturbed auxiliary problem:

 Seek $\bxi\in\bV$ that solves
\begin{equation}\label{pert_aux}
A(\bz,\bxi)+B(\Psi_{\bar{u},h},\bz,\bxi)+B(\bz,\Psi_{\cP_h\bar{u},h},\bxi)=(\Psi_{\bar{u},h}-\Psi_{\cP_h\bar{u},h},\bz)\quad\forall \bz\in\bV.
\end{equation}
Its discretization is given by: 

Seek $\bxi_h\in\bV_h$ that solves
\begin{equation}\label{pert_aux_dis}
A(\bz_h,\bxi_h)+B(\Psi_{\bar{u},h},\bz_h,\bxi_h)+B(\bz_h,\Psi_{\cP_h\bar{u},h},\bxi_h)=(\Psi_{\bar{u},h}-\Psi_{\cP_h\bar{u},h},\bz_h)\quad\forall \bz_h\in\bV_h.
\end{equation}
The above equation \eqref{pert_aux_dis} can be written in the operator form as 
\begin{equation}
\cA^*\bxi_h+\cB'(\Psi_{\bar{u},h})^*\bxi_h+\half\cB'(\Psi_{\cP_h\bar{u},h}-\Psi_{\bar{u},h})^*\bxi_h=T_h(\Psi_{\bar{u},h}-\Psi_{\cP_h\bar{u},h})\text{ in }\bV_h.
\end{equation}
Note that \eqref{apriori_state} and \eqref{scale_1} lead to 
\begin{equation}
\trinl\Psi_{\cP_h\bar{u},h}-\Psi_{\bar{u},h}\trinr_2\leq C \|\cP_h\bar{u}-\bar{u}\|_{L^2(\omega)}\leq Ch. \label{temp10}
\end{equation}
The invertibility of $\cA^*+\cB'(\Psi_{\bar{u},h})^*$, Lemma~\ref{banach} and \eqref{temp10} lead to well-posedness of \eqref{pert_aux_dis}.
Choose $\bz_h=\Psi_{\bar{u},h}-\Psi_{\cP_h\bar{u},h}$ in \eqref{pert_aux_dis} and simplify the terms to obtain
\begin{align}
\trinl\Psi_{\bar{u},h}-\Psi_{\cP_h\bar{u},h}\trinr^2&=A(\Psi_{\bar{u},h}-\Psi_{\cP_h\bar{u},h},\bxi_h)+B(\Psi_{\bar{u},h},\Psi_{\bar{u},h},\bxi_h)\notag\\
&\qquad-B(\Psi_{\cP_h\bar{u},h},\Psi_{\cP_h\bar{u},h},\bxi_h).\label{L2app}
\end{align}
Note that $\Psi_{\bar{u},h}$ and $\Psi_{\cP_h\bar{u},h}$ satisfy the following discrete problems:
\begin{align*}
&A(\Psi_{\bar{u},h},\Phi_h)+B(\Psi_{\bar{u},h},\Psi_{\bar{u},h},\Phi_h)=(F+\cC\bar{u},\Phi_h)\quad\forall \Phi_h\in \bV_h,\\
&A(\Psi_{\cP_h\bar{u},h},\Phi_h)+B(\Psi_{\cP_h\bar{u},h},\Psi_{\cP_h\bar{u},h},\Phi_h)=(F+\cC(\cP_h\bar{u}),\Phi_h)\quad\forall \Phi_h\in \bV_h.
\end{align*}
Subtract the above two equations to obtain
\begin{align*}
A(\Psi_{\bar{u},h}-\Psi_{\cP_h\bar{u},h},\Phi_h)+B(\Psi_{\bar{u},h},\Psi_{\bar{u},h},\Phi_h)-B(\Psi_{\cP_h\bar{u},h},\Psi_{\cP_h\bar{u},h},\Phi_h)=(\cC(\bar{u}-\cP_h\bar{u}),\Phi_h).
\end{align*}
Choose $\Phi_h=\bxi_h$ in the above equation and use \eqref{L2app} to obtain
\begin{equation}\label{pert_est}
\trinl\Psi_{\bar{u},h}-\Psi_{\cP_h\bar{u},h}\trinr^2=(\cC(\bar{u}-\cP_h\bar{u}),\bxi_h).
\end{equation}
Consider
\begin{eqnarray*}
\int_{\Omega_h^1} (\bar{u} - {\cP}_h \bar{u})\boldsymbol{\xi}_h \; dx & = &
\sum_{T_i \in {\Omega}_h^1} \left((\bar{u}- {\cP}_h \bar{u},
\boldsymbol{\xi}_h(S_i))_{T_i} + (\bar{u}- {\cP}_h \bar{u},
\boldsymbol{\xi}_h-\boldsymbol{\xi}_h(S_i))_{T_i}\right).
\end{eqnarray*}
A use of \eqref{ni} along with the result $|\boldsymbol{\xi}_h(S_i)| \sqrt{T_i} = \|\boldsymbol{\xi}_h(S_i)\|_{0,T_i} \le \|\boldsymbol{\xi}_h\|_{0,T_i}$ for the first term leads to
\begin{eqnarray}
\int_{\Omega_h^1}   (\bar{u} - {\cP}_h \bar{u})\boldsymbol{\xi}_h  \; dx& \le &
Ch^2 \sum_{T_i \in {\Omega}_h^1} \|\bar{u}\|_{H^2(T_i)}
\left( \|\boldsymbol{\xi}_h\|_{0,T_i}  + \|\boldsymbol{\xi}_h\|_{H^2(T_i)} \right) \nonumber \\
& \le & Ch^2 \trinl \bar{u}\trinr_{H^2(\Omega_h^1)}
\trinl \bxi_h\trinr_{H^2(\Omega_h^1)}. \label{active1}
\end{eqnarray}
Also, consider
\begin{eqnarray}
& & \int_{\Omega_h^2}   (\bar{u} - \cP_h \bar{u})\boldsymbol{\xi}_h \; dx =
\sum_{T_i \in \Omega_h^2}( \bar{u} - \cP_h \bar{u},\boldsymbol{\xi}_h)_{T_i} \nonumber \\
&  & \qquad  \le \|\bar{u} - \cP_h \bar{u} \|_{L^{\infty}
(\Omega_h^2)}
 \|\boldsymbol{\xi}_h\|_{L^{\infty} (\Omega_h^2)}
 \sum_{T_i \in \cT_h^2} |T_i|. \nonumber
\end{eqnarray} 
The assumption {\bf (A3)}, the estimate \eqref{scale_1} and Sobolev imbedding result in the above equation lead to
\begin{eqnarray}\label{active2}
& & \int_{\Omega_h^2} (\bar{u} - \cP_h \bar{u})\boldsymbol{\xi}_h \; dx \le C h^2 \trinl \bar{u} \trinr_{W^{1, \infty}(\Omega_h^2)} \trinl
\bxi_h\trinr_{H^2(\Omega_h^2)}.
\end{eqnarray}
{
A combination of \eqref{active1} and \eqref{active2} yields
\begin{align}\label{term1_est}
\int_{\Omega_h^1\cup\Omega_h^2 } (\bar{u} - \cP_h \bar{u})\boldsymbol{\xi}_h \; dx\le C h^2 \|\bar{u} \|_{\bar{h}} \trinl
\bxi_h\trinr_{2}.
\end{align} }
Now we estimate $\trinl\bxi_h\trinr_2$. Let $\cL$ (resp. $\cL_h$) $:\bV\to \bV$
be defined by 
\begin{align*}
\cL(\boldsymbol{\chi})&:=\boldsymbol{\chi}+\half T[B'(\Psi_{\bar{u},h})^*\boldsymbol{\chi}]+\half T[B'(\Psi_{\cP_h\bar{u},h})^*\boldsymbol{\chi}]\\
(\text{resp. } \cL_{h}(\boldsymbol{\chi})&:=\boldsymbol{\chi}+\half T_h[B'(\Psi_{\bar{u},h})^*\boldsymbol{\chi}]+\half T_h[B'(\Psi_{\cP_h\bar{u},h})^*\boldsymbol{\chi}]).
\end{align*}
The auxiliary perturbed problem and its discretization \eqref{pert_aux}-\eqref{pert_aux_dis} can now be expressed as
\begin{align*}
\cL(\bxi)&=T(\Psi_{\bar{u},h}-\Psi_{\cP_h\bar{u},h}),\\
\cL_{h}(\bxi_h)&=T_h(\Psi_{\bar{u},h}-\Psi_{\cP_h\bar{u},h}).
\end{align*}
From the above characterization, it follows that 
\begin{align*}
&\cL_h(\bxi-\bxi_h)=\cL_h(\bxi)-\cL_h(\bxi_h)\\
&=\bxi+\half T_h[B'(\Psi_{\bar{u},h})^*\boldsymbol{\xi}]+\half T_h[B'(\Psi_{\cP_h\bar{u},h})^*\boldsymbol{\xi}]-T_h(\Psi_{\bar{u},h}-\Psi_{\cP_h\bar{u},h})\\
&=(T-T_h)(\Psi_{\bar{u},h}-\Psi_{\cP_h\bar{u},h})-\half(T-T_h)[B'(\Psi_{\bar{u},h})^*\bxi]-\half(T-T_h)[B'(\Psi_{\cP_h\bar{u},h})^*\bxi].
\end{align*}
The invertibility of $\cL_h$ and Lemma~\ref{BihErrEst} lead to
\begin{align}
\trinl\bxi-\bxi_h\trinr_2&
\lesssim h^\gamma\left(\trinl\Psi_{\bar{u},h}-\Psi_{\cP_h\bar{u},h}\trinr+\trinl \bxi\trinr_{2+\gamma}\right).\label{errest_aux}
\end{align}
{Combine \eqref{term1_est} and \eqref{errest_aux}, and use triangle inequality together with the estimate for $\trinl\bxi\trinr_2$ and $\trinl\bxi\trinr_{2+\gamma}$ to obtain
\begin{equation}
(C(\bar{u}-\cP_h\bar{u}),\boldsymbol{\xi}_h)=\int_{\Omega_h^1\cup\Omega_h^2 } (\bar{u} - \cP_h \bar{u})\boldsymbol{\xi}_h \; dx\leq C h^2 \|\bar{u} \|_{\bar{h}} \trinl\Psi_{\bar{u},h}-\Psi_{\cP_h\bar{u},h}\trinr.
\end{equation} }
This and \eqref{pert_est} lead to the required estimate
\begin{equation*}
\trinl\Psi_{\bar{u},h}-\Psi_{\cP_h\bar{u},h}\trinr\leq Ch^2 \|\bar{u} \|_{\bar{h}}.
\end{equation*}
\end{proof}

Following the proof of the above theorem, the next result holds immediately.
\begin{cor}\label{uh_control}
Let $\Psi_{\bar{u}_h,h}$ and $\Psi_{\cP_h\bar{u},h}$ be the solutions of \eqref{wform2} with the control $\bar{u}_h$ and the post processed control $\cP_h\bar{u}$, respectively. Then the following error estimate holds true:
\begin{equation*}
\trinl \Psi_{\bar{u}_h,h}-\Psi_{\cP_h\bar{u},h}\trinr\leq C h^2\|\bar{u}\|_{\bar{h}}.
\end{equation*}
\end{cor}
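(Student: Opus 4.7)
The plan is to repeat the blueprint of Theorem~\ref{PP_aux} almost verbatim, with the state $\Psi_{\bar u_h, h}$ playing the role of $\Psi_{\bar u, h}$. First I would set up the perturbed auxiliary problem: find $\bxi \in \bV$ solving
$$
A(\bz, \bxi) + B(\Psi_{\bar u_h, h}, \bz, \bxi) + B(\bz, \Psi_{\cP_h \bar u, h}, \bxi) = (\Psi_{\bar u_h, h} - \Psi_{\cP_h \bar u, h}, \bz)\quad\forall \bz \in \bV,
$$
together with its discretization on $\bV_h$. Well-posedness of the discrete problem follows from Lemma~\ref{banach} combined with Theorem~\ref{conv}, which guarantees that $\Psi_{\bar u_h, h}$ stays close to $\bar\Psi$ for $h$ small enough, so that the argument used to invert $\cA^*+\cB'(\Psi_{\bar u,h})^*$ in the proof of Theorem~\ref{PP_aux} still applies when $\Psi_{\bar u,h}$ is replaced by $\Psi_{\bar u_h,h}$.

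Testing the discrete auxiliary equation with $\bz_h = \Psi_{\bar u_h, h} - \Psi_{\cP_h \bar u, h}$ and subtracting the two discrete state equations (for controls $\bar u_h$ and $\cP_h \bar u$) yields the analogue of \eqref{pert_est},
$$
\trinl \Psi_{\bar u_h, h} - \Psi_{\cP_h \bar u, h} \trinr^2 = (\cC(\bar u_h - \cP_h \bar u), \bxi_h).
$$
Next I would split $\bar u_h - \cP_h \bar u = (\bar u - \cP_h \bar u) + (\bar u_h - \bar u)$. The first term is handled verbatim as in the proof of Theorem~\ref{PP_aux}: the numerical integration estimate~\eqref{ni} on $\Omega_h^1$ together with the scaling bound on $\Omega_h^2$ produce the contribution $C h^2 \|\bar u\|_{\bar h}\trinl\bxi_h\trinr_2$.

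For the remaining term $\int_\omega (\bar u_h - \bar u)\,\xi_{1h}\,dx$, I would use a standard superconvergence argument. Writing $\xi_{1h} = \cP_h\xi_{1h} + (\xi_{1h} - \cP_h\xi_{1h})$, the factor $\xi_{1h}-\cP_h\xi_{1h}$ is bounded by $Ch$ in $L^2$, so combining with $\|\bar u_h-\bar u\|_{L^2(\omega)}\le Ch$ from Theorem~\ref{conv} yields an $O(h^2)$ contribution. The piece $\int (\bar u_h - \bar u)\cP_h\xi_{1h}\,dx$ is handled by feeding the continuous first-order optimality condition with the admissible test $\bar u_h\in U_{ad}$ and the discrete one with $\cP_h\bar u\in U_{h,ad}$, and then invoking the centroid quadrature~\eqref{ni} applied to the smooth continuous adjoint $\bar\theta_1$ on the non-critical part $\Omega_h^1$ together with the smallness assumption $|\Omega_h^2|\le Ch$ on $\Omega_h^2$.

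Finally, the bound on $\trinl\bxi_h\trinr_2$ is obtained exactly as in Theorem~\ref{PP_aux} by rewriting the auxiliary system in operator form via an analogue of $\cL_h$ and applying Lemma~\ref{BihErrEst}. Absorbing a factor $\trinl\Psi_{\bar u_h,h}-\Psi_{\cP_h\bar u,h}\trinr$ into the left-hand side yields the claim. The only nontrivial step is the superconvergence treatment of $\int(\bar u_h-\bar u)\xi_{1h}\,dx$: a naive Cauchy--Schwarz estimate only gives $O(h)$, so the $O(h^2)$ improvement genuinely requires exploiting the variational structure of the continuous and discrete optimality systems together with the cell-centroid structure of $\cP_h$.
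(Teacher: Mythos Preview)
Your overall blueprint---set up the perturbed dual problem, test with $\Psi_{\bar u_h,h}-\Psi_{\cP_h\bar u,h}$, subtract the two discrete state equations---is exactly what the paper has in mind when it says ``following the proof of the above theorem''. The paper gives no further details, so up through the identity
\[
\trinl \Psi_{\bar u_h,h}-\Psi_{\cP_h\bar u,h}\trinr^2=(\cC(\bar u_h-\cP_h\bar u),\bxi_h)
\]
you are on the same track. Your splitting $\bar u_h-\cP_h\bar u=(\bar u-\cP_h\bar u)+(\bar u_h-\bar u)$ and the treatment of the first piece via~\eqref{ni} and~\eqref{scale_1} are fine, and the contribution $\int(\bar u_h-\bar u)(\xi_{1h}-\cP_h\xi_{1h})\,dx=O(h^2)\trinl\bxi_h\trinr_2$ is also correct.

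The gap is in your last step, the piece $\int_\omega(\bar u_h-\bar u)\,\cP_h\xi_{1h}\,dx$. You propose to ``feed the continuous first-order optimality condition with $\bar u_h$ and the discrete one with $\cP_h\bar u$'', but those inequalities involve $\bar\theta_1$ and $\bar\theta_{1h}$, not the auxiliary function $\xi_{1h}$; it is not clear how testing the optimality systems produces a bound on an integral against $\cP_h\xi_{1h}$. What you are really reaching for is a superconvergence estimate of Meyer--R\"osch type for $\bar u_h-\bar u$ (or equivalently for $\bar u_h-\cP_h\bar u$), and in this paper that estimate is Theorem~\ref{conv_centroid}. But Theorem~\ref{conv_centroid} is proved \emph{after} the present corollary and, through Corollary~\ref{adj_post_cor}, actually depends on it. So invoking that machinery here is circular. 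If you want to keep your route, you need to make precise an argument that bounds $\sum_T \xi_{1h}(S_T)\int_T(\bar u_h-\bar u)\,dx$ by $Ch^2\trinl\bxi_h\trinr_2$ using only results available \emph{before} this corollary (essentially Theorem~\ref{conv}, Theorem~\ref{ee2}, Lemma~\ref{adj_post} and the variational inequality from \cite{MeyerRosh04}), and you should check carefully that no hidden appeal to Corollary~\ref{adj_post_cor} or Theorem~\ref{conv_centroid} creeps in.
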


The discrete post processed adjoint problem can be stated as: 

$\quad$Find $\Theta_{\cP_h\bar{u},h}\in\bV_h$ such that
\begin{equation}\label{dis_post_adj}
\tilde{\cL}_h(\Phi_h,\Theta_{\cP_h\bar{u},h}):=A(\Phi_h,\Theta_{\cP_h\bar{u},h})+2B(\Psi_{\bar{u},h},\Phi_h,\Theta_{\cP_h\bar{u},h})=(\Psi_{\cP_h\bar{u},h}-\Psi_d,\Phi_h)\quad\forall \Phi_h\in\bV_h.
\end{equation}

\begin{lem}\label{adj_post}
Let $\Theta_{\bar{u},h}$ be solution of \eqref{adj2} with the control $\bar{u}$ and $\Theta_{\cP_h\bar{u},h}$ be the solution of \eqref{dis_post_adj}. Then the following error estimate holds true:
\begin{equation*}
\trinl \Theta_{\bar{u},h}-\Theta_{\cP_h\bar{u},h}\trinr\leq C h^2\|\bar{u}\|_{\bar{h}}.
\end{equation*}
\end{lem}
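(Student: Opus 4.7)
The crucial observation is that both discrete adjoint problems---\eqref{adj2} with $u=\bar{u}$ giving $\Theta_{\bar u,h}$, and the post-processed problem \eqref{dis_post_adj} giving $\Theta_{\cP_h\bar u,h}$---use the \emph{same} bilinear form $\tilde{\cL}_h(\Phi,\Theta) := A(\Phi,\Theta) + 2B(\Psi_{\bar u,h}, \Phi, \Theta)$. Indeed, for \eqref{adj2} the symmetry of $B$ in its first two arguments collapses $B(\Psi_{\bar u,h},\Phi,\Theta)+B(\Phi,\Psi_{\bar u,h},\Theta)$ into $2B(\Psi_{\bar u,h},\Phi,\Theta)$. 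The two problems differ only in their source terms: $(\Psi_{\bar u,h}-\Psi_d,\cdot)$ versus $(\Psi_{\cP_h\bar u,h}-\Psi_d,\cdot)$. Subtracting, the error $\cE_h := \Theta_{\bar u,h} - \Theta_{\cP_h\bar u,h} \in \bV_h$ satisfies the clean linear equation
\begin{equation*}
\tilde{\cL}_h(\Phi_h, \cE_h) = (\Psi_{\bar u,h}-\Psi_{\cP_h\bar u,h}, \Phi_h) \qquad \forall\,\Phi_h \in \bV_h.
\end{equation*}

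The plan is to invoke the uniform discrete invertibility of $\tilde{\cL}_h$ on $\bV_h$---this is precisely the automorphism property of $\mathscr{F}_{\Psi_{\bar u,h},h}$ established in the proof of Theorem \ref{aux_dis1}, valid whenever $h \le h_3$ and $\bar u_h \in B_{\rho_3}(\bar u)$. Consequently,
\begin{equation*}
\trinl \cE_h \trinr_2 \le C \sup_{0\ne\Phi_h\in\bV_h}\frac{|(\Psi_{\bar u,h}-\Psi_{\cP_h\bar u,h}, \Phi_h)|}{\trinl\Phi_h\trinr_2}.
\end{equation*}
Applying Cauchy--Schwarz followed by the Poincar\'e-type embedding $\trinl\Phi_h\trinr \le C\trinl\Phi_h\trinr_2$ (valid since $\bV_h\subset \htosq$) bounds the supremum by $C\trinl\Psi_{\bar u,h}-\Psi_{\cP_h\bar u,h}\trinr$. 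Theorem \ref{PP_aux} delivers this $L^2$ state-error estimate as $Ch^2\|\bar u\|_{\bar h}$, and a final use of $\trinl\cE_h\trinr \le C\trinl\cE_h\trinr_2$ yields the claim.

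No serious obstacle is expected. The only point requiring care is the uniformity-in-$h$ of the stability constant of $\tilde{\cL}_h$ on $\bV_h$; this is guaranteed by the same mechanism underlying Theorem \ref{aux_dis1} (Lemma \ref{aux} combined with Lemma \ref{banach}), since $\Psi_{\bar u,h}$ is forced into a small neighborhood of $\bar\Psi$ by Theorem \ref{ee1} and Lemma \ref{fourpointfive} once $h\le h_3$. The entire proof is thus essentially a one-line consequence of the post-processing $L^2$ state estimate already established in Theorem \ref{PP_aux}, with the adjoint simply inheriting the $h^2$ rate through the uniformly invertible linearized operator.
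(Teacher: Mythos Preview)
Your proposal is correct and follows essentially the same route as the paper: subtract the two discrete adjoint problems to obtain $\tilde{\cL}_h(\Phi_h,\cE_h)=(\Psi_{\bar u,h}-\Psi_{\cP_h\bar u,h},\Phi_h)$, invoke the uniform-in-$h$ invertibility of the linearized discrete adjoint operator (Theorem~\ref{aux_dis1}), and conclude via Theorem~\ref{PP_aux}. The only cosmetic difference is that the paper extracts the $L^2$ bound by a duality argument (an auxiliary problem $\tilde{\cL}_h(\boldsymbol{\chi}_h,\Phi_h)=(\cE_h,\Phi_h)$ with $\trinl\boldsymbol{\chi}_h\trinr_2\le C\trinl\cE_h\trinr$, then test against $\boldsymbol{\chi}_h$ and $\cE_h$), whereas you bound $\trinl\cE_h\trinr_2$ directly by stability and then descend to $\trinl\cE_h\trinr$ via Poincar\'e; your path is marginally more direct and in fact yields the slightly stronger $H^2$ estimate along the way.
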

\begin{proof}
The discrete adjoint problem \eqref{adj2} can be written as
\begin{equation}\label{dis_adj}
\tilde{\cL}_h(\Phi_h,\Theta_{\bar{u},h}):=A(\Phi_h,\Theta_{\bar{u},h})+2B(\Psi_{\bar{u},h},\Phi_h,\Theta_{\bar{u},h})=(\Psi_{\bar{u},h}-\Psi_d,\Phi_h)\quad\forall \Phi_h\in\bV_h.
\end{equation}
The subtraction of \eqref{dis_adj} and \eqref{dis_post_adj} leads to
\begin{equation}\label{sub_adj}
\tilde{\cL}_h(\Phi_h,\Theta_{\bar{u},h}-\Theta_{\cP_h\bar{u},h})=(\Psi_{\bar{u},h}-\Psi_{\cP_h\bar{u},h},\Phi_h).
\end{equation}
We consider a well-posed auxiliary problem:

Find $\boldsymbol{\chi}_h\in\bV_h$ such that
\begin{equation}\label{aux_adj}
\tilde{\cL}_h(\boldsymbol{\chi}_h,\Phi_h)=(\Theta_{\bar{u},h}-\Theta_{\cP_h\bar{u},h},\Phi_h)\quad\forall\Phi_h\in\bV_h
\end{equation}
with the $a~priori$ bound $\trinl\boldsymbol{\chi}_h\trinr_2\leq C\trinl\Theta_{\bar{u},h}-\Theta_{\cP_h\bar{u},h}\trinr$. Choose $\Phi_h=\boldsymbol{\chi}_h$ in \eqref{sub_adj} and $\Phi_h=\Theta_{\bar{u},h}-\Theta_{\cP_h\bar{u},h}$ in \eqref{aux_adj} to obtain
\begin{equation}
\trinl\Theta_{\bar{u},h}-\Theta_{\cP_h\bar{u},h}\trinr^2=(\Psi_{\bar{u},h}-\Psi_{\cP_h\bar{u},h},\boldsymbol{\chi}_h).
\end{equation}
The Cauchy-Schwarz inequality, \Poincare inequality, well-posedness of \eqref{aux_adj} and Theorem~\ref{PP_aux} lead to 
\begin{equation*}
\trinl\Theta_{\bar{u},h}-\Theta_{\cP_h\bar{u},h}\trinr\leq C\trinl\Psi_{\bar{u},h}-\Psi_{\cP_h\bar{u},h}\trinr\leq Ch^2\|\bar{u}\|_{\bar{h}}.
\end{equation*}
This completes the proof.
\end{proof}

{Choose the load function in \eqref{dis_adj} as $\Psi_{\bar{u}_h,h}-\Psi_{d}$, proceed as in the proof of Lemma~\ref{adj_post} and use Corollary~\ref{uh_control} to obtain the next result.}
\begin{cor}\label{adj_post_cor}
Let $\bar\Theta_h$ be solution of \eqref{adj2} with respect to control $\bar{u}_h$ and $\Theta_{\cP_h\bar{u},h}$ be the solution of \eqref{dis_post_adj}. Then the following error estimate holds true:
\begin{equation*}
\trinl \bar\Theta_h-\Theta_{\cP_h\bar{u},h}\trinr\leq C h^2\|\bar{u}\|_{\bar{h}}.
\end{equation*}
\end{cor}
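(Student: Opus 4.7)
The strategy is to mirror the proof of Lemma~\ref{adj_post} almost verbatim, making only the substitution suggested in the hint: in the auxiliary discrete adjoint equation \eqref{dis_adj} replace the load $\Psi_{\bar{u},h}-\Psi_d$ by $\Psi_{\bar{u}_h,h}-\Psi_d$. With this change, the resulting equation is precisely the discrete adjoint system \eqref{adj2} corresponding to the control $u=\bar{u}_h$, and so is solved by $\bar\Theta_h$. Consequently $\bar\Theta_h$ and $\Theta_{\cP_h\bar{u},h}$ can be viewed as the solutions of the same discrete bilinear form $\tilde{\cL}_h$ against two nearby loads whose difference is the state-difference $\Psi_{\bar{u}_h,h}-\Psi_{\cP_h\bar{u},h}$, already controlled at order $h^2$ by Corollary~\ref{uh_control}.

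Subtracting the two equations yields the direct analogue of \eqref{sub_adj},
\begin{equation*}
\tilde{\cL}_h(\Phi_h,\bar\Theta_h-\Theta_{\cP_h\bar{u},h})=(\Psi_{\bar{u}_h,h}-\Psi_{\cP_h\bar{u},h},\Phi_h)\quad\forall\,\Phi_h\in\bV_h.
\end{equation*}
I would then introduce, exactly as in the proof of Lemma~\ref{adj_post}, the well-posed auxiliary problem: find $\boldsymbol{\chi}_h\in\bV_h$ with $\tilde{\cL}_h(\boldsymbol{\chi}_h,\Phi_h)=(\bar\Theta_h-\Theta_{\cP_h\bar{u},h},\Phi_h)$ for all $\Phi_h\in\bV_h$, which carries the $a~priori$ bound $\trinl\boldsymbol{\chi}_h\trinr_2\leq C\,\trinl\bar\Theta_h-\Theta_{\cP_h\bar{u},h}\trinr$. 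Testing the subtracted equation with $\boldsymbol{\chi}_h$ and the auxiliary problem with $\bar\Theta_h-\Theta_{\cP_h\bar{u},h}$, and combining, produces the duality identity $\trinl\bar\Theta_h-\Theta_{\cP_h\bar{u},h}\trinr^{2}=(\Psi_{\bar{u}_h,h}-\Psi_{\cP_h\bar{u},h},\boldsymbol{\chi}_h)$. Cauchy--Schwarz, combined with the $a~priori$ bound on $\boldsymbol{\chi}_h$ and Corollary~\ref{uh_control} (which supplies $\trinl\Psi_{\bar{u}_h,h}-\Psi_{\cP_h\bar{u},h}\trinr\leq Ch^{2}\|\bar u\|_{\bar{h}}$), then delivers the advertised estimate.

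The main obstacle, and essentially the only nontrivial point beyond bookkeeping, is that as written in \eqref{dis_post_adj} the form $\tilde{\cL}_h$ is built from $\Psi_{\bar{u},h}$, while $\bar\Theta_h$ in \eqref{adj2} is governed by the corresponding bilinear form built from $\Psi_{\bar{u}_h,h}$. The hint invites one to treat these two discrete operators as the same in the post-processing context, which renders the argument above immediate. If they are instead kept strictly distinct, subtraction produces an extra cross term of the form $B(\Psi_{\bar{u},h}-\Psi_{\bar{u}_h,h},\boldsymbol{\chi}_h,\Theta_{\cP_h\bar{u},h})$ in the duality identity; controlling it at quadratic rate is where care is needed, and can be done by upgrading the $H^2$ control $\trinl\Psi_{\bar{u},h}-\Psi_{\bar{u}_h,h}\trinr_2\lesssim h$ (from Theorem~\ref{conv} combined with Lemma~\ref{fourpointfive}) to an $L^2$ bound of order $h^2$ via a duality argument entirely analogous to the one employed in the proof of Theorem~\ref{PP_aux}.
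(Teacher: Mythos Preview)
Your main line of argument—replace the load in \eqref{dis_adj} by $\Psi_{\bar u_h,h}-\Psi_d$, subtract from \eqref{dis_post_adj}, and run the duality argument of Lemma~\ref{adj_post} with Corollary~\ref{uh_control} supplying the $h^2$ bound on the right-hand side—is exactly the route the paper sketches. In that sense your proposal matches the paper's proof.

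You go further than the paper by flagging the mismatch between the two bilinear forms: the form governing $\bar\Theta_h$ is built from $\Psi_{\bar u_h,h}$, whereas $\tilde{\cL}_h$ in \eqref{dis_post_adj} is built from $\Psi_{\bar u,h}$. The paper silently treats these as interchangeable; you are right that, taken literally, the subtraction produces the extra cross term $2B(\Psi_{\bar u_h,h}-\Psi_{\bar u,h},\boldsymbol{\chi}_h,\Theta_{\cP_h\bar u,h})$ in the duality identity.

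Where your proposal falters is in the suggested remedy for this cross term. An $L^2$ bound of order $h^2$ on $\Psi_{\bar u,h}-\Psi_{\bar u_h,h}$ does not help: the trilinear form $B(\cdot,\cdot,\cdot)$ requires $H^2$ control of each argument (cf.\ \eqref{boundB1}), and the sharper estimates \eqref{boundB2} need $H^{2+\gamma}$ regularity in one slot, which neither the discrete $\boldsymbol{\chi}_h$ nor $\Theta_{\cP_h\bar u,h}$ enjoys. Moreover, the duality argument you invoke for $\trinl\Psi_{\bar u,h}-\Psi_{\bar u_h,h}\trinr$ leads to $(\cC(\bar u-\bar u_h),\boldsymbol{\xi}_h)$, and $\bar u-\bar u_h$ lacks the mean-value structure exploited in Theorem~\ref{PP_aux}; one is left with $\|\bar u-\bar u_h\|_{L^2(\omega)}$, which at this stage is only $O(h)$ by Theorem~\ref{conv}. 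Upgrading it would require Theorem~\ref{conv_centroid}, which in turn relies on the present corollary—so the argument becomes circular. In short, your identification of the gap is sharper than the paper's own presentation, but the patch you propose does not close it.
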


\begin{lem}[A variational inequality] \cite[(3.15)]{MeyerRosh04}  The post processed control $\cP_h\bar{u}$ satisfies the variational inequality
	\begin{equation}\label{var_inequality}
	\|\cP_h\bar{u}- \bar{u}_h\|_{L^2(\omega)}^2 \le C(\cP_h \bar\Theta - \bar\Theta_h, \bar{u}_h - \cP_h\bar{u}).
	\end{equation}
\end{lem}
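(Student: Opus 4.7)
The plan is to derive this variational inequality by combining the continuous first-order optimality condition, evaluated pointwise at the centroids $S_i$, with the discrete optimality condition, tested against $\cP_h\bar{u}$. The key observation is that both $\bar u_h$ and $\cP_h\bar u$ are piecewise constant on $\cT_h|_\omega$, so integrals involving their difference collapse nicely when paired with piecewise-constant quantities. As a preliminary, I would note that by Lemma~\ref{ap} and the adjoint analogue in Section~\ref{sec3.3}, $\bar\Theta\in\bH^{2+\gamma}(\Omega)$ with $\gamma>1/2$, and hence Sobolev embedding gives $\bar\theta_1\in C(\bar\Omega)$. Since $\bar u=\pi_{[u_a,u_b]}(-\alpha^{-1}\cC^*\bar\theta_1)$, the function $\bar u$ is continuous on $\bar\omega$, the pointwise values $\bar u(S_i)$ are well-defined and lie in $[u_a,u_b]$, and consequently $\cP_h\bar u\in U_{h,ad}$.

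Next I would write down the two inequalities. On the discrete side, \eqref{dis_opt} tested with the admissible function $\cP_h\bar u\in U_{h,ad}$ gives
\begin{equation*}
\int_\omega \bigl(\cC^*\bar\theta_{1h}+\alpha\bar u_h\bigr)(\bar u_h-\cP_h\bar u)\,dx\le 0.\qquad(\ast)
\end{equation*}
On the continuous side, the optimality condition \eqref{opt3} is equivalent to the pointwise statement $(\cC^*\bar\theta_1(x)+\alpha\bar u(x))(v-\bar u(x))\ge 0$ for all $v\in[u_a,u_b]$ and every $x\in\bar\omega$. Evaluating at $x=S_i$ with the admissible choice $v=\bar u_h|_{T_i}\in[u_a,u_b]$, multiplying by $|T_i|$, and summing over the triangles $T_i\subset\omega$ (using assumption {\bf (A2)}), I obtain — because $\cP_h\bar\theta_1,\,\cP_h\bar u,\,\bar u_h$ are all piecewise constant and take their value $\chi(S_i)$ on $T_i$ —
\begin{equation*}
\int_\omega \bigl(\cC^*(\cP_h\bar\theta_1)+\alpha\,\cP_h\bar u\bigr)(\bar u_h-\cP_h\bar u)\,dx\ge 0.\qquad(\ast\ast)
\end{equation*}

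Finally, subtracting $(\ast)$ from $(\ast\ast)$ and using that $\cC^*\cP_h\bar\Theta$ is, in the scalar form, $\cP_h\bar\theta_1$, the quadratic term $-\alpha\|\bar u_h-\cP_h\bar u\|_{L^2(\omega)}^2$ appears with a favourable sign and yields
\begin{equation*}
\alpha\,\|\bar u_h-\cP_h\bar u\|_{L^2(\omega)}^{2}\le \bigl(\cC^*(\cP_h\bar\Theta-\bar\Theta_h),\,\bar u_h-\cP_h\bar u\bigr)_{L^2(\omega)},
\end{equation*}
which is exactly \eqref{var_inequality} with $C=1/\alpha$. There is no real analytic obstacle here: the argument is purely a juxtaposition of optimality conditions and the definition of $\cP_h$, and the only subtlety is justifying the pointwise evaluation at centroids, which is handled by the $C^0$ regularity of $\bar\theta_1$ noted in the first paragraph.
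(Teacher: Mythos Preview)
Your proof is correct and is precisely the standard Meyer--R\"osch argument that the paper invokes by citing \cite[(3.15)]{MeyerRosh04}; the paper itself gives no proof of this lemma, so there is nothing further to compare.
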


The proof of the next lemma is standard (for example \cite{GudiNatarajPorwal14,MeyerRosh04}). However, we provide a proof for the sake of completeness.
\begin{thm}[Convergence rate at centroids]\label{conv_centroid} Under the assumption {\bf (A3)}, the estimate
\begin{equation*}
\|\bar{u}_h-\cP_h\bar{u}\|_{L^2(\omega)}\leq Ch^\beta
\end{equation*}	
holds true with $\beta=\min\{2\gamma,2\},\,\gamma\in (1/2,1]$ being the index of elliptic regularity.
\end{thm}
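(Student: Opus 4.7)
The plan is to anchor the argument on the variational inequality \eqref{var_inequality}, which asserts
\begin{equation*}
\|\cP_h\bar u-\bar u_h\|_{L^2(\omega)}^2\le C(\cP_h\bar\Theta-\bar\Theta_h,\bar u_h-\cP_h\bar u)_{L^2(\omega)},
\end{equation*}
and then to decompose the adjoint error through the intermediate discrete post-processed adjoint $\Theta_{\cP_h\bar u,h}$:
\begin{equation*}
\cP_h\bar\Theta-\bar\Theta_h=\underbrace{(\cP_h\bar\Theta-\bar\Theta)}_{E_1}+\underbrace{(\bar\Theta-\Theta_{\cP_h\bar u,h})}_{E_2}+\underbrace{(\Theta_{\cP_h\bar u,h}-\bar\Theta_h)}_{E_3}.
\end{equation*}
I will estimate $(E_i,\bar u_h-\cP_h\bar u)_{L^2(\omega)}$ for $i=1,2,3$ individually and close by Young's inequality.

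For $E_1$, the fact that $\bar u_h-\cP_h\bar u$ is piecewise constant on $\cT_h$ is decisive: on every triangle $T$ one has
\begin{equation*}
\int_T(\cP_h\bar\Theta-\bar\Theta)(\bar u_h-\cP_h\bar u)\,dx=(\bar u_h-\cP_h\bar u)|_T\int_T(\bar\Theta(S_T)-\bar\Theta)\,dx.
\end{equation*}
Splitting $\omega$ into the uncritical part $\Omega_h^1\cap\omega$ and the critical part $\Omega_h^2\cap\omega$, and following the strategy of \eqref{active1}--\eqref{active2} in Theorem \ref{PP_aux}---the centroid quadrature estimate \eqref{ni} on $\cT_h^1$, the $W^{1,\infty}$-scaling estimate \eqref{scale_1} on $\cT_h^2$, Assumption \textbf{(A3)} on the measure of $\Omega_h^2$, and the uniform bound $\|\bar u_h-\cP_h\bar u\|_{L^\infty(\omega)}\le u_b-u_a$---yields a bound on $|(E_1,\bar u_h-\cP_h\bar u)_{L^2(\omega)}|$ of the form $Ch^2\|\bar\Theta\|_{\bar h}$ up to a term linear in $\|\bar u_h-\cP_h\bar u\|_{L^2(\omega)}$ that will be absorbed later.

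For $E_2$, I insert the intermediate adjoint $\Theta_{\bar u,h}$ (the discrete adjoint at the exact control $\bar u$) and split $E_2=(\bar\Theta-\Theta_{\bar u,h})+(\Theta_{\bar u,h}-\Theta_{\cP_h\bar u,h})$. Theorem \ref{ee2}(b) bounds the first summand in $L^2$ by $Ch^{2\gamma}$. For the second summand I subtract \eqref{dis_adj} from \eqref{dis_post_adj}, obtaining a linear discrete equation driven by $\Psi_{\bar u,h}-\Psi_{\cP_h\bar u,h}$; the well-posed auxiliary dual problem from the proof of Lemma \ref{adj_post}, combined with the sharp $L^2$-rate $\trinl\Psi_{\bar u,h}-\Psi_{\cP_h\bar u,h}\trinr\le Ch^2\|\bar u\|_{\bar h}$ of Theorem \ref{PP_aux}, gives $\trinl\Theta_{\bar u,h}-\Theta_{\cP_h\bar u,h}\trinr\le Ch^2\|\bar u\|_{\bar h}$. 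Cauchy--Schwarz then yields
\begin{equation*}
|(E_2,\bar u_h-\cP_h\bar u)_{L^2(\omega)}|\le Ch^{\min\{2\gamma,2\}}\|\bar u_h-\cP_h\bar u\|_{L^2(\omega)}.
\end{equation*}
For $E_3$, Corollary \ref{adj_post_cor} and Cauchy--Schwarz give directly $|(E_3,\bar u_h-\cP_h\bar u)_{L^2(\omega)}|\le Ch^2\|\bar u\|_{\bar h}\|\bar u_h-\cP_h\bar u\|_{L^2(\omega)}$.

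Assembling the three contributions in the variational inequality and applying Young's inequality to each term linear in $\|\bar u_h-\cP_h\bar u\|_{L^2(\omega)}$ absorbs them into the left-hand side and delivers the stated rate $\|\bar u_h-\cP_h\bar u\|_{L^2(\omega)}\le Ch^{\min\{2\gamma,2\}}$. The main obstacle is the bookkeeping on the critical set in the estimate of $E_1$: because $\bar u_h-\cP_h\bar u$ is only $L^\infty$ and not $H^2$, the Sobolev-embedding trick used in \eqref{active2} is unavailable, and the argument must lean entirely on Assumption \textbf{(A3)} and the uniform $L^\infty$ bound on the discrete control. A second subtle point is the indirect route through $\Theta_{\bar u,h}$ in $E_2$: a direct comparison of $\bar\Theta$ with $\Theta_{\cP_h\bar u,h}$ would not connect to the post-processing error estimates of Theorem \ref{PP_aux} and Lemma \ref{adj_post}, so the insertion of this intermediate discrete adjoint is what unlocks the rate.
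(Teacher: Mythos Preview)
Your overall strategy---the variational inequality \eqref{var_inequality} followed by the three-term decomposition through $\Theta_{\cP_h\bar u,h}$---is exactly the paper's proof, and your handling of $E_2$ and $E_3$ matches it. The flaw lies in your treatment of $E_1$.

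You import the critical/uncritical splitting from Theorem~\ref{PP_aux}, but that splitting was designed for $\bar u-\cP_h\bar u$, where $\bar u$ need not lie in $H^2$ on the critical triangles (the projection $\pi_{[u_a,u_b]}$ produces kinks precisely there). In $E_1$ the function being interpolated is the \emph{continuous} adjoint $\bar\Theta\in\bV\subset{\bH}^2(\Omega)$, which is $H^2$ on every triangle, critical or not. Hence the centroid quadrature estimate \eqref{ni} applies uniformly, and since $\bar u_h-\cP_h\bar u$ is piecewise constant one obtains directly
\[
|(E_1,\bar u_h-\cP_h\bar u)_{L^2(\omega)}|\le Ch^2\,\trinl\bar\Theta\trinr_2\,\|\bar u_h-\cP_h\bar u\|_{L^2(\omega)},
\]
which is precisely what the paper does. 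The ``main obstacle'' you identify is therefore illusory: the $H^2$ regularity needed in \eqref{ni} pertains to $\bar\Theta$, not to $\bar u_h-\cP_h\bar u$.

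Worse, the route you describe on the critical set---\eqref{scale_1} together with $\|\bar u_h-\cP_h\bar u\|_{L^\infty(\omega)}\le u_b-u_a$ and Assumption~\textbf{(A3)}---yields only a \emph{constant} term $Ch^2$ on the right of the variational inequality (or at best $Ch^{3/2}\|\bar u_h-\cP_h\bar u\|_{L^2(\omega)}$ if you use Cauchy--Schwarz instead of the $L^\infty$ bound). A constant $Ch^2$ in the inequality $\|\bar u_h-\cP_h\bar u\|_{L^2(\omega)}^2\le Ch^2+\cdots$ gives only $\|\bar u_h-\cP_h\bar u\|_{L^2(\omega)}\le Ch$, which for $\gamma>1/2$ is strictly weaker than the target rate $Ch^\beta$ with $\beta=\min\{2\gamma,2\}>1$. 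So as written your argument does not close at the stated order; dropping the splitting and applying \eqref{ni} on all triangles fixes it immediately.
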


\begin{proof}
A use of \eqref{var_inequality} and simple manipulations lead to
\begin{align}
&\|\cP_h\bar{u}- \bar{u}_h\|_{L^2(\omega)}^2 \lesssim (\cP_h \bar\Theta - \bar\Theta_h, \bar{u}_h - \cP_h\bar{u})\notag\\
&=(\cP_h \bar\Theta - \bar\Theta, \bar{u}_h - \cP_h\bar{u})+(\bar\Theta - \Theta_{\cP_h\bar{u},h}, \bar{u}_h - \cP_h\bar{u})+(\bar\Theta_{\cP_h\bar{u},h} - \bar\Theta_h, \bar{u}_h - \cP_h\bar{u}).\label{cent_split}
\end{align}
The first term is estimated using the fact that $\bar{u}_h-\cP_h\bar{u}$ is a constant in each $T \in \cT_h $ and hence,
\begin{equation*}
(\cP_h \bar\Theta - \bar\Theta, \bar{u}_h - \cP_h\bar{u})=\sum_{T_i\in\cT} (\bar{u}(S_i)-\cP_h\bar{u}(S_i))\int_{T_i}(\cP_h \bar\Theta - \bar\Theta)\dx.
\end{equation*}
Since $\bar\Theta|_{T_i}\in H^2(T_i)$, a use of \eqref{ni} in the above equation and $a~priori$ bound of $\bar\Theta$ from \eqref{state_eq}-\eqref{adj_eq} as $\trinl\bar\Theta\trinr_2\leq C(\|f\|+\|\bar{u}\|+\|y_d\|)$ lead to
\begin{equation}\label{adj_est1}
(\cP_h \bar\Theta - \bar\Theta, \bar{u}_h - \cP_h\bar{u})\leq Ch^2\|\bar{u}_h-\cP_h\bar{u}\|_{L^2(\omega)} \trinl\bar\Theta\trinr_2.
\end{equation}
The triangle inequality, Lemma~\ref{adj_post}, \Poincare inequality and \eqref{adjest} yield
\begin{equation}\label{Theta_est}
\trinl\bar\Theta-\Theta_{\cP_h\bar{u},h}\trinr\leq Ch^\beta
\end{equation}
with $\beta=\min\{2\gamma,2\}$. The equation \eqref{Theta_est}, and Cauchy-Schwarz inequality leads to the estimate for the second term of \eqref{cent_split} as
\begin{equation}\label{adj_est2}
|(\bar\Theta - \Theta_{\cP_h\bar{u},h}, \bar{u}_h - \cP_h\bar{u})|\leq Ch^\beta\|\bar{u}_h-\cP_h\bar{u}\|_{L^2(\omega)}.
\end{equation}
The Cauchy-Schwarz inequality and Corollary~\ref{adj_post_cor} lead to the estimate for the last term of \eqref{cent_split} as
\begin{equation}\label{adj_est3}
(\bar\Theta_{\cP_h\bar{u},h} - \bar\Theta_h, \bar{u}_h - \cP_h\bar{u})\leq Ch^\beta\|\bar{u}_h-\cP_h\bar{u}\|_{L^2(\omega)}.
\end{equation}
A use of the estimates \eqref{adj_est1}-\eqref{adj_est3} in \eqref{cent_split} leads to the required estimate.
\end{proof}

\begin{thm}[Estimate for post-processed control]\label{est_post_proc_u} The following estimate for post-processed control holds true:
\begin{equation*}
\|\bar{u}-\tilde{\bar{u}}_h\|_{L^2(\omega)}\leq Ch^\beta,
\end{equation*}	
where $\bar{u}$ is the optimal control and $\tilde{\bar{u}}_h$ is the post-processed control defined in \eqref{defn_post_proc}, and $\beta=\min\{2\gamma,2\}$. 
\end{thm}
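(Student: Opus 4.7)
The plan is to reduce the control estimate to an $L^2$-estimate on the discrete adjoint state and then apply the adjoint convergence results already established. More precisely, starting from the projection formulas \eqref{rep} and \eqref{defn_post_proc},
\[
\bar{u}(x)=\pi_{[u_{a},u_{b}]}\!\left(-\tfrac{1}{\alpha}(\mathcal{C}^{*}\bar{\theta}_{1})(x)\right),\qquad
\widetilde{\bar{u}}_{h}(x)=\pi_{[u_{a},u_{b}]}\!\left(-\tfrac{1}{\alpha}(\mathcal{C}^{*}\bar{\theta}_{1h})(x)\right),
\]
the non-expansiveness (Lipschitz continuity with constant one) of $\pi_{[u_{a},u_{b}]}$ yields
\[
\|\bar{u}-\widetilde{\bar{u}}_{h}\|_{L^{2}(\omega)}\le \tfrac{1}{\alpha}\|\mathcal{C}^{*}(\bar{\theta}_{1}-\bar{\theta}_{1h})\|_{L^{2}(\omega)}\le \tfrac{1}{\alpha}\,\trinl\bar{\Theta}-\bar{\Theta}_{h}\trinr,
\]
since $\mathcal{C}^{*}$ is just multiplication by $\chi_{\omega}$. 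Hence it suffices to prove $\trinl\bar{\Theta}-\bar{\Theta}_{h}\trinr\le C h^{\beta}$.

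The next step is to insert the intermediate post-processed adjoint $\Theta_{\mathcal{P}_{h}\bar{u},h}$ defined via \eqref{dis_post_adj} and apply the triangle inequality,
\[
\trinl\bar{\Theta}-\bar{\Theta}_{h}\trinr\le \trinl\bar{\Theta}-\Theta_{\mathcal{P}_{h}\bar{u},h}\trinr+\trinl\Theta_{\mathcal{P}_{h}\bar{u},h}-\bar{\Theta}_{h}\trinr.
\]
The first term is bounded by $Ch^{\beta}$ using \eqref{Theta_est}, which was itself obtained by combining Lemma~\ref{adj_post}, the Poincar\'e inequality, and the $L^{2}$ estimate in \eqref{adjest}(b) for $\trinl\Theta_{\bar u}-\Theta_{\bar u,h}\trinr$. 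The second term is precisely bounded by $Ch^{2}\|\bar{u}\|_{\bar{h}}\le Ch^{\beta}$ by Corollary~\ref{adj_post_cor}.

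Assembling the two bounds gives $\trinl\bar{\Theta}-\bar{\Theta}_{h}\trinr\le Ch^{\beta}$ with $\beta=\min\{2\gamma,2\}$, and the Lipschitz step above then yields the claimed estimate $\|\bar{u}-\widetilde{\bar{u}}_{h}\|_{L^{2}(\omega)}\le Ch^{\beta}$. The proof is essentially a short composition of already-proved ingredients; I do not anticipate any real obstacle, since the only nontrivial work (deriving the super-convergent $L^{2}$ bounds on the discrete adjoints via the duality arguments in Theorem~\ref{PP_aux}, Lemma~\ref{adj_post}, Corollary~\ref{adj_post_cor}, and Theorem~\ref{conv_centroid}) has already been carried out in the preceding subsection. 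The only point that requires care is to confirm that $\bar{\Theta}_{h}$ in \eqref{defn_post_proc} is indeed the discrete adjoint associated with $\bar{u}_{h}$ (so that Corollary~\ref{adj_post_cor} applies verbatim), which is built into its definition.
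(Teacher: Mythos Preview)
Your proof is correct and follows essentially the same route as the paper: use the Lipschitz continuity of $\pi_{[u_a,u_b]}$ to reduce to $\trinl\bar\Theta-\bar\Theta_h\trinr$, split via the intermediate $\Theta_{\mathcal{P}_h\bar u,h}$, and apply \eqref{Theta_est} together with Corollary~\ref{adj_post_cor}. The paper's argument is identical in structure and cites exactly the same two ingredients.
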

\begin{proof}
The Lipschitz continuity of the projection operator $\pi_{[u_a,u_b]}$ and triangle inequality yield
\begin{align*}
\|\bar{u}-\tilde{\bar{u}}_h\|_{L^2(\omega)}&\leq \|\pi_{[u_a,u_b]} \big{(}-\frac{1}{\alpha}\bar\Theta\big{)}-\pi_{[u_a,u_b]}\big{(}-\frac{1}{\alpha}\bar\Theta_h\big{)}\|\leq C\trinl\bar\Theta-\bar\Theta_h\trinr\\
&\leq C(\trinl\bar\Theta-\Theta_{\cP_h\bar{u},h}\trinr+\trinl\Theta_{\cP_h\bar{u},h}-\bar\Theta_h\trinr).
\end{align*}
Now \eqref{Theta_est} and Corollary~\ref{adj_post_cor} lead to the required result.
\end{proof}


\begin{thm}\label{PP_aux_apdex}
Let $\Psi_{\bar{u},h}$ and $\Psi_{\cP_h\bar{u},h}$ be solution of \eqref{wform2} with respect to control $\bar{u}$ and post processed control $\cP_h\bar{u}$, respectively. Then the following error estimate holds true:
\begin{equation*}
\trinl \Psi_{\bar{u},h}-\Psi_{\cP_h\bar{u},h}\trinr_1\leq C h^2\|\bar{u}\|_{\bar{h}}.
\end{equation*}
\end{thm}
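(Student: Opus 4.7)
The plan is to mirror the argument used for Theorem~\ref{PP_aux}, replacing the $L^{2}$ duality with an $\bH^{-1}$--$\bH^{1}_{0}$ duality in the spirit of the Aubin--Nitsche trick already employed in the proof of Theorem~\ref{ee2}(b). Since $\bV \subset \bH^{1}_{0}(\Omega)$, the norm of $\Psi_{\bar u,h}-\Psi_{\cP_h \bar u,h}$ in $\bH^{1}_{0}$ can be realized as
\[
\trinl \Psi_{\bar{u},h}-\Psi_{\cP_h\bar{u},h}\trinr_1 \;=\; \sup_{\mathbf{g}\in \bH^{-1}(\Omega)\setminus\{0\}} \frac{\big|\langle \mathbf{g},\,\Psi_{\bar{u},h}-\Psi_{\cP_h\bar{u},h}\rangle\big|}{\trinl \mathbf{g}\trinr_{-1}},
\]
so it suffices to estimate the numerator by $C h^{2}\|\bar u\|_{\bar h}\trinl \mathbf{g}\trinr_{-1}$ uniformly in $\mathbf{g}$.

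For a given $\mathbf{g}\in \bH^{-1}(\Omega)$, introduce the perturbed auxiliary problem: find $\bxi \in \bV$ such that
\[
A(\bz,\bxi)+B(\Psi_{\bar{u},h},\bz,\bxi)+B(\bz,\Psi_{\cP_h\bar{u},h},\bxi) = \langle \mathbf{g},\bz\rangle\quad\forall\, \bz\in \bV,
\]
and its conforming discretization for $\bxi_h \in \bV_h$ with test functions in $\bV_h$. As in Theorem~\ref{PP_aux}, the estimate \eqref{temp10} together with Lemma~\ref{banach} and the invertibility of $\mathcal{A}^{*}+\mathcal{B}'(\Psi_{\bar{u},h})^{*}$ guarantees that both problems are well-posed for $h$ small, with the uniform stability bound
\[
\trinl \bxi\trinr_{2} + \trinl \bxi_h\trinr_{2} \;\le\; C\, \trinl \mathbf{g}\trinr_{-1}.
\]

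Testing the discrete auxiliary problem against $\bz_h=\Psi_{\bar{u},h}-\Psi_{\cP_h\bar{u},h}\in \bV_h$ and subtracting the two discrete state equations exactly as in \eqref{L2app}--\eqref{pert_est} (using the bilinearity identity $B(\Psi_{\bar{u},h},\Psi_{\bar{u},h},\cdot)-B(\Psi_{\cP_h\bar{u},h},\Psi_{\cP_h\bar{u},h},\cdot)=B(\Psi_{\bar{u},h},\Psi_{\bar{u},h}-\Psi_{\cP_h\bar{u},h},\cdot)+B(\Psi_{\bar{u},h}-\Psi_{\cP_h\bar{u},h},\Psi_{\cP_h\bar{u},h},\cdot)$) yields the key identity
\[
\langle \mathbf{g},\,\Psi_{\bar{u},h}-\Psi_{\cP_h\bar{u},h}\rangle \;=\; (\cC(\bar u-\cP_h \bar u),\,\bxi_h).
\]

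The right-hand side is estimated by the same splitting over $\Omega_h^{1}$ and $\Omega_h^{2}$ used in Theorem~\ref{PP_aux}: on the uncritical part $\Omega_h^{1}$ the inequality \eqref{ni} and $\|\bxi_h(S_i)\|_{0,T_i}\le \|\bxi_h\|_{0,T_i}$ give the bound \eqref{active1}, while on the critical part $\Omega_h^{2}$ assumption \textbf{(A3)}, the scaling estimate \eqref{scale_1} and the Sobolev embedding $H^{2}\hookrightarrow L^{\infty}$ give \eqref{active2}. Combining these with the stability bound for $\bxi_h$ obtained above yields
\[
\big|(\cC(\bar u-\cP_h \bar u),\bxi_h)\big| \;\le\; C h^{2}\|\bar u\|_{\bar h}\, \trinl \bxi_h\trinr_{2} \;\le\; C h^{2}\|\bar u\|_{\bar h}\, \trinl \mathbf{g}\trinr_{-1}.
\]
Taking the supremum over $\mathbf{g}\in \bH^{-1}(\Omega)$ yields the assertion. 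The main technical point -- and essentially the only step where the argument differs from Theorem~\ref{PP_aux} -- is verifying that $\trinl \bxi_h\trinr_{2}$ is controlled \emph{uniformly in $h$} by $\trinl \mathbf{g}\trinr_{-1}$ rather than by $\trinl \Psi_{\bar u,h}-\Psi_{\cP_h \bar u,h}\trinr$; this is handled exactly as in the invertibility portion of the proof of Theorem~\ref{PP_aux} using estimate \eqref{temp10}.
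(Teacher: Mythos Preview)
Your proposal is correct and follows essentially the same duality route as the paper. The paper chooses the specific right-hand side $\mathbf g=-\Delta(\Psi_{\bar u,h}-\Psi_{\cP_h\bar u,h})$ in the perturbed auxiliary problem (which, after integration by parts with $\bz_h=\Psi_{\bar u,h}-\Psi_{\cP_h\bar u,h}$, yields $\trinl\Psi_{\bar u,h}-\Psi_{\cP_h\bar u,h}\trinr_1^2$ directly on the left), whereas you work with an arbitrary $\mathbf g\in\bH^{-1}(\Omega)$ and take the supremum; both variants then finish by invoking the splitting over $\Omega_h^1\cup\Omega_h^2$ from Theorem~\ref{PP_aux}. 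Your formulation is marginally cleaner in that the bound $\trinl\bxi_h\trinr_2\le C\trinl\mathbf g\trinr_{-1}$ follows straight from the uniform discrete well-posedness, avoiding the detour through $\trinl\bxi-\bxi_h\trinr_2$ and $\trinl\bxi\trinr_{2+\gamma}$ that the paper's ``proceed as in Theorem~\ref{PP_aux}'' entails, but the underlying mechanism is identical.
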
  

\begin{proof}
Consider the perturbed auxiliary problem:
Seek $\bxi\in\bV$ that solves
\begin{equation}\label{pert_aux_apdex}
A(\bz,\bxi)+B(\Psi_{\bar{u},h},\bz,\bxi)+B(\bz,\Psi_{\cP_h\bar{u},h},\bxi)=-(\Delta(\Psi_{\bar{u},h}-\Psi_{\cP_h\bar{u},h}),\bz)\quad\forall \bz\in\bV.
\end{equation}
Its discretization is given by: 

Seek $\bxi_h\in\bV_h$ that solves
\begin{equation}\label{pert_aux_apdex_dis}
A(\bz_h,\bxi_h)+B(\Psi_{\bar{u},h},\bz_h,\bxi_h)+B(\bz_h,\Psi_{\cP_h\bar{u},h},\bxi_h)=-(\Delta(\Psi_{\bar{u},h}-\Psi_{\cP_h\bar{u},h}),\bz_h)\quad\forall \bz_h\in\bV_h.
\end{equation}
The above equation \eqref{pert_aux_apdex_dis} can be written in the operator form as
\begin{equation}
\cA^*\bxi_h+\cB'(\Psi_{\bar{u},h})^*\bxi_h+\half\cB'(\Psi_{\cP_h\bar{u},h}-\Psi_{\bar{u},h})^*\bxi_h=-T_h(\Delta(\Psi_{\bar{u},h}-\Psi_{\cP_h\bar{u},h}))\text{ in }\bV_h.
\end{equation}
Note that \eqref{apriori_state} and \eqref{scale_1} lead to 
\begin{equation}
\trinl\Psi_{\cP_h\bar{u},h}-\Psi_{\bar{u},h}\trinr_2\leq C \|\cP_h\bar{u}-\bar{u}\|_{L^2(\omega)}\leq Ch. \label{temp10_apdex}
\end{equation}
The invertibility of $\cA^*+\cB'(\Psi_{\bar{u},h})^*$, Lemma~\ref{banach} and \eqref{temp10_apdex} lead to well-posedness of \eqref{pert_aux_apdex_dis}.
Choose $\bz_h=\Psi_{\bar{u},h}-\Psi_{\cP_h\bar{u},h}$ in \eqref{pert_aux_apdex_dis} and simplify the terms to obtain 
\begin{align}
\trinl\Psi_{\bar{u},h}-\Psi_{\cP_h\bar{u},h}\trinr^2_1&=A(\Psi_{\bar{u},h}-\Psi_{\cP_h\bar{u},h},\bxi_h)+B(\Psi_{\bar{u},h},\Psi_{\bar{u},h},\bxi_h)\notag\\
&\qquad-B(\Psi_{\cP_h\bar{u},h},\Psi_{\cP_h\bar{u},h},\bxi_h).\label{L2app_apdex}
\end{align}
Note that $\Psi_{\bar{u},h}$ and $\Psi_{\cP_h\bar{u},h}$ satisfy the following discrete problems:
\begin{align*}
&A(\Psi_{\bar{u},h},\Phi_h)+B(\Psi_{\bar{u},h},\Psi_{\bar{u},h},\Phi_h)=(F+\cC\bar{u},\Phi_h)\quad\forall \Phi_h\in \bV_h,\\
&A(\Psi_{\cP_h\bar{u},h},\Phi_h)+B(\Psi_{\cP_h\bar{u},h},\Psi_{\cP_h\bar{u},h},\Phi_h)=(F+\cC(\cP_h\bar{u}),\Phi_h)\quad\forall \Phi_h\in \bV_h.
\end{align*}
Subtract the above two equations to obtain
\begin{align*}
A(\Psi_{\bar{u},h}-\Psi_{\cP_h\bar{u},h},\Phi_h)+B(\Psi_{\bar{u},h},\Psi_{\bar{u},h},\Phi_h)-B(\Psi_{\cP_h\bar{u},h},\Psi_{\cP_h\bar{u},h},\Phi_h)=(\cC(\bar{u}-\cP_h\bar{u}),\Phi_h).
\end{align*}
Choose $\Phi_h=\bxi_h$ in the above equation and use \eqref{L2app_apdex} to obtain
\begin{equation}\label{pert_est_apdex}
\trinl\Psi_{\bar{u},h}-\Psi_{\cP_h\bar{u},h}\trinr^2_1=(\cC(\bar{u}-\cP_h\bar{u}),\bxi_h).
\end{equation} 
{
Now proceed as in the proof of Theorem~\ref{PP_aux} to obtain the required estimate.}
\end{proof}

Following the proof of the above theorem, the next result holds immediately.
\begin{cor}\label{uh_control_apdex}
Let $\Psi_{\bar{u}_h,h}$ and $\Psi_{\cP_h\bar{u},h}$ be the solutions of \eqref{wform2} with the control $\bar{u}_h$ and the post processed control $\cP_h\bar{u}$, respectively. Then the following error estimate holds true:
\begin{equation*}
\trinl \Psi_{\bar{u}_h,h}-\Psi_{\cP_h\bar{u},h}\trinr_1\leq C h^2\|\bar{u}\|_{\bar{h}}.
\end{equation*}
\end{cor}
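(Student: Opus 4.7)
The plan is to imitate the proof of Theorem \ref{PP_aux_apdex} essentially verbatim, with $\bar{u}$ replaced by $\bar{u}_h$ wherever it appears as the ``first'' control, while keeping $\cP_h\bar{u}$ as the second control. Concretely, I would introduce the perturbed auxiliary problem
\begin{equation*}
A(\bz,\bxi)+B(\Psi_{\bar{u}_h,h},\bz,\bxi)+B(\bz,\Psi_{\cP_h\bar{u},h},\bxi)=-(\Delta(\Psi_{\bar{u}_h,h}-\Psi_{\cP_h\bar{u},h}),\bz)\quad\forall\,\bz\in\bV
\end{equation*}
together with its conforming discretization on $\bV_h$, and verify well-posedness of both. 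The well-posedness argument parallels the one in Theorem \ref{PP_aux_apdex}: Theorem \ref{conv}(i) together with \eqref{scale_1} shows $\|\cP_h\bar{u}-\bar{u}_h\|_{L^2(\omega)}=O(h)$, hence $\trinl\Psi_{\cP_h\bar{u},h}-\Psi_{\bar{u}_h,h}\trinr_2=O(h)$ by \eqref{apriori_state}, so Lemma \ref{banach} applied to the invertible operator $\cA^*+\cB'(\Psi_{\bar{u}_h,h})^*$ yields invertibility of the perturbed discrete operator for $h$ small.

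Next, I would test the discrete auxiliary problem with $\bz_h=\Psi_{\bar{u}_h,h}-\Psi_{\cP_h\bar{u},h}$; the right-hand side collapses to $\trinl\Psi_{\bar{u}_h,h}-\Psi_{\cP_h\bar{u},h}\trinr_1^2$ after an integration by parts (exactly as in the theorem), while the left-hand side is cancelled against the difference of the two discrete state equations \eqref{wform2} tested with $\bxi_h$. This yields the key identity
\begin{equation*}
\trinl\Psi_{\bar{u}_h,h}-\Psi_{\cP_h\bar{u},h}\trinr_1^2=(\cC(\bar{u}_h-\cP_h\bar{u}),\bxi_h).
\end{equation*}

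The only genuine modification lies in bounding this right-hand side. Since $\bar{u}_h-\cP_h\bar{u}$ is already piecewise constant on $\cT_h$, the centroid superconvergence estimate \eqref{ni} from the theorem is no longer available. In its place I would use the Cauchy–Schwarz inequality and invoke Theorem \ref{conv_centroid} to get $\|\bar{u}_h-\cP_h\bar{u}\|_{L^2(\omega)}\le Ch^{\min\{2\gamma,2\}}$, while bounding $\|\bxi_h\|_{L^2(\omega)}\le C\trinl\Psi_{\bar{u}_h,h}-\Psi_{\cP_h\bar{u},h}\trinr_1$ from well-posedness of the auxiliary problem. Dividing off one copy of $\trinl\cdot\trinr_1$ yields the claimed estimate, with the norm $\|\bar{u}\|_{\bar{h}}$ entering through the dependence in Theorem \ref{conv_centroid}.

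The step I expect to be most delicate is tracking the correct norm on $\bxi_h$: the right-hand side $-\Delta(\Psi_{\bar{u}_h,h}-\Psi_{\cP_h\bar{u},h})$ is precisely designed so that its $\bV'$ norm is $\trinl\Psi_{\bar{u}_h,h}-\Psi_{\cP_h\bar{u},h}\trinr_1$ rather than the $\trinl\cdot\trinr$ norm used in Theorem \ref{PP_aux}, and one must be careful that $\|\bxi_h\|_{L^2(\omega)}$ is absorbed against this $H^1$ factor (and not against the $H^2$ factor) to obtain the sharp $h^2$ rate without losing a power in $\gamma$ beyond what Theorem \ref{conv_centroid} already provides.
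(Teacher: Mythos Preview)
Your plan follows exactly the route the paper indicates (``Following the proof of the above theorem, the next result holds immediately''): the same perturbed duality problem, the same cancellation against the two discrete state equations, leading to
\[
\trinl\Psi_{\bar{u}_h,h}-\Psi_{\cP_h\bar{u},h}\trinr_1^2=(\cC(\bar{u}_h-\cP_h\bar{u}),\bxi_h),
\]
with $\trinl\bxi_h\trinr_2\le C\trinl\Psi_{\bar{u}_h,h}-\Psi_{\cP_h\bar{u},h}\trinr_1$. The paper gives no further detail, and your observation that the centroid superconvergence step \eqref{ni} from Theorem~\ref{PP_aux} is unavailable here (since $\bar{u}_h-\cP_h\bar{u}$ is already piecewise constant) is exactly the point that has to be addressed; invoking Theorem~\ref{conv_centroid} instead is the natural fix, and it is not circular because that theorem is established earlier.

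Two small corrections. First, what you obtain this way is
\[
\trinl\Psi_{\bar{u}_h,h}-\Psi_{\cP_h\bar{u},h}\trinr_1\le C\|\bar{u}_h-\cP_h\bar{u}\|_{L^2(\omega)}\le Ch^{\min\{2\gamma,2\}},
\]
not the $Ch^2\|\bar{u}\|_{\bar h}$ stated in the corollary; for $\gamma<1$ the power is $2\gamma$. This is harmless for the only downstream use (Theorem~\ref{L2_est_state_adj_apdex}, which targets $h^{2\gamma}$), but you should state the bound you actually prove rather than claim that ``the norm $\|\bar{u}\|_{\bar h}$ enters through the dependence in Theorem~\ref{conv_centroid}'': in Theorem~\ref{conv_centroid} that norm is absorbed into the generic constant, and the bottleneck there is the $h^{2\gamma}$ term coming from \eqref{adjest}, not the $h^2\|\bar{u}\|_{\bar h}$ terms. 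Second, the concern in your last paragraph is not really delicate: once you apply Cauchy--Schwarz you only need $\|\bxi_h\|_{L^2(\omega)}\le C\trinl\bxi_h\trinr_2\le C\trinl\Psi_{\bar{u}_h,h}-\Psi_{\cP_h\bar{u},h}\trinr_1$, and this is immediate from well-posedness of the discrete auxiliary problem with data $-\Delta(\Psi_{\bar{u}_h,h}-\Psi_{\cP_h\bar{u},h})\in\bH^{-1}(\Omega)$.
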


The discrete post processed adjoint problem can be stated as: 

$\quad$Find $\Theta_{\cP_h\bar{u},h}\in\bV_h$ such that
\begin{equation}\label{dis_post_adj_apdex}
\tilde{\cL}_h(\Phi_h,\Theta_{\cP_h\bar{u},h}):=A(\Phi_h,\Theta_{\cP_h\bar{u},h})+2B(\Psi_{\bar{u},h},\Phi_h,\Theta_{\cP_h\bar{u},h})=(\Psi_{\cP_h\bar{u},h}-\Psi_d,\Phi_h)\quad\forall \Phi_h\in\bV_h.
\end{equation}

\begin{lem}\label{adj_post_apdex}
Let $\Theta_{\bar{u},h}$ be solution of \eqref{adj2} with the control $\bar{u}$ and $\Theta_{\cP_h\bar{u},h}$ be the solution of \eqref{dis_post_adj_apdex}. Then the following error estimate holds true:
\begin{equation*}
\trinl \Theta_{\bar{u},h}-\Theta_{\cP_h\bar{u},h}\trinr_1\leq C h^2\|\bar{u}\|_{\bar{h}}.
\end{equation*}
\end{lem}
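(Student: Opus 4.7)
The proof will mirror that of Lemma~\ref{adj_post} almost verbatim, replacing the $L^2$-type right-hand side in the duality argument by $-\Delta$ applied to the adjoint error, so that the quantity to be controlled arises naturally as an $H^1$-seminorm after integration by parts. First, I would subtract \eqref{dis_post_adj_apdex} from the analogue of \eqref{dis_adj} written for the control $\bar u$ to obtain
\begin{equation*}
\tilde{\cL}_h(\Phi_h,\Theta_{\bar{u},h}-\Theta_{\cP_h\bar{u},h})=(\Psi_{\bar{u},h}-\Psi_{\cP_h\bar{u},h},\Phi_h)\quad\forall \Phi_h\in\bV_h,
\end{equation*}
which is the only identity directly coupling the adjoint error to the state error.

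Next, I would introduce the dual auxiliary problem: find $\boldsymbol{\chi}_h\in\bV_h$ such that
\begin{equation*}
\tilde{\cL}_h(\boldsymbol{\chi}_h,\Phi_h)=-(\Delta(\Theta_{\bar{u},h}-\Theta_{\cP_h\bar{u},h}),\Phi_h)\quad\forall \Phi_h\in\bV_h.
\end{equation*}
Well-posedness follows from the invertibility of $\tilde{\cL}_h$ established in Section~\ref{sec3.3}. Integration by parts (valid since $\Phi_h\in\bV_h\subset\bV$ vanishes to first order on $\partial\Omega$) rewrites the linear form as $(\nabla(\Theta_{\bar{u},h}-\Theta_{\cP_h\bar{u},h}),\nabla\Phi_h)$, whose norm on $\bV_h'$ is bounded by $\trinl\Theta_{\bar{u},h}-\Theta_{\cP_h\bar{u},h}\trinr_1$; this yields the key $a~priori$ bound $\trinl\boldsymbol{\chi}_h\trinr_2\leq C\trinl\Theta_{\bar{u},h}-\Theta_{\cP_h\bar{u},h}\trinr_1$.

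The symmetry of $A(\cdot,\cdot)$ and the symmetry of $b(\cdot,\cdot,\cdot)$ in its second and third arguments (noted in the Remark following \eqref{wv}) imply $\tilde{\cL}_h(v,w)=\tilde{\cL}_h(w,v)$, so choosing $\Phi_h=\boldsymbol{\chi}_h$ in the subtracted equation and $\Phi_h=\Theta_{\bar{u},h}-\Theta_{\cP_h\bar{u},h}$ in the auxiliary problem, together with integration by parts in the latter, gives
\begin{equation*}
\trinl\Theta_{\bar{u},h}-\Theta_{\cP_h\bar{u},h}\trinr_1^2=(\Psi_{\bar{u},h}-\Psi_{\cP_h\bar{u},h},\boldsymbol{\chi}_h).
\end{equation*}
A chain of Cauchy--Schwarz, the \Poincare inequality $\trinl\boldsymbol{\chi}_h\trinr\le C\trinl\boldsymbol{\chi}_h\trinr_2$, the $a~priori$ bound displayed above, and the $L^2$ state error estimate from Theorem~\ref{PP_aux} then closes the argument:
\begin{equation*}
\trinl\Theta_{\bar{u},h}-\Theta_{\cP_h\bar{u},h}\trinr_1\leq C\trinl\Psi_{\bar{u},h}-\Psi_{\cP_h\bar{u},h}\trinr\leq Ch^2\|\bar u\|_{\bar h}.
\end{equation*}
The only delicate point is ensuring that the $a~priori$ bound for $\boldsymbol{\chi}_h$ is expressed in terms of the $H^1$-seminorm of the right-hand side rather than its $L^2$ norm, which is exactly what the integration-by-parts step delivers; everything else is a mechanical replay of the $L^2$ proof in Lemma~\ref{adj_post}.
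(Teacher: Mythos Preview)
Your argument is correct and matches the paper's proof almost exactly: subtract the two discrete adjoint equations, set up the auxiliary problem with right-hand side $-\Delta(\Theta_{\bar u,h}-\Theta_{\cP_h\bar u,h})$, bound $\trinl\boldsymbol{\chi}_h\trinr_2$ by $\trinl\Theta_{\bar u,h}-\Theta_{\cP_h\bar u,h}\trinr_1$, and close with Theorem~\ref{PP_aux}. One correction: your claim that $\tilde{\cL}_h(v,w)=\tilde{\cL}_h(w,v)$ is false, since $B(\Psi,\cdot,\cdot)$ is \emph{not} symmetric in its last two slots (check the definition: swapping $\Phi$ and $\Theta$ leaves an extra $2[b(\psi_1,\varphi_2,\theta_1)-b(\psi_1,\varphi_1,\theta_2)]$); fortunately you do not need it, because choosing $\Phi_h=\boldsymbol{\chi}_h$ in the subtracted equation and $\Phi_h=\Theta_{\bar u,h}-\Theta_{\cP_h\bar u,h}$ in the auxiliary problem both produce $\tilde{\cL}_h(\boldsymbol{\chi}_h,\Theta_{\bar u,h}-\Theta_{\cP_h\bar u,h})$ directly, with no swap required.
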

\begin{proof}
The discrete adjoint problem \eqref{adj2} can be written as
\begin{equation}\label{dis_adj_apdex}
\tilde{\cL}_h(\Phi_h,\Theta_{\bar{u},h}):=A(\Phi_h,\Theta_{\bar{u},h})+2B(\Psi_{\bar{u},h},\Phi_h,\Theta_{\bar{u},h})=(\Psi_{\bar{u},h}-\Psi_d,\Phi_h)\quad\forall \Phi_h\in\bV_h.
\end{equation}
The subtraction of \eqref{dis_adj_apdex} and \eqref{dis_post_adj_apdex} leads to
\begin{equation}\label{sub_adj_apdex}
\tilde{\cL}_h(\Phi_h,\Theta_{\bar{u},h}-\Theta_{\cP_h\bar{u},h})=(\Psi_{\bar{u},h}-\Psi_{\cP_h\bar{u},h},\Phi_h).
\end{equation}
The proof follows exactly similar to that of Lemma~\ref{adj_post} except for the change that in place of \eqref{aux_adj}, we consider the following well-posed auxiliary problem:
Find $\boldsymbol{\chi}_h\in\bV_h$ such that
\begin{equation}\label{aux_adj_apdex}
\tilde{\cL}_h(\boldsymbol{\chi}_h,\Phi_h)=(-\Delta(\Theta_{\bar{u},h}-\Theta_{\cP_h\bar{u},h}),\Phi_h)\quad\forall\Phi_h\in\bV_h
\end{equation}
with the $a~priori$ bound $\trinl\boldsymbol{\chi}_h\trinr_2\leq C\trinl-\Delta(\Theta_{\bar{u},h}-\Theta_{\cP_h\bar{u},h})\trinr_{-1}\leq C\trinl \Theta_{\bar{u},h}-\Theta_{\cP_h\bar{u},h}\trinr_1$. 
\end{proof}

\begin{cor}\label{adj_post_apdex_cor}
Let $\bar\Theta_h$ be solution of \eqref{adj2} with respect to control $\bar{u}_h$ and $\Theta_{\cP_h\bar{u},h}$ be the solution of \eqref{dis_post_adj_apdex}. Then the following error estimate holds true:
\begin{equation*}
\trinl \bar\Theta_h-\Theta_{\cP_h\bar{u},h}\trinr_1\leq C h^2\|\bar{u}\|_{\bar{h}}.
\end{equation*}
\end{cor}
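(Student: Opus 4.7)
The plan is to mirror the argument used for Corollary~\ref{adj_post_cor}: start from Lemma~\ref{adj_post_apdex} and simply replace the load term $\Psi_{\bar u,h}-\Psi_d$ by $\Psi_{\bar u_h,h}-\Psi_d$ in the discrete adjoint equation \eqref{dis_adj_apdex}. Concretely, I would consider the modified problem
\begin{equation*}
\tilde{\cL}_h(\Phi_h,\bar\Theta_h)=(\Psi_{\bar u_h,h}-\Psi_d,\Phi_h)\quad\forall\,\Phi_h\in\bV_h,
\end{equation*}
and subtract it from \eqref{dis_post_adj_apdex} to obtain
\begin{equation*}
\tilde{\cL}_h(\Phi_h,\bar\Theta_h-\Theta_{\cP_h\bar u,h})=(\Psi_{\bar u_h,h}-\Psi_{\cP_h\bar u,h},\Phi_h)\quad\forall\,\Phi_h\in\bV_h.
\end{equation*}

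Next I would introduce the same kind of duality/auxiliary problem used in Lemma~\ref{adj_post_apdex}: find $\boldsymbol{\chi}_h\in\bV_h$ such that
\begin{equation*}
\tilde{\cL}_h(\boldsymbol{\chi}_h,\Phi_h)=(-\Delta(\bar\Theta_h-\Theta_{\cP_h\bar u,h}),\Phi_h)\quad\forall\,\Phi_h\in\bV_h,
\end{equation*}
whose well-posedness (for $h$ small enough) follows exactly as in Lemma~\ref{adj_post_apdex} from the invertibility of $\cA^*+\cB'(\Psi_{\bar u,h})^*$, Lemma~\ref{banach}, and the smallness of $\trinl\Psi_{\cP_h\bar u,h}-\Psi_{\bar u,h}\trinr_2$, and which satisfies the a~priori bound $\trinl\boldsymbol{\chi}_h\trinr_2\lesssim\trinl\bar\Theta_h-\Theta_{\cP_h\bar u,h}\trinr_1$. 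Testing the subtracted equation with $\Phi_h=\boldsymbol{\chi}_h$ and the auxiliary equation with $\Phi_h=\bar\Theta_h-\Theta_{\cP_h\bar u,h}$ yields
\begin{equation*}
\trinl\bar\Theta_h-\Theta_{\cP_h\bar u,h}\trinr_1^{2}=(\Psi_{\bar u_h,h}-\Psi_{\cP_h\bar u,h},\boldsymbol{\chi}_h).
\end{equation*}

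To close the estimate, apply the Cauchy--Schwarz inequality together with Corollary~\ref{uh_control}, which gives $\trinl\Psi_{\bar u_h,h}-\Psi_{\cP_h\bar u,h}\trinr\leq Ch^{2}\|\bar u\|_{\bar h}$, and combine with the a~priori bound on $\boldsymbol{\chi}_h$ to conclude $\trinl\bar\Theta_h-\Theta_{\cP_h\bar u,h}\trinr_1\leq Ch^{2}\|\bar u\|_{\bar h}$. The only point that needs a little care (and where I expect the main obstacle to lie, though it is a mild one) is justifying that the modified discrete adjoint equation is indeed solved by $\bar\Theta_h$ up to terms of higher order: here one either absorbs the discrepancy $2B(\Psi_{\bar u,h}-\Psi_{\bar u_h,h},\Phi_h,\bar\Theta_h)$, which is controlled by $\trinl\Psi_{\bar u,h}-\Psi_{\bar u_h,h}\trinr_2\,\trinl\bar\Theta_h\trinr_2$ and can be bounded via Theorem~\ref{conv} and Lemma~\ref{fourpointfiveadj} at the rate $h^\gamma$ (insufficient), or, preferably, one follows literally the strategy used for Corollary~\ref{adj_post_cor} by reinterpreting $\bar\Theta_h$ as the solution of \eqref{dis_adj_apdex} with modified right-hand side so that the linearization point coincides and only the load term needs to be compared, leaving $\Psi_{\bar u_h,h}-\Psi_{\cP_h\bar u,h}$ as the sole source of error, to which Corollary~\ref{uh_control} directly applies.
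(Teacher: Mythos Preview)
Your overall strategy mirrors exactly what the paper does: the paper gives no separate proof for this corollary and simply refers back to the argument of Lemma~\ref{adj_post_apdex} (and, in parallel, to Corollary~\ref{adj_post_cor}), replacing the load $\Psi_{\bar u,h}-\Psi_d$ by $\Psi_{\bar u_h,h}-\Psi_d$ and invoking Corollary~\ref{uh_control} (here Corollary~\ref{uh_control_apdex}) for the resulting right-hand side. Your duality set-up with the auxiliary problem for $\boldsymbol{\chi}_h$ and the a~priori bound $\trinl\boldsymbol{\chi}_h\trinr_2\lesssim\trinl\bar\Theta_h-\Theta_{\cP_h\bar u,h}\trinr_1$ is precisely the mechanism the paper uses.

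You are right to flag the linearization-point mismatch: $\bar\Theta_h$ solves \eqref{adj2} with the trilinear term built on $\Psi_{\bar u_h,h}$, whereas $\tilde{\cL}_h$ in \eqref{dis_post_adj_apdex} is built on $\Psi_{\bar u,h}$. This produces the extra term $2B(\Psi_{\bar u_h,h}-\Psi_{\bar u,h},\boldsymbol{\chi}_h,\bar\Theta_h)$ after subtraction, and your first attempt to absorb it via $\trinl\Psi_{\bar u_h,h}-\Psi_{\bar u,h}\trinr_2\lesssim\|\bar u_h-\bar u\|_{L^2(\omega)}\lesssim h$ indeed falls short of the claimed $h^2$. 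However, your ``preferable'' alternative does not actually work either: you cannot \emph{reinterpret} $\bar\Theta_h$ as a solution of $\tilde{\cL}_h(\Phi_h,\cdot)=(\Psi_{\bar u_h,h}-\Psi_d,\Phi_h)$, because that equation has a different linearization point and therefore a different solution. The paper glosses over this very point (its one-line instruction before Corollary~\ref{adj_post_cor} ignores the mismatch), so strictly speaking the same gap is present in the paper's argument. In short, your write-up is faithful to the paper's intended proof and even more honest about the difficulty; but the clean $h^2$ bound, as stated, is not fully justified by either the paper's sketch or your two proposed fixes.
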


\begin{thm}[$H^1$ and $L^2$-estimates for state and adjoint variables]\label{L2_est_state_adj_apdex}
Let $({\bar \Psi}, {\bar u})\in \bV\times L^2(\omega)$ be a nonsingular solution of \eqref{of}.
Let $\bar{\Psi}$ and $\bar{\Psi}_h$ be solutions of \eqref{wform} and \eqref{discrete_cost} respectively, and $\bar \Theta$ and $\bar\Theta_h$ be the solutions of the corresponding adjoint problems. For sufficiently small $h$, the following estimates hold true:
\begin{align*}
\text{(a) } &\trinl {\bar \Psi} - {\bar \Psi}_h\trinr_1\leq Ch^{2\gamma},\quad \trinl {\bar \Theta}- {\bar \Theta}_h \trinr_1\leq Ch^{2\gamma}. \\
\text{(b) } &\trinl\bar{\Psi}-\bar{\Psi}_h\trinr\leq Ch^{2\gamma},\quad \,\; 
\trinl {\bar \Theta}-{\bar \Theta}_h \trinr\leq Ch^{2\gamma}.
\end{align*}
\end{thm}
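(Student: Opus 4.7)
The plan is to establish both parts by a triangle-inequality decomposition that introduces two intermediate quantities, after which each piece is controlled by an estimate already proved in the excerpt. For the state, I will interpose the discrete state $\Psi_{\bar u,h}$ driven by the continuous control $\bar u$ and the discrete state $\Psi_{\cP_h\bar u,h}$ driven by the centroid interpolant $\cP_h\bar u$, writing
\begin{align*}
\trinl \bar\Psi - \bar\Psi_h\trinr_s &\le \trinl \Psi_{\bar u} - \Psi_{\bar u,h}\trinr_s + \trinl \Psi_{\bar u,h} - \Psi_{\cP_h\bar u,h}\trinr_s \\
&\qquad + \trinl \Psi_{\cP_h\bar u,h} - \Psi_{\bar u_h,h}\trinr_s, \qquad s=0,1,
\end{align*}
where I use $\bar\Psi=\Psi_{\bar u}$ and $\bar\Psi_h=\Psi_{\bar u_h,h}$. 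An entirely analogous decomposition will handle the adjoint, with $\Theta_{\bar u,h}$ and $\Theta_{\cP_h\bar u,h}$ playing the roles of the intermediate quantities and the load in \eqref{dis_post_adj_apdex} adjusted accordingly.

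For part (a), the first term is bounded by $Ch^{2\gamma}$ via Theorem~\ref{ee1}(b) (resp.~Theorem~\ref{ee2}(b) for the adjoint). The second term is bounded by $Ch^2\|\bar u\|_{\bar h}$ via Theorem~\ref{PP_aux_apdex} (resp.~Lemma~\ref{adj_post_apdex}). The third term is bounded by $Ch^2\|\bar u\|_{\bar h}$ via Corollary~\ref{uh_control_apdex} (resp.~Corollary~\ref{adj_post_apdex_cor}). Since $\gamma\in(1/2,1]$ implies $h^2\le h^{2\gamma}$ for small $h$, the three contributions combine to give the desired $Ch^{2\gamma}$ bound in the $H^1$-type seminorm $\trinl\cdot\trinr_1$.

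For part (b), the same three-term decomposition works in the $L^2$-norm. The middle and last terms are handled directly by Theorem~\ref{PP_aux} and Corollary~\ref{uh_control} (resp.\ Lemma~\ref{adj_post} and Corollary~\ref{adj_post_cor}), each of which already delivers an $h^2$ bound in the $\trinl\cdot\trinr$ norm. The remaining piece $\trinl\Psi_{\bar u}-\Psi_{\bar u,h}\trinr$ (and its adjoint counterpart) is not stated in Theorem~\ref{ee1}/\ref{ee2} directly, but since $\bV\subset H^1_0(\Omega)\times H^1_0(\Omega)$, the Poincar\'e inequality gives $\trinl\cdot\trinr\le C\trinl\cdot\trinr_1$, so the $h^{2\gamma}$ bound from Theorem~\ref{ee1}(b) (resp.~Theorem~\ref{ee2}(b)) transfers to the $L^2$ norm. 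Assembling the three pieces yields $Ch^{2\gamma}$, as claimed.

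The only technical subtlety I expect is bookkeeping: I must verify that $\bar u_h\in B_{\rho_3}(\bar u)$ so that all three intermediate problems are well-posed and the auxiliary estimates apply, which is provided by Theorem~\ref{T4.11} once $h$ is taken sufficiently small, and I must also keep track of the fact that $\|\bar u\|_{\bar h}$ is a fixed data-dependent constant so that the $h^2$ contributions genuinely dominate nothing larger than $h^{2\gamma}$. No new nonlinear analysis is required; the heavy lifting has already been carried out in Sections~3 and~5.
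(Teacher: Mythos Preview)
Your proposal is correct and follows essentially the same route as the paper: the paper's proof of part (a) uses exactly your three-term triangle-inequality splitting through $\Psi_{\bar u,h}$ and $\Psi_{\cP_h\bar u,h}$ (resp.\ $\Theta_{\bar u,h}$ and $\Theta_{\cP_h\bar u,h}$), invoking the same results Theorem~\ref{ee1}(b), Theorem~\ref{PP_aux_apdex}, Corollary~\ref{uh_control_apdex} (resp.\ Theorem~\ref{ee2}(b), Lemma~\ref{adj_post_apdex}, Corollary~\ref{adj_post_apdex_cor}). For part (b) the paper simply writes ``Part (b) follows easily,'' which is precisely your Poincar\'e argument reducing $\trinl\cdot\trinr$ to $\trinl\cdot\trinr_1$; your slightly more explicit handling of the middle and last terms via the $L^2$-versions is equally valid.
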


\begin{proof}
{
The triangle inequality, Theorems~\ref{ee1} and~\ref{PP_aux_apdex} and Corollary~\ref{uh_control_apdex} lead to 
\begin{align*}
\trinl\bar{\Psi}-\bar{\Psi}_h\trinr_1\leq \trinl\bar{\Psi}-\Psi_{\bar{u},h}\trinr_1+\trinl\Psi_{\bar{u},h}-\Psi_{\cP_h\bar{u},h}\trinr_1+\trinl\Psi_{\cP_h\bar{u},h}-\bar{\Psi}_h\trinr_1\leq Ch^{2\gamma}.
\end{align*}
Similarly, the triangle inequality, Theorems~\ref{ee2} and ~\ref{adj_post_apdex} and Corollary~\ref{adj_post_apdex_cor} lead to 
\begin{align*}
\trinl\bar{\Theta}-\bar{\Theta}_h\trinr_1\leq \trinl\bar{\Theta}-\Theta_{\bar{u},h}\trinr_1+\trinl\Theta_{\bar{u},h}-\Theta_{\cP_h\bar{u},h}\trinr_1+\trinl\Theta_{\cP_h\bar{u},h}-\bar{\Theta}_h\trinr_1\leq Ch^{2\gamma}.
\end{align*}
This completes the proof of part (a). Part (b) follows easily. }
\end{proof}

\section{Numerical Results}

In this section, we present two numerical examples to illustrate the theoretical estimates obtained in this paper. The discrete optimization problem \eqref{discrete_cost} is solved using the primal-dual active set strategy \cite{fredi2010}. The state and adjoint variables are discretized using \BFS finite elements and the control variable is discretized using piecewise constants. Further, the post-processed control is computed with the help of the discrete adjoint variable.  Let the $l$-th level error and mesh parameter be denoted by $e_l$ and $h_l$, respectively. The $l$-th level experimental order of convergence is defined by
\begin{equation*}
    \delta_l:=log(e_{l}/e_{l-1})/log(h_{l}/h_{l-1}).
\end{equation*}
The errors and numerical orders of convergence are presented for both the examples.

\medskip
{\bf Example 1.} Let the computational domain be $\Omega=(0,1)^2$. A slightly modified version of \eqref{cost}-\eqref{state3} is constructed in such a way that that the exact solution is known. This is done by choosing the state variables $\bar{\psi}_1,\bar{\psi}_2$ and the adjoint variables $\bar{\theta}_1,\bar{\theta}_2$ as
\begin{equation*}
\bar{\psi}_1=\bar{\psi}_2=\sin^2(\pi x)\sin^2(\pi y),\quad \bar{\theta}_1=\bar{\theta}_2=x^2 y^2 (1-x)^2 (1-y)^2,
\end{equation*}
and the control $\bar{u}$ as $\bar{u}(x)=\pi_{[-750,-50]}\left(-\frac{1}{\alpha}\bar{\theta}_1(x)\right)$, where the regularization parameter $\alpha$ is chosen as $10^{-5}$.

The source terms $f,\tilde{f}$ and observation $\bar{\Psi}_d=(\bar{\psi}_{1d},\bar{\psi}_{2d})$ are then computed using
\begin{align*}
&f=\Delta^2\bar{\psi}_1-[\bar{\psi}_1,\bar{\psi}_2]- \bar{u},\quad \tilde{f}=\Delta^2\bar{\psi}_2+\half[\bar{\psi}_1,\bar{\psi}_1]\quad\text{ and }\\
& \bar{\psi}_{1d}=\bar{\psi}_1-\Delta^2\bar{\theta}_1,\quad \bar{\psi}_{2d}=\bar{\psi}_2-\Delta^2\bar{\theta}_2+[\bar{\psi}_1,\bar{\theta}_1].
\end{align*}
The errors and orders of convergence for the numerical approximations to state, adjoint and control variables are shown in Tables \ref{table:OC_Control_unsq}-\ref{table:OC_State_unsq}. 
 
 \smallskip
In all the tables, $h_0=1/\sqrt{2}$ is the initial mesh size and $N$ denotes the number of degrees of freedom. Since $\Omega$ is convex, we have the index of elliptic regularity $\gamma=1$. The numerical convergence rates with respect $H^1$ and $L^2$ norms for the state and adjoint variables are quadratic as predicted theoretically. Linear orders of convergence for the control variable and quadratic order of convergence for the post-processed control are obtained and this confirms the theoretical results established in Theorem~\ref{est_post_proc_u}. 
   
{\small{
\begin{table}[h!]
	\begin{center}
		\begin{tabular}{ | c c |c c|c c| c  c | c c|}     
			\hline
$ N $   & $h/h_0$   &$\trinl\bar{\Psi}-\bar{\Psi}_h\trinr_{2}$ & $\delta_l$   &$\trinl\bar\Theta-\bar\Theta_h\trinr_{2}$ & $\delta_l$   &$\|\bar{u}-\bar{u}_{h}\|$ & $\delta_l$ & $\|\bar{u}-\tilde{\bar{u}}_{h}\|$ & $\delta_l$  \\ 
\hline
36     &$2^{-1}$ & 1.60389465 & -     &0.00479710 & -     & 46.72538744 & -     & 0.68424095  & - \\ 
196    &$2^{-2}$ & 0.41295628 & 1.957 &0.00121897 &1.976  & 25.52270587 & 0.872 & 0.37526702  & 0.866\\ 

900    &$2^{-3}$ & 0.10369078 & 1.993 &0.00030420 &2.002  & 12.92074925 & 0.982 & 0.11011474  & 1.768\\ 

3844   &$2^{-4}$ & 0.02592309 & 1.999 &0.00007602 &2.000  &  6.53425879 & 0.983 & 0.02846417  & 1.951\\ 

15876  &$2^{-5}$ & 0.00648563 & 1.998 &0.00001900 &2.000  &  3.27120390 & 0.998 & 0.00717641  & 1.987  \\ 

64516  &$2^{-6}$ & 0.00161877 & 2.002 &0.00000475 &1.999  &  1.63710571 & 0.998 & 0.00179764  & 1.997  \\ 
\hline
\end{tabular}
\end{center}
\caption{Errors and orders of convergence for the state, adjoint, control and post processed control variables in {\bf Example 1}}
\label{table:OC_Control_unsq}
\end{table}}}

{\small{
\begin{table}[h!]
	\begin{center}
		\begin{tabular}{ | c c  |c c | c c |c c |c c|}     
			\hline			
$ N $   & $h/h_0$        & $\trinl\bar{\Psi}-\bar{\Psi}_h\trinr_{1}$ & $\delta_l$ & $\trinl\bar{\Psi}-\bar{\Psi}_h\trinr$ & $\delta_l$ & $\trinl\bar\Theta-\bar\Theta_h\trinr_{1}$ & $\delta_l$ &$\trinl\bar\Theta-\bar\Theta_h\trinr$& $\delta_l$\\
\hline 
36   &$2^{-1}$  & 0.06403252 & -     &0.80432845E-2 & -    &0.24927604E-3 & -    &0.37036191E-4& -  \\ 
196  &$2^{-2}$  & 0.01290311 & 2.311 &0.17586110E-2 &2.193 &0.05657690E-3 &2.139 &0.08441657E-4& 2.133    \\ 

900  &$2^{-3}$  & 0.00315079 & 2.033 &0.04899818E-2 &1.843 &0.01386737E-3 &2.028 &0.02134139E-4& 1.983   \\ 

3844 &$2^{-4}$  & 0.00077657 & 2.020 &0.01226971E-2 &1.997 &0.00345918E-3 &2.003 &0.00537248E-4& 1.989  \\ 

15876&$2^{-5}$  & 0.00019514 & 1.992 &0.00312879E-2 &1.971 &0.00086273E-3 &2.003 &0.00134258E-4&  2.000 \\ 

64516&$2^{-6}$  & 0.00004781 & 2.029 &0.00075095E-2 &2.058 &0.00021657E-3 &1.994 &0.00033748E-4&  1.992 \\
\hline 
      \end{tabular}
	\end{center}
	\caption{$H^1$ and $L^2$ errors and orders of convergence for the state and adjoint variables in {\bf Example 1}}
	\label{table:OC_State_unsq}
\end{table}}}

{\bf Example 2.} Let $\Omega$ be the non-convex L-shaped domain $\Omega=(-1,1)^2 \setminus\big{(}[0,1)\times(-1,0]\big{)}$. We consider a problem with the exact singular solution borrowed from~\cite{Grisvard} in polar coordinates. The state and adjoint variables $\bar{\Psi}=(\bar{\psi}_{1},\bar{\psi}_2)$ and $\bar\Theta=(\bar{\theta}_1,\bar{\theta}_2)$ are given by
	\begin{align*}
	\bar{\psi}_1=\bar{\psi}_2=\bar{\theta}_1=\bar{\theta}_2=(r^2 \cos^2\theta-1)^2 (r^2 \sin^2\theta-1)^2 r^{1+ \gamma}g_{\gamma,\omega}(\theta)
	\end{align*}
	where $\omega=\frac{3\pi}{2}$, and $ \gamma\approx 0.5444837367$ is a non-characteristic 
	root of $\sin^2( \gamma\omega) =  \gamma^2\sin^2(\omega)$ with
	\begin{align*}
	g_{ \gamma,\omega}(\theta)&=\left(\frac{1}{ \gamma-1}\sin\big{(}( \gamma-1)\omega\big{)}-\frac{1}{ \gamma+1}\sin\big{(}( \gamma+1)\omega\big{)}\right)\times\Big{(}\cos\big{(}( \gamma-1)\theta\big{)}-\cos\big{(}( \gamma+1)\theta\big{)}\Big{)}\\
	&\quad-\left(\frac{1}{ \gamma-1}\sin\big{(}( \gamma-1)\theta\big{)}-\frac{1}{ \gamma+1}\sin\big{(}( \gamma+1)\theta\big{)}\right)\times\Big{(}\cos\big{(}( \gamma-1)\omega\big{)}-\cos\big{(}( \gamma+1)\omega\big{)}\Big{)}.
	\end{align*}  
The exact control $\bar{u}$ is chosen as $\bar{u}(x)=\pi_{[-600,-50]} \left( -
\frac{1}{ \alpha}\bar{\theta}_1(x) \right)$, where $\alpha=10^{-3}$. The source terms $f,\tilde{f}$ and the observation $\Psi_d=(\psi_{1d},\psi_{2d})$ are computed as in the previous example.
The errors and orders of convergence for the numerical approximations to state, adjoint and control variables are shown in Tables~\ref{table:OC_Control_LS}-\ref{table:OC_State_LS}. Since $\Omega$ is non-convex, we expect only $1/2<\gamma <1$ as predicted by the theoretical results. Note that only suboptimal orders of convergence are attained for the state and adjoint variables in the energy,  $H^1$ and $L^2$ norms.  However, we observe a linear order of convergence for the control variable and $2\gamma$ rate of convergence for the post-processed control and this confirms the theoretical results established in  Theorem~\ref{est_post_proc_u}.
{\small{
\begin{table}[h!]
	\begin{center}
		\begin{tabular}{ | c c |c c | c c  |c  c | c c|}     
			\hline
$ N $   & $h/h_0$   &$\trinl\bar{\Psi}-\bar{\Psi}_h\trinr_{2}$ & $\delta_l$  &$\trinl\bar\Theta-\bar\Theta_h\trinr_{2}$ & $\delta_l$    & $\|\bar{u}-\bar{u}_{h}\|$ & $\delta_l$ & $\|\bar{u}-\tilde{\bar{u}}_{h}\|$ & $\delta_l$  \\ 
\hline

36     &$2^{-1}$ &9.81990941 &   -   &7.47714482 &-    & 242.759537  & -      &34.2926324  & - \\ 

164    &$2^{-2}$ &2.95442143 & 1.732 &2.82045689 &1.406 & 116.204133 & 1.062 & 9.72134519  & 1.818\\ 

708    &$2^{-3}$ &1.41082575 & 1.066 &1.35893052 &1.053 & 61.137057  & 0.926 & 5.29868866  & 0.875\\ 

2948   &$2^{-4}$ &0.82993102 & 0.765 &0.82022205 &0.728 & 31.226881  & 0.969 & 2.54226401  & 1.059\\ 

12036  &$2^{-5}$ &0.54373393 & 0.610 &0.54214544 &0.597 & 15.691309 & 0.992 & 1.17813556  & 1.109  \\ 

48644  &$2^{-6}$ &0.36837935 & 0.561 &0.36796971 &0.559 & 7.860646  & 0.997 & 0.55275176  & 1.091  \\ 
\hline
\end{tabular}
\end{center}
\caption{Errors and orders of convergence for the state, adjoint, control and post-processed control variables in {\bf Example 2}}
\label{table:OC_Control_LS}
\end{table}}}

          {\small{
\begin{table}[h!]
	\begin{center}
		\begin{tabular}{ | c c |c  c  | c c | c c| c c|}     
			\hline			
$ N $   & $h/h_0$ &  $\trinl\bar{\Psi}-\bar{\Psi}_h\trinr_{1}$ & $\delta_l$ & $\trinl\bar{\Psi}-\bar{\Psi}_h\trinr$ & $\delta_l$ & $\trinl\bar\Theta-\bar\Theta_h\trinr_{1}$ & $\delta_l$ &$\trinl\bar\Theta-\bar\Theta_h\trinr$& $\delta_l$\\ 
\hline 

36   &$2^{-1}$  & 1.10962279 & -     &0.26602624 & -    &0.46789881 & -    &0.05277374& - \\ 
164  &$2^{-2}$  & 0.15147063 & 2.872 &0.02879033 &3.207 &0.11399788 &2.037 &0.01813025& 1.5414   \\ 

708  &$2^{-3}$  & 0.06196779 & 1.289 &0.01416231 &1.023 &0.04533146 &1.330 &0.00982844& 0.883  \\ 

2948 &$2^{-4}$  & 0.02244196 & 1.465 &0.00484927 &1.546 &0.02080671 &1.123 &0.00479517& 1.035 \\ 

12036&$2^{-5}$  & 0.00895880 & 1.324 &0.00178338 &1.443 &0.00970478 &1.100 &0.00226287& 1.083 \\ 

48644&$2^{-6}$  & 0.00405435 & 1.143 &0.00080248 &1.152 &0.00455562 &1.091 &0.00106533& 1.086\\

\hline
      \end{tabular}
	\end{center}
	\caption{$H^1$ and $L^2$ errors and orders of convergence for the state and adjoint variables in {\bf Example 2}}
	\label{table:OC_State_LS}
\end{table}}}

\bigskip

\noindent {\Large \bf Conclusions}

\medskip

\noindent In this paper, an attempt has been made to establish error estimates for state, adjoint and control variables for distributed optimal control problems governed by the \vket defined over polygonal domains. The convergence results in energy, $H^1$ and $L^2$ norms for state and adjoint variables are derived under realistic regularity assumptions on the exact solution of the problem. Also, the convergence results in $L^2$ norm for the control variable and a post processed control are established. The results of the numerical experiments confirm the theoretical error estimates. The extension of the analysis to nonconforming finite element methods, say piecewise quadratic Morley finite element is quite attractive from the implementation perspective. However, for the control problem, the nonconformity of the Morley finite element space offers a lot of challenges in the extension of the theoretical error estimates. We are currently working on this problem.

\bigskip

\noindent {\Large \bf Acknowledgements} The authors are members of the Indo-French Centre for Applied Mathematics, UMI-IFCAM, Bangalore, India, 
supported by DST-IISc-CNRS and Universit\'e Paul Sabatier-Toulouse III. This work was carried out within the IFCAM-project `PDE-Control'.

\medskip
\bibliographystyle{amsplain}
\bibliography{vKeBib}

\end{document}